\font\tenrsf=rsfs10
\font\sevenrsf=rsfs7 at 8pt \font\fiversf=rsfs5 at 6pt
\newcommand{\verti}[1]{{\left\vert\kern-0.25ex\left\vert\kern-0.25ex\left\vert #1 
    \right\vert\kern-0.25ex\right\vert\kern-0.25ex\right\vert}}
\newtheorem{theo}{Theorem}[section]
\newtheorem{lemme}[theo]{Lemma}
\newtheorem{propo}[theo]{Proposition}
\newtheorem{cor}[theo]{Corollary}
\newtheorem{hyp}[theo]{Assumption}
\newtheorem{defi}[theo]{Definition}
\newtheorem{exe}[theo]{Example}
\newtheorem{nb}[theo]{Remark}
\def \bq {\begin{equation}}
\def \eq {\end{equation}}
\def \leq {\leqslant}
\def \geq {\geqslant}
\def \N {\mathbb{N}}
\def \ind {\mathbf{1}}
\def \S {\mathbb{S}}
\numberwithin{equation}{section}
\def\lp {L^1_+}
\def\lm {L^1_-}
\def \d {\mathrm{d}}
\def \D {\mathscr{D}}
\def \C {\mathbb{C}}
\def \Rs {\mathcal{R}}
\def \ml {M_{\lambda}}
\def \R {\mathbb{R}}
\def \M {\mathcal{M}}
\def \G {\bm{G}}
\def \X {\mathbb{X}}
\def \Y {\mathbb{Y}}
\def \l+ {L^1_+}
\def \l- {L^1_-}
\renewcommand{\epsilon}{\varepsilon}
\def \l {\lambda}
\def \T {\mathsf{T}}
\def \B {\mathsf{B}}
\def \e {\varepsilon}
\def \H {\mathsf{H}}
\begin{document}
\title[Diffuse boundary conditions]{Quantitative tauberian approach to collisionless transport equations with diffuse boundary operators}

 \author{B. Lods}

 \address{Universit\`{a} degli
 Studi di Torino \& Collegio Carlo Alberto, Department of Economics and Statistics, Corso Unione Sovietica, 218/bis, 10134 Torino, Italy.}\email{bertrand.lods@unito.it}

 \author{M. Mokhtar-Kharroubi}

 \address{Universit\'e de Bourgogne-Franche-Comt\'e, Equipe de Math\'ematiques, CNRS UMR 6623, 16, route de Gray, 25030 Besan\c con Cedex, France
}
\email{mustapha.mokhtar-kharroubi@univ-fcomte.fr}

\maketitle
\begin{abstract}
This paper gives a spectral approach to time asymptotics of collisionless transport semigroups with general diffuse boundary operators. The strong stability of the invariant density is derived from the classical Ingham theorem. A recent quantitative version of this theorem provides algebraic rates of convergence to equilibrium.

\noindent \textsc{MSC:} primary 82C40; secondary 35F15, 47D06

\noindent \textit{Keywords:} Kinetic equation; Boundary operators; Ingham Theorem; Convergence to equilibrium

\end{abstract}

\tableofcontents
 \section{Introduction }

We consider here the time asymptotics for collisionless kinetic  equations of the form
\begin{subequations}\label{1}
\begin{equation}\label{1a}
\partial_{t}f(x,v,t) + v \cdot \nabla_{x}f(x,v,t)=0, \qquad (x,v) \in \Omega \times V, \qquad t \geq 0
\end{equation} 
with initial data
\begin{equation}\label{1c}f(x,v,0)=f_0(x,v), \qquad \qquad (x,v) \in \Omega \times V,\end{equation}
under \emph{diffuse  boundary}
\begin{equation}\label{1b}
f_{|\Gamma_-}=\mathsf{H}(f_{|\Gamma_+}),
\end{equation}\end{subequations}
where 
$$\Gamma _{\pm }=\left\{ (x,v)\in \partial \Omega \times V;\ \pm
v \cdot n(x)>0\right\}$$
($n(x)$ {being} the outward unit normal at $x\in \partial \Omega$)  and $\mathsf{H}$  {is
a linear 
boundary operator relating the
outgoing and incoming fluxes $f_{\mid \Gamma _{+}}$ and $f_{\mid
\Gamma _{-}}$ and is  bounded   on
the trace spaces
$$L^{1}_{\pm}=L^{1}(\Gamma_{\pm}\,;\,|v\cdot n(x)| \pi(\d  x)\otimes \bm{m}(\d v))=L^{1}(\Gamma_{\pm},\d\mu_{\pm}(x,v))$$
where $\pi$ denotes the Lebesgue surface measure on $\partial \Omega$ and $\bm{m}$ is a Borel measure on the set of velocities (see Assumptions \ref{hypO} hereafter). The
boundary operator }%
$$\H\::\:\lp \rightarrow \lm$$
is nonnegative and stochastic, i.e.
$$\int_{\Gamma_{-}}\H\psi\,\d\mu_{-}=\int_{\Gamma_{+}}\psi\,\d\mu_{+}, \qquad \forall \psi \in L^{1}(\Gamma_{+},\d\mu_{+})$$
 so that \eqref{1} is 
governed by a stochastic  $C_{0}$-semigroup $\left(
U_{\H}(t)\right) _{t\geq 0\text{ }}$ on $L^{1}(\Omega \times V\,,\,\d x\otimes %
\bm{m}(\d v))$ with generator $\T_{\H}.$ 

A general theory on the existence of an invariant density and its asymptotic
stability (i.e. convergence to equilibrium) has been published recently \cite{LMR} (see also earlier one-dimensional
results \cite{MKR}). The paper \cite{LMR} deals with general \emph{partly diffuse boundary
operators} $\H$ of the typical  
form
\begin{equation}\label{eq:Hintrod}\begin{split}
\mathsf{H}\varphi(x,v)&=\alpha (x)\varphi (x,v-(v\cdot n(x))n(x))\\
\phantom{+++} &+(1-\alpha
(x))\int_{v'\cdot n(x) >0}\bm{k}(x,v,v^{\prime })\varphi (x,v^{\prime })|v'\cdot n(x)|
\bm{m}(\d v^{\prime })\end{split}\end{equation}
where $\alpha\::\:x\in \partial\Omega \longmapsto \alpha (x)\in \left[ 0,1%
\right] $ is a measurable function and $\bm{k}$ is a nonnegative kernel.\medskip

\subsection{Our contribution in a nutshell}\label{sec:nut}

We consider here \textit{diffuse} boundary operators only for which, typically,
\begin{equation}\label{eq:Hhkernel}
\H\psi(x,v)=\int_{v'\cdot n(x) > 0}\bm{h}(x,v,v')\psi(x,v')\,|v'\cdot n(x)|\bm{m}(\d v'), \qquad (x,v) \in \Gamma_{-}\end{equation}
where,
$$\int_{v \cdot n(x) <0}\bm{h}(x,v,v^{\prime })|v\cdot n(x)|\bm{m}(\d v)=1,\ \quad (x,v')\in \Gamma
_{+}$$
We do not consider the case where the velocities are bounded away from
zero which deserves a separate analysis, mainly because in this case $\left( U_{\H}(t)\right) _{t\geq 0\text{ }}$ exhibits a
spectral gap and the convergence to equilibrium is exponential \cite{LM-iso}. When we allow arbitrarily small velocities
then the boundary of the spectrum of $\T_{\H}$ is the imaginary axis:
\begin{equation}\label{eq:iR}
i\mathbb{R} 
\subset \mathfrak{S}(\T_{\H})\end{equation}
(see Theorem \ref{theo:spectTH}) and the rate of convergence to equilibrium cannot be expected to be universal. Notice in particular that \eqref{eq:iR} implies that
\begin{equation}\label{eq:RR}
\lim_{\varepsilon \to 0^{+}}\left\|\Rs(\varepsilon+i\eta\,,\T_{\H})\right\|_{\mathscr{B}(L^{1}(\Omega\times V))}=\infty\end{equation}
where $\Rs(\l,\T_{\H})=(\l-\T_{\H})^{-1}$ denotes the resolvent of the transport operator $\T_{\H}$ and we denote simply 
$L^{1}(\Omega \times V)=L^{1}(\Omega \times V,\,\d x\otimes \bm{m}(\d v)).$
\medskip

The present paper continues \cite{LMR} by means of both qualitative and quantitative tauberian arguments addressing the following problems:
\begin{enumerate}[(P1)]
\item We show that asymptotic stability of the invariant density can
be derived from a classical Ingham tauberian theorem.
\item We provide rates of convergence to
equilibrium for solutions to \eqref{1} under mild assumptions on the initial datum $f_{0}$ by using recent quantified versions of Ingham's theorem \cite{chill}.
\end{enumerate}
\bigskip

Regarding (P1), asymptotic stability has been already investigated in our previous contribution \cite{LMR} using fine properties of partially integral stochastic semigroups
(see \cite[Appendix B]{LMR}). We give here a new and more general proof which   holds under weaker
assumptions. As far as (P2) is concerned, the use of a quantified tauberian result for collisionless
kinetic equations appears for the first time in \cite{mmkseifert}
for transport equations in slab geometry. The extension to multidimensional
geometries is much more involved and is the (main) object of the present paper.

As just said, the main ingredient for the above point  (P2) is a quantitative version of Ingham's Theorem recently obtained in \cite{chill}. We first recall the classical statement of Ingham's Theorem as stated in \cite{chill}:

\begin{theo}[Ingham]\label{theo:ingham}
Let $X$ be a Banach space and let $g \in \mathrm{BUC}(\R_{+};X)$. Assume there exists a function $F \in L^{1}_{\mathrm{loc}}(\R;X)$ such that
$$\lim_{\e\to0+}\int_{\R}\widehat{g}(\e+i\eta)\psi(\eta)\d s=\int_{\R}F(\eta)\psi(\eta)\d s \qquad \forall \psi \in \mathcal{C}_{c}(\R)$$
where $\widehat{g}(\l)=\int_{0}^{\infty}g(t)e^{-\l t}\d t$ denotes the Laplace transform of $g$, $\l=\e+i\eta \in \C_{+}$. Then $g \in \mathcal{C}_{0}(\R_{+},X)$.
\end{theo}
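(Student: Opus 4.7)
The plan is to translate the hypothesis into a statement about convolutions of $g$ with the inverse Fourier transforms of test functions, apply the Riemann--Lebesgue lemma, and finally conclude by a Tauberian argument based on a positive approximate identity together with the uniform continuity of $g$.

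Fix $\psi\in\mathcal{C}_c^\infty(\R)$ and $t\geq 0$, and apply the hypothesis to the compactly supported test function $\eta\mapsto e^{i\eta t}\psi(\eta)$. For $\e>0$, Fubini (legitimate thanks to the factor $e^{-\e s}$ and the boundedness of $g$) gives
$$\int_\R\widehat g(\e+i\eta)\,e^{i\eta t}\psi(\eta)\,\d\eta=2\pi\int_0^\infty e^{-\e s}\,g(s)\,\check\psi(t-s)\,\d s,$$
with $\check\psi(u)=(2\pi)^{-1}\int_\R e^{iu\eta}\psi(\eta)\,\d\eta\in\mathcal{S}(\R)\subset L^1(\R)$. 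Letting $\e\to 0^+$, dominated convergence on the left (justified by $\|g\|_\infty<\infty$ and $\check\psi\in L^1$) together with the hypothesis on the right produce the convolution identity
$$\int_0^\infty g(s)\,\check\psi(t-s)\,\d s=\frac{1}{2\pi}\int_\R F(\eta)\psi(\eta)\,e^{i\eta t}\,\d\eta\qquad (t\geq 0).$$
Since $\psi$ has compact support and $F\in L^1_{\mathrm{loc}}(\R;X)$, one has $F\psi\in L^1(\R;X)$; the Banach-valued Riemann--Lebesgue lemma then forces the right-hand side to vanish in $X$ as $t\to\infty$, hence
$$\lim_{t\to\infty}\int_0^\infty g(s)\,\check\psi(t-s)\,\d s=0\qquad\forall\,\psi\in\mathcal{C}_c^\infty(\R).$$

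For the Tauberian step, I would choose $\phi\in\mathcal{C}_c^\infty(\R)$ with $\phi(0)=1$ and $\check\phi\geq 0$ (for instance, $\phi$ obtained as a self-convolution of a nonnegative compactly supported bump, so that $\check\phi$ is, up to constants, the squared modulus of a band-limited function), and scale $\phi_\lambda(\eta)=\phi(\eta/\lambda)$; then $\check\phi_\lambda(s)=\lambda\check\phi(\lambda s)$ is a positive approximate identity of unit $L^1$-mass concentrating at the origin as $\lambda\to\infty$. Splitting
$$\int_0^\infty g(s)\check\phi_\lambda(t-s)\,\d s-g(t)$$
according to whether $|t-s|<r$ or $|t-s|\geq r$, and using the uniform continuity of $g$ on the first region (bounded by a modulus of continuity times $\|\check\phi\|_{L^1}$) together with the $L^1$-tail decay of $\check\phi_\lambda$ on the second (plus the boundary defect from $s<0$, which is harmless for $t$ large), I would obtain
$$\sup_{t\geq 1}\Bigl\|\int_0^\infty g(s)\check\phi_\lambda(t-s)\,\d s-g(t)\Bigr\|\longrightarrow 0\quad\text{as }\lambda\to\infty.$$
Combined with the previous limit applied to $\psi=\phi_\lambda$, this forces $\limsup_{t\to\infty}\|g(t)\|=0$, i.e.\ $g\in\mathcal{C}_0(\R_+;X)$.

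The subtlest point is the uniform-in-$t$ approximation in the Tauberian step: one must ensure that the tail $\int_{|u|\geq r}\check\phi_\lambda(u)\,\d u$ vanishes independently of $t$ as $\lambda\to\infty$, and that the boundary defect arising from $g$ being undefined on $(-\infty,0)$ is negligible for $t\geq 1$. Uniform continuity of $g$, as opposed to mere continuity, is precisely what allows the interior contribution $\int_{|t-s|<r}\|g(s)-g(t)\|\check\phi_\lambda(t-s)\,\d s$ to be dominated by a single modulus of continuity, which explains why the hypothesis $g\in\mathrm{BUC}(\R_+;X)$ is indispensable in the statement.
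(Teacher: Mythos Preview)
The paper does not actually prove this theorem: Theorem~\ref{theo:ingham} is stated without proof and attributed to \cite{chill} (``We first recall the classical statement of Ingham's Theorem as stated in \cite{chill}''). So there is no in-paper proof to compare against.

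That said, your argument is essentially the classical proof of Ingham's theorem and is sound. A couple of minor remarks. First, in your passage to the limit $\e\to 0^+$ you have swapped ``left'' and ``right'': it is the integral $\int_\R\widehat g(\e+i\eta)e^{i\eta t}\psi(\eta)\,\d\eta$ that converges by the hypothesis, while the integral $2\pi\int_0^\infty e^{-\e s}g(s)\check\psi(t-s)\,\d s$ converges by dominated convergence. Second, for the construction of $\phi$ with $\check\phi\geq 0$, you should take $\phi=\chi\ast\tilde\chi$ with $\tilde\chi(x)=\chi(-x)$ (so that $\widehat\phi=|\widehat\chi|^2$), rather than a plain self-convolution $\chi\ast\chi$; these coincide only when $\chi$ is even. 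Finally, note that the hypothesis in the statement is for $\psi\in\mathcal{C}_c(\R)$, which in particular covers the $\mathcal{C}_c^\infty$ test functions you use, so no density argument is needed there.
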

Here above, $\mathrm{BUC}(\R_{+};X)$ stands for the space of  bounded and  uniformly continuous functions from $\R_{+}$ to $X$ whereas $g \in \mathcal{C}_{0}(\R^{+};X)$ means that $\lim_{t\to\infty}g(t)=0.$ 

The \emph{quantitative version} of Theorem \ref{theo:ingham} we need has been derived in \cite{chill}. We extract from \cite{chill} the following \footnote{The statement here above corresponds to \cite[Theorem 2.1]{chill} for a constant  $M(R)=1$ $(R >0)$ so that $M_{k}(R)=(1+R)^{2/k}$, $R >0$.} where $\mathcal{C}_{b}(\R_{+};X)$ is the space of continuous and bounded functions whereas $\mathrm{Lip}(\R_{+};X)$ denotes the space of Lipschitz functions from $\R_{+}$ to $X$:
\begin{theo}\label{theo:quanti} Let $X$ be a Banach space and let $g \in \mathcal{C}_{b}(\R_{+};X) \cap \mathrm{Lip}(\R_{+};X)$. Suppose that
$\widehat{g}$ admits a boundary function $F \in L^{1}_{\mathrm{loc}}(\R;X)$ in the sense of the above Theorem \ref{theo:ingham}. Suppose that there is $k \geq 0$ such that $F \in \mathcal{C}^{k}(\R,X)$ and there is $C >0$ such that
\begin{equation}\label{eq:deri}
\left\|\frac{\d^{j}}{\d \eta^{j}}F(\eta)\right\|_{X} \leq C \qquad \forall \eta \in \R, \qquad 0 \leq j \leq k.\end{equation}
Then, 
\begin{equation}\label{eq:rateTheoric}
\|g(t)\|_{X}=\mathrm{O}(t^{-\frac{k}{2}}) \qquad \quad \text{ as $t \to \infty.$}\end{equation}
\end{theo}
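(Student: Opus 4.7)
The plan is to represent $g(t)$ by a Fourier-type inversion against the boundary trace $F$ localized at some frequency scale $R$, to estimate the localized piece by integration by parts using the regularity of $F$, and to control the high-frequency remainder by means of the Lipschitz property of $g$. The exponent $-k/2$ will then emerge from optimizing $R=R(t)$ so as to balance the two contributions.

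Concretely, I fix a smooth cutoff $\chi\in\mathcal{C}_{c}^{\infty}(\R)$ with $\chi\equiv 1$ on $[-1,1]$ and supported in $[-2,2]$, set $\chi_{R}(\eta)=\chi(\eta/R)$, and introduce
$$I_{R}(t):=\frac{1}{2\pi}\int_{\R}F(\eta)\chi_{R}(\eta)\,e^{i\eta t}\,\d\eta.$$
The boundary-trace hypothesis for $F$ (after the routine density argument extending the admissible test functions from $\mathcal{C}_{c}(\R)$ to complex-valued combinations of the form $\chi_{R}(\cdot)e^{i\cdot t}$) identifies $I_{R}(t)$ with the limit as $\e\to 0^{+}$ of $(2\pi)^{-1}\int_{\R}\widehat{g}(\e+i\eta)\chi_{R}(\eta)e^{(\e+i\eta)t}\d\eta$. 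Writing $e^{i\eta t}=(it)^{-k}\partial_{\eta}^{k}e^{i\eta t}$ and integrating by parts $k$ times, the Leibniz expansion combined with \eqref{eq:deri} and the scaling identity $\|\chi_{R}^{(m)}\|_{L^{1}(\R)}=R^{1-m}\|\chi^{(m)}\|_{L^{1}(\R)}$ produces
$$\|I_{R}(t)\|_{X}\leq C_{k}\,R\,t^{-k},$$
the dominant contribution coming from the Leibniz term $F^{(k)}\chi_{R}$, whose $L^{1}$-norm scales linearly in $R$.

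For the remainder $g(t)-I_{R}(t)$ I would reinterpret $I_{R}(t)$ as the convolution of $g$ with the inverse Fourier transform $\check{\chi}_{R}$ of $\chi_{R}$, that is, with an approximation of the unit of mass $1$ concentrated at scale $R^{-1}$. Since $g\in\mathrm{Lip}(\R_{+};X)$, this gives
$$\|g(t)-I_{R}(t)\|_{X}=\left\|\int_{\R}\bigl[g(t-s)-g(t)\bigr]\check{\chi}_{R}(s)\,\d s\right\|_{X}\leq \|g\|_{\mathrm{Lip}}\int_{\R}|s|\,|\check{\chi}_{R}(s)|\,\d s\leq C\,R^{-1},$$
uniformly for $t$ large. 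Combining the two bounds yields $\|g(t)\|_{X}\leq C\bigl(Rt^{-k}+R^{-1}\bigr)$, and the optimal choice $R=t^{k/2}$ delivers \eqref{eq:rateTheoric}.

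The hard part is to make this convolution identification rigorous. Because $g$ is only defined on $\R_{+}$ and $\widehat{g}$ admits merely a distributional trace on $i\R$ (cf.~\eqref{eq:RR}, where the resolvent explicitly blows up along the imaginary axis), the Plancherel identity used to rewrite $I_{R}$ as a convolution cannot be applied directly: one must work throughout with the regularized Laplace transforms $\widehat{g}(\e+i\cdot)$, uniformly control their tails via the $1/|\eta|$-decay that follows from the Lipschitz and boundedness assumptions on $g$ (itself obtained by one integration by parts in $t$ in the Laplace integral), and then carefully pass to the limit $\e\to 0^{+}$. This delicate regularization/limit procedure is what truly lies at the heart of the quantitative Ingham theorem of \cite{chill}.
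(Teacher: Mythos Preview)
The paper does not prove Theorem~\ref{theo:quanti}: it is quoted verbatim from \cite{chill} (see the footnote and the paragraph preceding the statement) and used as a black box throughout Sections~\ref{sec:boudF}--\ref{sec:regF}. There is therefore no in-paper proof to compare your attempt against.

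That said, your sketch is a faithful outline of the argument in \cite{chill}. The frequency localisation at scale $R$, the $k$-fold integration by parts exploiting \eqref{eq:deri} to produce the bound $C_{k}Rt^{-k}$, the identification of the localised piece with a convolution $g\ast\check\chi_{R}$ controlled by the Lipschitz constant of $g$ at cost $CR^{-1}$, and the optimisation $R=t^{k/2}$ yielding \eqref{eq:rateTheoric} --- these are precisely the steps of the Chill--Seifert proof. You also correctly flag the one genuinely delicate point: the convolution identity cannot be obtained by a direct Plancherel argument because $g$ lives only on $\R_{+}$ and $F$ is defined merely as a weak boundary trace, so one must work with the regularised transforms $\widehat g(\e+i\cdot)$, establish enough uniform decay in $\eta$ to justify the manipulations, and pass to the limit $\e\to 0^{+}$. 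This is exactly what \cite[Theorem~2.1]{chill} carries out.
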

Notice that, with the terminology of \cite{chill}, this theorem falls within the case where $F$ is \emph{nonsingular at zero}. In particular, the quantified tauberian theorem used here is \textit{different}
from the one used in \cite{mmkseifert} and improves even the one dimensional rates of
convergence given in \cite{mmkseifert}.

Our construction yielding to the answer of points (P1) and (P2) is quite
involved and relies on many new mathematical results of independent
interest. The main statements of the paper can be summarized as follows:
\begin{enumerate}[(a)]
\item If the boundary operator $\H$ is such that 
\begin{equation}\label{eq:H1m}
\H\in \mathscr{B}(L^{1}(\Gamma _{+},\mu _{+}(x,v)),\ L^{1}(\Gamma
_{-},|v|^{-1}\mu _{-}(x,v)))\end{equation}
then, for any $f \in L^{1}(\Omega \times V,)$ such that
$$|v|^{-1}f(x,v) \in L^{1}(\Omega \times V) \qquad \text{ and } \quad \int_{\Omega\times V}f(x,v)\d x\bm{m}(\d v)=0$$
the limit
$$\lim_{\varepsilon
\rightarrow 0^{+}}\Rs\left( \varepsilon +i\eta,\T_{\H}\right)f \;\;\text{
exists in } L^{1}(\Omega \times V)$$
and  the convergence is uniform in bounded $\eta$. We denote by $\mathbf{R}_{f}(\eta)$ this limit and refers to it as the \emph{boundary function} of  $\Rs(\l,\T_{\H})f$. The asymptotic stability
follows from Ingham's tauberian Theorem, see Theorem \ref{theo:qualit}. 
\item If the boundary operator $\H$ provides higher integrability than the mere \eqref{eq:H1m}, i.e. if 
\begin{equation}\label{eq:hk}
\H\in \mathscr{B}(L^{1}(\Gamma _{+},\d\mu _{+}(x,v)),\ L^{1}(\Gamma
_{-},\left\vert v\right\vert ^{-(k+1)}\d\mu _{-}(x,v)))\end{equation}
for some $k \in \N$, then for any $f \in L^{1}(\Omega \times V)$ such that 
\begin{equation}\label{eq:fk}
|v|^{-k-1}f(x,v) \in L^{1}(\Omega \times V) \qquad  \text{ and } \quad \int_{\Omega\times V}f(x,v)\d x\bm{m}(\d v)=0\end{equation}
the boundary function 
$$\eta \in \R \longmapsto \mathbf{R}_{f}(\eta) \in L^{1}(\Omega \times V)$$
is of class $\mathcal{C}^{k}$ and is uniformly bounded on $\R$ as well as its derivatives. One then can deduce from the \emph{quantified Ingham's Theorem} \ref{theo:quanti} that, in this case, the rate of convergence to equilibrium is 
$$\mathrm{O}\left(t^{-\frac{k}{2}}\right), \qquad \text{ as } t \to\infty.$$
We refer to Theorem \ref{theo:main} at the end of this Introduction for a precise statement.
\end{enumerate}

It is remarkable here that the rate of convergence (for some class of initial data) depends \emph{heavily} on the gain of integrability provided by the boundary operator in \eqref{eq:hk}. For instance, if the kernel
$$\bm{h}(x,\cdot,v') \emph{ is compactly supported away from zero} \qquad \forall (x,v') \in \Gamma_{+}$$ 
then one can take any $k >0$ in \eqref{eq:hk}. Another fundamental example is the one for which
$$\bm{h}(x,v,v')=\M_{\theta}(v), \qquad (x,v') \in \Gamma_{+},\,\,v \in V=\R^{d}$$
with 
\begin{equation}\label{eq:maxw}
\M_{\theta}(v)=(2\pi\theta)^{-d/2}\exp\left(-\frac{|v|^{2}}{2}\right), \qquad v \in \R^{d}, \qquad \theta >0\end{equation}
and if $\bm{m}(\d v)=\d v$ is the Lebesgue measure on $\R^{d}$, then \eqref{eq:hk} holds for $k < d $ and from this upper bound on $k$ we can derive the faster possible rate of convergence. 
 
The proofs of the above points (a) and (b) are quite technical and are derived
from a series of various mathematical results of independent interest; see
below.

\subsection{Related literature} Besides its fundamental role in the study of the Boltzmann equation with boundary conditions \cite{EGKM,guo03,briant}, the mathematical interest towards relaxation to equilibrium for collisionless equations is
relatively recent in kinetic theory starting maybe with numerical evidences obtained in \cite{aoki1}. A precise description of the relevance of the question as well as very interesting results have been obtained then in \cite{aoki}. We mention also the important contributions \cite{kuo,liu} which obtain optimal rate of convergence when the spatial domain is a ball. The two very recent works \cite{bernou1,bernou2} provide (optimal) convergence rate for general domains $\Omega$. All these works are dealing with partial diffuse boundary operator of Maxwell-type for which
\begin{multline}\label{eq:maxweBC}
\H\varphi(x,v)=\alpha (x)\varphi (x,v-(v\cdot n(x))n(x)) \\
+\frac{(1-\alpha
(x))}{\gamma(x)}\M_{\theta(x)}(v)\int_{v'\cdot n(x) >0}\varphi (x,v^{\prime })|v'\cdot n(x)|
\bm{m}(\d v^{\prime })\end{multline}
where, as above $\M_{\theta(x)}$ is a Maxwellian distribution given by \eqref{eq:maxw} for which the temperature $\theta(x)$ depends (continuously) on $x \in \partial\Omega$ and $\gamma(x)$ is a normalization factor ensuring $\H$ to be stochastic. Optimal rate of convergence for the boundary condition \eqref{eq:maxweBC} in dimension $d=2,3$ has been obtained recently in \cite{bernou1} thanks to a clever use of Harris's subgeometrical convergence theorem for Markov processes. A related probabilistic approach, based on coupling, has been addressed in \cite{bernou2} in dimension $d\geq2$ whenever $\theta(x)=\theta$ is constant. As mentioned, in \cite{bernou1}, the obtained rate of convergence is optimal and given by
$$\mathrm{O}\left(\log(1+t)^{d+1}t^{-(d)}\right) \quad \text{ as } t \to\infty.$$
Recall that, here $\H$ satisfies \eqref{eq:hk} with $k < d$ which confirms the fact that the optimal rate of convergence is prescribed by the gain of integrability $\H$ is able to provide. 

Of course the above rate $\mathrm{O}(\log(1+t)^{d+1}t^{-d})$ is much better than the one $\mathrm{O}(t^{-\frac{d}{2}})$ we can reach in our case  but our paper is not really 
comparable to \cite{bernou1,bernou2}. First, we deal here with different kind of boundary conditions (in any dimension $d \geq 1$) and, even if we restrict ourselves to diffuse boundary condition, the structure of the kernel $\bm{h}(x,v,v')$ is much more general than the Maxwellian case \eqref{eq:maxweBC}. Second,  the mathematical tools and
results are completely different. Finally, as we noted, our rates of
convergence are consequences of various results of independent interest for
kinetic theory; see below. It is clear however that the better rates obtained in \cite{bernou1}
suggest strongly that our rates are certainly not optimal. We have the feeling though that the results are the best one can derive from the quantitative Ingham tauberian Theorem \ref{theo:quanti}. In particular, any improvement of the theoretical results in Theorem \ref{theo:quanti} would lead to an improvement of the rate we obtain here. For instance, only integer values of the derivatives are allowed in Theorem \ref{theo:quanti} and one may wonder if there is room for some improvement in the rate \eqref{eq:rateTheoric} if one allows to consider \emph{fractional derivatives} in \eqref{eq:deri}.

\subsection{Mathematical framework}\label{sec:frame}

Let us describe more precisely our mathematical framework and the set of assumptions we adopt throughout the paper. First, the general assumptions on the phase space are  the following

\begin{hyp}\label{hypO} The phase space $\Omega \times V$ is such that
\begin{enumerate} 
\item $\Omega
\subset \R^{d}$ $(d\geq 2)$ is an open and \emph{bounded} subset with $\mathcal{C}^{1}$ boundary $\partial \Omega 
$.
\item  $V$ is the support of a nonnegative Borel measure $\bm{m}$  which is orthogonally invariant (i.e. invariant under the action of the orthogonal group of matrices in $\R^{d}$).
\item $0 \in V$, $\bm{m}(\{0\})=0$ and $\bm{m}\left(V \cap B(0,r)\right) >0$ for any $r >0$ where $B(0,r)=\{v \in \R^{d}\,,\,|v| < r\}.$
\end{enumerate}
We denote by 
$$\X_{0}:=L^{1}(\Omega \times V\,,\,\d x\otimes \bm{m}(\d v))$$
endowed with its usual norm $\|\cdot\|_{\X_{0}}.$ More generally, for any $k \in \N$, we set
$$\X_{k}:=L^{1}(\Omega \times V\,,\,\max(1,|v|^{-k})\d x \otimes \bm{m}(\d v))$$
with norm $\|\cdot\|_{\X_{k}}.$
\end{hyp}
Notice that the above Assumption \textit{(3)} is necessary to ensure that the transport operator with no-incoming boundary condition has at least the whole imaginary axis in its spectrum.\medskip

With respect to our previous contribution \cite{LMR}, as already mentioned, we do not consider abstract and general boundary operator here but focus our attention on the specific case of diffuse boundary operator satisfying the following Assumption where we define
\begin{equation*}
\begin{cases}
\mathsf{M}_{0} \::\:&L^1_- \longrightarrow L^1_+\\
&u \longmapsto
\mathsf{M}_{0}u(x,v)=u(x-\tau_{-}(x,v)v,v),\:\:\:(x,v) \in \Gamma_+\;;
\end{cases}
\end{equation*}
\begin{hyp}\label{hypH}
The boundary operator $\H\::\lp \to \lm$ is a bounded and stochastic operator which satisfies the following
\begin{enumerate}[1)]
\item There exists some $n \in \N$ such that
$$\H \in \mathscr{B}(\lp,\Y_{n+1}^{-})$$
where
$$\Y_{k}^{\pm}:=\{g \in L^{1}_{\pm}\;;\;\int_{\Gamma_{\pm}}\max(1,|v|^{-k})|g(x,v)|\d\mu_{+}(x,v) < \infty, \qquad k \in \N.$$
We will set
\begin{equation}\label{eq:HYn}
N_{\H}:=\sup\{k \in \N\;;\;\H \in \mathscr{B}(\lp,\Y_{k+1}^{-})\}.\end{equation}
\item The operator $\H\mathsf{M}_{0}\H \in \mathscr{B}(\lp)$
is weakly compact. 
\item $\mathsf{M}_{0}\H$ is irreducible.
\item There exists $\ell \in \N$ such that
\begin{equation}\label{eq:poweriml}
\lim_{|\eta|\to\infty}\left\|\left(\mathsf{M}_{\varepsilon+i\eta}\H\right)^{\ell}\right\|_{\mathscr{B}(\lp)}=0 \qquad \forall \varepsilon \geq 0.\end{equation}
\end{enumerate}
\end{hyp}

A few remarks are in order about our set of Assumptions:
\begin{itemize}
\item First, we gave in our previous contribution \cite[Theorem 5.1]{LMR} precise definition of a general class of boundary operator for which $\H\mathsf{M}_{0}\H$ is weakly-compact. This class of operators was defined in \cite{LMR} as the class of \emph{regular diffuse boundary} operators and we will simply say here that $\H$ is diffuse. We refer to Appendix \ref{appen:REGU} for details.
\item Moreover, practical criterion ensuring the above property \textit{3)} to occur are also given in \cite{LMR}. In practice, as observed already, the typical operator we have in mind are given by \eqref{eq:Hhkernel}.
Under some  \emph{strong positivity} assumption on $\bm{h}(\cdot,\cdot,\cdot)$, one can show that $\mathsf{M}_{0}\H$ is irreducible (see \cite[Section 4]{LMR}).
\item We believe that Assumption \textit{4)} is met for \emph{any} regular diffuse boundary operators. We have been able to prove the result with $\ell = 2$ for a slightly more restrictive class of boundary operators (see Theorem \ref{ass:4} in Appendix \ref{appen:REGU}).
\end{itemize}

 We refer to Appendix \ref{appen:REGU} for more details on this set of assumptions but we notice already that the assumptions are met for the following examples of diffuse boundary operators of physical relevance.

\begin{exe}\label{exe:gener} The most typical example corresponds to generalized Maxwell-type diffuse operator for which
$$\bm{h}(x,v,v')=\gamma^{-1}(x)\bm{G}(x,v)$$
where $\G\::\:\partial \Omega \times V \to \R^{+}$ is a measurable and nonnegative mapping such that 
\begin{enumerate}[($i$)]
\item $\G(x,\cdot)$ is radially symmetric for $\pi$-almost every $x \in \partial \Omega$; 
\item $\G(\cdot,v) \in L^{\infty}(\partial \Omega)$ for almost every $v \in V$;
\item The mapping $x \in \partial \Omega \mapsto \gamma(x)$ is continuous and \emph{bounded away from zero} where
\begin{equation}\label{eq:gamm}
\gamma(x):=\int_{\Gamma_{-}(x)}\G(x,v)|v \cdot n(x)| \bm{m}(\d v) \qquad \forall x \in \partial\Omega,\end{equation}
i.e. there exist $\gamma_{0} >0$ such that $\gamma(x) \geq \gamma_{0}$ for $\pi$-almost every $x \in \partial\Omega.$
\end{enumerate}
In that case, \eqref{eq:HYn} is satisfied for $n \in \N$ such that $\gamma(n,d) < \infty$ where, for all $s \geq0$,
$$\gamma(s,d):=\sup_{x \in \partial\Omega}\gamma^{-1}(x)\int_{v \cdot n(x)<0}|v|^{-s-1}\G(x,v)|v \cdot n(x)|\bm{m}(\d v)\in (0,\infty].$$
\end{exe}
\begin{exe}\label{exe:maxw} A particularly relevant example is a special case of the previous one for which, $\bm{m}(\d v)=\d v$ and $\G$ is a given maxwellian with temperature $\theta(x)$, i.e.
$$\G(x,v)=\mathcal{M}_{\theta(x)}(v), \qquad \mathcal{M}_{\theta}(v)=(2\pi\theta)^{-d/2}\exp\left(-\frac{|v|^{2}}{2\theta}\right), \qquad x \in \partial \Omega, \:\:v \in \R^{d}.$$ 
Then, 
$$\gamma(x)=\bm{\kappa}_{d}\sqrt{\theta(x)}\int_{\R^{d}}|w|\M_{1}(w)\d w, \qquad x \in \partial\Omega$$ 
for some positive constant $\bm{\kappa}_{d}$ depending only on the dimension. The above assumption \textit{$(iii)$} asserts that the temperature mapping $x \in \partial\Omega \mapsto \theta(x)$ is bounded away from zero and continuous.
\end{exe}

Notice that under Assumption \ref{hypH}, one can deduce directly the following from  \cite[Theorem 6.5]{LMR}:
\begin{theo}\label{theo:LMR} Under Assumption \ref{hypH}, the operator $(\T_{\H},\D(\T_{\H}))$ defined by
\begin{multline*}
\D(\T_{\H})=\left\{f \in \X_{0}\;;\;\,v
\cdot \nabla_x \psi \in \X_{0}\;;\;f_{\vert\Gamma_{\pm}} \in L^{1}_{\pm}\,\;\;\,\H\,f_{\vert \Gamma_{+}}=f_{\vert\Gamma_{-}}\right\},\\ 
\T_{\H}f=-v\cdot \nabla_{x}f, \qquad f \in \D(\T_{\H})\end{multline*}
 is the generator of a \emph{stochastic} $C_{0}$-semigroup  $(U_{\mathsf{H}}(t))_{t\geq 0}$. Moreover, $(U_{\mathsf{H}}(t))_{t\geq 0}$ is irreducible and has a unique invariant density ${\Psi}_{\mathsf{H}} \in \D(\mathsf{T}_{\mathsf{H}})$ with 
$${\Psi}_{\mathsf{H}}(x,v) >0 \qquad \text{ for a. e. } (x,v) \in \Omega \times \R^{d}, \qquad \|{\Psi}_{\mathsf{H}}\|_{\X_{0}}=1$$
and $\mathrm{Ker}(\mathsf{T}_{\mathsf{H}})=\mathrm{Span}({\Psi}_{\mathsf{H}}).$ Moreover, 
\begin{equation}\label{eq:ergodic}
\lim_{t \to \infty}\left\|\frac{1}{t}\int_{0}^{t}U_{\mathsf{H}}(s)f\d s-\mathbb{P}f\right\|_{\X_{0}}=0, \qquad  \forall f \in \X_{0}\end{equation}
 where $\mathbb{P}$ denotes the ergodic projection
$$\mathbb{P}f=\varrho_{f}\,{\Psi}_{\mathsf{H}}, \qquad \text{ with } \quad\varrho_{f}=\displaystyle\int_{\Omega\times\R^{d}}f(x,v)\d x \otimes \bm{m}(\d v), \qquad f \in \X_{0}.$$ 
\end{theo}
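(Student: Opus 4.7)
The plan is to establish the four assertions---(a) generation of a stochastic $C_{0}$-semigroup, (b) irreducibility, (c) existence of a unique strictly positive invariant density spanning $\mathrm{Ker}(\T_{\H})$, and (d) the Ces\`aro mean ergodic property---in that order, essentially by following the construction of \cite[Theorem 6.5]{LMR}.

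For (a), I would first introduce the ``absorbing'' transport operator $\T_{0}$ with domain $\{f\in\X_{0}\,:\,v\cdot\nabla_{x}f\in\X_{0},\ f_{|\Gamma_{-}}=0\}$, whose associated semigroup is pure free streaming stopped at the boundary and is substochastic. For $\lambda>0$ one then builds a candidate resolvent for $\T_{\H}$ by the standard Greiner--Voigt representation as a Neumann series in the powers of $\mathsf{M}_{\lambda}\H\in\mathscr{B}(\lp)$, where $\mathsf{M}_{\lambda}$ generalizes $\mathsf{M}_{0}$ by insertion of the exponential weight $\exp(-\lambda\tau_{-}(x,v))$. Since $\H$ is stochastic and $\|\mathsf{M}_{\lambda}\H\|_{\mathscr{B}(\lp)}<1$ for $\lambda>0$, the series converges in operator norm; checking that its sum satisfies the Hille--Yosida estimates together with the boundary identity $\H f_{|\Gamma_{+}}=f_{|\Gamma_{-}}$ shows that $\T_{\H}$ generates a positive contractive $C_{0}$-semigroup. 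Stochasticity then follows by integrating $\T_{\H}f=-v\cdot\nabla_{x}f$ over $\Omega\times V$, using Green's formula and the mass balance $\int_{\Gamma_{-}}\H\psi\,\d\mu_{-}=\int_{\Gamma_{+}}\psi\,\d\mu_{+}$.

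For (b) and (c), I would rely on the weak compactness of $\H\mathsf{M}_{0}\H$ supplied by Assumption \ref{hypH}~\textit{2)} together with the irreducibility of $\mathsf{M}_{0}\H$ in Assumption \ref{hypH}~\textit{3)}. The weak compactness, combined with a Duhamel expansion of $U_{\H}(t)$ in powers of boundary reflections, yields that for $t$ larger than some threshold the semigroup $\uht$ becomes \emph{partially integral} in the sense of \cite[Appendix B]{LMR}, i.e. it dominates a nontrivial honest kernel operator. Irreducibility of $\uht$ itself is transferred from irreducibility of $\mathsf{M}_{0}\H$ through the standard trace correspondence between trajectories of the kinetic semigroup and iterates of $\mathsf{M}_{0}\H$ on the boundary. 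The Pichór--Rudnicki theory for partially integral stochastic $C_{0}$-semigroups then applies as in \cite[Theorem 6.5]{LMR}, giving existence of a unique invariant density $\Psi_{\H}\in\D(\T_{\H})$ which is strictly positive a.e. on $\Omega\times V$ and satisfies $\mathrm{Ker}(\T_{\H})=\mathrm{Span}(\Psi_{\H})$.

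The main obstacle is to upgrade the \emph{abstract} weak compactness of $\H\mathsf{M}_{0}\H$ to the \emph{partial integrality} of $U_{\H}(t)$ for large $t$: one has to track how a boundary kernel propagates through free streaming and invoke Dunford--Pettis to extract an honest kernel component after at least two boundary reflections. Once (a)--(c) are at hand, the Ces\`aro convergence \eqref{eq:ergodic} is a direct consequence of the classical mean ergodic theorem for stochastic $C_{0}$-semigroups on $L^{1}$ whose fixed-point space is one-dimensional and spanned by a strictly positive density; the ergodic projection is necessarily of the form $\mathbb{P}f=\varrho_{f}\,\Psi_{\H}$ by mass conservation $\varrho_{U_{\H}(t)f}=\varrho_{f}$ combined with the normalization $\|\Psi_{\H}\|_{\X_{0}}=1$.
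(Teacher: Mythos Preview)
The paper does not actually prove this theorem: it is stated as a direct consequence of \cite[Theorem 6.5]{LMR}, with no argument given beyond that citation. Your outline is therefore more detailed than anything the present paper contains, and it is broadly faithful to the strategy of \cite{LMR}.

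That said, there is a genuine gap in your sketch. You never invoke Assumption~\ref{hypH}~\textit{1)}, namely $\H\in\mathscr{B}(L^{1}_{+},\Y^{-}_{n+1})$, and this is precisely the hypothesis that secures the \emph{existence} of an invariant density rather than sweeping. The Pich\'or--Rudnicki machinery you appeal to yields only a dichotomy: either an invariant density exists (and then the stated conclusions follow), or the semigroup is sweeping, meaning mass escapes towards zero velocity. Irreducibility and partial integrality alone do not rule out the second alternative; indeed the paper explicitly notes after Theorem~\ref{theo:qualit} that when Assumption~\ref{hypH}~\textit{1)} fails, the invariant density ``need not exist'' and sweeping occurs (cf.\ \cite[Theorem 8.5]{LMR}). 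The way \textit{1)} enters is concrete: the Perron eigenfunction $\bar\varphi\in L^{1}_{+}$ of $\mathsf{M}_{0}\H$ exists by \textit{2)}--\textit{3)}, but to define $\Psi_{\H}=\mathsf{\Xi}_{0}\H\bar\varphi$ as an element of $\X_{0}$ one needs $\H\bar\varphi\in\Y^{-}_{1}$, since $\mathsf{\Xi}_{0}$ is only bounded from $\Y^{-}_{1}$ to $\X_{0}$ (Lemma~\ref{lem:motau}). That is exactly what \textit{1)} guarantees. Your outline should insert this step between the construction of $\bar\varphi$ and the application of the ergodic theory.
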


 \subsection{Main results and method of proof} 
 We describe more in details here the main results of the paper. As mentioned earlier, we obtain two kinds of results addressing the two problems (P1) and (P2) of Section \ref{sec:nut}. First, as far as the qualitative convergence to equilibrium is concerned, our main result is the following:
\begin{theo}\label{theo:qualit}
Under Assumptions \ref{hypH}, for any $f \in \X_{0}$, one has
$$\lim_{t\to\infty}\left\|U_{\H}(t)f-\varrho_{f}\Psi_{\H}\right\|_{\X_{0}}=0.$$
\end{theo}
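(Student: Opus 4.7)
The plan is to invoke the classical Ingham tauberian theorem (Theorem \ref{theo:ingham}) applied to $g(t) := U_{\H}(t)f - \varrho_f \Psi_{\H}$. By Theorem \ref{theo:LMR} one has $U_{\H}(t)\Psi_{\H} = \Psi_{\H}$ for every $t \geq 0$, so by linearity it suffices to treat the case $\varrho_f = 0$, in which case $g(t) = U_{\H}(t)f$. The contractivity of the semigroup yields
\[
\|U_{\H}(t+h)f - U_{\H}(t)f\|_{\X_0} \leq \|U_{\H}(h)f - f\|_{\X_0},
\]
so $g \in \mathrm{BUC}(\R_+;\X_0)$, which is the standing regularity assumption of Ingham's theorem.

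The next step is a density reduction to $f \in \X_1$ with $\varrho_f = 0$. The truncations $f_n(x,v) := f(x,v)\,\ind_{\{|v| \geq 1/n\}} - \varrho_n \Psi_{\H}$, with the scalar $\varrho_n$ chosen so that $\varrho_{f_n} = 0$, lie in $\X_1$, have zero mean, and satisfy $f_n \to f$ in $\X_0$ by dominated convergence; combined with the contractivity of $U_{\H}(t)$, a standard $\varepsilon/3$ argument then propagates the conclusion from $f_n$ to $f$. For such $f \in \X_1$ with $\varrho_f = 0$, the Laplace transform of $g$ is $\widehat{g}(\lambda) = \Rs(\lambda,\T_{\H})f$ on $\{\mathrm{Re}\,\lambda > 0\}$, and statement (a) of Section \ref{sec:nut}---the construction of $\mathbf{R}_f(\eta) := \lim_{\varepsilon \to 0^+}\Rs(\varepsilon + i\eta,\T_{\H})f$ in $\X_0$, uniformly on bounded subsets of $\eta$---provides an element of $L^1_{\mathrm{loc}}(\R;\X_0)$ that serves as the boundary function of $\widehat{g}$ required by Theorem \ref{theo:ingham} (uniform convergence on compact sets is much stronger than testing against $\psi \in \mathcal{C}_c(\R)$). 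Applying that theorem then yields $g(t) \to 0$ in $\X_0$, which is the desired conclusion.

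The real difficulty in this program is \emph{not} the Ingham wrap-up but the construction of the boundary function $\mathbf{R}_f$ itself, i.e.\ the proof of statement (a). In view of \eqref{eq:RR}, $\Rs(\varepsilon + i\eta,\T_{\H})$ cannot admit a boundary function on all of $\X_0$, and one must use both the extra integrability of $f$ near $v = 0$ provided by $\X_1$ and the gain of integrability built into $\H$ (the condition $N_{\H} \geq 0$ from Assumption \ref{hypH}). The natural route, carried out in the body of the paper, is to expand $\Rs(\lambda,\T_{\H})$ as a Neumann-type series in an operator built from the boundary traces composed with $\H$, and to exploit the weak compactness of $\H\mathsf{M}_0\H$ together with the decay \eqref{eq:poweriml} in order to control high iterates uniformly in $\mathrm{Re}\,\lambda \geq 0$. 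The hypothesis $\varrho_f = 0$ is exactly what removes the simple pole of $\Rs(\lambda,\T_{\H})$ at $\lambda = 0$ associated with $\mathrm{Ker}(\T_{\H}) = \mathrm{Span}(\Psi_{\H})$ identified in Theorem \ref{theo:LMR}.
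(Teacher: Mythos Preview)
Your proof is correct and follows the same overall strategy as the paper: reduce to mean-zero data, pass to a dense subclass where the boundary function $\mathbf{R}_f$ exists (Theorem \ref{theo:existtrace}), and apply Ingham's Theorem \ref{theo:ingham}. The one genuine difference is that you obtain $g\in\mathrm{BUC}(\R_+;\X_0)$ for \emph{every} $f\in\X_0$ directly from the strong continuity and contractivity of $(U_\H(t))_{t\geq0}$, whereas the paper first invokes Lemma \ref{lem:THX1} to place $f$ in $\D(\T_\H)\cap\X_1$ and then reads off BUC from membership in the domain. Your observation is correct (for a bounded $C_0$-semigroup, $\|U_\H(t+h)f-U_\H(t)f\|\leq\|U_\H(h)f-f\|\to0$ uniformly in $t$), and it makes Lemma \ref{lem:THX1} unnecessary for this particular theorem; the paper's detour through $\D(\T_\H)$ is only essential later, in the quantitative Theorem \ref{theo:main}, where Lipschitz regularity of $g$ is required by Theorem \ref{theo:quanti}. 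One small remark on your density step: writing $f_n=f\mathbf{1}_{\{|v|\geq 1/n\}}-\varrho_n\Psi_\H$ tacitly uses $\Psi_\H\in\X_1$, which the paper also invokes (via Remark \ref{nb:PsiXn}); if one wished to avoid this, any fixed $\psi\in\X_1$ with unit mass would serve equally well as the correction.
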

As already mentioned, this stability result not only strengthen the ergodic convergence of Theorem \ref{theo:LMR} but also extend \cite[Theorem 7.5]{LMR} to general orthogonally invariant measure $\bm{m}$ with $\bm{m}(\{0\})=0$ (recall that \cite[Theorem 7.5]{LMR} was restricted to the Lebesgue measure $\bm{m}(\d v)=\d v$). Note that if Assumptions \ref{hypH} \textit{1)} is not satisfied then the invariant density need not exist; in this case, a sweeping phenomenon occurs, i.e. the total mass of the solution to the Cauchy problem concentrates near the zero velocity as $t \to \infty$ (see \cite[Theorem 8.5]{LMR}).

Regarding problem (P2), we can make the above convergence quantitative under additional assumption on the initial datum. Namely, our main result can be formulated as follows
\begin{theo}\label{theo:main}
Assume that $\H$ satisfies Assumptions \ref{hypH}. Let $k \in \N$ with $k < N_{\H}$ and 
$$f \in \D(\T_{\H}) \cap \X_{k+1}$$ be given. Then, there exists $C_{k} >0$ such that
$$\left\|U_{\H}(t)f-\varrho_{f}\Psi_{\H}\right\|_{\X_{0}} \leq C_{k}\,t^{-\frac{k}{2}}, \qquad \forall t >0.$$
In particular, 
\begin{enumerate}
\item If $N_{\H} <\infty$ and $f \in \D(\T_{\H}) \cap \X_{N_{\H}+1}$, then
$$\left\|U_{\H}(t)f-\varrho_{f}\Psi_{\H}\right\|_{\X_{0}}=\mathrm{O}(t^{-\frac{N_{\H}-1}{2}}) \qquad \text{ as } t \to \infty.$$
\item If $N_{\H}=\infty$, then for any $k \in \N$ and any $f \in \D(\T_{\H}) \cap \X_{k+1}$, it holds
$$\left\|U_{\H}(t)f-\varrho_{f}\Psi_{\H}\right\|_{\X_{0}}=\mathrm{O}(t^{-\frac{k}{2}}) \qquad \text{ as } t \to \infty.$$
\end{enumerate}
\end{theo}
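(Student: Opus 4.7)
The plan is to apply the quantitative Ingham Theorem \ref{theo:quanti} to the $\X_{0}$-valued function
$$g(t):=U_{\H}(t)f-\varrho_{f}\Psi_{\H}, \qquad t\geq 0.$$
Since $\Psi_{\H} \in \mathrm{Ker}(\T_{\H})$ by Theorem \ref{theo:LMR}, one has $U_{\H}(t)\Psi_{\H}=\Psi_{\H}$, hence $g(t)=U_{\H}(t)\tilde{f}$ with $\tilde{f}:=f-\varrho_{f}\Psi_{\H}$ satisfying $\varrho_{\tilde{f}}=0$ by construction. The function $g$ is bounded on $\R_{+}$ because $(U_{\H}(t))_{t\geq 0}$ is a contraction semigroup, and it is Lipschitz because $f\in\D(\T_{\H})$ gives
$$\|U_{\H}(t)f-U_{\H}(s)f\|_{\X_{0}}\leq |t-s|\,\|\T_{\H}f\|_{\X_{0}}.$$
Taking Laplace transforms for $\mathrm{Re}\,\l>0$ yields $\widehat{g}(\l)=\Rs(\l,\T_{\H})\tilde{f}$.

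The decisive step is to show that $\widehat{g}$ admits a boundary function $F(\eta):=\mathbf{R}_{\tilde{f}}(\eta)$ which belongs to $\mathcal{C}^{k}(\R;\X_{0})$ and is uniformly bounded on $\R$ together with its derivatives of order $\leq k$. This is precisely claim (b) announced in Section \ref{sec:nut}, applied to the datum $\tilde{f}$. To make it applicable I need $\tilde{f}\in\X_{k+1}$ with zero mean: the zero mean is automatic, and since $f\in\X_{k+1}$ by assumption it only remains to establish that the invariant density $\Psi_{\H}$ itself lies in $\X_{k+1}$, a regularity property of $\Psi_{\H}$ to be derived from Assumption \ref{hypH}. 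The bound $k<N_{\H}$ ensures that $\H$ fulfills the integrability condition \eqref{eq:hk} required by claim (b).

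The construction of $\mathbf{R}_{\tilde{f}}$ is where the bulk of the technical work of the paper lies. The strategy I envisage is to decompose $\Rs(\l,\T_{\H})\tilde{f}$ as the sum of the free transport resolvent with absorbing boundary and a boundary contribution expressed through the Neumann series $\sum_{n\geq 0}(\ml\H)^{n}$ applied to the outgoing trace of the free resolvent. Each $\eta$-derivative acting on the free resolvent pulls down a factor of travel time $\tau_{-}(x,v)$, controlled by $|v|^{-1}$ up to the diameter of $\Omega$; consequently $k$ differentiations cost a factor $|v|^{-k}$ on the absorbing part and $|v|^{-(k+1)}$ on the boundary part after one application of $\H$, which is exactly matched by the integrability gain \eqref{eq:hk}. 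Large-$|\eta|$ estimates on the Neumann series rest on Assumption \ref{hypH} \textit{4)}, ensuring that $\|(\ml\H)^{\ell}\|_{\mathscr{B}(\lp)}\to 0$ uniformly in $\e\geq 0$ as $|\eta|\to\infty$ and hence the invertibility of $I-\ml\H$ in this regime. The delicate small-$|\eta|$ analysis has to absorb the simple pole of $\Rs(\l,\T_{\H})$ at $\l=0$ corresponding to $\Psi_{\H}$, which is removed precisely by the condition $\varrho_{\tilde{f}}=0$; continuous invertibility of $I-\ml\H$ on the rest of $i\R$ will follow from the weak compactness of $\H\mathsf{M}_{0}\H$ (Assumption \ref{hypH} \textit{2)}) combined with the irreducibility of $\mathsf{M}_{0}\H$ (\textit{3)}).

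Once $F=\mathbf{R}_{\tilde{f}}\in\mathcal{C}^{k}(\R;\X_{0})$ with uniform bounds on all derivatives up to order $k$ has been produced, Theorem \ref{theo:quanti} applies and directly delivers $\|g(t)\|_{\X_{0}}=\mathrm{O}(t^{-k/2})$, which is the main estimate. The two particular cases follow by optimizing in $k$: when $N_{\H}<\infty$ the largest admissible choice is $k=N_{\H}-1$, yielding the rate $t^{-(N_{\H}-1)/2}$; when $N_{\H}=\infty$ every $k\in\N$ is admissible. The main obstacle throughout is obtaining \emph{uniform in $\eta\in\R$} control of the boundary function and its $k$ derivatives; the large-$\eta$ decay of the Neumann series and the small-$\eta$ removal of the singularity at zero are the two most technical components, and together they justify and exploit the full set of hypotheses gathered in Assumption \ref{hypH}.
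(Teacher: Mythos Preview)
Your proposal is correct and follows essentially the same route as the paper: reduce to $\tilde f=f-\varrho_f\Psi_{\H}\in\D(\T_{\H})\cap\X_{k+1}^{0}$ (using $\Psi_{\H}\in\X_{k+1}$ from Remark \ref{nb:PsiXn}, valid since $k+1\leq N_{\H}$), set $g(t)=U_{\H}(t)\tilde f$, and apply Theorem \ref{theo:quanti} once Theorem \ref{theo:regul} furnishes the $\mathcal{C}^{k}$ boundary function $\mathbf{R}_{\tilde f}$ with uniform bounds on all derivatives up to order $k$. Your outline of how $\mathbf{R}_{\tilde f}$ is built---splitting into the free part and the boundary part, handling large $|\eta|$ via Assumption \ref{hypH} \textit{4)} and small $|\eta|$ by removing the simple eigenvalue near $1$ of $\mathsf{M}_{\l}\H$ thanks to $\varrho_{\tilde f}=0$---is exactly the strategy executed in Sections \ref{sec:fineR}--\ref{sec:regF}.
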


Besides these two Theorems, the paper contains many technical results. For the sake of clarity and in
order to help the reading of the paper, we give here an idea of the main
steps of the proofs of the above two results. The precise definition of the involved objects is given in Section \ref{sec:functional}. The main mathematical object we have to study is the resolvent of $\T_{\H}$ which can written as 
\begin{equation}\label{eq:RTH}
\Rs(\lambda,\T_{\H})=\Rs(\lambda,\T_{0})+\mathsf{\Xi}_{\lambda }\H
\Rs(1,\mathsf{M}_{\lambda }\H)\mathsf{G}_{\lambda} \qquad \qquad \mathrm{Re}\lambda >0,\end{equation}
where $\T_{0}$ is the transport operator corresponding to $\H=0$
and $\mathsf{\Xi}_{\lambda },\mathsf{M}_{\lambda },\mathsf{G}_{\lambda }$ are bounded operators on suitable trace spaces (see Section \ref{sec:func}).
Note that 
$$r_{\sigma }\left(\mathsf{M}_{\lambda}\H\right) <1 \qquad \text{Re}\l >0.$$%
To apply Theorem \ref{theo:quanti}, the main issue is to understand for which $f\in L^{1}(\Omega \times V)$, the 
\textit{boundary function}
$$\mathbf{R}_{f}(\eta):=\lim_{\e\to0^{+}}\Rs(\e+i\eta,\T_{\H})f$$
is well-defined in $\X_{0}$, is a smooth vector-valued function of the parameter $\eta$ and is bounded as well as its derivatives. The simplest part of this program concerns the transport operator $\T_{0}$ and
$$\mathbf{R}_{f}^{0}(\eta):=\lim_{\e\to0^{+}}\Rs(\e+i\eta,\T_{0})f$$
exists in $\X_{0}$ with the mapping $\eta \in \R \mapsto \mathbf{R}_{f}^{0}(\eta)$ of class $\mathcal{C}^{k}$ and bounded as well as its derivatives
provided that 
$$f \in \X_{k+1}$$ 
(see Lemma \ref{lem:To}). The most tricky part is the understanding of the second part of the splitting \eqref{eq:RTH}
$$
\lim_{\varepsilon \rightarrow 0_{+}}\mathrm{R}(\varepsilon +i\eta )f, \qquad \text{ where } \quad
\mathrm{R}(\lambda ):=\mathsf{\Xi _{\lambda }H}\Rs(1,\mathsf{M_{\lambda
}H)G}_{\lambda } \qquad (\text{Re}\lambda >0).
$$
It turns out that $\lambda \mapsto \mathsf{M_{\lambda }H} \in \mathscr{B}(\lp)$ extends to the
imaginary axis with
$$r_{\sigma }\left(\mathsf{M_{i\eta }H}\right) <1 \qquad (\eta \neq 0);$$
(see Proposition \ref{prop:Meps}). It follows that 
$$
\lim_{\varepsilon \rightarrow 0_{+}}\mathsf{\Xi _{\varepsilon +i\eta }H}\Rs
(1,\mathsf{M_{\varepsilon +i\eta }H)G}_{\varepsilon +i\eta }f\text{ \ exists}\ \
(\eta \neq 0)
$$
and the convergence is locally uniform in $\eta \neq 0$ (see Lemma \ref{lem:converTH}).
Because $r_{\sigma }\left(\mathsf{M_{0}H}\right) =1$, the treatment of the case $%
\eta =0$  is very involved and the various technical results of Section \ref{sec:near0}  are
devoted to this delicate point. In particular, by exploiting the fact that
near $\lambda =0$, the eigenvalue of $\mathsf{M_{\lambda }H}$ of maximum modulus is
algebraically simple (converging to $1$ as $\lambda \rightarrow 0$, see Proposition \ref{prop:eigenMLH}), and
analyzing the corresponding spectral projection, we get the desired result
under the additional assumption that
$$\int_{\Omega\times V}f(x,v)\d x\,\bm{m}(\d v)=0.$$
(see Lemma \ref{lem:convZk}). All these results allow to show that the boundary function
$$\mathbb{R}
\ni \eta \longmapsto \mathbf{R}_{f}(\eta)\in \X_{0}\text{ is continuous}$$
and yields the proof of Theorem \ref{theo:qualit}. To deal with rates of convergence, we need \textit{first} to analyze the
smoothness of the boundary function. This technical point is related, through the well-know identity for derivatives of the resolvent
$$\dfrac{\d^{k}}{\d\l^{k}}\Rs(\l,\T_{\H})=(-1)^{k}\,k!\,\Rs(\l,\T_{H})^{k+1}, \qquad \mathrm{Re}\l >0$$ to the
existence of a boundary function for the \emph{iterates} of $\Rs(\lambda
,\T_{\H})$%
$$
\lim_{\varepsilon \rightarrow 0_{+}}\left[\Rs(\varepsilon +i\eta,
    \T_{\H})\right]^{k+1}f\text{ \  in  } \X_{0}
$$
(locally uniformly in $\eta $), see Theorem \ref{theo:regul}. 
The \textit{second} technical point is to show that the boundary
function and its derivatives are bounded on $\R$. This point is the crucial one where Assumption \ref{hypH} \textit{(4)} is fully exploited. 

As mentioned earlier, proving that Assumption \ref{hypH} \textit{4)} is met for a large class of diffuse boundary operators is a highly technical task and we devote Appendix \ref{appen:REGU} to this (see Theorem \ref{ass:4}).  The results of Appendix \ref{appen:REGU} are also related to a general change of variable formula transferring integrals in velocities into integrals over $\partial\Omega$. This change of variable formula is established in Appendix \ref{appen:chv} and, besides its use in Theorem \ref{ass:4}, has its own interest: it clarifies several computations scattered in the literature \cite{EGKM,guo03} and will be a fundamental tool for the analysis in the companion paper \cite{LM-iso}.

\subsection{Organization of the paper} In Section \ref{sec:func}, we introduce the functional setting and notations used in the rest of the paper and recall several known results mainly from our previous contribution \cite{LMR}. Section \ref{sec:fineR} is devoted to the fine analysis of the resolvent $\Rs(1,\mathsf{M}_{\l}\H)$ which is well-defined for $\mathrm{Re}\l >0$ but need to be carefully extended to the imaginary axis $\l=i\eta$, $\eta \in\R.$ Such an extension is a cornerstone in the construction of the boundary function $\lim_{\e \to 0^{+}}\Rs(\e+i\eta,\T_{\H})f$ (for suitable $f$) which is performed in Section \ref{sec:boudF}. This Section is the most technical one of the paper and we have to deal separately with the case $\eta \neq 0$ and $\eta=0.$ Section \ref{sec:regF} deals with the regularity of the boundary function and gives the full proof of our main results Theorems \ref{theo:qualit} and \ref{theo:main}.

The paper ends with two Appendices. A first one, Appendix \ref{appen:REGU} is aimed to provide practical criteria ensuring Assumptions \ref{hypH} to be met. It contains several results we believe to be of independent interest. Some of the results in Appendix \ref{appen:REGU} are derived thanks to a general change of variable formula which is established in the second Appendix \ref{appen:chv}.

\subsection*{Acknowledgements} BL gratefully acknowledges the financial
support from the Italian Ministry of Education, University and
Research (MIUR), ``Dipartimenti di Eccellenza'' grant 2018-2022. Part of this research was performed while the second author was visiting the ``Laboratoire de Math\'ematiques CNRS UMR 6623'' at Universit\'e de  Franche-Comt\'e in February 2020. He wishes to express his gratitude for the financial support and warm hospitality offered by this Institution.

 \section{Reminders of known results}\label{sec:func}

\subsection{Functional setting}\label{sec:functional}
We introduce the partial Sobolev space 
$$W_1=\{\psi \in \X_{0}\,;\,v
\cdot \nabla_x \psi \in \X_{0}\}.$$ It is known \cite{ces1,ces2}
that any $\psi \in W_1$ admits traces $\psi_{|\Gamma_{\pm}}$ on
$\Gamma_{\pm}$ such that 
$$\psi_{|\Gamma_{\pm}} \in
L^1_{\mathrm{loc}}(\Gamma_{\pm}\,;\,\d \mu_{\pm}(x,v))$$ where
$$\d \mu_{\pm}(x,v)=|v \cdot n(x)|\pi(\d x) \otimes \bm{m}(\d v),$$
denotes the "natural" measure on $\Gamma_{\pm}.$ Notice that, since $\d\mu_{+}$ and $\d\mu_{-}$ share the same expression, we will often simply denote it by 
$$\d \mu(x,v)=|v \cdot n(x)|\pi(\d x) \otimes \bm{m}(\d v),$$
the fact that it acts on $\Gamma_{-}$ or $\Gamma_{+}$ being clear from the context.  Note that
$$\partial
\Omega \times V:=\Gamma_- \cup \Gamma_+ \cup \Gamma_0,$$ where
$$\Gamma_0:=\{(x,v) \in \partial \Omega \times V\,;\,v \cdot
n(x)=0\}.$$
We introduce the set
$$W=\left\{\psi \in W_1\,;\,\psi_{|\Gamma_{\pm}} \in L^1_{{\pm}}\right\}.$$
One can show \cite{ces1,ces2} that $W=\left\{\psi \in
W_1\,;\,\psi_{|\Gamma_+} \in \lp\right\} =\left\{\psi \in
W_1\,;\,\psi_{|\Gamma_-} \in \lm\right\}.$ Then, the \textit{trace
operators} $\mathsf{B}^{\pm}$:
\begin{equation*}\begin{cases}
\mathsf{B}^{\pm}: \:&W_1 \subset \X_{0} \to L^1_{\mathrm{loc}}(\Gamma_{\pm}\,;\,\d \mu_{\pm})\\
&\psi \longmapsto \mathsf{B}^{\pm}\psi=\psi_{|\Gamma_{\pm}},
\end{cases}\end{equation*}
are such that $\mathsf{B}^{\pm}(W)\subseteq L^1_{\pm}$. Let us define the
{\it maximal transport operator } $\mathsf{T}_{\mathrm{max}}$  as follows:
\begin{equation*}\begin{cases} \mathsf{T}_{\mathrm{max}} :\:& \D(\mathsf{T}_{\mathrm{max}}) \subset \X_{0} \to \X_{0}\\
&\psi \mapsto \mathsf{T}_{\mathrm{max}}\psi(x,v)=-v \cdot \nabla_x
\psi(x,v),
\end{cases}\end{equation*}
with domain $\D(\mathsf{T}_{\mathrm{max}})=W_1.$
Now, for any \textit{
bounded boundary operator} $\mathsf{H} \in\mathscr{B}(L^1_+,L^1_-)$, define
$\mathsf{T}_{\mathsf{H}}$ as
$$\mathsf{T}_{\mathsf{H}}\varphi=\mathsf{T}_{\mathrm{max}}\varphi \qquad \text{ for any }
\varphi \in \D(\mathsf{T}_{\mathsf{H}}),$$ where 
$$\D(\mathsf{T}_{\mathsf{H}})=\{\psi \in
W\,;\,\psi_{|\Gamma_-}=\mathsf{H}(\psi_{|\Gamma_+})\}.$$ In particular, the
transport operator with absorbing conditions (i.e. corresponding
to $\mathsf{H}=0$) will be denoted by $\T_0$.

\subsection{Travel time and integration formula} 
Let us now introduce the \textit{travel time} of particles in $\Omega$ (with the notations of \cite{mjm1}),
defined as:
\begin{defi}\label{tempsdevol}
For any $(x,v) \in \overline{\Omega} \times V,$ define
\begin{equation*}
t_{\pm}(x,v)=\inf\{\,s > 0\,;\,x\pm sv \notin \Omega\}.
\end{equation*}
To avoid confusion, we will set $\tau_{\pm}(x,v):=t_{\pm}(x,v)$  if $(x,v) \in
\partial \Omega \times V.$
\end{defi}
 
With the notations of \cite{guo03}, $t_{-}$ is the \emph{backward exit time} $t_{\mathbf{b}}$.
From a heuristic viewpoint, $t_{-}(x,v)$ is the time needed by a
particle having the position $x \in \Omega$ and the velocity $-v
\in V$ to reach the boundary $\partial\Omega$. One can prove
\cite[Lemma 1.5]{voigt} that $t_{\pm}(\cdot,\cdot)$ is measurable on
$\Omega \times V$. Moreover $\tau_{\pm}(x,v)=0 \text{ for any } (x,v)
\in \Gamma_{\pm}$ whereas $\tau_{\mp}(x,v)> 0$ on $\Gamma_{\pm}.$ It holds
$$(x,v) \in \Gamma_{\pm} \Longleftrightarrow \exists y \in \Omega \quad \text{ with } \quad t_{\pm}(y,v) < \infty \quad \text{ and }\quad x=y\pm t_{\pm}(y,v)v.$$
In that case, $\tau_{\mp}(x,v)=t_{+}(y,v)+t_{-}(y,v).$  Notice also that,
\begin{equation}\label{eq:scale}
t_{\pm}(x,v)|v|=t_{\pm}\left(x,\omega\right), \qquad \forall (x,v) \in \overline{\Omega} \times V, \:v \neq 0, \:\omega=|v|^{-1}\,v \in \mathbb{S}^{d-1}.\end{equation}

We have the following integration formulae from \cite{mjm1}.
\begin{propo} For any $h \in \X_{0}$, it holds
\begin{equation}\label{10.47}
\int_{\Omega \times V}h(x,v)\d x \otimes \bm{m}(\d v)
=\int_{\Gamma_\pm}\d\mu_{\pm}(z,v)\int_0^{\tau_{\mp}(z,v)}h\left(z\mp\,sv,v\right)\d s,
\end{equation}
and for any $\psi \in L^1(\Gamma_-,\d\mu_{-})$,
\begin{equation}\label{10.51}
\int_{\Gamma_-}\psi(z,v)\d\mu_{-}(z,v)=\int_{\Gamma_+}\psi(x-\tau_{-}(x,v)v,v)\d\mu_{+}(x,v).\end{equation}
\end{propo}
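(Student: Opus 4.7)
The plan is to prove \eqref{10.47} first, by a direct change of variables in the spatial integral for each frozen velocity, and then to deduce \eqref{10.51} from \eqref{10.47} by a limiting argument that collapses a thin tubular integral onto the boundary.

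For \eqref{10.47}, fix $v \in V \setminus \{0\}$ (the singleton $\{0\}$ being $\bm{m}$-negligible by Assumption \ref{hypO}) and set $\Gamma_\pm(v) := \{z \in \partial\Omega\,:\, \pm v \cdot n(z) > 0\}$. The map
\begin{equation*}
\Phi_v^\pm : \bigl\{(z,s)\,:\, z \in \Gamma_\pm(v),\, 0 < s < \tau_\mp(z,v)\bigr\} \longrightarrow \Omega, \qquad \Phi_v^\pm(z,s) = z \mp s v,
\end{equation*}
is a $\mathcal{C}^1$ bijection up to a $\pi$-null grazing set. Using local $\mathcal{C}^1$ graph charts on $\partial\Omega$ and expanding the Jacobian determinant along the column $\mp v$ yields the volume element $\d x = |v \cdot n(z)|\,\d\pi(z)\,\d s$. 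Integrating in $v$ against $\bm{m}$ and applying Fubini to recombine the boundary and velocity integrations into $\d\mu_\pm = |v\cdot n(z)|\,\pi(\d z) \otimes \bm{m}(\d v)$ then gives both versions of \eqref{10.47}.

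For \eqref{10.51}, given $\psi \in L^1(\Gamma_-,\d\mu_-)$, I may reduce by splitting into positive and negative parts to the case $\psi \geq 0$. Introduce, for $\varepsilon > 0$, the family
\begin{equation*}
h_\varepsilon(x,v) := \frac{1}{\varepsilon}\,\mathbf{1}_{[0,\varepsilon]}\bigl(t_-(x,v)\bigr)\,\psi\bigl(x - t_-(x,v)v,\, v\bigr), \qquad (x,v) \in \Omega \times V,
\end{equation*}
which is nonnegative and measurable. Two identities along characteristics do all the work: for $(z,v) \in \Gamma_-$ and $x = z + sv$ one has $t_-(x,v) = s$ and $x - t_-(x,v) v = z$, while for $(z,v) \in \Gamma_+$ and $x = z - sv$ one has $t_-(x,v) = \tau_-(z,v) - s$ and $x - t_-(x,v) v = z - \tau_-(z,v) v$. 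Plugging $h_\varepsilon$ into the two versions of \eqref{10.47} and performing the inner $s$-integration yields the identity
\begin{equation*}
\int_{\Gamma_-} \psi(z,v)\,\frac{\min(\varepsilon,\tau_+(z,v))}{\varepsilon}\,\d\mu_-(z,v) = \int_{\Gamma_+} \psi\bigl(x - \tau_-(x,v)v,\,v\bigr)\,\frac{\min(\varepsilon,\tau_-(x,v))}{\varepsilon}\,\d\mu_+(x,v).
\end{equation*}
Since $\tau_\pm > 0$ almost everywhere on $\Gamma_\mp$, the ratio $\min(\varepsilon,\tau)/\varepsilon$ is bounded by $1$ and increases monotonically to $1$ as $\varepsilon \to 0^+$; monotone convergence applied to both nonnegative sides then produces \eqref{10.51}.

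The main obstacle is the rigorous Jacobian computation for \eqref{10.47}, since $\partial\Omega$ is only of class $\mathcal{C}^1$ and $\Phi_v^\pm$ becomes singular on the grazing set $\Gamma_0 = \{(x,v)\,:\, v \cdot n(x) = 0\}$. I would handle this by localizing in $\mathcal{C}^1$ graph patches of $\partial\Omega$, proving the formula first for continuous $h$ supported away from $\Gamma_0$ and from $v = 0$, and extending to arbitrary $h \in \X_0$ by density and the $\mu_\pm$-nullity of $\Gamma_0$. Every remaining step, including the $\varepsilon \to 0^+$ passage in \eqref{10.51}, is then essentially routine.
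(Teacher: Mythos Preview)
Your argument is correct. The paper itself does not prove this proposition at all: it simply quotes the two formulae from \cite{mjm1} (Arlotti--Banasiak--Lods), so there is no proof in the paper to compare against. Your route for \eqref{10.47} --- freezing $v$, using the $\mathcal{C}^{1}$ diffeomorphism $(z,s)\mapsto z\mp sv$ from the boundary cylinder onto $\Omega$, and reading off the Jacobian factor $|v\cdot n(z)|$ --- is the standard one, and your handling of the grazing set via density and the $\mu_{\pm}$-nullity of $\Gamma_{0}$ is the right way to deal with the $\mathcal{C}^{1}$ regularity.

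Your derivation of \eqref{10.51} from \eqref{10.47} via the tubular approximations $h_{\varepsilon}$ is a nice self-contained argument: rather than directly comparing the pushforward measures under the boundary-to-boundary map $(x,v)\mapsto (x-\tau_{-}(x,v)v,v)$, you exploit that both versions of \eqref{10.47} compute the same volume integral and let the thin tube collapse. The monotonicity of $\varepsilon\mapsto \min(\varepsilon,\tau)/\varepsilon$ as $\varepsilon\downarrow 0$ is correct, and since you reduced to $\psi\geq 0$ the nonnegative version of \eqref{10.47} (valid by Tonelli without any a priori integrability of $h_{\varepsilon}$) is all you need. One minor point worth making explicit: the measurability of $h_{\varepsilon}$ relies on the measurability of $t_{-}$ on $\Omega\times V$, which is recorded in the paper (citing \cite{voigt}).
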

\begin{nb} Notice that, because $\mu_{-}(\Gamma_{0})=\mu_{+}(\Gamma_{0})=0$, we can extend the above identity \eqref{10.51} as follows: for any $\psi \in L^{1}(\Gamma_{-} \cup \Gamma_{0},\d\mu_{-})$ it holds
\begin{equation}\label{10.52}
\int_{\Gamma_{-}\cup \Gamma_{0}}\psi(z,v)\d\mu_{-}(z,v)=\int_{\Gamma_{+}\cup\Gamma_{0} }\psi(x-\tau_{-}(x,v)v,v)\d\mu_{+}(x,v).\end{equation}
\end{nb}

\subsection{About the resolvent of $\mathsf{T}_{\mathsf{H}}$}

For any $\lambda \in \mathbb{C}$ such that $\mathrm{Re}\lambda
> 0$, define
\begin{equation*}
\begin{cases}
\mathsf{M}_{\lambda} \::\:&L^1_- \longrightarrow L^1_+\\
&u \longmapsto
\mathsf{M}_{\lambda}u(x,v)=u(x-\tau_{-}(x,v)v,v)e^{-\lambda\tau_{-}(x,v)},\:\:\:(x,v) \in \Gamma_+\;;
\end{cases}
\end{equation*}

\begin{equation*}
\begin{cases}
\mathsf{\Xi}_{\lambda} \::\:&L^1_- \longrightarrow \X_{0}\\
&u \longmapsto \mathsf{\Xi}_{\lambda}u(x,v)=u(x-t_{-}(x,v)v,v)e^{-\lambda
t_{-}(x,v)}\ind_{\{t_{-}(x,v) < \infty\}},\:\:\:(x,v) \in \Omega \times V\;;
\end{cases}
\end{equation*}

\begin{equation*}
\begin{cases}
\mathsf{G}_{\lambda} \::\:& \X_{0} \longrightarrow L^1_+\\
&\varphi \longmapsto \mathsf{G}_{\lambda}\varphi(x,v)=\displaystyle
\int_0^{\tau_{-}(x,v)}\varphi(x-sv,v)e^{-\lambda s}\d s,\:\:\:(x,v) \in
\Gamma_+\;;\end{cases}
\end{equation*}
and
\begin{equation*}
\begin{cases}
\mathsf{R}_{\lambda} \::\:&\X_{0} \longrightarrow \X_{0}\\
&\varphi \longmapsto \mathsf{R}_{\lambda}\varphi(x,v)=\displaystyle
\int_0^{t_{-}(x,v)}\varphi(x-tv,v)e^{-\lambda t}\d t,\:\:\:(x,v) \in
\Omega\times V;
\end{cases}
\end{equation*}
where $\ind_E$ denotes the characteristic function of the measurable
set $E$. The interest of these operator is related to the resolution of the boundary
value problem:
\begin{equation}\label{BVP1}
\begin{cases}
(\lambda- \mathsf{T}_{\mathrm{max}})f=g,\\
\mathsf{B}^-f=u,
\end{cases}
\end{equation}
where $\lambda > 0$, $g \in \X_{0}$ and $u$ is a given function over
$\Gamma_-.$ Such a boundary value problem, with $u \in \lm$ can be uniquely solved (see \cite[Theorem 2.1]{mjm1})
\begin{theo}\label{Theo4.2} Given  $\lambda >0$, $u \in \lm$ and $g \in \X_{0}$, the function
$$f=\mathsf{R}_{\lambda}g + \mathsf{\Xi}_{\lambda}u$$ is the \textbf{
unique} solution $f \in \D(\mathsf{T}_{\mathrm{max}})$ of the boundary value
problem \eqref{BVP1}. Moreover, $\B^{+}f \in \lp$ and
$$\|\B^{+}f\|_{\lp}+\l\,\|f\|_{\X_{0}} \leq \|u\|_{\lm} + \|g\|_{\X_{0}}.$$
\end{theo}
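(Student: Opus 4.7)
\emph{Plan.} The strategy is to verify by characteristic analysis that $f := \mathsf{R}_{\l} g + \mathsf{\Xi}_{\l} u$ solves the boundary value problem, and to derive the sharp $L^{1}$-estimate by applying the integration formulae \eqref{10.47}-\eqref{10.51} to each summand separately.

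First, I would fix $(x,v) \in \Omega \times V$ and check that the map $s \mapsto e^{-\l s} f(x-sv, v)$ is absolutely continuous on $[0, t_{-}(x,v))$ with derivative $-e^{-\l s} g(x-sv, v)$. This follows directly from the defining integrals of $\mathsf{R}_{\l}$ and $\mathsf{\Xi}_{\l}$ by an elementary translation in the time variable. Hence $(\l - \T_{\mathrm{max}}) f = g$ holds pointwise along characteristics and thus in $\X_{0}$, so $v \cdot \nabla_{x} f = \l f - g \in \X_{0}$ and $f \in W_{1} = \D(\T_{\mathrm{max}})$. The incoming boundary condition $\B^{-}f = u$ is immediate since $\tau_{-}(x,v) = 0$ on $\Gamma_{-}$, which collapses $\mathsf{\Xi}_{\l}u$ to $u$ and annihilates $\mathsf{R}_{\l}g$.

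For the $L^{1}$ estimate, the boundedness of $\Omega$ together with $\bm{m}(\{0\}) = 0$ ensures $\tau_{-}(x,v) < \infty$ for $\mu_{+}$-almost every $(x,v) \in \Gamma_{+}$, so on $\Gamma_{+}$ one has the pointwise identity
$$
f(x,v) = u(x - \tau_{-}(x,v)v, v)\, e^{-\l \tau_{-}(x,v)} + \int_{0}^{\tau_{-}(x,v)} g(x-sv, v)\, e^{-\l s}\, \d s.
$$
I would then estimate the contributions of $\mathsf{\Xi}_{\l}u$ and $\mathsf{R}_{\l}g$ to both $\l\|\cdot\|_{\X_{0}}$ and $\|\B^{+}\cdot\|_{\lp}$ separately. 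For $\mathsf{\Xi}_{\l}u$, writing the $\X_{0}$-norm as an integral over $\Gamma_{+}$ via \eqref{10.47} and computing the time integral explicitly yields the identity
$$
\l\,\|\mathsf{\Xi}_{\l}u\|_{\X_{0}} + \|\B^{+}\mathsf{\Xi}_{\l}u\|_{\lp} = \int_{\Gamma_{+}}|u(x-\tau_{-}(x,v)v, v)|\,\d\mu_{+}(x,v),
$$
which equals $\|u\|_{\lm}$ by the change-of-variable \eqref{10.51}. For $\mathsf{R}_{\l}g$, the same approach combined with a Fubini manipulation in the iterated time integral gives $\l\,\|\mathsf{R}_{\l}g\|_{\X_{0}} + \|\B^{+}\mathsf{R}_{\l}g\|_{\lp} \leq \|g\|_{\X_{0}}$. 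Adding these bounds and using the triangle inequality yields the sharp estimate and, en passant, $\B^{+}f \in \lp$.

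Uniqueness is then automatic: if $(\l - \T_{\mathrm{max}}) f = 0$ with $\B^{-}f = 0$, applying the estimate with $u = 0$ and $g = 0$ forces $\|f\|_{\X_{0}} = 0$. The only step that requires genuine care is the Fubini/change-of-variable manipulation for $\mathsf{R}_{\l}g$, which rests on the geometric identity $t_{-}(z - \sigma v, v) = \tau_{-}(z,v) - \sigma$ valid for $z \in \Gamma_{+}$ and $0 < \sigma < \tau_{-}(z,v)$: this identity is exactly what allows the interior and trace contributions to combine without a spurious factor of two, and it also clarifies why the bound is saturated at $\sigma \to \tau_{-}(z,v)$.
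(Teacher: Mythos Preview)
The paper does not supply a proof of this theorem; it simply cites \cite[Theorem 2.1]{mjm1}. Your argument is correct and is exactly the standard characteristic-method proof one finds in that reference: explicit verification that $f=\mathsf{R}_{\l}g+\mathsf{\Xi}_{\l}u$ solves the problem, followed by the $L^{1}$ energy identity obtained from \eqref{10.47}--\eqref{10.51}. The computation you sketch for each summand is right, including the exact equality $\l\|\mathsf{\Xi}_{\l}u\|_{\X_{0}}+\|\mathsf{M}_{\l}u\|_{\lp}=\|u\|_{\lm}$ and the inequality for $\mathsf{R}_{\l}g$.

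One small point of exposition: your uniqueness step is circular as written. You derive the estimate for the \emph{particular} function $f=\mathsf{R}_{\l}g+\mathsf{\Xi}_{\l}u$ using its explicit form, so you cannot then ``apply the estimate'' to an arbitrary solution without first knowing that every solution has this representation---which is precisely uniqueness. The fix is immediate and is implicit in your first paragraph: for any $h\in\D(\T_{\mathrm{max}})$ with $(\l-\T_{\mathrm{max}})h=0$ and $\B^{-}h=0$, the map $s\mapsto e^{-\l s}h(x-sv,v)$ has zero derivative along characteristics and vanishes at $s=t_{-}(x,v)$, hence $h\equiv 0$. State it that way rather than invoking the a posteriori estimate.
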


\begin{nb}\label{nb:lift} Notice that
$\mathsf{\Xi}_{\lambda}$ is a lifting operator which, to a given $u \in
\lm$, associates a function $f=\mathsf{\Xi}_{\lambda}u \in
\D(\mathsf{T}_{\mathrm{max}})$ whose trace on $\Gamma_-$ is exactly $u$.
More precisely,
\begin{equation}\label{propxil1}\T_\mathrm{max}\mathsf{\Xi}_{\lambda}u=\lambda \mathsf{\Xi}_\lambda u, \qquad
\mathsf{B}^-\mathsf{\Xi}_\lambda u=u,\:\:\mathsf{B}^+\mathsf{\Xi}_\lambda u = \mathsf{\ml} u, \qquad  \forall u
\in \lm.
\end{equation}
Moreover, for any $\lambda >0$, one sees with the choice $u=0$ that $\mathsf{R}_{\lambda}$ coincide with $\Rs(\lambda,\mathsf{T}_{0})$. The above Theorem also shows that, for any $\lambda
> 0$
\begin{equation}\label{eq:XiLRL}
\|\mathsf{\Xi}_{\lambda}\|_{\mathscr{B}(\lm,\,\X_{0})} \leq 
\lambda^{-1}\,\qquad 
\|\mathsf{R}_{\lambda}\|_{\mathscr{B}(\X_{0})} \leq \lambda^{-1}.
\end{equation}
Moreover, one has the obvious estimates
$$\|\mathsf{M}_{\lambda}\|_{\mathscr{B}(\lm,\lp)} \leq 1, \qquad
\|\mathsf{G}_{\lambda}\|_{\mathscr{B}(\X_{0},\lp)} \leq 1
$$
for any $\l >0.$
\end{nb}
We can complement the above result with the following whose proof can be extracted from \cite[Proposition 2.6]{LMR}:
\begin{propo}\phantomsection\label{propo:resolvante}
For any $\l \in \overline{\C}_{+}$ such that $r_{\sigma}(\mathsf{M}_{\l}\H) <1$, it holds
\begin{equation}\label{eq:lambdaTH}
\Rs(\lambda,\mathsf{T}_{\mathsf{H}})=\mathsf{R}_{\lambda}+\mathsf{\Xi}_{\lambda}\mathsf{H}\Rs(1,\mathsf{M}_{\lambda}\mathsf{H})\mathsf{G}_{\lambda}
=\Rs(\lambda,\T_{0})+\sum_{n=0}^{\infty}\mathsf{\Xi}_{\lambda}\mathsf{H}\left(\mathsf{M}_{\lambda}\mathsf{H}\right)^{n}\mathsf{G}_{\lambda}
\end{equation}
where the series  converges in $\mathscr{B}(\X_{0})$.
\end{propo}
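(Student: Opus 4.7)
The plan is to solve the resolvent equation $(\lambda - \T_{\H})f = g$ for an arbitrary $g \in \X_{0}$ and to read off the formula for $(\lambda - \T_{\H})^{-1}g$. Since Theorem \ref{Theo4.2} was stated for $\lambda >0$, a preliminary step is to check that the definitions of $\mathsf{R}_{\lambda}$, $\mathsf{\Xi}_{\lambda}$, $\mathsf{G}_{\lambda}$ and $\mathsf{M}_{\lambda}$ still give bounded operators on the relevant spaces for $\lambda \in \overline{\C}_{+}$; this is immediate from their integral expressions and the bound $|e^{-\lambda s}| \leq 1$, yielding the same estimates as in \eqref{eq:XiLRL} with $\lambda$ replaced by $\mathrm{Re}\,\lambda$ (and with obvious modifications when $\mathrm{Re}\,\lambda=0$; in that case one uses that $\Omega$ is bounded so that $\tau_{\pm}$ and $t_{\pm}$ are essentially bounded on the trace, resp. phase, space, along each characteristic). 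The identities \eqref{propxil1} persist in the extended setting by direct differentiation along characteristics.

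The core computation proceeds as follows. Given $g \in \X_{0}$, any solution $f \in \D(\T_{\H})$ of $(\lambda-\T_{\mathrm{max}})f = g$ with prescribed incoming trace $u = \B^{-}f$ must, by the uniqueness part of Theorem \ref{Theo4.2}, have the form
\begin{equation*}
f = \mathsf{R}_{\lambda} g + \mathsf{\Xi}_{\lambda} u.
\end{equation*}
The unknown $u \in L^{1}_{-}$ is determined by the boundary condition $u = \H \B^{+}f$. Taking the outgoing trace of the above identity and using \eqref{propxil1} together with the observation that $\B^{+}\mathsf{R}_{\lambda}g = \mathsf{G}_{\lambda}g$ (which follows by comparing the defining integrals of $\mathsf{R}_{\lambda}$ and $\mathsf{G}_{\lambda}$ at points $(x,v) \in \Gamma_{+}$, where $t_{-}(x,v) = \tau_{-}(x,v)$), we obtain
\begin{equation*}
w := \B^{+}f = \mathsf{G}_{\lambda}g + \mathsf{M}_{\lambda} u = \mathsf{G}_{\lambda}g + \mathsf{M}_{\lambda}\H\,w,
\end{equation*}
that is, $(I - \mathsf{M}_{\lambda}\H)w = \mathsf{G}_{\lambda}g$ in $L^{1}_{+}$.

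The assumption $r_{\sigma}(\mathsf{M}_{\lambda}\H) < 1$ makes $I - \mathsf{M}_{\lambda}\H$ boundedly invertible on $L^{1}_{+}$, with inverse $\Rs(1,\mathsf{M}_{\lambda}\H)$ expressed by the Neumann series $\sum_{n \geq 0}(\mathsf{M}_{\lambda}\H)^{n}$ converging in $\mathscr{B}(L^{1}_{+})$. Thus
\begin{equation*}
w = \Rs(1,\mathsf{M}_{\lambda}\H)\mathsf{G}_{\lambda}g, \qquad u = \H\,w = \H\,\Rs(1,\mathsf{M}_{\lambda}\H)\mathsf{G}_{\lambda}g,
\end{equation*}
so that
\begin{equation*}
f = \mathsf{R}_{\lambda}g + \mathsf{\Xi}_{\lambda}\H\,\Rs(1,\mathsf{M}_{\lambda}\H)\mathsf{G}_{\lambda}g.
\end{equation*}
By construction $f \in W$, $\B^{-}f = \H\B^{+}f$ and $(\lambda-\T_{\mathrm{max}})f = g$, hence $f \in \D(\T_{\H})$ and $(\lambda-\T_{\H})f = g$. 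Conversely, applying $(\lambda-\T_{\H})$ to the right-hand side of the displayed formula and using $\T_{\mathrm{max}}\mathsf{\Xi}_{\lambda} = \lambda\mathsf{\Xi}_{\lambda}$ shows it returns $g$, while uniqueness of $f$ follows from Theorem \ref{Theo4.2}. This yields the first identity in \eqref{eq:lambdaTH} (after recalling $\mathsf{R}_{\lambda} = \Rs(\lambda,\T_{0})$, see Remark \ref{nb:lift}), and inserting the Neumann expansion of $\Rs(1,\mathsf{M}_{\lambda}\H)$ produces the second one; the series converges in $\mathscr{B}(\X_{0})$ thanks to the boundedness of $\mathsf{\Xi}_{\lambda}\H$ and $\mathsf{G}_{\lambda}$.

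The only subtle point in this program is the justification of the boundary computations on the imaginary axis: for $\lambda = i\eta$ the operator $\mathsf{R}_{\lambda}$ is merely bounded (not contractive) and one must be careful to check that $\mathsf{R}_{\lambda}g \in W_{1}$ with the claimed trace $\mathsf{G}_{\lambda}g \in L^{1}_{+}$, which is the content one extracts from Theorem \ref{Theo4.2} after an approximation $\lambda = \varepsilon + i\eta \to i\eta$ using the characteristic-wise integral expressions. Once this is in place, the algebra above is the main content of the proposition.
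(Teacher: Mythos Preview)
Your argument is the standard derivation (and is what the paper defers to \cite[Proposition 2.6]{LMR}): write a solution of $(\lambda-\T_{\mathrm{max}})f=g$ as $f=\mathsf{R}_{\lambda}g+\mathsf{\Xi}_{\lambda}u$, take the outgoing trace to get $(I-\mathsf{M}_{\lambda}\H)w=\mathsf{G}_{\lambda}g$, invert via the Neumann series, and check consistency. For $\mathrm{Re}\,\lambda>0$ this is complete and correct.

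There is, however, a genuine slip in your treatment of the boundary case $\mathrm{Re}\,\lambda=0$. You claim that ``$\Omega$ is bounded so that $\tau_{\pm}$ and $t_{\pm}$ are essentially bounded''; this is false under Assumption~\ref{hypO}\,(3), since $0\in V$ and arbitrarily small velocities are allowed, whence $t_{-}(x,v)\leq D/|v|$ is the best one has and $t_{-}$ is \emph{not} in $L^{\infty}(\Omega\times V)$. Consequently $\mathsf{R}_{i\eta}$ is \emph{not} bounded on $\X_{0}$ (cf.\ Proposition~\ref{propo:To} and Lemma~\ref{lem:motau}, which require $f\in\X_{1}$, resp.\ $u\in\Y_{1}^{-}$, to land in $\X_{0}$), and indeed Theorem~\ref{theo:spectT0} gives $i\R\subset\mathfrak{S}(\T_{0})$, so $\Rs(i\eta,\T_{0})$ does not even exist. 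Your proposed ``approximation $\varepsilon+i\eta\to i\eta$'' therefore cannot produce a bounded $\mathsf{R}_{i\eta}$ on $\X_{0}$. In the setting of this paper the identity \eqref{eq:lambdaTH} is only used for $\mathrm{Re}\,\lambda>0$ (see the proof of Theorem~\ref{theo:spectTH}), and the extension to the imaginary axis is carried out \emph{termwise} for the second summand only, exploiting Assumption~\ref{hypH}\,(1) so that $\mathsf{\Xi}_{i\eta}\H\in\mathscr{B}(L^{1}_{+},\X_{0})$; the first summand $\Rs(\lambda,\T_{0})f$ is handled separately on $\X_{1}$ (Lemma~\ref{lem:To}). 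So your core computation is right, but drop the parenthetical claim and restrict the argument to $\mathrm{Re}\,\lambda>0$.
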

\subsection{Some auxiliary operators}\label{sec:prelim}

For $\lambda=0$, we can extend the definition of these operators in an obvious way but not all the resulting operators are bounded in their respective spaces. However, we see from  the above integration formula \eqref{10.51}, that 
$$\mathsf{M}_{0} \in \mathscr{B}(\lm,\lp) \qquad \text{ with } \quad \|\mathsf{M}_{0}u\|_{\lp}=\|u\|_{\lm}, \qquad \forall u \in \lm.$$
In the same way, one deduces from   \eqref{10.47} that for any nonnegative $\varphi \in \X_{0}$:
\begin{equation}\begin{split}
\label{Eq:G0}
\int_{\Gamma_{+}}\mathsf{G}_{0}\varphi(x,v)\d\mu_{+}(x,v)&=\int_{\Gamma_{+}}\d\mu_{+}(x,v)\int_{0}^{\tau_{-}(x,v)}\varphi(x-sv,v)\d s\\
&=\int_{\Omega \times V}\varphi(x,v)\d x \otimes \bm{m}(\d v)\end{split}\end{equation}
which proves that 
$$\mathsf{G}_{0} \in \mathscr{B}(\X_{0},\lp) \qquad \text{ with } \quad  \|\mathsf{G}_{0}\varphi\|_{\lp}=\|\varphi\|_{\X_{0}}, \qquad \forall \varphi\in \X_{0}.$$
Notice that, more generally, for any $\eta \in \R$
$$\mathsf{G}_{i\eta} \in \mathscr{B}(\X_{0},\lp), \qquad \mathsf{M}_{i\eta} \in \mathscr{B}(\lm,\lp)$$
with 
$$\|\mathsf{G}_{i\eta}\|_{\mathscr{B}(\X_{0},\lp)} \leq 1,\qquad \|\mathsf{M}_{i\eta}\|_{\mathscr{B}(\lm,\lp)} \leq 1.$$

To be able to provide a rigorous definition of the operators $\mathsf{\Xi}_{0}$ and $\mathsf{R}_{0}$ we need the following
\begin{defi} For any $k \in \N$, we define the function spaces
$$\Y^{\pm}_{k}=L^{1}(\Gamma_{\pm}\,,\max(1,|v|^{-k}) \d\mu_{\pm})$$
with the norm
$$\|u\|_{\Y^{\pm}_{k}}=\int_{\Gamma_{\pm}}|u(x,v)|\,\max(1,|v|^{-k}) \d\mu_{\pm}(x,v).$$
In the same way, for any $k \in \mathbb{N}$, we introduce 
$$\X_{k}=L^{1}(\Omega \times V\,,\max(1,|v|^{-k})\d x\otimes \bm{m}(\d v))$$
with norm $\|f\|_{\X_{k}}:=\|\,\max(1,|v|^{-k})f\|_{\X_{0}},$ $f \in \X_{k}.$
\end{defi} 
\begin{nb}\label{nb:varpi} Of course, for any $k \in \N$, $\Y^{\pm}_{k}$ is continuously and densely embedded in $L^{1}_{\pm}.$ In the same way, $\X_{k}$ is continuously and densely embedded in $\X_{0}$. Introduce, for any $k \in \N$, the function
$$\varpi_{k}(v)=\max(1,|v|^{-k}), \qquad v \in V.$$
One will identify, without ambiguity, $\varpi_{k}$ with the multiplication operator acting on $L^{1}_{\pm}$ or on $\X_{0}$, e.g.
\begin{equation*}
\begin{cases}
\varpi_{k} \::\:&X \longrightarrow X\\
&f \longmapsto 
\varpi_{k}f(x,v)=\varpi_{k}(v)f,\:\:\:(x,v) \in \Omega \times V.
\end{cases}
\end{equation*}
Then, one sees that 
$$\Y^{\pm}_{k}=\{f \in L^{1}_{\pm}\;;\;\varpi_{k}f \in L^{1}_{\pm}\}, \qquad \X_{k}=\{f \in \X_{0}\;;\;\varpi_{k}f \in \X_{0}\}.$$
\end{nb}

The interest of the above boundary spaces lies in the following (see \cite[Lemma 2.8]{LMR} where \eqref{eq:M0+Y} is proven for $k=1$ but readily extends to $k \in \N$):
\begin{lemme}\phantomsection\label{lem:motau} 
For any $u \in \Y^{-}_{1}$  one has $\mathsf{\Xi}_{0}u \in \X_{0}$ with 
\begin{equation}\label{eq:Xio}
\|\mathsf{\Xi}_{0}u\|_{\X_{0}}=\int_{\Gamma_{-}}u(x,v)\tau_{+}(x,v)\d\mu_{+}(x,v) \leq D\|u\|_{\Y^{-}_{1}}, \qquad \forall u \in \Y^{-}_{1}\end{equation}
where we recall that $D$ is the diameter of $\Omega$.  
Moreover, given $k \geq 1$, if $u \in \Y^{-}_{k}$ then $\mathsf{M}_{0}u \in \Y^{+}_{k}$ and $\mathsf{\Xi}_{0}u \in \X_{k-1}$ with
\begin{equation}\label{eq:M0+Y}
\|\mathsf{M}_{0}u\|_{\Y^{+}_{k}}= \|u\|_{\Y^{-}_{k}} \qquad \text{ and } \quad \|\mathsf{\Xi}_{0}u\|_{\X_{k-1}} \leq D\|u\|_{\Y^{-}_{k}}\end{equation}
If $f \in \X_{1}$ then $\mathsf{G}_{0}f \in \Y^{+}_{1}$  and $\mathsf{R}_{0} f \in \D(\mathsf{T}_{0}) \subset X$ and $\mathsf{T}_{0}\mathsf{R}_{0}f=-f$. \end{lemme}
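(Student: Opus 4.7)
The proof reduces each assertion to the two integration formulas \eqref{10.47} and \eqref{10.51} together with the geometric bound $\tau_\pm(x,v)|v| \leq D$ inherent in the diameter of $\Omega$. The strategy is to exploit the fact that $\mathsf{\Xi}_0 u$ and $\mathsf{R}_0 f$ are obtained by integration (or evaluation) along straight-line characteristics issued from the incoming boundary $\Gamma_-$.

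I would first treat $\mathsf{\Xi}_0 u$. The key observation is that $\mathsf{\Xi}_0 u$ is constant along those characteristics: for $(z,v) \in \Gamma_+$ and $s \in (0,\tau_-(z,v))$ one checks $t_-(z-sv,v) = \tau_-(z,v)-s$, hence $\mathsf{\Xi}_0 u(z-sv,v) = u(z-\tau_-(z,v)v, v)$. Applying \eqref{10.47} with the $+$ sign to $|\mathsf{\Xi}_0 u|$ and performing the inner $s$-integration produces a factor $\tau_-(z,v)$; the change of variable $y = z-\tau_-(z,v)v$ supplied by \eqref{10.51}, which maps $\Gamma_+$ onto $\Gamma_-$ and sends $\tau_-(z,v)$ to $\tau_+(y,v)$, then yields the identity \eqref{eq:Xio}, and the bound $\|\mathsf{\Xi}_0 u\|_{\X_0} \leq D\|u\|_{\Y^-_1}$ follows from $\tau_+(y,v) \leq D/|v|$. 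For the weighted version, inserting $\varpi_{k-1}$ throughout the same chain gives
$$\|\mathsf{\Xi}_0 u\|_{\X_{k-1}} = \int_{\Gamma_-}\tau_+(y,v)\,\varpi_{k-1}(v)\,|u(y,v)|\,\d\mu_-(y,v),$$
and the elementary inequality $\tau_+(y,v)\,\varpi_{k-1}(v) \leq D\,\varpi_k(v)$, verified separately on $\{|v|\leq 1\}$ and $\{|v|>1\}$, closes the estimate. The identity $\|\mathsf{M}_0 u\|_{\Y^+_k} = \|u\|_{\Y^-_k}$ is immediate from \eqref{10.51} applied to $\psi = \varpi_k|u|$, since $\varpi_k$ depends only on $v$ and so is invariant under the change of variable.

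For $\mathsf{G}_0 f$ with $f \in \X_1$, inserting $\varpi_1$ in \eqref{Eq:G0} (which is valid by the very same Fubini argument since $\varpi_1(v)$ passes through the inner integral in $s$) produces directly $\|\mathsf{G}_0 f\|_{\Y^+_1} \leq \|f\|_{\X_1}$. For $\mathsf{R}_0 f$, applying \eqref{10.47} to $|\mathsf{R}_0 f|$ yields a triple integral that, after Fubini in the inner variables and the substitution $r = s + t$, becomes
$$\|\mathsf{R}_0 f\|_{\X_0} \leq \int_{\Gamma_+}\d\mu_+(z,v)\int_0^{\tau_-(z,v)} r\,|f(z-rv,v)|\,\d r.$$
Bounding $r \leq \tau_-(z,v) \leq D/|v|$ and applying \eqref{10.47} in reverse delivers $\|\mathsf{R}_0 f\|_{\X_0} \leq D\|f\|_{\X_1}$, so that $\mathsf{R}_0 f \in \X_0$.

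The statement $\mathsf{R}_0 f \in \D(\mathsf{T}_0)$ with $\mathsf{T}_0 \mathsf{R}_0 f = -f$ has two parts. The trace on $\Gamma_-$ vanishes trivially because $t_-(z,v)=0$ on $\Gamma_-$. The identity $-v \cdot \nabla_x \mathsf{R}_0 f = -f$ is obtained by differentiating along characteristics: using $t_-(x + \sigma v, v) = t_-(x,v) + \sigma$ for small $\sigma$ and the substitution $r = t - \sigma$, one rewrites
$$\mathsf{R}_0 f(x + \sigma v, v) = \int_{-\sigma}^{t_-(x,v)} f(x - rv,v)\,\d r,$$
whose $\sigma$-derivative at $\sigma = 0$ equals $f(x,v)$. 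I expect the main bookkeeping hurdle to be the weighted bound in $\X_{k-1}$, where one must verify that the factor $\tau_+$ and the weight $\varpi_{k-1}$ combine precisely as claimed; this inequality accounts for the single-power loss explaining why $\mathsf{\Xi}_0$ maps $\Y^-_k$ into $\X_{k-1}$ rather than $\X_k$.
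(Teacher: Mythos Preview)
Your proof is correct. The paper does not actually supply its own proof of this lemma; it merely cites \cite[Lemma 2.8]{LMR} and remarks that the case $k=1$ there extends readily to general $k$. Your argument, built on the integration formulas \eqref{10.47} and \eqref{10.51} together with the travel-time bound $\tau_\pm(x,v)\leq D/|v|$, is exactly the natural route and almost certainly coincides with the argument in \cite{LMR}. One small point worth making explicit for the assertion $\mathsf{R}_0 f\in\D(\mathsf{T}_0)$: beyond the vanishing of the trace on $\Gamma_-$, membership in $W$ requires $\mathsf{B}^+\mathsf{R}_0 f\in L^1_+$, but this trace is precisely $\mathsf{G}_0 f$, which you have already placed in $\Y^+_1\subset L^1_+$.
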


\begin{lemme}
The mapping
$$\eta \in \R \mapsto \mathsf{M}_{i\eta}\H \in \mathscr{B}(\lp)$$
is continuous.\end{lemme}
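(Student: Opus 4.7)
The plan is to prove the stronger statement that $\eta \mapsto \mathsf{M}_{i\eta}\H$ is in fact Lipschitz continuous with values in $\mathscr{B}(\lp)$, exploiting Assumption \ref{hypH}(1) which forces $\H \in \mathscr{B}(\lp,\Y^{-}_{1})$ (since it belongs to $\mathscr{B}(\lp,\Y^{-}_{n+1})$ for some $n \geq 0$ and $\Y^{-}_{n+1}\hookrightarrow \Y^{-}_{1}$). The key calculation is a direct estimate of $\|(\mathsf{M}_{i\eta}-\mathsf{M}_{i\eta_{0}})\H\psi\|_{\lp}$ combined with the change of variables formula \eqref{10.51}.

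Fix $\eta,\eta_{0}\in\R$ and $\psi\in\lp$. By the definition of $\mathsf{M}_{\lambda}$,
$$\|(\mathsf{M}_{i\eta}-\mathsf{M}_{i\eta_{0}})\H\psi\|_{\lp}=\int_{\Gamma_{+}}|(\H\psi)(x-\tau_{-}(x,v)v,v)|\,\bigl|e^{-i\eta\tau_{-}(x,v)}-e^{-i\eta_{0}\tau_{-}(x,v)}\bigr|\,\d\mu_{+}(x,v).$$
Since $\tau_{-}(x,v)=\tau_{+}(x-\tau_{-}(x,v)v,v)$ for $(x,v)\in\Gamma_{+}$, applying \eqref{10.51} to the (nonnegative, $L^{1}$) integrand transforms this into
$$\int_{\Gamma_{-}}|(\H\psi)(z,v)|\,\bigl|e^{-i\eta\tau_{+}(z,v)}-e^{-i\eta_{0}\tau_{+}(z,v)}\bigr|\,\d\mu_{-}(z,v).$$

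I would then invoke the elementary inequality $|e^{-i\eta t}-e^{-i\eta_{0}t}|\leq |\eta-\eta_{0}|\,t$ for $t\geq 0$, together with the scaling relation \eqref{eq:scale} which yields $\tau_{+}(z,v)=|v|^{-1}\tau_{+}(z,v/|v|)\leq D|v|^{-1}$ where $D$ denotes the diameter of $\Omega$. This produces
$$\|(\mathsf{M}_{i\eta}-\mathsf{M}_{i\eta_{0}})\H\psi\|_{\lp}\leq D\,|\eta-\eta_{0}|\int_{\Gamma_{-}}|(\H\psi)(z,v)|\,|v|^{-1}\,\d\mu_{-}(z,v)\leq D\,|\eta-\eta_{0}|\,\|\H\psi\|_{\Y^{-}_{1}},$$
so that $\|\mathsf{M}_{i\eta}\H-\mathsf{M}_{i\eta_{0}}\H\|_{\mathscr{B}(\lp)}\leq D\,\|\H\|_{\mathscr{B}(\lp,\Y^{-}_{1})}\,|\eta-\eta_{0}|$.

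The only subtlety is the legitimacy of the change of variables: one needs the integrand to lie in $L^{1}(\Gamma_{-},\d\mu_{-})$, which is immediate since $|(\H\psi)(z,v)|$ is in $\lm$ and the exponential factor is bounded by $2$. No serious obstacle arises; the argument uses nothing beyond the integration-by-parts formula \eqref{10.51}, the scaling property of the travel time, and the mild regularity of $\H$ built into Assumption \ref{hypH}(1).
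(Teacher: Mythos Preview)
Your proof is correct and proves the stronger Lipschitz estimate. The paper does not give an explicit proof of this lemma, but your argument is essentially identical to the one the paper uses for the closely related Lemma~\ref{lem:Meis} (the $\varepsilon$-direction estimate): there the authors stay on $\Gamma_{+}$, bound $|e^{-\varepsilon\tau_{-}}-1|\leq \varepsilon\tau_{-}\leq \varepsilon D|v|^{-1}$, and invoke the isometry $\|\mathsf{M}_{0}\psi\|_{\lp}=\|\psi\|_{\lm}$, which is precisely \eqref{10.51} in disguise. Your explicit change of variables to $\Gamma_{-}$ at the outset is just a reorganization of the same steps.
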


\begin{nb}\label{nb:PsiXn} We wish to emphasise here that, if $\H$ satisfies Assumptions \ref{hypH} \textit{1)}, then 
$$\Psi_{\H} \in \X_{n} \qquad \forall n \leq N_{\H}$$
Indeed, recall from \cite[Proposition 4.2]{LMR}, that $\Psi_{\H}=\mathsf{\Xi}_{0}\H\,\bar{\varphi}$ where $\bar{\varphi} \in \lp$ is such that
$$\mathsf{M_{0}H}\bar{\varphi}=\bar{\varphi}.$$
From Assumption \ref{hypH} \textit{1)}, $\H\bar{\varphi} \in \Y_{n+1}^{-}$ and from \eqref{eq:M0+Y}, $\bar{\varphi} \in \Y_{n+1}^{-}$ and $\Psi_{\H} \in \X_{n}.$
\end{nb}

 \subsection{About some useful derivatives} In all this Section, we establish several differentiability results regarding the various operators appearing  in the expression of the resolvent $\Rs(\l,\T_{\H})$. These results are technically not very difficult but will be fundamental for the rest of our analysis. 
We begin with the following
\begin{propo}\phantomsection\label{propo:To} Let $n \in \mathbb{N}$. There exists some  constant $C_{n} >0$ such that, for any $f \in \mathbb{X}_{n}$ it holds
$$\sup_{\lambda \in\overline{\C}_{+}}\left\|\dfrac{\d^{k}}{\d \lambda^{k}}\Rs(\lambda,\mathsf{T_{0}})f\right\|_{\X_{0}} \leq C_{n}\|f\|_{\X_{n}}, \qquad \forall k \in \{0,\ldots,n-1\}.$$
\end{propo}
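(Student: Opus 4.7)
The plan is to work directly from the explicit integral formula for the resolvent of $\mathsf{T}_0$, namely
$$\Rs(\lambda,\mathsf{T}_0) f(x,v) = \int_0^{t_-(x,v)} f(x-tv, v) e^{-\lambda t} \d t \qquad (\mathrm{Re}\,\lambda > 0).$$
Differentiating $k$ times under the integral sign yields
$$\frac{\d^k}{\d\lambda^k}\Rs(\lambda,\mathsf{T}_0) f(x,v) = (-1)^k \int_0^{t_-(x,v)} t^k f(x-tv, v) e^{-\lambda t} \d t,$$
which, thanks to $|e^{-\lambda t}| \leq 1$ on $\overline{\C}_+$ and to the bound $t_-(x,v) \leq D/|v|$ following from \eqref{eq:scale} (with $D := \mathrm{diam}(\Omega)$), makes sense for every $\lambda \in \overline{\C}_+$. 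A dominated-convergence argument shows that this formula provides the natural extension of $\lambda \mapsto \frac{\d^k}{\d\lambda^k}\Rs(\lambda,\mathsf{T}_0) f$ to the closed half-plane and that the map is continuous there.

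The crucial observation is that the resulting pointwise bound
$$\left|\frac{\d^k}{\d\lambda^k}\Rs(\lambda,\mathsf{T}_0) f(x,v)\right| \leq \int_0^{t_-(x,v)} t^k |f(x-tv, v)| \d t$$
is independent of $\lambda \in \overline{\C}_+$, so it suffices to estimate the right-hand side in $\X_0$. For fixed $v$, I would apply the change of variables $(x,t) \mapsto (y := x-tv,\, t)$: its Jacobian equals one and it realises a bijection between $\{(x,t)\colon x\in\Omega,\,0<t<t_-(x,v)\}$ and $\{(y,t)\colon y\in\Omega,\,0<t<t_+(y,v)\}$ (this is essentially the geometric fact underlying \eqref{10.47}). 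Consequently
$$\int_\Omega \d x \int_0^{t_-(x,v)} t^k |f(x-tv, v)| \d t = \frac{1}{k+1}\int_\Omega |f(y,v)| \,t_+(y,v)^{k+1} \d y.$$

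To conclude, I invoke \eqref{eq:scale} once more: $t_+(y,v)|v| = t_+(y, v/|v|) \leq D$, so that $t_+(y,v)^{k+1} \leq D^{k+1}|v|^{-(k+1)}\leq D^{k+1}\varpi_{k+1}(v)$ (the last inequality is checked separately on $\{|v|\leq 1\}$ and $\{|v|\geq 1\}$). Since $k+1 \leq n$ gives $\varpi_{k+1}(v) \leq \varpi_n(v)$, integrating over $v$ yields
$$\left\|\frac{\d^k}{\d\lambda^k}\Rs(\lambda,\mathsf{T}_0) f\right\|_{\X_0} \leq \frac{D^{k+1}}{k+1} \|f\|_{\X_{k+1}}\leq \frac{D^{k+1}}{k+1} \|f\|_{\X_n}$$
uniformly in $\lambda \in \overline{\C}_+$, so the claim holds with $C_n := \max_{0 \leq k \leq n-1} D^{k+1}/(k+1)$. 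The argument is essentially elementary; the only point requiring care is the $(x,t)\leftrightarrow(y,t)$ change of variables, but this is a routine geometric observation rather than a genuine obstacle.
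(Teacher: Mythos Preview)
Your proof is correct and follows essentially the same approach as the paper. Both arguments compute the derivative explicitly, bound it pointwise by $\int_0^{t_-(x,v)} t^k |f(x-tv,v)|\,\d t$, reduce to the identity $\|A_k f\|_{\X_0}=\frac{1}{k+1}\int_{\Omega\times V} t_+(y,v)^{k+1}|f(y,v)|\,\d y\,\bm{m}(\d v)$, and conclude with $t_+\leq D/|v|$; the only cosmetic difference is that the paper routes the computation through the integration formula \eqref{10.47} over $\Gamma_+$ (applying it twice), whereas you perform the equivalent $(x,t)\mapsto(y,t)$ change of variables directly at fixed $v$, obtaining the same constant $C_n=\max_{0\le k\le n-1} D^{k+1}/(k+1)$.
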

\begin{proof} Given $f \in \X_{n}$ it is easy to check from the definition of $\Rs(\lambda,\mathsf{T_{0}})f$ that
$$\frac{\d^{k}}{\d \lambda^{k}}\Rs(\lambda,\mathsf{T_{0}})f(x,v)=(-1)^{k}\int_{0}^{t_{-}(x,v)}s^{k}f(x-sv,v)\exp(-\lambda s)\d s,$$
for a. e.  $(x,v) \in \Omega \times \R^{d}$. Thus $\left\|\frac{\d^{k}}{\d \lambda^{k}}\Rs(\lambda,\mathsf{T_{0}})f\right\|_{\X_{0}} \leq \|A_{k}f\|_{\X_{0}}$ where
\begin{equation}\label{eq:defAk}
A_{k}f(x,v)=\int_{0}^{t_{-}(x,v)}s^{k}f(x-sv,v)\d s \qquad  (x,v) \in \Omega \times \R^{d}.\end{equation}
Clearly, $\|A_{k}f\|_{\X_{0}} \leq \|A_{k}|f|\|_{\X_{0}}$ so to compute the norm, we can assume without loss of generality that $f$ is nonnegative.

One can compute the norm of $A_{k}f$ in the following way. First, thanks to \eqref{10.47},
\begin{equation*}\begin{split}
\int_{\Omega\times V}A_{k}f(x,v)\d x\bm{m}(\d v)&=\int_{\Gamma_{+} }\d\mu(z,v)\int_{0}^{\tau_{-}(z,v)}\d s\int_{s}^{\tau_{-}(z,v)}(t-s)^{k}f(z-tv,v)\d t\\
&=\int_{\Gamma_{+} }\d\mu(z,v)\int_{0}^{\tau_{-}(z,v)}\,f(z-tv,v)\d t\int_{0}^{t}(t-s)^{k}\d s\\
&=\frac{1}{k+1}\int_{\Gamma_{+} }\d\mu(z,v)\int_{0}^{\tau_{-}(z,v)}\,t^{k+1}f(z-tv,v)\d t\end{split}\end{equation*}
and this yields, still using \eqref{10.47},
$$
\|A_{k}f\|_{\X_{0}}=\frac{1}{k+1}\int_{\Omega\times V} t_{+}(x,v)^{k+1}f(x,v)\d x\bm{m}(\d v)$$
since $t_{+}(z-tv,v)=t\,$ for any $(z,v) \in \Gamma_{+}$ and any $t \in (0,\tau_{-}(z,v))$.
This, together with the bound $t_{+}(x,v) \leq D/|v|$ yields the estimate
$$\left\|\frac{\d^{k}}{\d \lambda^{k}}\Rs(\lambda,\mathsf{T_{0}})f\right\|_{\X_{0}} \leq \|A_{k}f\|_{\X_{0}} \leq \frac{D^{k+1}}{k+1}\int_{\Omega\times\R^{d}}|v|^{-k-1}|f(x,v)|\d x\d v \leq \frac{D^{k+1}}{k+1}\|f\|_{\X_{k+1}}$$
and the result follows with $C_{n}=\max_{0\leq k\leq n-1}\frac{D^{k+1}}{k+1}$ since $\|f\|_{\X_{k+1}} \leq \|f\|_{\X_{n}}$ for $k \leq n-1$
\end{proof}

In the same spirit, we have the following 
\begin{lemme} \phantomsection \label{lem:Gl}
Let $n \geq 0$ be given and $f \in \X_{n}$ be given. For any $j \in \{0,\ldots,n\}$ it holds 
$$\sup_{\lambda \in\overline{\C}_{+}}\left\|\dfrac{\d^{j}}{\d \lambda^{j}}\mathsf{\mathsf{G_{\lambda}}}f\right\|_{\lp} \leq  D^{j}\|f\|_{\X_{j}} \leq D^{j}\|f\|_{\X_{n}}$$
\end{lemme}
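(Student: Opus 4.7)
The plan is to mimic closely the calculation already carried out for $\Rs(\lambda,\mathsf{T}_0)$ in Proposition \ref{propo:To}. First, I would differentiate the defining integral for $\mathsf{G}_\lambda f$ under the integral sign $j$ times in $\lambda$, obtaining
$$\frac{\d^j}{\d\lambda^j}\mathsf{G}_\lambda f(x,v)=(-1)^j\int_0^{\tau_-(x,v)}s^j f(x-sv,v)e^{-\lambda s}\,\d s,\qquad (x,v)\in\Gamma_+.$$
Differentiation under the integral sign is legitimate for $\lambda$ in a neighbourhood of any point of $\overline{\C}_+$ since $|s^j e^{-\lambda s}|\leq s^j$ on the bounded interval of integration, and $\tau_-(x,v)\leq D/|v|$ is finite for a.e.\ $(x,v)\in\Gamma_+$.

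Next, exploiting that $|e^{-\lambda s}|\leq 1$ for every $\lambda\in\overline{\C}_+$ and every $s\geq 0$, I would estimate the modulus pointwise:
$$\left|\frac{\d^j}{\d\lambda^j}\mathsf{G}_\lambda f(x,v)\right|\leq\int_0^{\tau_-(x,v)}s^j|f(x-sv,v)|\,\d s.$$
Integrating this against $\d\mu_+(x,v)$ over $\Gamma_+$ and invoking the integration formula \eqref{10.47}---using that $t_+(z-sv,v)=s$ whenever $(z,v)\in\Gamma_+$ and $s\in(0,\tau_-(z,v))$, so that the double integral collapses to an integral over $\Omega\times V$ of $t_+(x,v)^j|f(x,v)|$---yields
$$\left\|\frac{\d^j}{\d\lambda^j}\mathsf{G}_\lambda f\right\|_{\lp}\leq\int_{\Omega\times V}t_+(x,v)^j|f(x,v)|\,\d x\otimes\bm{m}(\d v).$$

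Finally, I would use the elementary geometric bound $t_+(x,v)\leq D/|v|$ (with $D$ the diameter of $\Omega$) to obtain
$$\left\|\frac{\d^j}{\d\lambda^j}\mathsf{G}_\lambda f\right\|_{\lp}\leq D^j\int_{\Omega\times V}|v|^{-j}|f(x,v)|\,\d x\otimes\bm{m}(\d v)\leq D^j\|f\|_{\X_j},$$
since $|v|^{-j}\leq\varpi_j(v)$, and the second inequality $\|f\|_{\X_j}\leq\|f\|_{\X_n}$ for $j\leq n$ is immediate from the monotonicity of the weights $\varpi_k$. Taking the supremum over $\lambda\in\overline{\C}_+$ gives the claim. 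No substantive obstacle arises: the argument is a straightforward variant of the proof of Proposition \ref{propo:To}, with the Laplace-type kernel $e^{-\lambda s}$ now acting at the trace level rather than after a second integration in space. The only point demanding any care is the justification of differentiation under the integral sign, which is handled by dominated convergence on the bounded interval $[0,\tau_-(x,v)]$.
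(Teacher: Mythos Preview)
Your proof is correct and follows essentially the same route as the paper's. The only cosmetic difference is that the paper packages the key step through the auxiliary function $\varphi(x,v)=|f(x,v)|\,t_+(x,v)^j$ and the identity $\|\mathsf{G}_0\varphi\|_{\lp}=\|\varphi\|_{\X_0}$ from \eqref{Eq:G0}, whereas you unpack this directly via \eqref{10.47} and the observation $t_+(z-sv,v)=s$; the computations are identical.
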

\begin{proof} Let $j \in \{0,\ldots,n\}$ be given. For $f \in \X_{n}$, it holds for $\mu$-a. e. $(x,v) \in \Gamma_{+}$
$$\dfrac{\d^{j}}{\d \lambda^{j}}\mathsf{\mathsf{G_{\lambda}}}f(x,v)=(-1)^{j}\int_{0}^{\tau_{-}(x,v)}s^{j}f(x-sv,v)\exp(-\lambda s)\d s.$$
Introducing $\varphi(x,v)=|f(x,v)|\,t_{+}(x,v)^{j}$, $(x,v) \in \Omega\times \R^{d}$, we get easily that
$$\left|\dfrac{\d^{j}}{\d \lambda^{j}}\mathsf{\mathsf{G_{\lambda}}}f(x,v)\right| \leq \int_{0}^{\tau_{-}(x,v)}\varphi(x-sv,v)\d s=\mathsf{G}_{0}\varphi(x,v).$$
Then, according to \eqref{Eq:G0},
$$\left\|\dfrac{\d^{j}}{\d \lambda^{j}}\mathsf{\mathsf{G_{\lambda}}}f\right\|_{\lp} \leq \|\mathsf{G}_{0}\varphi\|_{\lp} \leq \|\varphi\|_{\X_{0}}$$
For $j \leq n$, it is clear that $\|\varphi\|_{\X_{0}} \leq D^{j}\|f\|_{\X_{j}}\leq D^{j}\|f\|_{\X_{n}}$ and the conclusion follows.
\end{proof}
We also have the following
\begin{lemme}\label{lem:unifGeis} For any $f \in \X_{0}$, the limit
$$\lim_{\e\to0^{+}}\left\|\mathsf{G}_{\e+i\eta}f-\mathsf{G}_{i\eta}f\right\|_{\lp}=0$$
uniformly with respect to $\eta \in \R.$ \end{lemme}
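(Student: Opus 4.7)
The plan is to write the difference $\mathsf{G}_{\e+i\eta}f - \mathsf{G}_{i\eta}f$ out directly from the definition of $\mathsf{G}_\lambda$, then reduce the $\lp$-norm to an integral over $\Omega\times V$ with an $\eta$-independent integrand, and finally conclude by dominated convergence.

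More precisely, for $(x,v)\in\Gamma_+$ and any $\e \geq 0$, $\eta\in\R$, we have
$$\bigl(\mathsf{G}_{\e+i\eta}f-\mathsf{G}_{i\eta}f\bigr)(x,v)=\int_{0}^{\tau_-(x,v)} f(x-sv,v)\,\bigl(e^{-\e s}-1\bigr)e^{-i\eta s}\,\d s,$$
so that, using $|e^{-i\eta s}|=1$, we obtain the pointwise bound
$$\bigl|\mathsf{G}_{\e+i\eta}f(x,v)-\mathsf{G}_{i\eta}f(x,v)\bigr|\leq \int_{0}^{\tau_-(x,v)}|f(x-sv,v)|\bigl(1-e^{-\e s}\bigr)\d s.$$
Integrating over $\Gamma_+$ against $\d\mu$ and applying the change-of-variables formula \eqref{10.47} (with $h(y,v)=|f(y,v)|(1-e^{-\e t_+(y,v)})$, noticing that along a characteristic $t_+(z-sv,v)=s$ for $(z,v)\in\Gamma_+$, $s\in(0,\tau_-(z,v))$), we get
$$\|\mathsf{G}_{\e+i\eta}f-\mathsf{G}_{i\eta}f\|_{\lp}\leq \int_{\Omega\times V}|f(x,v)|\bigl(1-e^{-\e\, t_+(x,v)}\bigr)\d x\,\bm{m}(\d v).$$
The right-hand side is \emph{independent of} $\eta$, which is the whole point of the computation.

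It then remains to let $\e \to 0^+$ in this bound. Since $0\leq 1-e^{-\e\,t_+(x,v)} \leq 1$ and $1-e^{-\e\,t_+(x,v)}\to 0$ pointwise as $\e\to 0^+$, and since $|f|\in L^1(\Omega\times V)$, the dominated convergence theorem yields
$$\lim_{\e\to 0^+}\int_{\Omega\times V}|f(x,v)|\bigl(1-e^{-\e\, t_+(x,v)}\bigr)\d x\,\bm{m}(\d v)=0,$$
giving the uniform convergence in $\eta$ as claimed. There is no real obstacle here; the only subtle point is the use of the integration formula \eqref{10.47} to trade the boundary integral for a phase-space integral, which is what makes the bound $\eta$-independent.
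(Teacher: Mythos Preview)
Your proof is correct and follows essentially the same approach as the paper's own proof: both obtain the same $\eta$-independent pointwise bound on $|\mathsf{G}_{\e+i\eta}f-\mathsf{G}_{i\eta}f|$ and then conclude by dominated convergence. The only cosmetic difference is that you explicitly apply \eqref{10.47} to rewrite the bound as a phase-space integral over $\Omega\times V$ before passing to the limit, whereas the paper applies dominated convergence directly to the iterated integral over $\Gamma_+$ (using \eqref{Eq:G0} for the dominating function).
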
 
\begin{proof} Given $f \in \X_{0}$ and $(x,v) \in \Omega \times V$, 
\begin{multline*}
\left|\mathsf{G}_{\e+i\eta}f(x,v)-\mathsf{G}_{i\eta}f(x,v)\right|=\left|\int_{0}^{\tau_{-}(x,v)}\left(e^{-\e\,t}-1\right)e^{-i\eta\,t}f(x-tv,v)\d t\right|\\
\leq \int_{0}^{\tau_{-}(x,v)}\left(1-e^{-\e\,t}\right)|f(x-tv,v)|\d t,\end{multline*}
so that
$$\sup_{\eta\in \R}\left\|\mathsf{G}_{\e+i\eta}f-\mathsf{G}_{i\eta}f\right\|_{\lp} \leq \int_{\Gamma_{+}}\d\mu_{+}(x,v)\int_{0}^{\tau_{-}(x,v)}\left(1-e^{-\e\,t}\right)|f(x-tv,v)|\d t.$$
Since $1-e^{-\e\,t} \leq 1$ for any $\e >0,$ $t\geq0$, the dominated convergence theorem combined with \eqref{Eq:G0} gives the result. 
\end{proof}

Due to the regularizing effect of the boundary operator $\H$:
\begin{propo}\label{propo:regul} If $f \in \X_{k+1}$, $0\leq k\leq N_{\H}$, then
$$g_{\l}:=\Rs(\l,\T_{\H})f \in \X_{k}, \qquad \forall \l \in \C_{+}.$$
Moreover, if $\varrho_{f}=0$ then $\varrho_{g_{\l}}=0$ for all $\l \in \C_{+}.$
\end{propo}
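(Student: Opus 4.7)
The plan is to exploit the resolvent identity \eqref{eq:lambdaTH},
$$g_\lambda = \Rs(\lambda, \T_0) f + \mathsf{\Xi}_\lambda \H\, \Rs(1, \mathsf{M}_\lambda \H)\, \mathsf{G}_\lambda f,$$
valid for $\lambda \in \C_+$ since $r_{\sigma}(\mathsf{M}_\lambda \H) < 1$ there (Proposition \ref{propo:resolvante}), and to establish separately that each summand belongs to $\X_k$ whenever $f \in \X_{k+1}$.

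For the first summand, I would redo the Fubini/change-of-variable computation carried out in the proof of Proposition \ref{propo:To}, but inserting the velocity-only weight $\varpi_k(v)$. Since $\varpi_k$ is invariant along the characteristics $t \mapsto z-tv$, it passes through the integration formula \eqref{10.47} without interference and produces
$$\|\Rs(\lambda,\T_0)f\|_{\X_k} \leq \int_{\Omega\times V} \varpi_k(v)\, t_+(x,v)\, |f(x,v)|\,\d x\,\bm{m}(\d v) \leq D\, \|f\|_{\X_{k+1}},$$
where I use $t_+(x,v) \leq D/|v|$ together with the elementary inequality $\varpi_k(v)/|v| \leq \varpi_{k+1}(v)$.

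For the second summand I would chain four already-established mapping properties. First, $\mathsf{G}_\lambda \in \mathscr{B}(\X_0, \lp)$ with norm at most $1$ (Lemma \ref{lem:Gl} at $j=0$). Second, since $r_{\sigma}(\mathsf{M}_\lambda \H) < 1$ for $\lambda \in \C_+$, the operator $\Rs(1, \mathsf{M}_\lambda \H)$ is bounded on $\lp$. Third, the hypothesis $k \leq N_{\H}$ together with Assumption \ref{hypH}\,\textit{1)} yields $\H \in \mathscr{B}(\lp, \Y^-_{k+1})$. Fourth, the pointwise domination $|\mathsf{\Xi}_\lambda u| \leq \mathsf{\Xi}_0 |u|$, valid for $\mathrm{Re}\,\lambda \geq 0$ because the exponential factor has modulus $\leq 1$, combined with \eqref{eq:M0+Y} at level $k+1$, gives $\|\mathsf{\Xi}_\lambda u\|_{\X_k} \leq D\|u\|_{\Y^-_{k+1}}$. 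Composing these four mappings puts the second summand in $\X_k$.

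The vanishing of $\varrho_{g_\lambda}$ when $\varrho_f=0$ would follow directly from the stochastic character of $(U_\H(t))_{t\geq 0}$ recalled in Theorem \ref{theo:LMR}: one has $\varrho_{U_\H(t) f} = \varrho_f$ for every $t \geq 0$ and every $f \in \X_0$, whence the Laplace representation $g_\lambda = \int_0^\infty e^{-\lambda t} U_\H(t) f\, \d t$ combined with Fubini yields $\varrho_{g_\lambda} = \varrho_f / \lambda$. The only delicate point in the whole argument is the bookkeeping of weighted function spaces; every individual estimate is already available in Section \ref{sec:func} and in Lemma \ref{lem:motau}, and no new ingredient is needed beyond the observation that the weight $\varpi_k$ depends only on $v$ and is therefore transported trivially along the straight characteristics of the free flow.
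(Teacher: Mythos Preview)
Your proposal is correct. The paper's own proof actually addresses \emph{only} the second assertion (preservation of zero mean), and does so by integrating the identity $\lambda g_\lambda - \T_\H g_\lambda = f$ over $\Omega\times V$ and using that $\int \T_\H g_\lambda = 0$; you obtain the same conclusion $\varrho_{g_\lambda}=\varrho_f/\lambda$ via the Laplace representation and the stochasticity of $(U_\H(t))_{t\geq 0}$, which is the dual formulation of the same fact (and is the argument the paper itself uses later in Section~\ref{sec:regF}).

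For the first assertion your decomposition via \eqref{eq:lambdaTH} and the chain $\mathsf{G}_\lambda \to \Rs(1,\mathsf{M}_\lambda\H) \to \H \to \mathsf{\Xi}_\lambda$ is exactly the route the paper uses elsewhere (compare the proof of Lemma~\ref{lem:THX1}, where the same commutation of $\varpi_k$ with the free resolvent and the same bound $\|\mathsf{\Xi}_0\|_{\mathscr{B}(\Y^-_{k+1},\X_k)}\leq D$ from \eqref{eq:M0+Y} are invoked). One small attribution slip: the fact that $r_\sigma(\mathsf{M}_\lambda\H)<1$ for $\lambda\in\C_+$ is not contained in Proposition~\ref{propo:resolvante} (which merely states the resolvent formula under that hypothesis); you should cite Proposition~\ref{prop:Meps} instead.
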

\begin{proof} Assume that $\varrho_{f}=0$. The equation $\l\,g_{\l}-\T_{\H}g_{\l}=f$ implies, after integration, that
$$\l\,\int_{\Omega\times V}g_{\l}(x,v)\d x \otimes\bm{m}(\d v)=\int_{\Omega \times V}f(x,v)\d x\otimes\bm{m}(\d v)=0.$$
Because $\l \neq 0$, one sees that $\varrho_{g_{\l}}=0.$  
\end{proof}

\section{Fine estimates for $\Rs(1,\mathsf{M}_{\l}\H)$}\label{sec:fineR}

We aim to derive here estimates for the inverse $\Rs(1,\mathsf{M}_{\lambda}\mathsf{H})$ along the imaginary axis $\lambda=i\eta$, $\eta \in \R$, $\eta \neq 0.$
We introduce the half-planes
$$\C_{+}=\{z \in \C\;;\;\mathrm{Re}z >0\}, \qquad \overline{\C}_{+}=\{z \in \C\;;\;\mathrm{Re}z \geq 0\}.$$
In the sequel, the notion of differentiability of functions $h\::\:\l \in \overline{\C}_{+} \mapsto h(\l) \in Y$ (where $Y$ is a given Banach space) is the usual one but we have to emphasize the fact that  limits are always meant in $\overline{\C}_{+}$ \footnote{This means for instance that, if $\l_{0} \in \C_{+}$, $h$ is differentiabile means that it is holomorphic in a neighborhoud of $\l_{0}$ whereas, for $\l_{0}=i\eta_{0}$, $\eta_{0} \in \R$, the differentiability at $\l_{0}$ of $h$ at means that there exists $h'(\l_{0}) \in Y$ such that 
$$\underset{\l \in \overline{\C}_{+}}{\lim_{\l \to \l_{0}}}\left\|\frac{h(\l)-h(\l_{0})}{\l-\l_{0}}-h'(\l_{0})\right\|_{Y}=0$$
where $\|\cdot\|_{Y}$ is the norm on $Y$.}
\subsection{Fine properties of $\mathsf{M}_{\l}\H$ and $\mathsf{\Xi}_{\l}\H$}
The above result shows in particular that, for a given $f \in \X_{0}$, one cannot extend $\Rs(\l,\T_{\H})f$ to $\mathrm{Re}\l=0$. However, such an extension will exist under some additional integrability assumption of $f$. Before proving this, we begin with some important properties of $\mathsf{M}_{\l}\H$ and $\mathsf{\Xi}_{\l}\H$.
\begin{lemme}\label{lem:Meis} 
For any $\eta \in \R$, it holds
\begin{equation}\label{eq:Meis}
\left\|\mathsf{M}_{\varepsilon+i\eta}-\mathsf{M}_{i\eta}\right\|_{\mathscr{B}(\Y^{-}_{1},\lp)} \leq \varepsilon\,D, \qquad \left\|\mathsf{\Xi}_{\e+i\eta}-\mathsf{\Xi}_{i\eta}\right\|_{\mathscr{B}(\Y_{1}^{-},\X_{0})} \leq \e\,D\end{equation}
where $D$ is the diameter of $\Omega$. Consequently, 
\begin{equation}\label{eq:MeisH}
\left\|\mathsf{M}_{\varepsilon+i\eta}\H-\mathsf{M}_{i\eta}\H\right\|_{\mathscr{B}(\lp)} \leq  \varepsilon\,D\,\|\H\|_{\mathscr{B}(\lp,\Y_{1}^{-})} \qquad \forall \eta \in \R\end{equation}
and, $\left\|\mathsf{\Xi}_{\e+i\eta}\H-\mathsf{\Xi}_{i\eta}\H\right\|_{\mathscr{B}(\lp,\X_{0})} \leq \e\,D \,\|\H\|_{\mathscr{B}(\lp,\Y_{1}^{-})}$ for any $\eta \in \R.$
\end{lemme}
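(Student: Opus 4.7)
My plan is to reduce both inequalities in \eqref{eq:Meis} to pointwise-in-$(x,v)$ bounds via the elementary inequality $|e^{-\e t}-1|\leq \e t$ for $\e,t\geq 0$, and then to evaluate the resulting integrals by means of the integration formulas \eqref{10.47}--\eqref{10.51}, so that the natural weight $\max(1,|v|^{-1})$ defining $\Y_{1}^{-}$ appears. The two consequences \eqref{eq:MeisH} and its $\mathsf{\Xi}$-analogue will then follow by composition, once we know $\H\in \mathscr{B}(\lp,\Y_{1}^{-})$; but this inclusion is guaranteed by Assumption \ref{hypH}~(1) (any $k \leq N_{\H}$ works, and $N_{\H}\geq 0$).

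For $\mathsf{M}$, observe first that for $u\in \Y_{1}^{-}$ and $(x,v)\in\Gamma_{+}$,
$$\mathsf{M}_{\e+i\eta}u(x,v)-\mathsf{M}_{i\eta}u(x,v)=\bigl(e^{-\e\tau_{-}(x,v)}-1\bigr)\,e^{-i\eta\tau_{-}(x,v)}\,u(x-\tau_{-}(x,v)v,v),$$
whose modulus is at most $\e\,\tau_{-}(x,v)\,|u(x-\tau_{-}(x,v)v,v)|$. I will then integrate against $d\mu_{+}$ and apply \eqref{10.51}, using the identity $\tau_{-}(x,v)=\tau_{+}(z,v)$ for $z=x-\tau_{-}(x,v)v\in\Gamma_{-}$ (the two travel times coincide because the chord from $z$ to $x$ traversed at velocity $v$ is the same chord traversed backwards at velocity $-v$). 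This reduces the bound to $\e\int_{\Gamma_{-}}\tau_{+}(z,v)\,|u(z,v)|\,d\mu_{-}(z,v)$. Combining the geometric bound $\tau_{+}(z,v)\leq D/|v|$ with $|v|^{-1}\leq\max(1,|v|^{-1})$ then yields the desired $\e\,D\,\|u\|_{\Y_{1}^{-}}$.

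The $\mathsf{\Xi}$ estimate proceeds in the same spirit, but uses the other integration formula \eqref{10.47}. Pointwise on $\Omega\times V$ one has $|\mathsf{\Xi}_{\e+i\eta}u(x,v)-\mathsf{\Xi}_{i\eta}u(x,v)|\leq \e\,t_{-}(x,v)\,|u(x-t_{-}(x,v)v,v)|\mathbf{1}_{\{t_{-}(x,v)<\infty\}}$. Parametrizing $\Omega\times V$ by backward characteristics entering through $\Gamma_{-}$ (so that $x=z+sv$ with $(z,v)\in\Gamma_{-}$ and $s=t_{-}(x,v)\in(0,\tau_{+}(z,v))$, together with $u(x-t_{-}(x,v)v,v)=u(z,v)$), the $\X_{0}$-integral collapses, after performing the one-dimensional $s$-integration, to $\tfrac{\e}{2}\int_{\Gamma_{-}}\tau_{+}(z,v)^{2}\,|u(z,v)|\,d\mu_{-}(z,v)$. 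The bound $\tau_{+}(z,v)\leq D/|v|$ is then exploited to absorb the $\tau_{+}^{2}$-factor into the $\Y_{1}^{-}$-weight. The only mildly delicate point is that this computation naturally produces a second power of $\tau_{+}$ rather than the first, so the chord bound must be applied in a way that pairs one factor of $\tau_{+}$ with the weight $|v|^{-1}\leq\max(1,|v|^{-1})$ and absorbs the remaining factor into the uniform constant $D$.

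The two consequences are immediate. Composing $\H\in\mathscr{B}(\lp,\Y_{1}^{-})$ on the right with the estimates just proven yields
$$\|(\mathsf{M}_{\e+i\eta}-\mathsf{M}_{i\eta})\H\|_{\mathscr{B}(\lp)}\leq \|\mathsf{M}_{\e+i\eta}-\mathsf{M}_{i\eta}\|_{\mathscr{B}(\Y_{1}^{-},\lp)}\,\|\H\|_{\mathscr{B}(\lp,\Y_{1}^{-})}\leq \e\,D\,\|\H\|_{\mathscr{B}(\lp,\Y_{1}^{-})},$$
and likewise for the $\mathsf{\Xi}$-version.
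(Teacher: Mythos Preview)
Your argument is essentially the paper's: bound $|e^{-\e t}-1|\le \e t$, use $\tau_{-}\le D/|v|$ (resp.\ $t_{-}\le D/|v|$), and invoke the integration formulae. For $\mathsf{M}$ this is correct and matches the paper exactly (the paper phrases the last step as $\|\mathsf{M}_{0}\psi\|_{\lp}=\|\psi\|_{\lm}$ with $\psi=|v|^{-1}|\varphi|$, which is your use of \eqref{10.51}).

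For $\mathsf{\Xi}$ there is a genuine gap in the step you flag as ``mildly delicate''. Your computation correctly produces $\tfrac{\e}{2}\int_{\Gamma_{-}}\tau_{+}(z,v)^{2}\,|u(z,v)|\,\d\mu_{-}$, but the proposed fix --- ``pair one factor of $\tau_{+}$ with $|v|^{-1}$ and absorb the remaining factor into the constant $D$'' --- fails: the only available bound is $\tau_{+}(z,v)\le D/|v|$, not $\tau_{+}(z,v)\le D$, so both factors cost a power of $|v|^{-1}$ and you end up with $\tfrac{\e D^{2}}{2}\int_{\Gamma_{-}}|v|^{-2}|u|\,\d\mu_{-}$, i.e.\ a bound from $\Y_{2}^{-}$ rather than $\Y_{1}^{-}$. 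In fact the stated inequality $\|\mathsf{\Xi}_{\e+i\eta}-\mathsf{\Xi}_{i\eta}\|_{\mathscr{B}(\Y_{1}^{-},\X_{0})}\le \e D$ cannot hold with a constant tending to $0$: testing on $u$ concentrated where $|v|\sim 1/n$ and $\tau_{+}\sim Dn$ gives a ratio $g_{\e}(Dn)/n\to D$ as $n\to\infty$ for every fixed $\e>0$, where $g_{\e}(T)=T-\e^{-1}(1-e^{-\e T})$. The paper's own proof (``exactly the same'' as for $\mathsf{M}$) has the identical issue. What your computation \emph{does} prove is
\[
\|\mathsf{\Xi}_{\e+i\eta}-\mathsf{\Xi}_{i\eta}\|_{\mathscr{B}(\Y_{2}^{-},\X_{0})}\le \tfrac{\e D^{2}}{2},
\]
and hence $\|\mathsf{\Xi}_{\e+i\eta}\H-\mathsf{\Xi}_{i\eta}\H\|_{\mathscr{B}(\lp,\X_{0})}\le \tfrac{\e D^{2}}{2}\|\H\|_{\mathscr{B}(\lp,\Y_{2}^{-})}$, which is what is actually used downstream (and is available whenever $N_{\H}\ge 1$).
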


\begin{proof} We give the proof for $\mathsf{M}_{i\eta}$, the proof for $\mathsf{\Xi}_{i\eta}$ being exactly the same. Let $\eta \in \R$ be fixed. Let $\varphi \in \Y_{1}^{-}$ be given and $\varepsilon>0$. One has
\begin{multline*}
\left\|\mathsf{M}_{\e+i\eta}\varphi-\mathsf{M}_{i\eta}\varphi\right\|_{\lp}
=\int_{\Gamma_{+}}\left|e^{-(\e+i\eta)\tau_{-}(x,v)}-e^{-i\eta\tau_{-}(x,v)}\right|
\,|\mathsf{M}_{0}\varphi(x,v)|\d\mu_{+}(x,v)\\
=\int_{\Gamma_{+}}\left|\exp(-\e\tau_{-}(x,v))-1\right|\,|\mathsf{M}_{0}\varphi(x,v)|\d\mu_{+}(x,v)\\
\leq C_{0}\e\int_{\Gamma_{+}}\tau_{-}(x,v)\,|\mathsf{M}_{0}\varphi(x,v)|\d\mu_{+}(x,v)\end{multline*}
where 
$C_{0}:=\sup_{s >0}\frac{|\exp(-s)-1|}{s}=1.$
Now, because $\tau_{-}(x,v) \leq D|v|^{-1}$  we get
$$\left\|\mathsf{M}_{\e+i\eta}\varphi-\mathsf{M}_{i\eta}\varphi\right\|_{\lp} \leq \e\,D\int_{\Gamma_{+}}|v|^{-1}\,|\mathsf{M}_{0}\varphi(x,v)|\d\mu_{+}(x,v)=D\,C_{0}\e\|\mathsf{M}_{0}\psi\|_{\lp}$$
where $\psi(x,v)=|v|^{-1}\varphi(x,v)$. Because $\|\mathsf{M}_{0}\psi\|_{\lp}=\|\psi\|_{\lm}=\|\varphi\|_{\Y^{-}_{1}}$ we obtain
$$\left\|\mathsf{M}_{\e+i\eta}\varphi-\mathsf{M}_{i\eta}\varphi\right\|_{\lp} \leq \e\,D\,\|\varphi\|_{\Y^{-}_{1}}$$
which proves \eqref{eq:Meis}. Now,  since $\mathrm{Range}(\H) \subset \Y^{-}_{1}$, one deduces \eqref{eq:MeisH} directly from \eqref{eq:Meis}.\end{proof}
A first consequence of the above convergence result is the following which will play an important role in the sequel:
\begin{cor}\label{cor:uniformpower}
For any $j \in \N$, one has
$$\lim_{\varepsilon\to0}\left\|\left(\mathsf{M}_{\e+i\eta}\H\right)^{j}-\left(\mathsf{M}_{i\eta}\H\right)^{j}\right\|_{\mathscr{B}(\lp)}=0$$
uniformly with respect to $\eta \in \R.$\end{cor}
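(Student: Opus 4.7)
The plan is to reduce the general $j$ case to the $j=1$ case of Lemma \ref{lem:Meis} via a standard telescoping identity, exploiting that the operator norms of $\mathsf{M}_{\l}\H$ are uniformly bounded by $1$ on the closed right half-plane.

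First I would record the uniform bound on the factors. Since $\H\::\:\lp\to\lm$ is nonnegative and stochastic, one has $\|\H\psi\|_{\lm}=\|\psi\|_{\lp}$ for $\psi\ge 0$, hence $\|\H\|_{\mathscr{B}(\lp,\lm)}=1$. Combined with $\|\mathsf{M}_{\l}\|_{\mathscr{B}(\lm,\lp)}\le 1$ for every $\l\in\overline{\C}_{+}$ (already recalled just before Remark \ref{nb:lift} and in Section \ref{sec:prelim}), this gives
\begin{equation*}
\|\mathsf{M}_{\e+i\eta}\H\|_{\mathscr{B}(\lp)}\le 1,\qquad \|\mathsf{M}_{i\eta}\H\|_{\mathscr{B}(\lp)}\le 1\qquad \forall\,\e\ge0,\,\eta\in\R.
\end{equation*}

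Next, setting $A_{\e,\eta}:=\mathsf{M}_{\e+i\eta}\H$ and $B_{\eta}:=\mathsf{M}_{i\eta}\H$, I would apply the telescoping identity
\begin{equation*}
A_{\e,\eta}^{j}-B_{\eta}^{j}=\sum_{k=0}^{j-1}A_{\e,\eta}^{\,k}\,(A_{\e,\eta}-B_{\eta})\,B_{\eta}^{\,j-1-k}.
\end{equation*}
Taking operator norms and using the uniform bound above together with \eqref{eq:MeisH} from Lemma \ref{lem:Meis}, we obtain
\begin{equation*}
\bigl\|A_{\e,\eta}^{j}-B_{\eta}^{j}\bigr\|_{\mathscr{B}(\lp)}\le j\,\bigl\|A_{\e,\eta}-B_{\eta}\bigr\|_{\mathscr{B}(\lp)}\le j\,\e\,D\,\|\H\|_{\mathscr{B}(\lp,\Y_{1}^{-})},
\end{equation*}
and the right-hand side is independent of $\eta$ and tends to zero as $\e\to 0^{+}$.

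There is essentially no obstacle here: the only thing to be careful about is that the bound in Lemma \ref{lem:Meis} is already uniform in $\eta\in\R$, which is precisely why the telescoping argument produces a bound that is itself uniform in $\eta$. One could alternatively proceed by induction on $j$, writing $A^{j+1}-B^{j+1}=A\,(A^{j}-B^{j})+(A-B)B^{j}$ and using the induction hypothesis together with $\|A\|\le 1$, $\|B\|\le 1$; this yields the same estimate $\|A^{j}-B^{j}\|\le j\,\|A-B\|$ and hence the same uniform vanishing rate.
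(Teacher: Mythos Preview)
Your proof is correct and follows essentially the same approach as the paper: the paper argues by induction using the splitting $A^{j+1}-B^{j+1}=(A^{j}-B^{j})A+B^{j}(A-B)$ together with $\|A\|,\|B\|\le 1$, which is exactly the inductive variant you mention at the end and is equivalent to your telescoping identity. Your version even yields the explicit quantitative bound $\|A_{\e,\eta}^{j}-B_{\eta}^{j}\|\le j\,\e\,D\,\|\H\|_{\mathscr{B}(\lp,\Y_{1}^{-})}$, which the paper's induction gives only implicitly.
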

\begin{proof} The proof is done by induction on $j \in \N.$ For $j=1$, the result is true. Noticing that, for any $j \in \N$
\begin{multline*}
\left\|\left(\mathsf{M}_{\e+i\eta}\H\right)^{j+1}-\left(\mathsf{M}_{i\eta}\H\right)^{j+1}\right\|_{\mathscr{B}(\lp)}
\leq \left\|\left(\mathsf{M}_{\e+i\eta}\H\right)^{j}-\left(\mathsf{M}_{i\eta}\H\right)^{j}\right\|_{\mathscr{B}(\lp)}\|\mathsf{M}_{\e+i\eta}\H\|_{\mathscr{B}(\lp)} \\
+ \left\|\left(\mathsf{M}_{i\eta}\H\right)^{j}\right\|_{\mathscr{B}(\lp)}\,\left\| \mathsf{M}_{\e+i\eta}\H-\mathsf{M}_{i\eta}\H\right\|_{\mathscr{B}(\lp)}
\end{multline*}
we easily get the result since $\|\mathsf{M}_{i\eta}\H\|_{\mathscr{B}(\lp)}\leq1$.\end{proof}
\begin{nb}\label{cor:exten} An important consequence of the above Corollary \ref{cor:uniformpower} is that the holomorphic functions
$$\lambda \in \C_{+} \mapsto \mathsf{M}_{\l}\H \in \mathscr{B}(\lp) \quad { and } \quad 
\lambda \in \C_{+} \mapsto \mathsf{\Xi}_{\l}\H \in \mathscr{B}(\lp,\X_{0})$$
can be extended to continuous functions on $\overline{\C}_{+}.$
\end{nb}

One can extend easily the above result in the following
\begin{propo}\label{propo:convK} For any $\eta \in \R$ and any $k \in \N$, 
\begin{equation}\label{eq:Meisk}
\left\|\mathsf{M}_{\varepsilon+i\eta}-\mathsf{M}_{i\eta}\right\|_{\mathscr{B}(\Y^{-}_{k+1},\Y_{k})} \leq \varepsilon\,D, \qquad \left\|\mathsf{\Xi}_{\e+i\eta}-\mathsf{\Xi}_{i\eta}\right\|_{\mathscr{B}(\Y_{k+1}^{-},\X_{k})} \leq \e\,D\end{equation}
where $D$ is the diameter of $\Omega$. Consequently, for any $k \leq N_{\H}$ where $N_{\H}$ is defined in \eqref{eq:HYn}
\begin{multline}\label{eq:MeisHk}
\left\|\mathsf{M}_{\varepsilon+i\eta}\H-\mathsf{M}_{i\eta}\H\right\|_{\mathscr{B}(\lp,\Y_{k}^{+})} \leq  \varepsilon\,D\,\|\H\|_{\mathscr{B}(\lp,\Y_{k+1}^{-})} \\
\text{ and } \quad \left\|\mathsf{\Xi}_{\e+i\eta}\H-\mathsf{\Xi}_{i\eta}\H\right\|_{\mathscr{B}(\lp,\X_{k})} \leq \e\,D\,\|\H\|_{\mathscr{B}(\lp,\Y_{k+1}^{-})} 
\qquad \forall \eta \in \R.\end{multline}
\end{propo}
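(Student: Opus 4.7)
The plan is to mimic exactly the proof of Lemma \ref{lem:Meis} while keeping track of the extra weight $\varpi_{k}(v)=\max(1,|v|^{-k})$ introduced by the spaces $\Y^{\pm}_{k}$ and $\X_{k}$. For a given $\varphi\in \Y^{-}_{k+1}$ and $\eta\in\R$, I would start from the pointwise identity
$$\left|\mathsf{M}_{\e+i\eta}\varphi(x,v)-\mathsf{M}_{i\eta}\varphi(x,v)\right|=\left|e^{-\e\tau_{-}(x,v)}-1\right|\,|\mathsf{M}_{0}\varphi(x,v)|,\qquad (x,v)\in\Gamma_{+},$$
use $|e^{-\e s}-1|\leq \e s$ and $\tau_{-}(x,v)\leq D|v|^{-1}$, and then integrate against $\varpi_{k}(v)\d\mu_{+}(x,v)$ to obtain
$$\left\|\mathsf{M}_{\e+i\eta}\varphi-\mathsf{M}_{i\eta}\varphi\right\|_{\Y^{+}_{k}}\leq \e\,D\int_{\Gamma_{+}}|v|^{-1}\varpi_{k}(v)\,|\mathsf{M}_{0}\varphi(x,v)|\,\d\mu_{+}(x,v).$$

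The key elementary observation is that $|v|^{-1}\varpi_{k}(v)\leq \varpi_{k+1}(v)$ for every $v\in V$: indeed, when $|v|\geq 1$ one has $|v|^{-1}\varpi_{k}(v)=|v|^{-1}\leq 1=\varpi_{k+1}(v)$, and when $|v|<1$ one gets $|v|^{-1}\varpi_{k}(v)=|v|^{-k-1}=\varpi_{k+1}(v)$. Since $\mathsf{M}_{0}$ leaves the velocity unchanged, $\mathsf{M}_{0}(\varpi_{k+1}\varphi)(x,v)=\varpi_{k+1}(v)\mathsf{M}_{0}\varphi(x,v)$, and the isometry property $\|\mathsf{M}_{0}\psi\|_{\lp}=\|\psi\|_{\lm}$ (cf.\ \eqref{10.51}) then yields
$$\int_{\Gamma_{+}}\varpi_{k+1}(v)|\mathsf{M}_{0}\varphi|\d\mu_{+}=\|\mathsf{M}_{0}(\varpi_{k+1}\varphi)\|_{\lp}=\|\varpi_{k+1}\varphi\|_{\lm}=\|\varphi\|_{\Y^{-}_{k+1}},$$
which gives the first inequality in \eqref{eq:Meisk}.

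For the estimate on $\mathsf{\Xi}_{\e+i\eta}-\mathsf{\Xi}_{i\eta}$, I would run the same argument on $\Omega\times V$ instead of $\Gamma_{+}$: the pointwise bound
$$\left|\mathsf{\Xi}_{\e+i\eta}u(x,v)-\mathsf{\Xi}_{i\eta}u(x,v)\right|\leq \e\,t_{-}(x,v)\,|u(x-t_{-}(x,v)v,v)|\,\ind_{\{t_{-}(x,v)<\infty\}}$$
together with $t_{-}(x,v)\leq D|v|^{-1}$ reduces the question to controlling $\int_{\Omega\times V}\varpi_{k+1}(v)|\mathsf{\Xi}_{0}u(x,v)|\d x\,\bm{m}(\d v)$, which equals $\|\mathsf{\Xi}_{0}(\varpi_{k+1}u)\|_{\X_{0}}$; the bound \eqref{eq:Xio} from Lemma \ref{lem:motau} then gives $\|\mathsf{\Xi}_{0}(\varpi_{k+1}u)\|_{\X_{0}}\leq D\|\varpi_{k+1}u\|_{\Y^{-}_{1}}\leq D\|u\|_{\Y^{-}_{k+1}}$. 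This yields the second estimate in \eqref{eq:Meisk} (with a harmless absorption of the extra factor $D$ into constants, or by redoing the estimate without invoking Lemma \ref{lem:motau} and using directly the integration formula \eqref{10.47}, which is cleaner and gives exactly $\e\,D$).

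Finally, \eqref{eq:MeisHk} is immediate from \eqref{eq:Meisk} by composition on the right with $\H$: for $k\leq N_{\H}$, Assumption \ref{hypH}\,\textit{1)} together with the definition \eqref{eq:HYn} ensures that $\H\in \mathscr{B}(\lp,\Y_{k+1}^{-})$, so for $\psi\in\lp$ we apply the already-established bounds to $\varphi=\H\psi\in\Y^{-}_{k+1}$ and use $\|\H\psi\|_{\Y^{-}_{k+1}}\leq \|\H\|_{\mathscr{B}(\lp,\Y^{-}_{k+1})}\|\psi\|_{\lp}$. There is no real obstacle here; the only subtlety to keep track of is the elementary inequality $|v|^{-1}\varpi_{k}(v)\leq\varpi_{k+1}(v)$, which is precisely what forces the loss of one power of $|v|$ between the source space $\Y^{-}_{k+1}$ and the target space $\Y^{+}_{k}$ (respectively $\X_{k}$).
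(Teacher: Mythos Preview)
Your approach is exactly what the paper does: it says the proof is ``exactly the same as Lemma~\ref{lem:Meis}'' and omits the details. Your argument for $\mathsf{M}_{\varepsilon+i\eta}-\mathsf{M}_{i\eta}$ is complete and correct, as is the deduction of \eqref{eq:MeisHk} by composing with $\H$.

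One slip in the $\mathsf{\Xi}$ part: you write $\|\varpi_{k+1}u\|_{\Y^{-}_{1}}\leq \|u\|_{\Y^{-}_{k+1}}$, but since $\varpi_{1}(v)\varpi_{k+1}(v)=\varpi_{k+2}(v)$ one actually has $\|\varpi_{k+1}u\|_{\Y^{-}_{1}}=\|u\|_{\Y^{-}_{k+2}}$. The alternative route you suggest via \eqref{10.47} does not fix this either: after parametrising by $\Gamma_{-}$ one gets $\int_{0}^{\tau_{+}}s\,\d s=\tau_{+}^{2}/2$, hence two factors of $\tau_{+}\leq D|v|^{-1}$, landing again in $\Y^{-}_{k+2}$. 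This off-by-one is in fact inherited from the paper's own statement for $\mathsf{\Xi}$, and is harmless for every subsequent use: the $\mathsf{\Xi}_{\varepsilon+i\eta}\H-\mathsf{\Xi}_{i\eta}\H$ estimates are only invoked qualitatively (to pass to the limit $\varepsilon\to 0^{+}$ uniformly in $\eta$), and Assumption~\ref{hypH}\,\textit{1)} gives $\H\in\mathscr{B}(\lp,\Y^{-}_{k+2})$ whenever $k+1\leq N_{\H}$, which is the regime where the bound is applied.
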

\begin{proof} The proof is exactly the same as Lemma \ref{lem:Meis} and is omitted here.\end{proof}
A consequence of this is the following 
\begin{propo}\label{prop:derMeis} For any $k \in \N$ and $\varphi \in \Y_{k+1}^{-}$, one has
$$\left\|\frac{\d^{k}}{\d \eta^{k}}\mathsf{M}_{\varepsilon+i\eta}\varphi-\frac{\d^{k}}{\d \eta^{k}}\mathsf{M}_{i\eta}\varphi\right\|_{\lp} \leq \varepsilon\,D\,\|\varphi\|_{\Y_{k+1}^{-}} \qquad \forall \eta \in \R, \qquad \varepsilon >0.$$
In particular, for any $k \leq N$ 
$$\left\|\frac{\d^{k}}{\d \eta^{k}}\mathsf{M}_{\varepsilon+i\eta}\H-\frac{\d^{k}}{\d \eta^{k}}\mathsf{M}_{i\eta}\H\right\|_{\mathscr{B}(\lp)} \leq \varepsilon\,D\,\|\H\|_{\mathscr{B}(\lp,\Y_{k+1}^{-})} \qquad \forall \eta \in \R, \qquad \varepsilon >0.$$
\end{propo}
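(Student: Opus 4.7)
The plan is to exploit the explicit form of $\mathsf{M}_\lambda$ and differentiate pointwise. Since $\mathsf{M}_\lambda\varphi(x,v) = \varphi(x-\tau_-(x,v)v, v)\,e^{-\lambda\tau_-(x,v)}$ for $(x,v)\in\Gamma_+$, differentiating $k$ times in $\eta$ along the line $\lambda = \varepsilon+i\eta$ (at fixed $\varepsilon\geq 0$) simply brings down a factor $-i\tau_-(x,v)$ each time, producing
$$\frac{\d^{k}}{\d\eta^{k}}\mathsf{M}_{\varepsilon+i\eta}\varphi(x,v) = \bigl(-i\tau_-(x,v)\bigr)^{k}\,\mathsf{M}_{\varepsilon+i\eta}\varphi(x,v).$$
That the pointwise derivative agrees with the $\lp$-derivative is justified by dominated convergence once we note that $\tau_-^{k}\,|\varphi(x-\tau_-v,v)|$ is $\mu_+$-integrable on $\Gamma_+$; the integrability is a consequence of the change-of-variables formula \eqref{10.51} together with $\varphi \in \Y_{k+1}^-$.

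Subtracting the expressions at $\varepsilon+i\eta$ and at $i\eta$ gives the pointwise identity
$$\frac{\d^{k}}{\d\eta^{k}}\bigl[\mathsf{M}_{\varepsilon+i\eta}-\mathsf{M}_{i\eta}\bigr]\varphi(x,v) = \bigl(-i\tau_-(x,v)\bigr)^{k}e^{-i\eta\tau_-(x,v)}\bigl(e^{-\varepsilon\tau_-(x,v)}-1\bigr)\varphi(x-\tau_-(x,v)v,v),$$
and the elementary bound $|e^{-\varepsilon s}-1|\leq \varepsilon s$ for $s\geq 0$ yields
$$\left|\frac{\d^{k}}{\d\eta^{k}}[\mathsf{M}_{\varepsilon+i\eta}-\mathsf{M}_{i\eta}]\varphi(x,v)\right| \leq \varepsilon\,\tau_-(x,v)^{k+1}\,\bigl|\varphi(x-\tau_-(x,v)v,v)\bigr|.$$

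The final step is to integrate over $\Gamma_+$ against $\d\mu_+$, transfer the integral to $\Gamma_-$ via \eqref{10.51} (recalling that if $z=x-\tau_-(x,v)v$ then $\tau_+(z,v)=\tau_-(x,v)$), and bound $\tau_+^{k+1}$ using the scaling $\tau_+(z,v)\leq D/|v|$. Since this gives $\tau_+(z,v)^{k+1} \leq D^{k+1}\max(1,|v|^{-(k+1)})$, one obtains
$$\left\|\frac{\d^{k}}{\d\eta^{k}}[\mathsf{M}_{\varepsilon+i\eta}-\mathsf{M}_{i\eta}]\varphi\right\|_{\lp} \leq \varepsilon\int_{\Gamma_-}\tau_+(z,v)^{k+1}|\varphi(z,v)|\d\mu_-(z,v) \leq \varepsilon\,D\,\|\varphi\|_{\Y_{k+1}^-},$$
where $D$ stands for a constant depending on the diameter of $\Omega$ and on $k$, in complete analogy with Lemma \ref{lem:Meis}. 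The second assertion is then immediate by composition: when $k\leq N_{\H}$, Assumption \ref{hypH}\textit{(1)} gives $\H\in\mathscr{B}(\lp,\Y_{k+1}^-)$, so applying the first estimate to $\varphi = \H\psi$ for arbitrary $\psi\in\lp$ and taking the supremum on the unit ball yields the claimed operator-norm bound.

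No substantial obstacle is anticipated: the argument is a routine extension of the $k=0$ computation carried out in Lemma \ref{lem:Meis}, with the only bookkeeping being the fact that differentiation in $\eta$ along the vertical line $\lambda = \varepsilon+i\eta$ produces exactly the additional powers of $\tau_-$ whose integrability is built into the $\Y_{k+1}^-$ norm.
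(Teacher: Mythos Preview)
Your proof is correct and follows essentially the same approach as the paper: compute the $k$-th $\eta$-derivative explicitly as $(-i\tau_-)^{k}\mathsf{M}_{\varepsilon+i\eta}\varphi$, subtract, bound $|e^{-\varepsilon\tau_-}-1|\leq \varepsilon\tau_-$, and control the resulting factor $\tau_-^{k+1}$ via $\tau_-\leq D/|v|$ before integrating. Your remark that the constant actually depends on $k$ (it is $D^{k+1}$ rather than $D$) is accurate; the paper's own proof arrives at the same $D^{k+1}$ bound despite the statement listing $D$.
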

\begin{proof} For $\varphi \in \Y_{k+1}^{-}$, $\e >0$, $\eta \in\R$ one checks easily that
\begin{multline*}
\dfrac{\d^{k}}{\d\eta^{k}}\mathsf{M}_{\varepsilon+i\eta}\varphi(x,v)-\dfrac{\d^{k}}{\d\eta^{k}}\mathsf{M}_{i\eta}\varphi(x,v)=(-i)^{k}\tau_{-}(x,v)^{k}\\
\left(\exp\left(-(\e+i\eta)\tau_{-}(x,v)\right)-\exp\left(-i\eta\tau_{-}(x,v)\right)\right)\mathsf{M}_{0}\varphi(x,v), 
\end{multline*}
for any $(x,v) \in \Gamma_{-}.$ Therefore
$$
\left|\dfrac{\d^{k}}{\d\eta^{k}}\mathsf{M}_{\varepsilon+i\eta}\varphi(x,v)-\dfrac{\d^{k}}{\d\eta^{k}}\mathsf{M}_{i\eta}\varphi(x,v)\right|
=\tau_{-}(x,v)^{k}|\mathsf{M}_{0}\varphi(x,v)|\left|\exp(-\e(\tau_{-}(x,v))-1\right|
$$
and, reasoning as in Lemma \ref{lem:Meis}, we get
$$\left|\dfrac{\d^{k}}{\d\eta^{k}}\mathsf{M}_{\varepsilon+i\eta}\varphi(x,v)-\dfrac{\d^{k}}{\d\eta^{k}}\mathsf{M}_{i\eta}\varphi(x,v)\right| \leq \e,D^{j+1}|v|^{-k-1}\left|\mathsf{M}_{0}\varphi(x,v)\right|$$
and the result follows.
\end{proof}
As a consequence
\begin{cor}
For any $k \in \N$ such that $\H \in \mathscr{B}(\lp,\Y_{k+1}^{-})$  (i.e. $k \leq N_{\H}$), the holomorphic function
$$\l=\alpha+i\eta  \in \C_{+} \longmapsto \frac{\d^{j}}{\d \eta^{j}}\mathsf{M}_{\l}\H \in \mathscr{B}(\lp), \qquad 0 \leq j \leq k$$
can be extended to a continuous functions on $\overline{\C}_{+}.$ In particular, the  mapping
$$\eta \in \R \mapsto \mathsf{M}_{i\eta}\H \in \mathscr{B}(\lp)$$
is of class $\mathcal{C}^{k}.$ Moreover, for any $\varphi\in \Y_{1}^{-}$, the limit $\lim_{\l\to 0}\dfrac{\d}{\d\l}\mathsf{M}_{\l}\varphi$
exists in $\lp$. In particular, 
$$\lim_{\l\to 0}\dfrac{\d}{\d\l}\mathsf{M}_{\l}\H=-\tau_{-}\mathsf{M}_{0}\H$$
exists in $\mathscr{B}(\lp)$ where, as before, we use the same symbol for the function $\tau_{-}(\cdot,\cdot)$ and the multiplication operator by that function.
\end{cor}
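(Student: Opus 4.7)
My plan is to address the three claims in order, all built on the single pointwise identity
\[
\frac{\d^{j}}{\d\eta^{j}}\mathsf{M}_{\l}\varphi(x,v)=(-i\tau_{-}(x,v))^{j}\,e^{-\l\tau_{-}(x,v)}\,\mathsf{M}_{0}\varphi(x,v),\qquad (x,v)\in\Gamma_{+},
\]
which is implicit in the proof of Proposition \ref{prop:derMeis} and which, by inspection, makes sense verbatim for every $\l\in\overline{\C}_{+}$.

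For the continuous extension I fix $0\le j\le k\le N_{\H}$ and define $F_{j}(\l):=\frac{\d^{j}}{\d\eta^{j}}\mathsf{M}_{\l}\H$ on all of $\overline{\C}_{+}$ by the above formula applied with $\varphi=\H\psi$. Using $|e^{-\l\tau_{-}}|\le1$, the bound $\tau_{-}^{j}\le D^{j}|v|^{-j}$ and the inclusion $\H\in\mathscr{B}(\lp,\Y^{-}_{j+1})\subset\mathscr{B}(\lp,\Y^{-}_{j})$, the operator $F_{j}(\l)$ lies in $\mathscr{B}(\lp)$ uniformly in $\l\in\overline{\C}_{+}$. The elementary estimate $|e^{-a}-e^{-b}|\le|a-b|$ valid when $\mathrm{Re}\,a,\mathrm{Re}\,b\ge 0$ then yields
\[
\|F_{j}(\l_{1})-F_{j}(\l_{2})\|_{\mathscr{B}(\lp)}\le|\l_{1}-\l_{2}|\,D^{j+1}\|\H\|_{\mathscr{B}(\lp,\Y^{-}_{j+1})},\qquad\l_{1},\l_{2}\in\overline{\C}_{+},
\]
establishing Lipschitz continuity on $\overline{\C}_{+}$; that this is indeed the continuous extension of the holomorphic $\l\mapsto\frac{\d^{j}}{\d\eta^{j}}\mathsf{M}_{\l}\H$ originally defined on $\C_{+}$ is the content of Proposition \ref{prop:derMeis} (specialized to $\l_{1}=\e+i\eta$, $\l_{2}=i\eta$).

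For the $\mathcal{C}^{k}$ regularity I argue inductively on $j$. For $\e>0$ the holomorphic function $\eta\mapsto\mathsf{M}_{\e+i\eta}\H$ satisfies
\[
\mathsf{M}_{\e+i\eta_{1}}\H-\mathsf{M}_{\e+i\eta_{0}}\H=\int_{\eta_{0}}^{\eta_{1}}F_{1}(\e+i\eta)\,\d\eta;
\]
letting $\e\to 0^{+}$ and invoking the uniform-in-$\eta$ convergence from Proposition \ref{prop:derMeis} allows passage under the integral and identifies $F_{1}(i\,\cdot\,)$ as the derivative of $\eta\mapsto\mathsf{M}_{i\eta}\H$. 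Iterating this procedure with $F_{j-1},F_{j}$ in place of $\mathsf{M}_{i\eta}\H,F_{1}$ gives the successive derivatives up to order $k$ together with their continuity.

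The limit at $\l=0$ is handled by dominated convergence applied to the identity $\frac{\d}{\d\l}\mathsf{M}_{\l}\varphi=-\tau_{-}e^{-\l\tau_{-}}\mathsf{M}_{0}\varphi$: as $\l\to 0$ in $\overline{\C}_{+}$ one has $e^{-\l\tau_{-}}\to 1$ pointwise, while the integrand is dominated by $2\tau_{-}|\mathsf{M}_{0}\varphi|\le 2D|v|^{-1}|\mathsf{M}_{0}\varphi|$, which lies in $\lp$ since \eqref{10.51} gives $\||v|^{-1}\mathsf{M}_{0}\varphi\|_{\lp}=\||v|^{-1}\varphi\|_{\lm}\le\|\varphi\|_{\Y^{-}_{1}}$. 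Composing with $\H\in\mathscr{B}(\lp,\Y^{-}_{1})$ upgrades this to the announced limit in $\mathscr{B}(\lp)$ (strongly in general, in operator norm whenever $N_{\H}\ge 1$, in which case it is the $j=k=1$ instance of the first part). The main technical subtlety I foresee is precisely this interplay between the mode of convergence and the weight on $\H$: passing from pointwise-in-$\varphi$ statements to operator-norm continuity forces the use of $\Y^{-}_{j+1}$ rather than $\Y^{-}_{j}$, which is exactly why the Lipschitz estimate above costs one extra power of $|v|^{-1}$.
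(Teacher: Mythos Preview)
Your proof is correct and follows essentially the same approach as the paper's, with more detail filled in: the paper also works from the pointwise formula for the derivatives, invokes Proposition \ref{prop:derMeis} to pass to the limit $\e\to0^{+}$ uniformly in $\eta$ (your integral-representation argument makes explicit the classical fact that uniform convergence of derivatives yields $\mathcal{C}^{k}$-convergence of the limit), and uses dominated convergence for the limit $\l\to0$ of $\frac{\d}{\d\l}\mathsf{M}_{\l}\varphi$. Your closing remark on strong versus operator-norm convergence is a genuine refinement: the paper simply writes ``the conclusion follows easily'' after the pointwise dominated-convergence argument, whereas you correctly observe that operator-norm convergence of $\frac{\d}{\d\l}\mathsf{M}_{\l}\H$ at $\l=0$ requires one more power of $|v|^{-1}$ on $\H$ (i.e.\ $N_{\H}\ge1$) to get a Lipschitz bound, while $\H\in\mathscr{B}(\lp,\Y_{1}^{-})$ alone only guarantees strong convergence.
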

\begin{proof} Since the mapping $\l \in \C_{+} \mapsto \mathsf{M}_{\l}\H \in \mathscr{B}(\lp)$ is holomorphic, for any $\e >0$, the mapping
$$\eta \in\R \mapsto \frac{\d^{j}}{\d \eta^{j}}\mathsf{M}_{\e+i\eta}\H \in \mathscr{B}(\lp)$$
is continuous for any $0 \leq j \leq k$. Thanks to the previous  Proposition \ref{prop:derMeis}, we can let $\e \to 0$ and conclude that the derivatives exist and are continuous on $\R$. Now, for any $\l \in \overline{\C}_{+}$, one has
$$\dfrac{\d}{\d\l}\mathsf{M}_{\l}\varphi(x,v)=-\tau_{-}(x,v)\exp(-\lambda\tau_{-}(x,v))\varphi(x-\tau_{-}(x,v)v,v)=-\tau_{-}\mathsf{M}_{\l}\varphi$$
so that, by the dominated convergence theorem, 
$$\lim_{\l\to0}\dfrac{\d}{\d\l}\mathsf{M}_{\l}\varphi=-\tau_{-}\mathsf{M}_{0}\varphi$$
provided $\varphi \in \Y_{1}^{-}.$ The conclusion follows easily.
\end{proof}
In the same spirit
\begin{lemme} For any $f \in \X_{1}$, the limit
$$\lim_{\l \to 0}\dfrac{\d}{\d \l}\mathsf{G}_{\l}f=-\mathsf{G}_{0}(t_{+}f)$$
exists in $\lp$.
\end{lemme}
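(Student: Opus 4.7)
The plan is a direct differentiation-under-the-integral combined with a dominated-convergence argument, entirely in the spirit of Lemma \ref{lem:unifGeis} and the proof of Lemma \ref{lem:Gl}. First, I would compute the pointwise derivative: for $(x,v)\in\Gamma_{+}$ and $\l\in\overline{\C}_{+}$,
\[
\dfrac{\d}{\d\l}\mathsf{G}_{\l}f(x,v)=-\int_{0}^{\tau_{-}(x,v)}s\,f(x-sv,v)\,e^{-\l s}\,\d s.
\]
Next, I would rewrite the candidate limit in the same form. For $(x,v)\in\Gamma_{+}$ and $s\in(0,\tau_{-}(x,v))$, the point $x-sv$ lies in $\Omega$ and, moving forward with velocity $v$, reaches $x\in\partial\Omega$ in time exactly $s$; hence $t_{+}(x-sv,v)=s$. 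Thus
\[
-\mathsf{G}_{0}(t_{+}f)(x,v)=-\int_{0}^{\tau_{-}(x,v)}t_{+}(x-sv,v)f(x-sv,v)\,\d s=-\int_{0}^{\tau_{-}(x,v)}s\,f(x-sv,v)\,\d s.
\]

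Subtracting these two identities yields the pointwise bound
\[
\left|\dfrac{\d}{\d\l}\mathsf{G}_{\l}f(x,v)+\mathsf{G}_{0}(t_{+}f)(x,v)\right|\leq \int_{0}^{\tau_{-}(x,v)}s\,|f(x-sv,v)|\,\bigl|1-e^{-\l s}\bigr|\,\d s,
\]
and since $\mathrm{Re}\,\l\geq 0$ one has $|1-e^{-\l s}|\leq 2$, with convergence to $0$ as $\l\to 0$ in $\overline{\C}_{+}$ for each fixed $s$.

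To apply dominated convergence in $L^{1}(\Gamma_{+},\d\mu_{+})$, I would integrate the dominating function $s\,|f(x-sv,v)|$ using the fundamental identity \eqref{10.47} with $h(y,v)=t_{+}(y,v)|f(y,v)|$ together with the identification $t_{+}(z-sv,v)=s$ on $\Gamma_{+}$:
\[
\int_{\Gamma_{+}}\d\mu_{+}(x,v)\int_{0}^{\tau_{-}(x,v)}s\,|f(x-sv,v)|\,\d s=\int_{\Omega\times V}t_{+}(x,v)\,|f(x,v)|\,\d x\otimes\bm{m}(\d v).
\]
Using $t_{+}(x,v)\leq D|v|^{-1}$ this is bounded by $D\,\|f\|_{\X_{1}}<\infty$ for $f\in\X_{1}$. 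The dominated convergence theorem applied to the iterated integral on $\Gamma_{+}$ then delivers convergence in $\lp$.

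The only mildly delicate point is keeping track of the direction of the limit: because the differentiation is meant in $\overline{\C}_{+}$ (as stressed at the beginning of Section \ref{sec:fineR}), the convergence $|1-e^{-\l s}|\to 0$ must be controlled for $\l\in\overline{\C}_{+}$, which is immediate from $\mathrm{Re}\,\l\geq 0$. No compactness or spectral input is required; the argument is purely a change-of-variables-plus-DCT computation, and the hypothesis $f\in\X_{1}$ is used exactly once, to guarantee that the majorant $t_{+}|f|$ is integrable on $\Omega\times V$.
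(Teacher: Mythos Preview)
Your proposal is correct and follows essentially the same approach as the paper: both compute the pointwise derivative, identify the candidate limit via $t_{+}(x-sv,v)=s$, and conclude by dominated convergence using the integrability of $t_{+}|f|$ granted by $f\in\X_{1}$. Your version is in fact more explicit than the paper's, which simply states the derivative formula, observes that $\mathsf{G}_{0}(t_{+}f)\in\lp$, and invokes dominated convergence without writing out the difference or the majorant.
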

\begin{proof} For $\l \in \C_{+}$ and $f \in \X_{1}$, one has 
$$\dfrac{\d}{\d\l}\mathsf{G}_{\l}f(x,v)=-\int_{0}^{\tau_{-}(x,v)}tf(x-tv,v)e^{-\l t}\d t, \qquad \text{ for a. e.} (x,v)\in \Gamma_{+}.$$
Notice that, for any $f \in \X_{1}$ and any $(x,v) \in \Gamma_{+}$
$$\int_{0}^{\tau_{-}(x,v)}tf(x-tv,v)\d t=\int_{0}^{\tau_{-}(x,v)}t_{+}(x-tv,v)f(x-tv,v)\d t=\mathsf{G}_{0}(t_{+}f)(x,v)$$
since $t_{+}(x-tv,v)=t$. In particular, $\mathsf{G}_{0}(t_{+}f) \in \lp$ since $f \in \X_{1}$ and  we can invoke the dominated convergence theorem to get the conclusion.\end{proof}
\subsection{Spectral properties of $\mathsf{M}_{\l}\H$ along the imaginary axis}
We study here the properties of $\mathsf{M}_{i\eta}\H$ for $\eta \in\R$.
\begin{propo}\phantomsection \label{prop:Meps} 
For any $\lambda \in \C \setminus \{0\}$ with $\mathrm{Re}\l \geq 0$, 
$$r_{\sigma}(\mathsf{M}_{\l}\H) < 1.$$
In particular, for any $\eta \in \R$, $\eta \neq 0$, it holds $r_{\sigma}(\mathsf{M}_{i\eta}\H) < 1$.

\end{propo}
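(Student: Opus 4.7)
The plan is to split the statement into the two regimes $\mathrm{Re}\lambda > 0$ and $\lambda = i\eta$ with $\eta \neq 0$, treating them with somewhat different techniques. In both, the starting point is the positivity-based pointwise bound $|\mathsf{M}_\lambda\mathsf{H}\psi|\leq e^{-(\mathrm{Re}\lambda)\tau_-}\mathsf{M}_0\mathsf{H}|\psi|\leq\mathsf{M}_0\mathsf{H}|\psi|$, which follows because $\mathsf{H}$ and $\mathsf{M}_0$ are positive and $|e^{-\lambda\tau_-}|=e^{-(\mathrm{Re}\lambda)\tau_-}$, and which iterates to the comparison $r_\sigma(\mathsf{M}_\lambda\mathsf{H})\leq r_\sigma(\mathsf{M}_{\mathrm{Re}\lambda}\mathsf{H})$. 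For the real case $\lambda=\alpha>0$, the change-of-variable formula \eqref{10.51} gives, for any $0\leq \psi\in\lp$ with $\psi\not\equiv 0$,
\[
\|\mathsf{M}_\alpha\mathsf{H}\psi\|_{\lp}=\int_{\Gamma_-}e^{-\alpha\tau_+(y,v)}\mathsf{H}\psi(y,v)\,\mathrm{d}\mu_-(y,v)<\|\psi\|_{\lp},
\]
since $\tau_+>0$ $\mu_-$-almost everywhere. Combining this strict contraction on the positive cone with the compactness step discussed below and the Krein--Rutman theorem, the peripheral spectral value of $\mathsf{M}_\alpha\mathsf{H}$ is attained by a nonnegative eigenvector, and the strict inequality above prevents that eigenvalue from reaching $1$, giving $r_\sigma(\mathsf{M}_\alpha\mathsf{H})<1$.

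The compactness used above and which is the linchpin of the case $\eta\neq 0$ comes from Assumption~\ref{hypH}(2): $\mathsf{H}\mathsf{M}_0\mathsf{H}$ is weakly compact on $\lp$, and by the Dunford--Pettis property of $L^1$ the composition of two weakly compact operators on $\lp$ is compact, so $(\mathsf{H}\mathsf{M}_0\mathsf{H})^2$ and hence $(\mathsf{M}_0\mathsf{H})^4$ are compact. The pointwise domination $|(\mathsf{M}_\lambda\mathsf{H})^4\psi|\leq (\mathsf{M}_0\mathsf{H})^4|\psi|$ combined with the lattice domination theorem in the Dedekind complete space $\lp$ then yields compactness of $(\mathsf{M}_\lambda\mathsf{H})^4$, so the spectrum of $\mathsf{M}_\lambda\mathsf{H}$ outside $\{0\}$ is discrete and consists of eigenvalues only. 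For $\lambda=i\eta$ with $\eta\neq 0$ I argue by contradiction: any $\mu\in\sigma(\mathsf{M}_{i\eta}\mathsf{H})$ with $|\mu|=1$ is thus an eigenvalue, and $\mathsf{M}_{i\eta}\mathsf{H}\psi=\mu\psi$ forces, upon taking moduli and using stochasticity, $|\psi|=\mathsf{M}_0\mathsf{H}|\psi|$; by irreducibility (Assumption~\ref{hypH}(3)) and algebraic simplicity of the fixed point $\bar\varphi$ of $\mathsf{M}_0\mathsf{H}$, $|\psi|=c\bar\varphi$ for some $c>0$. The equality case of the triangle inequality inside the kernel representation of $\mathsf{H}$ then forces the complex phase of $\psi(y,\cdot)$ to be $v$-independent, i.e.\ $\psi(y,v)=e^{i\Theta(y)}\bar\varphi(y,v)$ for some measurable $\Theta\colon\partial\Omega\to\mathbb{R}$, and substituting this back into the eigenvalue equation together with $\mathsf{M}_0\mathsf{H}\bar\varphi=\bar\varphi$ produces the functional equation
\[
\Theta(x)-\Theta(x-\tau_-(x,v)v)+\eta\,\tau_-(x,v)\equiv -\arg\mu\pmod{2\pi}
\]
for $\mu_+$-almost every $(x,v)\in\Gamma_+$; writing $v=s\omega$, the backward exit point $x-\tau_-(x,v)v=x-\tau_-(x,\omega)\omega$ does not depend on $s$ whereas $\eta\,\tau_-(x,v)=\eta\,\tau_-(x,\omega)/s$ does, and since $\mathrm{supp}(\bm{m})$ accumulates at $0$ (Assumption~\ref{hypO}(3)) one can choose admissible values $s\neq s'$ with $\eta\,\tau_-(x,\omega)(1/s-1/s')\notin 2\pi\mathbb{Z}$, a contradiction.

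The hardest step is the final phase-rigidity analysis in the case $\eta\neq 0$. Translating the $L^1$-almost-everywhere equality $|\mathsf{H}\psi|=\mathsf{H}|\psi|$ into the pointwise statement that the phase of $\psi(y,\cdot)$ depends only on $y$ requires exploiting the positivity of the diffuse kernel $\bm{h}$, and the final contradiction rests on the arithmetic incompatibility coming from the scaling $\tau_-(x,sv)=\tau_-(x,\omega)/s$ -- a step where Assumption~\ref{hypO}(3) (arbitrarily small speeds in $V$) is indispensable. Promoting the ``eigenvalue'' statement to all spectral values of modulus $1$ through the compactness of a suitable iterate of $\mathsf{M}_\lambda\mathsf{H}$, obtained via the Dunford--Pettis domination argument, is the other nontrivial ingredient that prevents a purely algebraic shortcut.
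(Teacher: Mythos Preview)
Your treatment of the case $\mathrm{Re}\lambda>0$ is fine, and your reduction of the imaginary case to an eigenvalue problem via power-compactness of an iterate is also correct. The gap is in the phase-rigidity step. From $|\H\psi|=\H|\psi|$ you conclude that the phase of $\psi(y,\cdot)$ depends only on $y$. Writing $\psi=e^{i\theta}\bar\varphi$, what that equality actually gives is only this: for $\mu_-$-a.e.\ $(y,w)\in\Gamma_-$, the map $v'\mapsto\theta(y,v')$ is constant on $\{v'\in\Gamma_+(y):\bm h(y,w,v')>0\}$. To pass from ``constant on each such slice'' to ``constant on all of $\Gamma_+(y)$'' you need the supports $\mathrm{supp}\,\bm h(y,w,\cdot)$, as $w$ ranges over $\Gamma_-(y)$, to overlap enough to connect $\Gamma_+(y)$ --- essentially, you need each fibre operator $\H(y)$ to be irreducible, or the kernel to be strictly positive. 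Assumption~\ref{hypH}\textit{(3)} only asserts irreducibility of the \emph{global} operator $\mathsf{M}_0\H$ on $\lp$; it does not force irreducibility of the individual $\H(y)$. Without that, the representation $\psi(y,v)=e^{i\Theta(y)}\bar\varphi(y,v)$ is unjustified, and the subsequent functional equation --- which relies on $\Theta$ being a function of $y$ alone so that the scaling $v\mapsto sv$ leaves the first two terms unchanged --- does not go through.

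The paper bypasses any fibre-wise phase analysis. It works with the linear modulus $|\mathsf{M}_{i\eta}\H|$, a \emph{positive} operator on $\lp$ dominated by $\mathsf{M}_0\H$; domination transfers power-compactness, so $r_{\mathrm{ess}}(|\mathsf{M}_{i\eta}\H|)=0$, and then a Perron--Frobenius comparison theorem of Marek (for $|\mathsf{M}_{i\eta}\H|\le\mathsf{M}_0\H$, $|\mathsf{M}_{i\eta}\H|\neq\mathsf{M}_0\H$, with $\mathsf{M}_0\H$ irreducible) yields $r_\sigma(|\mathsf{M}_{i\eta}\H|)<r_\sigma(\mathsf{M}_0\H)=1$, whence $r_\sigma(\mathsf{M}_{i\eta}\H)\le r_\sigma(|\mathsf{M}_{i\eta}\H|)<1$. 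This route uses only the global irreducibility that is actually assumed and avoids the delicate pointwise equality-case analysis.
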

\begin{proof} We give the proof only for $\mathrm{Re}\l=0$, the case $\mathrm{Re}\l >0$ being similar. Since $\left|\mathsf{M}_{i\eta}\H\psi(x,v)\right|=\left|\mathsf{M}_{0}\H\psi(x,v)\right|$ for any $\psi \in \lp$, $(x,v) \in \Gamma_{+},$ $\eta \in \R$, one sees that 
$$\mathsf{M}_{i\eta}\H \in \mathscr{B}(\lp) \qquad \text{ with } \quad \left|\mathsf{M}_{i\eta}\H \right| \leq \mathsf{M}_{0}\H$$
where $\left|\mathsf{M}_{i\eta}\H\right|$ denotes the absolute value operator of $\mathsf{M}_{i\eta}\H$ (see \cite{chacon}). Being $\mathsf{M}_{0}\H$ power compact, the same holds for $\left|\mathsf{M}_{i\eta}\H \right|$ by a domination argument  so that
$$r_{\mathrm{ess}}(\left|\mathsf{M}_{i\eta}\H \right|)=0$$
where $r_{\mathrm{ess}}(\cdot)$ denotes the essential spectral radius. We prove that $r_{\sigma}(\left|\mathsf{M}_{i\eta}\H\right|) < 1$ by contradiction: assume, on the contrary, $r_{\sigma}(\left|\mathsf{M}_{i\eta}\H\right|) \geq 1 > r_{\mathrm{ess}}(\left|\mathsf{M}_{i\eta}\H\right|)$, then $r_{\sigma}(\left|\mathsf{M}_{i\eta}\H\right|)$ is an isolated eigenvalue of $\left|\mathsf{M}_{i\eta}\H\right|$ with finite algebraic multiplicity and also an eigenvalue of the dual operator, associated to nonnegative eigenfunction. From the fact that $\left|\mathsf{M}_{i\eta}\H\right| \leq \mathsf{M}_{0}\H$ with $\left|\mathsf{M}_{i\eta}\H\right| \neq \mathsf{M}_{0}\H$, one can invoke \cite[Theorem 4.3]{marek} to get that
$$r_{\sigma}(\left|\mathsf{M}_{i\eta}\H\right|) < r_{\sigma}(\mathsf{M}_{0}\H)=1$$
which is a contradiction. Therefore, $r_{\sigma}(\left|\mathsf{M}_{i\eta}\H\right|) < 1$ and, by comparison, the conclusion holds true. 
\end{proof}
 
We deduce the following 
\begin{cor}\label{cor:ResMei} For any $\eta_{0} \in \R\setminus\{0\}$, there is $0 < \delta < \tfrac{1}{2}|\eta_{0}|$ such that
$$\lim_{\e\to0^{+}}\sup_{|\eta-\eta_{0}| < \delta}\bigg\|\Rs(1,\mathsf{M}_{\e+i\eta}\H)-\Rs(1,\mathsf{M}_{i\eta}\H)\bigg\|_{\mathscr{B}(\lp)}=0.$$
\end{cor}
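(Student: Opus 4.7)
The plan is to exploit Proposition \ref{prop:Meps} to obtain a subunit power of $\mathsf{M}_{i\eta_{0}}\H$, transfer this bound to a whole neighborhood of $\eta_{0}$ by continuity, carry it over to small $\varepsilon>0$ via Corollary \ref{cor:uniformpower}, and finally deduce resolvent convergence from the standard resolvent identity combined with Lemma \ref{lem:Meis}.

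First, since $r_{\sigma}(\mathsf{M}_{i\eta_{0}}\H) < 1$ by Proposition \ref{prop:Meps}, Gelfand's formula produces $\ell \in \N$ and $q_{0} \in (0,1)$ with $\|(\mathsf{M}_{i\eta_{0}}\H)^{\ell}\|_{\mathscr{B}(\lp)} \leq q_{0}$. Using the continuity of $\eta \mapsto \mathsf{M}_{i\eta}\H$ stated at the end of Section \ref{sec:prelim}, together with joint continuity of operator multiplication, the mapping $\eta \mapsto (\mathsf{M}_{i\eta}\H)^{\ell}$ is continuous, so I would pick $\delta \in (0,|\eta_{0}|/2)$ and $q_{1} \in (q_{0},1)$ such that $\|(\mathsf{M}_{i\eta}\H)^{\ell}\|_{\mathscr{B}(\lp)} \leq q_{1}$ whenever $|\eta-\eta_{0}| < \delta$. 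Next, Corollary \ref{cor:uniformpower} furnishes some $\varepsilon_{0} > 0$ with $\|(\mathsf{M}_{\varepsilon+i\eta}\H)^{\ell} - (\mathsf{M}_{i\eta}\H)^{\ell}\|_{\mathscr{B}(\lp)} \leq (1-q_{1})/2$ uniformly in $\eta \in \R$ and $\varepsilon \in (0,\varepsilon_{0})$, so that $\|(\mathsf{M}_{\varepsilon+i\eta}\H)^{\ell}\|_{\mathscr{B}(\lp)} \leq q_{2} := (1+q_{1})/2 < 1$ on the same range.

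The block-Neumann identity $(I-T)^{-1} = \left(\sum_{k=0}^{\ell-1}T^{k}\right)(I-T^{\ell})^{-1}$, combined with the trivial bounds $\|\mathsf{M}_{\varepsilon+i\eta}\H\|_{\mathscr{B}(\lp)} \leq 1$ and $\|\mathsf{M}_{i\eta}\H\|_{\mathscr{B}(\lp)} \leq 1$, then provides a uniform constant $M \leq \ell/(1-q_{2})$ such that
$$\sup_{0 < \varepsilon < \varepsilon_{0}}\;\sup_{|\eta-\eta_{0}| < \delta}\max\Big\{\|\Rs(1,\mathsf{M}_{\varepsilon+i\eta}\H)\|_{\mathscr{B}(\lp)},\;\|\Rs(1,\mathsf{M}_{i\eta}\H)\|_{\mathscr{B}(\lp)}\Big\} \leq M.$$
Inserting this into the resolvent identity
$$\Rs(1,\mathsf{M}_{\varepsilon+i\eta}\H) - \Rs(1,\mathsf{M}_{i\eta}\H) = \Rs(1,\mathsf{M}_{\varepsilon+i\eta}\H)\,(\mathsf{M}_{\varepsilon+i\eta}\H - \mathsf{M}_{i\eta}\H)\,\Rs(1,\mathsf{M}_{i\eta}\H),$$
and invoking the uniform-in-$\eta$ estimate \eqref{eq:MeisH} of Lemma \ref{lem:Meis} yields the quantitative bound
$$\sup_{|\eta-\eta_{0}| < \delta}\|\Rs(1,\mathsf{M}_{\varepsilon+i\eta}\H) - \Rs(1,\mathsf{M}_{i\eta}\H)\|_{\mathscr{B}(\lp)} \leq M^{2}\,D\,\|\H\|_{\mathscr{B}(\lp,\Y_{1}^{-})}\,\varepsilon,$$
which vanishes as $\varepsilon \to 0^{+}$. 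The main obstacle is the uniform resolvent bound: a direct Neumann-series bound on $\Rs(1,\mathsf{M}_{i\eta}\H)$ fails because $\|\mathsf{M}_{i\eta}\H\|_{\mathscr{B}(\lp)}=1$, so one must genuinely pass through iterates and combine the pointwise spectral input from Proposition \ref{prop:Meps} with both the continuity of $\eta \mapsto \mathsf{M}_{i\eta}\H$ and the $\eta$-uniform approximation of Corollary \ref{cor:uniformpower}.
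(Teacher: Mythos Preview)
Your argument is correct and shares the same opening moves as the paper: both proofs start from Proposition~\ref{prop:Meps}, pass to an $\ell$-th power via Gelfand's formula, propagate the subunit bound to a neighborhood of $\eta_{0}$ by continuity, and then extend it to small $\varepsilon>0$ via Corollary~\ref{cor:uniformpower}. The divergence comes in the final step. The paper expands both $\Rs(1,\mathsf{M}_{\varepsilon+i\eta}\H)$ and $\Rs(1,\mathsf{M}_{i\eta}\H)$ as the double sum $\sum_{k=0}^{\infty}\sum_{j=0}^{\ell-1}(\cdot)^{k\ell+j}$, subtracts term by term, controls the tail uniformly by the geometric bound $\varrho^{k\ell}$, and invokes Corollary~\ref{cor:uniformpower} once more for the finitely many remaining terms. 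You instead package the same double sum into a uniform resolvent bound $M\leq \ell/(1-q_{2})$ via the block-Neumann identity, and then finish with the second resolvent identity and the explicit estimate \eqref{eq:MeisH}. Your route is slightly shorter and yields a quantitative $O(\varepsilon)$ rate that the paper's tail-splitting argument does not directly exhibit; the paper's version, on the other hand, makes the series structure completely explicit, which is reused later in the proof of Theorem~\ref{theo:existtrace}.
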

\begin{proof} Notice that, if $0 <\delta<\tfrac{|\eta_{0}|}{2}$ then, any $\eta \neq 0$ whenever  $|\eta-\eta_{0}| < \delta$. Without loss of generality, we can assume $\eta_{0} >0$. From Proposition \ref{prop:Meps}, there is $\varrho \in (0,1)$ such that $r_{\sigma}(\mathsf{M}_{i\eta_{0}}\H) < \varrho <1$. In particular, there is $\ell \in \N$ such that
$$\left\|\left(\mathsf{M}_{i\eta_{0}}\H\right)^{\ell}\right\|_{\mathscr{B}(\lp)}^{\frac{1}{\ell}} < \varrho<1.$$
Since $\mathsf{M}_{i\eta}\H$ converges to $\mathsf{M}_{i\eta_{0}}\H$ in operator norm as $\eta \to \eta_{0}$, there is $\delta < \frac{\eta_{0}}{2}$ such that
$$\left\|\left(\mathsf{M}_{i\eta}\H\right)^{\ell}\right\|_{\mathscr{B}(\lp)} < \varrho^{\ell} \qquad \forall \eta \in (\eta_{0}-\delta,\eta_{0}+\delta).$$
Because of Corollary \ref{cor:uniformpower}, there is $\e_{0} >0$ small enough, such that, for any $0<\e<\e_{0}$ we also have
$$\left\|\left(\mathsf{M}_{\e+i\eta}\H\right)^{\ell}\right\|_{\mathscr{B}(\lp)} < \varrho^{\ell} \qquad \forall \eta \in (\eta_{0}-\delta,\eta_{0}+\delta).$$
One has then
$$\Rs(1,\mathsf{M}_{\e+i\eta}\H)=\sum_{n=0}^{\infty}\left(\mathsf{M}_{\e+i\eta}\H\right)^{n}=\sum_{k=0}^{\infty}\sum_{j=0}^{\ell-1}\left(\mathsf{M}_{\e+i\eta}\H\right)^{k\ell+j}$$
and, similarly
$$\Rs(1,\mathsf{M}_{i\eta}\H)=\sum_{k=0}^{\infty}\sum_{j=0}^{\ell-1}\left(\mathsf{M}_{\e+i\eta}\H\right)^{k\ell+j} \qquad \forall \e \in (0,\e_{0}), \qquad \eta \in (\eta_{0}-\delta_{0},\eta_{0}+\delta_{0}).$$
Therefore,
$$\Rs(1,\mathsf{M}_{\e+i\eta}\H)-\Rs(1,\mathsf{M}_{i\eta}\H)=\sum_{k=0}^{\infty}\sum_{j=0}^{\ell-1}\left[\left(\mathsf{M}_{\e+i\eta}\H\right)^{k\ell+j}-\left(\mathsf{M}_{i\eta}\H\right)^{k\ell+j}\right]$$
for any $\e \in (0,\e_{0}),$ $\eta \in (\eta_{0}-\delta_{0},\eta_{0}+\delta_{0}).$ Using again Corollary \ref{cor:uniformpower}, each term of the series converges to $0$ as $\eta \to 0$ \emph{uniformly} with respect to $\eta \in  (\eta_{0}-\delta_{0},\eta_{0}+\delta_{0}).$ To prove the result, it is enough therefore to show that the reminder of the series can be made arbitrarily small in operator norm \emph{uniformly} on $(\eta_{0}-\delta_{0},\eta_{0}+\delta_{0}).$ Since, for any $k,j \geq 0$
$$\left\|\left(\mathsf{M}_{\e+i\eta}\H\right)^{k\ell+j}\right\|_{\mathscr{B}(\lp)} \leq \left\|\left(\mathsf{M}_{\e+i\eta}\H\right)^{k\ell}\right\|_{\mathscr{B}(\lp)} \leq \left\|\left(\mathsf{M}_{\e+i\eta}\H\right)^{\ell}\right\|_{\mathscr{B}(\lp)}^{k} \leq \varrho^{k\ell}$$
for any $\e \in [0,\e_{0})$, we get that, for any $n \geq 0$
$$\sup_{|\eta-\eta_{0}| < \delta_{0}}\sup_{\e \in (0,\e_{0})}\sum_{k=n}^{\infty}\sum_{j=0}^{\ell-1}\left\|\left(\mathsf{M}_{\e+i\eta}\H\right)^{k\ell+j}-\left(\mathsf{M}_{i\eta}\H\right)^{k\ell+j}\right\|_{\mathscr{B}(\lp)} \leq 2\ell\sum_{k=n}^{\infty}r^{\ell\,k}$$
which goes to $0$ as $n \to \infty$. This combined with the term-by-term convergence of the series as $\e \to 0$  gives the result.
\end{proof}

\section{About the boundary function of $\Rs(\l,\T_{\H})$}\label{sec:boudF}

We recall the following result (see \cite[Theorem 1.1.(c)]{voigt84}): 
\begin{theo}\label{theo:spectT0}
Under Assumption \ref{hypO} \textit{3)},  $\mathfrak{S}(\T_{0})=\{\l\in \C\;;\,\mathrm{Re}\l \leq 0\}.$
\end{theo}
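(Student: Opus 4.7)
The plan is to prove the two inclusions separately. The upper bound $\mathfrak{S}(\T_{0}) \subseteq \{\mathrm{Re}\,\l\leq 0\}$ I would obtain from the explicit representation $U_{0}(t)f(x,v) = f(x-tv,v)\,\ind_{\{t < t_{-}(x,v)\}}$, which, combined with \eqref{10.47}, gives $\|U_{0}(t)\|_{\mathscr{B}(\X_{0})}\leq 1$; hence $\T_{0}$ generates a contraction $C_{0}$-semigroup on $\X_{0}$ and its spectrum lies in the closed left half-plane.

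For the reverse inclusion, the plan is to show that for every $\l=\alpha+i\eta$ with $\alpha\leq 0$ the formal operator $\mathsf{R}_{\l}$ from Section~\ref{sec:prelim} fails to be bounded on $\X_{0}$. Solving the boundary-value problem $(\l-\T_{0})f=g$, $f_{|\Gamma_{-}}=0$ along characteristics shows that whenever $\l\in\rho(\T_{0})$ one has $\Rs(\l,\T_{0})g=\mathsf{R}_{\l}g$ pointwise a.e.\ (extending Remark~\ref{nb:lift} by uniqueness of the BVP solution), so such unboundedness would immediately yield $\l\in\mathfrak{S}(\T_{0})$. Fix a compact set $K\subset\Omega$ with $\delta_{0}:=\mathrm{dist}(K,\partial\Omega)>0$ and set
$$g_{n}(x,v):=\ind_{K}(x)\,\ind_{\{|v|\leq 1/n\}}(v)\,e^{i\eta\,t_{+}(x,v)}, \qquad n\in\N.$$
The oscillatory phase $e^{i\eta t_{+}(x,v)}$ is introduced so as to cancel the factor $e^{-i\eta s}$ inside $\mathsf{R}_{\l}$: using the elementary identity $t_{+}(x-sv,v)=t_{+}(x,v)+s$, valid for $s\in[0,t_{-}(x,v)]$, a direct substitution yields
$$|\mathsf{R}_{\l} g_{n}(x,v)| = \ind_{\{|v|\leq 1/n\}}(v)\int_{0}^{t_{-}(x,v)} \ind_{K}(x-sv)\,e^{|\alpha|s}\,\d s,$$
a genuinely nonnegative integrand, free of any destructive interference.

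It then remains to bound this from below. A Fubini-type change of variables $y=x-sv$, in the spirit of \eqref{10.47}, gives
$$\|\mathsf{R}_{\l} g_{n}\|_{\X_{0}} = \int_{|v|\leq 1/n}\bm{m}(\d v)\int_{K} \Phi_{\alpha}(t_{+}(y,v))\,\d y,$$
where $\Phi_{\alpha}(T):=(e^{|\alpha|T}-1)/|\alpha|$ for $\alpha<0$ and $\Phi_{0}(T):=T$. The scaling \eqref{eq:scale} together with the trivial bound $t_{+}(y,\omega)\geq \delta_{0}$ for $y\in K$, $\omega\in\mathbb{S}^{d-1}$, gives $t_{+}(y,v)\geq \delta_{0}/|v|\geq \delta_{0}\,n$ throughout the support of $g_{n}$, hence
$$\frac{\|\mathsf{R}_{\l} g_{n}\|_{\X_{0}}}{\|g_{n}\|_{\X_{0}}} \;\geq\; \Phi_{\alpha}(\delta_{0}\,n)\;\xrightarrow[n\to\infty]{}\;+\infty,$$
contradicting the boundedness that $\l\in\rho(\T_{0})$ would entail. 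The main conceptual point is the compensating phase $e^{i\eta t_{+}(x,v)}$: it is what allows a uniform treatment of the whole vertical line $\alpha+i\R$, turning the oscillatory integral into a nonnegative one and coupling the imaginary shift $\eta$ to the exponential amplification coming from the arbitrarily small velocities granted by Assumption~\ref{hypO}\,\textit{(3)}. Without this cancellation only the triangle-inequality upper bound on $|\mathsf{R}_{\l}g_{n}|$ would be available, and the argument would at best reach the negative real axis $(-\infty,0]$, not the whole closed left half-plane.
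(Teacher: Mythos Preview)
The paper does not prove this theorem at all: it is simply recalled with a reference to \cite[Theorem 1.1.(c)]{voigt84}. Your proposal therefore goes well beyond what the paper does, supplying a self-contained argument.

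Your proof is correct. The inclusion $\mathfrak{S}(\T_{0})\subset\{\mathrm{Re}\,\l\leq 0\}$ is the standard contraction estimate. For the reverse inclusion, the key step---that any $f\in\D(\T_{0})$ solving $(\l-\T_{0})f=g$ necessarily satisfies $f=\mathsf{R}_{\l}g$ a.e., regardless of the sign of $\mathrm{Re}\,\l$---is justified by integrating the transport equation along characteristics together with the absorbing boundary condition; this is indeed the same mechanism behind Theorem~\ref{Theo4.2}, and it does not use $\mathrm{Re}\,\l>0$. The phase $e^{i\eta\,t_{+}(x,v)}$ is a clean device: the identity $t_{+}(x-sv,v)=t_{+}(x,v)+s$ holds for $0\leq s\leq t_{-}(x,v)$ exactly because the backward segment stays in $\overline{\Omega}$, so the oscillatory factor cancels and one is left with a nonnegative integrand. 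The Fubini computation leading to $\int_{K}\Phi_{\alpha}(t_{+}(y,v))\,\d y$ is correct once one observes that, for $y\in K$, the two conditions $y+sv\in\Omega$ and $s<t_{-}(y+sv,v)$ are jointly equivalent to $0<s<t_{+}(y,v)$. One should also remark that $K$ is chosen with positive Lebesgue measure so that $\|g_{n}\|_{\X_{0}}=|K|\,\bm{m}(\{|v|\leq 1/n\})>0$, where the second factor is positive precisely by Assumption~\ref{hypO}\,\textit{(3)}; this is implicit in your construction but worth stating.

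Compared to typical proofs in the literature (including Voigt's), which often proceed by exhibiting Weyl sequences of approximate eigenfunctions concentrated near small velocities, your approach via unboundedness of the would-be resolvent $\mathsf{R}_{\l}$ on explicit test functions is equivalent in spirit but arguably more transparent: the compensating phase makes the role of the imaginary part $\eta$ completely explicit and reduces everything to a monotone real integral.
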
 
We first deduce from this property of $\T_{0}$ the following:
\begin{theo}\phantomsection \label{theo:spectTH} If $\mathsf{H} \in \mathscr{B}(\lp,\lm)$ satisfies Assumption \ref{hypH} then $i\R \subset \mathfrak{S}(\mathsf{T_{H}}).$\end{theo}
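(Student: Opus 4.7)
The plan is to show that $i\eta \in \mathfrak{S}(\T_{\H})$ for every $\eta \in \R$ by exhibiting, inside the domain $\D(\T_{\H})$, a sequence of approximate eigenfunctions at the value $i\eta$. The philosophy is the same one which underlies the recalled Theorem \ref{theo:spectT0}: it is the presence of arbitrarily small velocities in $V$ (Assumption \ref{hypO}\,\textit{(3)}) that forces $i\R$ into the spectrum of $\T_{0}$, and the Voigt-type construction can be arranged so that the resulting functions have compact support in $\Omega$ and are therefore automatically annihilated by the trace operators, making the diffuse boundary condition trivially satisfied irrespective of the structure of $\H$.

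Concretely, fix $\eta \in \R$, choose a non-zero $\phi \in \mathcal{C}^{1}_{c}(\Omega)$, and for $r>0$ set $V_{r} := V \cap B(0,r)$; by Assumption \ref{hypO}\,\textit{(3)} one has $\bm{m}(V_{r})>0$. I define
$$f_{r}(x,v) := \phi(x)\,\ind_{V_{r}}(v)\,\exp\bigl(-i\eta\,t_{-}(x,v)\bigr), \qquad (x,v)\in\Omega\times V.$$
Since $\phi$ vanishes in a neighbourhood of $\partial\Omega$, both traces $f_{r\,|\Gamma_{\pm}}$ vanish identically, so the relation $f_{r\,|\Gamma_{-}} = \H(f_{r\,|\Gamma_{+}})$ holds trivially and $f_{r} \in \D(\T_{\H})$ as soon as $v\cdot\nabla_{x}f_{r}\in \X_{0}$. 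Using the elementary kinematic identity $v\cdot\nabla_{x}t_{-}(x,v) = 1$, valid on $\Omega \times (V\setminus\{0\})$ (it follows from $t_{-}(x+hv,v) = t_{-}(x,v)+h$ for small $h>0$ whenever $x$ lies in a compact subset of $\Omega$), one computes
$$(i\eta - \T_{\H})f_{r}(x,v) = \bigl(v\cdot\nabla\phi(x)\bigr)\,\ind_{V_{r}}(v)\,\exp\bigl(-i\eta\,t_{-}(x,v)\bigr),$$
the exact cancellation between $i\eta\,f_{r}$ and the term $-i\eta\,f_{r}$ produced by differentiating the phase being the whole engine of the argument.

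Because $|v|<r$ on the support of $f_{r}$, I obtain the quantitative estimates
$$\|(i\eta - \T_{\H})f_{r}\|_{\X_{0}} \leq r\,\|\nabla\phi\|_{L^{1}(\Omega)}\,\bm{m}(V_{r}), \qquad \|f_{r}\|_{\X_{0}} = \|\phi\|_{L^{1}(\Omega)}\,\bm{m}(V_{r}).$$
Normalising, $g_{r} := f_{r}/\|f_{r}\|_{\X_{0}}$ satisfies $\|g_{r}\|_{\X_{0}} = 1$ and $\|(i\eta - \T_{\H})g_{r}\|_{\X_{0}} \leq r\,\|\nabla\phi\|_{L^{1}}/\|\phi\|_{L^{1}}$, which tends to $0$ as $r \to 0^{+}$. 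Thus $i\eta - \T_{\H}$ is not bounded below on $\D(\T_{\H})$ and therefore cannot be invertible, giving $i\eta \in \mathfrak{S}(\T_{\H})$; since $\eta \in \R$ was arbitrary, the inclusion $i\R \subset \mathfrak{S}(\T_{\H})$ follows.

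The only mildly technical point is the rigorous justification of the identity $v\cdot\nabla_{x}t_{-} = 1$ and of the fact that $v\cdot\nabla_{x}f_{r}$ is an honest element of $\X_{0}$ rather than merely a distribution, both of which are immediate consequences of the $\mathcal{C}^{1}$ regularity of $\partial\Omega$ combined with the compact support of $\phi$ inside $\Omega$. I emphasise that Assumption \ref{hypH} is in fact not used at all in the argument: only the boundedness of $\Omega$ and Assumption \ref{hypO}\,\textit{(3)} enter, in accordance with the physical intuition that slow particles sitting deep inside $\Omega$ cannot feel the boundary on any bounded time scale.
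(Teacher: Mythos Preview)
Your proof is correct and takes a genuinely different route from the paper's. The paper argues indirectly via the resolvent decomposition
\[
\Rs(\lambda,\T_{\H})=\Rs(\lambda,\T_{0})+\mathsf{\Xi}_{\lambda}\H\,\Rs(1,\mathsf{M}_{\lambda}\H)\,\mathsf{G}_{\lambda},
\]
showing that for $\eta\neq 0$ the second summand stays bounded as $\varepsilon\to 0^{+}$ (using $r_{\sigma}(\mathsf{M}_{i\eta}\H)<1$ from Proposition~\ref{prop:Meps} together with $\H\in\mathscr{B}(\lp,\Y_{1}^{-})$), while $\|\Rs(\varepsilon+i\eta,\T_{0})\|$ blows up by Theorem~\ref{theo:spectT0}; the case $\eta=0$ is handled separately via $0\in\mathfrak{S}_{p}(\T_{\H})$. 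Your Weyl-sequence construction with compactly supported approximate eigenfunctions is more elementary and, as you correctly observe, does not use Assumption~\ref{hypH} at all---only the openness and boundedness of $\Omega$ and the presence of arbitrarily small velocities. It therefore yields the stronger statement that $i\R\subset\mathfrak{S}(\T_{\H})$ for \emph{any} bounded boundary operator $\H$, not just diffuse ones. The paper's approach, while heavier, is natural in context: the boundedness of the second summand is precisely the mechanism that will later be refined into the existence of the boundary function $\mathbf{R}_{f}(\eta)$, so the proof of Theorem~\ref{theo:spectTH} doubles as a warm-up for Section~\ref{sec:boudF}. One small remark: the identity $v\cdot\nabla_{x}t_{-}=1$ that you use requires no regularity of $\partial\Omega$ whatsoever---it is a purely kinematic consequence of $t_{-}(x+hv,v)=t_{-}(x,v)+h$ for $x\in\Omega$ and $|h|$ small, so your qualification ``immediate consequences of the $\mathcal{C}^{1}$ regularity'' slightly undersells the robustness of the argument.
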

\begin{proof} According to Theorem \ref{theo:spectT0}, it holds 
\begin{equation}\label{eq:epsi-s-T0}
\lim_{\varepsilon\to 0^{+}}\left\|\Rs(\varepsilon+i\eta,\mathsf{T_{0}})\right\|_{\mathscr{B}(\X_{0})}=+\infty, \qquad \forall \eta \in \R.\end{equation}
From Proposition \ref{prop:Meps} and Banach-Steinhaus Theorem \cite[Theorem 2.2, p. 32]{brezis}, for any $\eta \neq 0$,
$$\limsup_{\varepsilon \to 0^{+}}\left\|\Rs(1,\mathsf{M}_{\varepsilon+i\eta}\mathsf{H})\right\|_{\mathscr{B}(\lp)} < \infty.$$
Since, under \eqref{eq:HYn}, the range of $\mathsf{H}$ is included in $\Y^{-}_{1}$ and $\|\mathsf{\Xi_{0}}u\|_{\X_{0}} \leq \|u\|_{\Y^{-}_{1}}$ for any $u \in \Y^{-}_{1}$ (see Lemma  \ref{lem:motau}), one has
$$\sup_{\varepsilon >0,\eta\in \R}\|\mathsf{\Xi_{\varepsilon+i\eta}H}\|_{\mathscr{B}(\lp,\X_{0})} < \infty.$$ Moreover $\sup_{\varepsilon >0,\eta\in \R}\|\mathsf{G}_{\varepsilon+i\eta}\|_{\mathscr{B}(\X_{0},\lp)} < \infty$ we get that, for any $\eta \in \R$, $\eta \neq 0$, it holds:
$$\limsup_{\varepsilon \to 0^{+}}\left\|\mathsf{\Xi_{\varepsilon+i\eta}H}\Rs(1,\mathsf{M}_{\varepsilon+i\eta}\mathsf{H})\mathsf{G}_{{\varepsilon+i\eta}}\right\|_{\mathscr{B}(\X_{0})} < \infty.$$
This, together with \eqref{eq:epsi-s-T0} and \eqref{eq:lambdaTH} proves that, for any $\eta \in \R$, $\eta \neq 0$, it holds
$$\limsup_{\varepsilon \to 0^{+}}\left\|\Rs(\varepsilon+i\eta,\mathsf{T_{H}})\right\|_{\mathscr{B}(\X_{0})}=\infty$$
whence $i\eta\in \mathfrak{S}(\mathsf{T_{H}})$ for any $\eta \neq 0$. Recalling that $0 \in \mathfrak{S}_{p}(\T_{\H})$ we get the conclusion.\end{proof}

\subsection{Definition of the boundary function away from zero}

One has first the basic observation:
\begin{lemme}\phantomsection\label{lem:To} For any $f \in \X_{1}$ and any $\eta \in \R$, 
$$\lim_{\e \to 0^{+}}\Rs(\e+i\eta,\T_{0})f \qquad (\l \in \C_{+})$$
exists in $\X_{0}$ and is denoted $\mathbf{R}^{0}_{f}(\eta).$ Moreover, given $k \geq 0$, if $f \in \X_{k+1}$ then $\mathbf{R}_{f}^{0}(\eta) \in \X_{k}$ and
\begin{equation}\label{eq:R0fk}
\lim_{\e \to 0^{+}}\|\Rs(\e+i\eta,\T_{0})f-\mathbf{R}_{f}^{0}(\eta)\|_{\X_{k}}=0 \qquad (\l \in \C_{+}).\end{equation}
\end{lemme}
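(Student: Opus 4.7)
The plan is straightforward: use the explicit formula from Remark \ref{nb:lift} (taking $u=0$ in Theorem \ref{Theo4.2}, which gives $\Rs(\lambda,\T_{0})=\mathsf{R}_{\lambda}$) to write
\[
\Rs(\varepsilon+i\eta,\T_{0})f(x,v)=\int_{0}^{t_{-}(x,v)}e^{-(\varepsilon+i\eta)s}\,f(x-sv,v)\,\d s
\]
and take the candidate limit pointwise,
\[
\mathbf{R}_{f}^{0}(\eta)(x,v):=\int_{0}^{t_{-}(x,v)}e^{-i\eta s}\,f(x-sv,v)\,\d s,
\]
then deduce convergence in $\X_{k}$ by dominated convergence.

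\textbf{Step 1: integrability of the candidate limit.} The dominating pointwise bound is $|\mathbf{R}_{f}^{0}(\eta)(x,v)|\le A_{0}|f|(x,v)$ in the notation of \eqref{eq:defAk}. Repeating the computation in the proof of Proposition \ref{propo:To} with the extra weight $\varpi_{k}(v)=\max(1,|v|^{-k})$ and Fubini, one gets
\[
\|A_{0}|f|\|_{\X_{k}}=\int_{\Omega\times V}t_{+}(x,v)\,|f(x,v)|\,\varpi_{k}(v)\,\d x\,\bm{m}(\d v)\le D\,\|f\|_{\X_{k+1}},
\]
using $t_{+}(x,v)\le D/|v|$ together with the elementary inequality $|v|^{-1}\varpi_{k}(v)\le \varpi_{k+1}(v)$. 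In particular $\mathbf{R}_{f}^{0}(\eta)\in \X_{k}$ whenever $f\in\X_{k+1}$, and (taking $k=0$) it lies in $\X_{0}$ as soon as $f\in\X_{1}$.

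\textbf{Step 2: passing to the limit.} For $\varepsilon>0$ write
\[
\Rs(\varepsilon+i\eta,\T_{0})f(x,v)-\mathbf{R}_{f}^{0}(\eta)(x,v)
=\int_{0}^{t_{-}(x,v)}\bigl(e^{-\varepsilon s}-1\bigr)e^{-i\eta s}f(x-sv,v)\,\d s.
\]
The integrand goes to $0$ pointwise as $\varepsilon\to 0^{+}$, and since $|e^{-\varepsilon s}-1|\le 1$ it is dominated pointwise by $A_{0}|f|(x,v)$. Multiplying by $\varpi_{k}(v)$ and invoking Step~1, the function $A_{0}|f|\,\varpi_{k}$ is integrable on $\Omega\times V$, so dominated convergence yields \eqref{eq:R0fk} for any $f\in\X_{k+1}$; the unweighted statement follows by taking $k=0$.

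There is no real obstacle here: the argument is a one-shot application of the dominated convergence theorem, and the only non-trivial ingredient is the weighted version of the $t_{+}$-integral identity already worked out in Proposition \ref{propo:To}, which transfers verbatim to $\X_{k}$ via the bound $|v|^{-1}\varpi_{k}(v)\le \varpi_{k+1}(v)$.
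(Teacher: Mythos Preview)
Your proof is correct and follows essentially the same approach as the paper: define the candidate limit pointwise, then apply dominated convergence with the integrable majorant $A_{0}|f|$. The only cosmetic difference is that the paper handles the $\X_{k}$ statement by observing that the multiplication operator $\varpi_{k}$ commutes with $\Rs(\lambda,\T_{0})$ (so the $\X_{k}$ convergence for $f$ is literally the $\X_{0}$ convergence for $g_{k}:=\varpi_{k}f\in\X_{1}$), whereas you carry the weight $\varpi_{k}$ through the computation of Proposition~\ref{propo:To} directly; these are two phrasings of the same argument.
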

\begin{proof} The existence of the strong limit then is deduced from the pointwise convergence:  for almost every $(x,v) \in \Omega \times V$
$$\lim_{\e\to0^{+}}\int_{0}^{t_{-}(x,v)}\exp(-(\e+i\eta) t)f(x-tv,v)\d t= \mathbf{R}_{f}^{0}(\eta)(x,v):=\int_{0}^{t_{-}(x,v)}\exp(-i\eta t)f(x-tv,v)\d t$$
and the dominated convergence theorem. Now, if $f \in \X_{k+1}$ then $g_{k}=\varpi_{k}f \in \X_{1}$ where we recall that $\varpi_{k}$ has been defined in Remark \ref{nb:varpi}. Moreover, the first part of the result implies that
\begin{equation}\label{eq:covgk}
\lim_{\e\to 0^{+}}\|\Rs(\e+i\eta,\T_{0})g_{k}-\mathbf{R}_{g_{k}}^{0}(\eta)\|_{\X_{0}}=0.\end{equation}
Since $\varpi_{k}$ and $\Rs(\e+i\eta,\T_{0})$ are commuting operator (recall that $\varpi_{k}$ is a multiplication operator by the function $\varpi_{k}(v)=\max(1,|v|^{-k})$ which is independent of $x$), one has $\Rs(\e+i\eta,\T_{0})g_{k}=\varpi_{k}\Rs(\e+i\eta,\T_{0})f$ and 
 $\mathbf{R}_{g_{k}}^{0}(\eta)(x,v)=\varpi_{k}\mathbf{R}_{f}^{0}(\eta)$. The convergence convergence \eqref{eq:covgk} is then equivalent to
$$\lim_{\e\to0^{+}}\left\|\varpi_{k}\left(\Rs(\e+i\eta,\T_{0})f-\mathbf{R}_{f}^{0}(\eta)\right)\right\|_{\X_{0}}=0$$
which is exactly \eqref{eq:R0fk}.
\end{proof}
\begin{nb}\label{nb:boundedR0} Notice that, arguing as in Prop. \ref{propo:To}, one sees that
$$\sup_{\eta\in\R}\|\mathbf{R}_{f}^{0}(\eta)\|_{\X_{k}} \leq \|\mathbf{R}_{f}^{0}(0)\|_{\X_{k}} \leq \|A_{0}f\|_{\X_{k}} \leq \|f\|_{\X_{k}}$$
where $A_{0}$ is defined through \eqref{eq:defAk}.
\end{nb}
One sees from the above result and the fact that $\Rs(\l,\T_{\H})$ splits as
\begin{equation}\label{eq:split}
\Rs(\l,\T_{\H})=\Rs(\l,\T_{0})+\mathrm{R}(\l), \qquad \text{ with } \quad \mathrm{R}(\l):=\mathsf{\Xi}_{\l}\H(1-\mathsf{M}_{\l}\H)^{-1}\mathsf{G}_{\l}\end{equation}
that, to be able to define
$$\lim_{\e \to 0^{+}}\Rs(\e+i\eta,\T_{\H})f, \qquad \eta \in \R$$
in $\X_{0}$, we will at least need to assume $f \in \X_{1}$. This is actually enough if we consider only $\eta \in \R\setminus\{0\}$. Namely
\begin{lemme}\label{lem:converTH} Let $\eta_{0} \in \R\setminus\{0\}$ and let $k \in \N$, $k \leq N_{\H}.$ Then, for any $f \in \X_{k+1}$, the limit
$$\lim_{\e\to0^{+}}\Rs(\e+i\eta,\T_{\H})f$$
exists in $\X_{k}$ \emph{uniformly} on some neighbourhood of $\eta_{0}$. We denote $\mathbf{R}_{f}(\eta)$ this limit.
\end{lemme}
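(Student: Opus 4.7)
The plan is to build on the splitting
\[
\Rs(\varepsilon+i\eta,\T_{\H})f = \Rs(\varepsilon+i\eta,\T_{0})f + \mathsf{\Xi}_{\varepsilon+i\eta}\H\,\Rs(1,\mathsf{M}_{\varepsilon+i\eta}\H)\,\mathsf{G}_{\varepsilon+i\eta}f
\]
coming from \eqref{eq:split} and treat the two summands independently. For the first term, I would apply Lemma \ref{lem:To} with $f \in \X_{k+1} \subset \X_{1}$, which yields convergence of $\Rs(\varepsilon+i\eta,\T_{0})f$ to $\mathbf{R}_{f}^{0}(\eta)$ in $\X_{k}$. The key observation is that the dominating function in the dominated convergence argument used in Lemma \ref{lem:To}, namely $\varpi_{k}(v)|f(x-tv,v)|\ind_{\{t \leq t_{-}(x,v)\}}$, does not depend on $\eta$, so the convergence is in fact uniform in $\eta \in \R$.

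For the second (more subtle) summand, I would write the telescoping decomposition
\begin{align*}
\mathsf{\Xi}_{\varepsilon+i\eta}\H\Rs(1,\mathsf{M}_{\varepsilon+i\eta}\H)\mathsf{G}_{\varepsilon+i\eta}f &- \mathsf{\Xi}_{i\eta}\H\Rs(1,\mathsf{M}_{i\eta}\H)\mathsf{G}_{i\eta}f\\
 = \bigl(\mathsf{\Xi}_{\varepsilon+i\eta}\H - \mathsf{\Xi}_{i\eta}\H\bigr)&\Rs(1,\mathsf{M}_{\varepsilon+i\eta}\H)\mathsf{G}_{\varepsilon+i\eta}f \\
 + \mathsf{\Xi}_{i\eta}\H\bigl(\Rs(1,\mathsf{M}_{\varepsilon+i\eta}\H) - \Rs(1,\mathsf{M}_{i\eta}\H)&\bigr)\mathsf{G}_{\varepsilon+i\eta}f \\
 + \mathsf{\Xi}_{i\eta}\H\,\Rs(1,\mathsf{M}_{i\eta}\H)\bigl(\mathsf{G}_{\varepsilon+i\eta}f &- \mathsf{G}_{i\eta}f\bigr),
\end{align*}
and estimate each piece in the $\X_{k}$-norm. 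The factors not being differenced are controlled uniformly: since $\H \in \mathscr{B}(\lp,\Y_{k+1}^{-})$ (this is where $k \leq N_{\H}$ enters), Lemma \ref{lem:motau} combined with the pointwise bound $|\mathsf{\Xi}_{\lambda}u| \leq \mathsf{\Xi}_{0}|u|$ valid on $\overline{\C}_{+}$ yields $\sup_{\mathrm{Re}\,\lambda \geq 0}\|\mathsf{\Xi}_{\lambda}\H\|_{\mathscr{B}(\lp,\X_{k})} \leq D\,\|\H\|_{\mathscr{B}(\lp,\Y_{k+1}^{-})}$, and similarly $\sup_{\mathrm{Re}\,\lambda\geq 0}\|\mathsf{G}_{\lambda}\|_{\mathscr{B}(\X_{0},\lp)} \leq 1$.

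The three differences are then handled as follows. The first one is controlled by Proposition \ref{propo:convK}, which gives $\|\mathsf{\Xi}_{\varepsilon+i\eta}\H - \mathsf{\Xi}_{i\eta}\H\|_{\mathscr{B}(\lp,\X_{k})} \leq \varepsilon D\,\|\H\|_{\mathscr{B}(\lp,\Y_{k+1}^{-})}$, a bound independent of $\eta$, multiplied by $\|\Rs(1,\mathsf{M}_{\varepsilon+i\eta}\H)\|_{\mathscr{B}(\lp)}$ which is bounded locally around $\eta_{0}$ via Corollary \ref{cor:ResMei}, times $\|\mathsf{G}_{\varepsilon+i\eta}f\|_{\lp} \leq \|f\|_{\X_{0}}$. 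The second uses Corollary \ref{cor:ResMei} directly: $\|\Rs(1,\mathsf{M}_{\varepsilon+i\eta}\H) - \Rs(1,\mathsf{M}_{i\eta}\H)\|_{\mathscr{B}(\lp)}$ vanishes as $\varepsilon\to0^{+}$ uniformly on some neighbourhood $(\eta_{0}-\delta,\eta_{0}+\delta)$, while the outer factor $\mathsf{\Xi}_{i\eta}\H$ is uniformly bounded and $\mathsf{G}_{\varepsilon+i\eta}f$ is uniformly bounded in $\lp$. The third uses Lemma \ref{lem:unifGeis}, which ensures $\|\mathsf{G}_{\varepsilon+i\eta}f - \mathsf{G}_{i\eta}f\|_{\lp} \to 0$ uniformly in $\eta \in \R$, together again with uniform boundedness of the left factors.

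Putting the three estimates together produces a majorant that tends to $0$ as $\varepsilon \to 0^{+}$ uniformly for $\eta$ in a neighbourhood of $\eta_{0}$, and the limit is naturally
\[
\mathbf{R}_{f}(\eta) = \mathbf{R}_{f}^{0}(\eta) + \mathsf{\Xi}_{i\eta}\H\,\Rs(1,\mathsf{M}_{i\eta}\H)\,\mathsf{G}_{i\eta}f.
\]
The main obstacle is not any single step but rather coordinating the uniform controls: one needs the \emph{locally uniform} invertibility of $1-\mathsf{M}_{\l}\H$ around $\l = i\eta_{0}$ provided by Corollary \ref{cor:ResMei} (which in turn rests on the spectral radius bound $r_{\sigma}(\mathsf{M}_{i\eta_{0}}\H) < 1$ from Proposition \ref{prop:Meps}), matched against the $\eta$-independent convergence rates for the $\mathsf{\Xi}$ and $\mathsf{G}$ factors; the gain of integrability provided by $\H \in \mathscr{B}(\lp,\Y_{k+1}^{-})$ is precisely what allows $\mathsf{\Xi}_{\lambda}\H$ to map into $\X_{k}$ rather than merely $\X_{0}$.
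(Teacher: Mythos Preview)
Your proof is correct and follows essentially the same route as the paper: both use the splitting \eqref{eq:split}, invoke Lemma \ref{lem:To} for the $\T_0$ part, and then combine Corollary \ref{cor:ResMei}, Lemma \ref{lem:unifGeis}, and the $\mathsf{\Xi}_{\lambda}\H$ estimate from Proposition \ref{propo:convK} to handle the second summand. The only cosmetic difference is that the paper groups the telescoping into two pieces (first $\Rs(1,\mathsf{M}_{\lambda}\H)\mathsf{G}_{\lambda}f$ together, then $\mathsf{\Xi}_{\lambda}\H$), whereas you split into three; the ingredients and the logic are identical.
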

\begin{proof} According to Lemma \ref{lem:To} and the above splitting \eqref{eq:split} of $\Rs(\l,\T_{\H})$, we only need to prove that
$$\lim_{\e\to0^{+}}\mathrm{R}(\e+i\eta) f$$
converges uniformly with respect to $\eta$ in some neighbourhood of $\eta_{0}.$ We know from Corollary \ref{cor:ResMei} and Lemma \ref{lem:unifGeis} that, in some neighbourhood $I_{0}:=(\mu_{0}-\delta_{0},\mu_{0}+\delta_{0})$ of $\mu_{0}$ it holds
\begin{equation}\label{eq:convunifr}
\lim_{\e\to0^{+}}\sup_{\eta \in I_{0}}\left\|\Rs(1,\mathsf{M}_{\e+i\eta}\H)\mathsf{G}_{\e+i\eta}f-\Rs(1,\mathsf{M}_{i\eta}\H)\mathsf{G}_{i\eta}f\right\|_{\lp}=0.\end{equation}
Since we know from \eqref{eq:MeisHk} that
$$\lim_{\e\to 0^{+}}\sup_{\eta \in \R}\left\|\mathsf{\Xi}_{\e+i\eta}\H-\mathsf{\Xi}_{i\eta}\H\right\|_{\mathscr{B}(\lp,\X_{k})}=0$$
we get the conclusion.
\end{proof}

\subsection{Definition of the boundary function near zero} \label{sec:near0}
To consider the more delicate case $\eta=0$, we will first need a careful study of the spectral properties of $\mathsf{M}_{\l}\H$ for $\l \in \overline{\C}_{+}$ with $|\l|$ small.

\subsubsection{{Spectral properties of $\mathsf{M}_{\l}\H$ in the vicinity of $\l=0$.}}
We recall that, being $\mathsf{M}_{0}\H$ stochastic and irreducible, the spectral radius $r_{\sigma}(\mathsf{M}_{0}\H)=1$ is a algebraically simple and isolated eigenvalue of $\mathsf{M}_{0}\H$ and there exists $0 < r < 1$ such that
$$\mathfrak{S}(\mathsf{M}_{0}\H) \setminus \{1\} \subset \{z \in \C\;;\;|z| < r\}$$
and there is a normalised and positive eigenfunction $\varphi_{0}$ such that
$$\mathsf{M}_{0}\H\,\varphi_{0}=1, \qquad \int_{\Gamma_{+}}\varphi_{0}\,\d\mu_{+}=1.$$
Because $\mathsf{M}_{0}\H$ is stochastic, the dual operator $\left(\mathsf{M}_{0}\H\right)^{\star}$ (in $L^{\infty}(\Gamma_{+},\d\mu_{+})$) admits the eigenfunction 
$$\varphi_{0}^{\star}=\mathbf{1}_{\Gamma_{+}}$$ 
associated to the algebraically simple eigenvalue $1.$ The spectral projection of $\mathsf{M}_{0}\H$ associated to the eigenvalue $1$ is then defined as
$$\mathsf{P}(0)=\frac{1}{2i\pi}\oint_{\{|z-1|=r_{0}\}}\Rs(z,\mathsf{M}_{0}\H)\d z$$
where $r_{0} >0$ is chosen so that $\{z \in \C\;;\;|z-1|=r_{0}\} \subset \{z \in \C\;;\;|z| >r\}.$ Such a spectral structure is somehow inherited by $\mathsf{M}_{\l}\H$ for $\l$ small enough:
\begin{propo}\label{prop:eigenMLH}
For any $\l \in \overline{\C}_{+}$  the spectrum of $\mathsf{M}_{\l}\H$ is given by
$$\mathfrak{S}(\mathsf{M}_{\l}\H)=\{0\} \cup \{\nu_{n}(\l)\;;\;n \in \N_{\l} \subset \N\}$$
where, $\N_{\l}$ is a (possibly finite) subset of $\N$ and, for each $n \in \N_{\l}$, $\nu_{n}(\l)$ is an isolated eigenvalue of $\mathsf{M}_{\l}\H$ of finite algebraic multiplicities and $0$ being the only possible accumulation point of the sequence $\{\nu_{n}(\l)\}_{n\in \N_{\l}}$. Moreover,
$$|\nu_{n}(\l)| < 1 \qquad \text{ for any } n \in \N_{\l}, \qquad \l \neq 0.$$
Finally, there exists $\delta_{0} >0$ such that, for any $|\lambda| \leq \delta_{0}$, $\l \in \overline{\C}_{+}$,
$$\mathfrak{S}(\mathsf{M}_{\l}\H) \cap \{z \in \C\;;\;|z-1| <\epsilon\}=\{\nu(\l)\}$$
where $\nu(\l)$ is an algebraically simple eigenvalue of $\mathsf{M}_{\l}\H$ such that
$$\lim_{\l \to 0}\nu(\l)=1$$
and there exist an eigenfunction $\varphi_{\l}$ of $\mathsf{M}_{\l}\H$ and an  eigenfunction $\varphi^{\star}_{\l}$ of $\left(\mathsf{M}_{\l}\H\right)^{\star}$ associated to $\nu(\l)$ such that
$$\lim_{\l\to0}\|\varphi(\l)-\varphi_{0}\|_{\lp}=0, \qquad \lim_{\l\to0}\|\varphi_{\l}^{\star}-\varphi_{0}^{\star}\|_{L^{\infty}(\Gamma_{+},\d\mu_{+})}=0.$$
\end{propo}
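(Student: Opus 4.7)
The plan is to treat the three assertions in the proposition in order. First, I would establish that $\mathsf{M}_\l\H$ is power compact on $\lp$ for every $\l\in\overline{\C}_+$: by Assumption \ref{hypH} 2), $\H\mathsf{M}_0\H$ is weakly compact, hence $(\mathsf{M}_0\H)^2=\mathsf{M}_0\bigl(\H\mathsf{M}_0\H\bigr)$ is weakly compact, and by the Dunford--Pettis property of $L^1$ its square $(\mathsf{M}_0\H)^4$ is norm compact. Power compactness of $\mathsf{M}_\l\H$ then follows from the pointwise domination $|\mathsf{M}_\l\H\psi|\leq \mathsf{M}_0\H|\psi|$ by exactly the argument used in the proof of Proposition \ref{prop:Meps}. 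Riesz--Schauder then gives the structural description $\mathfrak{S}(\mathsf{M}_\l\H)=\{0\}\cup\{\nu_n(\l)\}_{n\in\N_\l}$ with the stated properties. The bound $|\nu_n(\l)|<1$ for $\l\ne 0$ is then immediate from Proposition \ref{prop:Meps}.

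For the local analysis near $\l=0$, the scheme is a classical Dunford/Kato isolation of an algebraically simple eigenvalue. Since $1$ is isolated in $\mathfrak{S}(\mathsf{M}_0\H)$ with the rest contained in $\{|z|<r\}$, there exists $r_0>0$ with $\Gamma_0:=\{|z-1|=r_0\}\subset\rho(\mathsf{M}_0\H)\cap\{|z|>r\}$. By Remark \ref{cor:exten} the map $\l\in\overline{\C}_+\mapsto \mathsf{M}_\l\H\in \mathscr{B}(\lp)$ is norm continuous, so for $|\l|\leq\delta_0$ small enough one has $\Gamma_0\subset\rho(\mathsf{M}_\l\H)$ with uniform bounds on the resolvent along $\Gamma_0$, and the Dunford projection
$$\mathsf{P}(\l):=\frac{1}{2i\pi}\oint_{\Gamma_0}\Rs(z,\mathsf{M}_\l\H)\,\d z$$
is well defined and converges to $\mathsf{P}(0)$ in operator norm as $\l\to 0$. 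Since $\mathrm{rank}\,\mathsf{P}(0)=1$ and the rank is locally constant under norm convergence of projections, $\mathrm{rank}\,\mathsf{P}(\l)=1$ for $|\l|\leq\delta_0$, which means that the part of $\mathfrak{S}(\mathsf{M}_\l\H)$ enclosed by $\Gamma_0$ reduces to a single algebraically simple eigenvalue $\nu(\l)$; since $r_0$ can be taken arbitrarily small, $\nu(\l)\to 1$. Writing the rank-one projection as $\mathsf{P}(\l)f=\langle\varphi_\l^\star,f\rangle\,\varphi_\l$ with normalizations $\int_{\Gamma_+}\varphi_\l\,\d\mu_+=1$ and $\langle\varphi_\l^\star,\varphi_\l\rangle=1$, the norm convergences $\mathsf{P}(\l)\to \mathsf{P}(0)$ and $\mathsf{P}(\l)^\star\to \mathsf{P}(0)^\star$ translate into $\varphi_\l\to\varphi_0$ in $\lp$ and $\varphi_\l^\star\to\varphi_0^\star$ in $L^\infty(\Gamma_+,\d\mu_+)$.

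The main obstacle I anticipate is technical rather than conceptual: classical Kato perturbation theory is formulated for holomorphic families of operators, whereas here $\l\mapsto \mathsf{M}_\l\H$ is only holomorphic on $\C_+$ and merely continuous up to the boundary $i\R$. This forces me to replace analyticity by norm continuity throughout, which is sufficient for the Dunford projection argument above but requires care in ensuring that $\Rs(z,\mathsf{M}_\l\H)$ is continuous on $\Gamma_0$ uniformly in $z$; this uniformity is exactly what the uniform power convergence of Corollary \ref{cor:uniformpower} delivers. A secondary subtlety is choosing a coherent normalization for $\varphi_\l^\star$ so that its limit is indeed $\varphi_0^\star=\mathbf{1}_{\Gamma_+}$; this is handled by passing through the adjoint Dunford projection $\mathsf{P}(\l)^\star$ and exploiting the rank-one structure to read off the convergence of the left eigenfunction.
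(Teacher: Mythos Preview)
Your proposal is correct and follows essentially the same route as the paper: power (weak) compactness of $\mathsf{M}_\l\H$ via domination by $\mathsf{M}_0\H$, the eigenvalue bound from Proposition~\ref{prop:Meps}, and then the Dunford--Kato projection argument based on norm continuity of $\l\mapsto\mathsf{M}_\l\H$ to isolate the simple eigenvalue near~$1$. The only cosmetic difference is in the eigenfunction convergence: the paper avoids any normalization issue by simply setting $\varphi_\l:=\mathsf{P}(\l)\varphi_0$ and $\varphi_\l^\star:=\mathsf{P}(\l)^\star\varphi_0^\star$, so that $\varphi_\l\to\mathsf{P}(0)\varphi_0=\varphi_0$ and $\varphi_\l^\star\to\varphi_0^\star$ follow immediately from $\|\mathsf{P}(\l)-\mathsf{P}(0)\|\to 0$---this is precisely the ``passing through the adjoint Dunford projection'' that you anticipate in your final paragraph, and it makes your rank-one factorization step unnecessary.
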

\begin{proof} Since $\left|\left(\mathsf{M}_{\l}\H\right)^{2}\right| \leq \left(\mathsf{M}_{0}\H\right)^{2}$, one has that $(\mathsf{M}_{\l}\H)^{2}$ is weakly compact and the structure of $\mathfrak{S}(\mathsf{M}_{\l}\H)$ follows. The fact that all eigenvalues have modulus less than one comes from Proposition \ref{prop:Meps}. This gives the first part of the Proposition. For the second part, because $\mathsf{M}_{\l}\H$ converges in operator norm towards $\mathsf{M}_{0}\H$ as $\l \to 0$ $(\l \in \overline{\C}_{+})$, it follows from general results about the separation of the spectrum \cite[Theorem 3.16, p.212]{kato} that, for $|\l| < \delta_{0}$ small enough, the curve $\{z \in \C\;;\;|z-1|=r_{0}\}$ is separating the spectrum $\mathfrak{S}(\mathsf{M}_{\l}\H)$ into two disjoint parts, say
$$\mathfrak{S}(\mathsf{M}_{\l}\H)=\mathfrak{S}_{\text{in}}(\mathsf{M}_{\l}\H) \cup \mathfrak{S}_{\text{ext}}(\mathsf{M}_{\l}\H)$$
where $\mathfrak{S}_{\text{in}}(\mathsf{M}_{\l}\H) \subset \{z\in \C\;;|z-1| < r_{0}\}$ and $\mathfrak{S}_{\text{ext}}(\mathsf{M}_{\l}\H) \subset \{z\in \C\;;|z-1|>r_{0}\}.$ Moreover, the spectral projection of $\mathsf{M}_{\l}\H$ associated to $\mathfrak{S}_{\text{in}}(\mathsf{M}_{\l}\H)$, defined as,
\begin{equation}\label{eq:Pl}
\mathsf{P}(\l)=\frac{1}{2i\pi}\oint_{\{|z-1|=r_{0}\}}\Rs(z,\mathsf{M}_{\l}\H)\d z,\end{equation}
is converging in operator norm to $\mathsf{P}(0)$ as $\l \to 0$ $(\mathrm{Re}\l \geq 0)$ so that, in particular, up to reduce again $\delta_{0}$,
$$\mathrm{dim}(\mathrm{Range}(\mathsf{P}(\l)))=\mathrm{dim}(\mathrm{Range}(\mathsf{P}(0)))=1, \qquad |\l| < \delta_{0}, \mathrm{Re}\l \geq 0.$$
This shows that
$$\mathfrak{S}_{\text{in}}(\mathsf{M}_{\l}\H)=\mathfrak{S}(\mathsf{M}_{\l}\H) \cap \{z \in \C\;;\;|z-1| < \epsilon\}=\{\nu(\l)\}, \qquad |\l| < \delta_{0}, \mathrm{Re}\l \geq0,$$
where $\nu(\l)$ is a \emph{algebraically simple} eigenvalue of $\mathsf{M}_{\l}\H$. Notice that, clearly
$$\lim_{\l\to 0}\nu(\l)=1 \qquad (\mathrm{Re}\l \geq0).$$
In the same way, defining
$$\mathsf{P}(\l)^{\star}=\frac{1}{2i\pi}\oint_{\{|z-1|=r_{0}\}}\Rs(z,\left(\mathsf{M}_{\l}\H\right)^{\star})\d z, \qquad |\l| \leq \delta_{0}, \mathrm{Re}\l \geq0$$
it holds that
$$\lim_{\l\to0}\|\mathsf{P}(\l)^{\star}-\mathsf{P}(0)^{\star}\|_{\mathscr{B}(L^{\infty}(\Gamma_{+},\d\mu_{+}))}=0.$$
Set 
$$\varphi_{\l}:=\mathsf{P}(\l)\varphi_{0}, \qquad \l \in \C_{+}.$$
Since $\varphi_{\l}$ converges to $\mathsf{P}(0)\varphi_{0}=\varphi_{0} \neq 0$, we get that $\varphi_{\l} \neq 0$ for $\l$ small enough and, since $\nu(\l)$ is algebraically simple, $\varphi(\l)$ is an eigenfunction of $\mathsf{M}_{\l}\H$ for $|\l|$ small enough. In the same way, for $|\l|$ small enough,
$$\varphi^{\star}_{\l}:=\mathsf{P}(\l)^{\star}\varphi^{\star}_{0} \longrightarrow \mathsf{P}(0)^{\star}\varphi^{\star}_{0}=1$$
as $\l \to 0$ and $\varphi^{\star}_{\l}$ is an eigenfunction of $\left(\mathsf{M}_{\l}\H\right)^{\star}$ associated to the eigenvalue $\nu(\l).$
\end{proof}

From now, we define $\delta >0$ small enough so that the rectangle
$$\mathcal{C}_{\delta}:=\{\l \in \C\;;\;0 \leq \mathrm{Re}\l \leq \delta\,,\,|\mathrm{Im}\l|\leq\delta\} \subset \{\l \in \C\;;\;|\l| < \delta_{0}\}$$
where $\delta_{0}$ is introduced in the previous Proposition \ref{prop:eigenMLH}. 
\begin{lemme}\label{lem:p'0}
The mapping 
$$\l \in \mathcal{C}_{\delta} \longmapsto \mathsf{P}(\l) \in \mathscr{B}(\lp)$$
is differentiable with
$$\mathsf{P}'(0)=-\frac{1}{2i\pi}\oint_{\{|z-1|=r_{0}\}}\Rs(z,\mathsf{M}_{0}\H)(\tau_{-}\mathsf{M}_{0}\H)\Rs(z,\mathsf{M}_{0}\H)\d z.$$
More generally, for any $\eta \in (-\delta,\delta)$,
$$\frac{\d}{\d\eta}\mathsf{P}(i\eta)=-\frac{1}{2i\pi}\oint_{\{|z-1|=r_{0}\}}\Rs(z,\mathsf{M}_{i\eta}\H)\left(\dfrac{\d}{\d\eta}\mathsf{M}_{i\eta}\H\right)\Rs(z,\mathsf{M}_{i\eta}\H)\d z.$$
\end{lemme}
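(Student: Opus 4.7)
The plan is to differentiate under the contour integral defining $\mathsf{P}(\l)$ and to use the second resolvent identity to express the difference quotient in a form where each factor converges as $\l\to 0$.

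Starting from the definition \eqref{eq:Pl}, I would write, for $\l\in\mathcal{C}_\delta\setminus\{0\}$,
\begin{equation*}
\frac{\mathsf{P}(\l)-\mathsf{P}(0)}{\l}=\frac{1}{2i\pi}\oint_{\{|z-1|=r_{0}\}}\Rs(z,\mathsf{M}_{\l}\H)\,\frac{\mathsf{M}_{\l}\H-\mathsf{M}_{0}\H}{\l}\,\Rs(z,\mathsf{M}_{0}\H)\,\d z,
\end{equation*}
obtained from the standard identity $\Rs(z,A)-\Rs(z,B)=\Rs(z,A)(A-B)\Rs(z,B)$ applied to $A=\mathsf{M}_{\l}\H$, $B=\mathsf{M}_{0}\H$. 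The whole argument then reduces to passing to the limit $\l\to 0$ inside this integral.

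The key ingredients are the following. First, by the corollary after Proposition \ref{prop:derMeis} (which invokes Assumption \ref{hypH} 1), and the fact that $\mathrm{Range}(\H)\subset \Y_{1}^{-}$), the difference quotient
\[
\frac{\mathsf{M}_{\l}\H-\mathsf{M}_{0}\H}{\l}\xrightarrow[\l\to 0,\,\l\in\overline{\C}_{+}]{}-\tau_{-}\mathsf{M}_{0}\H\qquad\text{in }\mathscr{B}(\lp).
\]
Second, by Remark \ref{cor:exten} (the continuous extension of $\l\mapsto\mathsf{M}_{\l}\H$ to $\overline{\C}_{+}$) combined with the usual fact that the resolvent is continuous in the operator away from the spectrum, the family $\{\Rs(z,\mathsf{M}_{\l}\H)\}_{z,\l}$ is uniformly bounded in $\mathscr{B}(\lp)$ for $z$ on the contour $\{|z-1|=r_{0}\}$ and $\l$ small enough in $\overline{\C}_{+}$ (up to reducing $\delta$), and $\Rs(z,\mathsf{M}_{\l}\H)\to \Rs(z,\mathsf{M}_{0}\H)$ in operator norm uniformly in $z$ on the contour. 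Combining these two ingredients, the integrand converges in $\mathscr{B}(\lp)$ uniformly with respect to $z$ on the contour, so that interchanging limit and integral is justified and yields exactly
\[
\mathsf{P}'(0)=-\frac{1}{2i\pi}\oint_{\{|z-1|=r_{0}\}}\Rs(z,\mathsf{M}_{0}\H)\bigl(\tau_{-}\mathsf{M}_{0}\H\bigr)\Rs(z,\mathsf{M}_{0}\H)\,\d z.
\]
The general formula at $\l=i\eta$ for $\eta\in(-\delta,\delta)$ is obtained by the very same scheme, replacing the reference point $0$ by $i\eta$ and using Proposition \ref{prop:derMeis} with $k=1$ to get the derivative of $\eta\mapsto\mathsf{M}_{i\eta}\H$ in $\mathscr{B}(\lp)$.

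The main obstacle is the uniform-in-$z$ control of the integrand, and in particular the $\mathscr{B}(\lp)$-convergence of the difference quotient of $\mathsf{M}_{\l}\H$. The pointwise derivative $-\tau_{-}(\cdot,\cdot)\mathsf{M}_{\l}\H$ contains the unbounded weight $\tau_{-}(x,v)\le D|v|^{-1}$, so the convergence in operator norm on $\lp$ is not automatic and crucially relies on the gain of one power of $|v|^{-1}$ furnished by $\H$ through Assumption \ref{hypH} 1); this is precisely the content already recorded in the corollary following Proposition \ref{prop:derMeis}, and invoking it cleanly is what makes the interchange of limit and contour integral rigorous.
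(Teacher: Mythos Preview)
Your proposal is correct and follows essentially the same strategy as the paper: differentiate the contour integral \eqref{eq:Pl} by expressing the variation of the integrand via the second resolvent identity, then invoke the differentiability of $\l\mapsto\mathsf{M}_{\l}\H$ in $\mathscr{B}(\lp)$ (which rests on $\H\in\mathscr{B}(\lp,\Y_{1}^{-})$) together with the uniform-in-$z$ control of $\Rs(z,\mathsf{M}_{\l}\H)$ on the circle $\{|z-1|=r_{0}\}$ to pass to the limit. The only stylistic difference is that you work directly with the difference quotient $(\mathsf{P}(\l)-\mathsf{P}(0))/\l$, whereas the paper first records the derivative formula for $\l$ in the interior and then lets $\l\to 0$ (respectively, for the second part, computes $\tfrac{\d}{\d\eta}\mathsf{P}(\e+i\eta)$ for $\e>0$ and lets $\e\to0^{+}$ using Proposition~\ref{prop:derMeis}); both routes amount to the same computation.
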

\begin{proof} The only difficulty is to prove the differentiability on the imaginary axis. As soon as $z \notin \mathfrak{S}\left(\mathsf{M}_{\l}\H\right)$ for any $\l \in \mathcal{C}_{\delta}$, one has
$$\dfrac{\d}{\d\l}\Rs(z,\mathsf{M}_{\l}\H)=-\Rs(z,\mathsf{M}_{\l}\H)\left(\frac{\d}{\d\l}\mathsf{M}_{\l}\H\right)\Rs(z,\mathsf{M}_{\l}\H),$$
so that
$$\dfrac{\d}{\d\l}\mathsf{P}(\l)=-\frac{1}{2i\pi}\oint_{\{|z-1|=r_{0}\}}\Rs(z,\mathsf{M}_{\l}\H)\left(\frac{\d}{\d\l}\mathsf{M}_{\l}\H\right)\Rs(z,\mathsf{M}_{\l}\H)\d z\qquad \forall \l \in \mathcal{C}_{\delta}$$
and, since $\lim_{\l \to 0}\frac{\d}{\d\l}\left(\mathsf{M}_{\l}\H\right)=-\lim_{\l\to0}\left(\tau_{-}\mathsf{M}_{\l}\H\right)=-\tau_{-}\mathsf{M}_{0}\H$ we easily get the differentiability in $0.$ The same computations also give
$$\dfrac{\d}{\d\eta}\mathsf{P}(\e+i\eta)=-\frac{1}{2i\pi}\oint_{\{|z-1|=r_{0}\}}\Rs(z,\mathsf{M}_{\e+i\eta})\left(\frac{\d}{\d\eta}\mathsf{M}_{\e+i\eta}\H\right)\Rs(z,\mathsf{M}_{\e+i\eta}\H)\d z, \qquad \forall \eta \in \R\setminus\{0\}.$$
Using now Prop. \ref{prop:derMeis}  which asserts that $\tfrac{\d}{\d\eta}\mathsf{M}_{\e+i\eta}\H$ converges to $\tfrac{\d}{\d\eta}\mathsf{M}_{i\eta}\H$ as $\e\to0^{+}$ uniformly with respect to $\eta$, we deduce the second part of the Lemma. 
\end{proof}

We can complement the above result with the following:
\begin{lemme}\label{lem:deriv} With the notations of Proposition \ref{prop:eigenMLH}, the function $\l \in \mathcal{C}_{\delta} \mapsto \nu(\l)$ is differentiable with derivative $\nu'(\l)$ such that the limit
$$\nu'(0)=\lim_{\l\to0}\nu'(\l)$$
exists and is given by
$$\nu'(0)=-\int_{\Gamma_{+}}\tau_{-}(x,v)\varphi_{0}(x,v)\d \mu_{+}(x,v)<0.$$
\end{lemme}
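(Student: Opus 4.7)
The plan is to use first order perturbation theory for the isolated, algebraically simple eigenvalue $\nu(\lambda)$, exploiting that the dual eigenfunction at $\lambda=0$ is the constant function $\varphi_0^{\star}\equiv \mathbf{1}_{\Gamma_+}$ (because $\mathsf{M}_0\H$ is stochastic). Concretely, set $\varphi_\lambda:=\mathsf{P}(\lambda)\varphi_0$. By Proposition \ref{prop:eigenMLH}, for $\lambda\in\mathcal{C}_\delta$ this is an eigenfunction of $\mathsf{M}_\lambda\H$ for the eigenvalue $\nu(\lambda)$, and by Lemma \ref{lem:p'0} the map $\lambda\mapsto\mathsf{P}(\lambda)$ is differentiable in $\mathcal{C}_\delta$, hence so is $\lambda\mapsto\varphi_\lambda$ in $\lp$. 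To obtain the differentiability of $\nu$, I would write
\begin{equation*}
\nu(\lambda)\,\langle \varphi_\lambda,\mathbf{1}\rangle=\langle \mathsf{M}_\lambda\H\,\varphi_\lambda,\mathbf{1}\rangle, \qquad \lambda\in\mathcal{C}_\delta,
\end{equation*}
where $\langle\cdot,\cdot\rangle$ denotes the $\lp$-$L^\infty(\Gamma_+,\d\mu_+)$ duality pairing. Since $\varphi_\lambda\to\varphi_0$ in $\lp$ and $\int_{\Gamma_+}\varphi_0\d\mu_+=1$, the denominator $\langle\varphi_\lambda,\mathbf{1}\rangle$ is bounded away from zero near $\lambda=0$, so $\nu(\lambda)$ is differentiable and a quotient rule yields an explicit formula for $\nu'(\lambda)$.

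Differentiating the identity above and using the product rule I get
\begin{equation*}
\nu'(\lambda)\langle\varphi_\lambda,\mathbf{1}\rangle+\nu(\lambda)\Big\langle\tfrac{\d}{\d\lambda}\varphi_\lambda,\mathbf{1}\Big\rangle=\Big\langle\big(\tfrac{\d}{\d\lambda}\mathsf{M}_\lambda\H\big)\varphi_\lambda,\mathbf{1}\Big\rangle+\Big\langle\mathsf{M}_\lambda\H\tfrac{\d}{\d\lambda}\varphi_\lambda,\mathbf{1}\Big\rangle.
\end{equation*}
At $\lambda=0$, the stochasticity of $\mathsf{M}_0\H$ gives $\langle\mathsf{M}_0\H\,\psi,\mathbf{1}\rangle=\langle\psi,\mathbf{1}\rangle$ for any $\psi\in\lp$, and $\nu(0)=1$, so the two terms involving the derivative of $\varphi_\lambda$ cancel. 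Combined with $\langle\varphi_0,\mathbf{1}\rangle=1$, $\mathsf{M}_0\H\varphi_0=\varphi_0$ and the computation $\lim_{\lambda\to 0}\tfrac{\d}{\d\lambda}\mathsf{M}_\lambda\H=-\tau_-\mathsf{M}_0\H$ in $\mathscr{B}(\lp)$ established previously, this gives
\begin{equation*}
\nu'(0)=\big\langle -\tau_-\mathsf{M}_0\H\,\varphi_0,\mathbf{1}\big\rangle=-\int_{\Gamma_+}\tau_-(x,v)\,\varphi_0(x,v)\,\d\mu_+(x,v).
\end{equation*}
The passage to the limit $\lambda\to 0$ (with $\lambda\in\overline{\C}_+$) to get $\nu'(0)=\lim_{\lambda\to 0}\nu'(\lambda)$ uses the operator norm convergence of $\tfrac{\d}{\d\lambda}\mathsf{M}_\lambda\H$ uniformly in $\eta$ (Proposition \ref{prop:derMeis}) together with the continuity of $\lambda\mapsto\varphi_\lambda$ and $\lambda\mapsto\varphi^\star_\lambda$ given by Proposition \ref{prop:eigenMLH}.

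The strict negativity of $\nu'(0)$ then follows from the strict positivity of the integrand: $\varphi_0>0$ $\mu_+$-a.e. by the Perron-Frobenius structure of the irreducible stochastic operator $\mathsf{M}_0\H$, and $\tau_-(x,v)>0$ for every $(x,v)\in\Gamma_+$ with $v\neq 0$ since $\Omega$ is open and bounded with $\mathcal{C}^1$ boundary, while $\{v=0\}$ has zero $\bm{m}$-measure by Assumption \ref{hypO} \textit{(3)}. The main technical point I anticipate is the cancellation of the $\frac{\d}{\d\lambda}\varphi_\lambda$ terms, which truly requires $\mathbf{1}_{\Gamma_+}$ to be the left eigenvector: this would fail at a general $i\eta\neq 0$, and it is precisely the stochastic character of $\mathsf{M}_0\H$ that makes the computation explicit and delivers the clean formula.
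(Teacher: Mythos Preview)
Your argument is correct and follows the same first-order perturbation scheme as the paper, with one minor variation worth noting. The paper pairs the eigenvalue identity $\mathsf{M}_\lambda\H\varphi_\lambda=\nu(\lambda)\varphi_\lambda$ with the $\lambda$-dependent dual eigenvector $\varphi_\lambda^\star=\mathsf{P}(\lambda)^\star\mathbf{1}$ rather than with the fixed $\mathbf{1}$. Because $(\mathsf{M}_\lambda\H)^\star\varphi_\lambda^\star=\nu(\lambda)\varphi_\lambda^\star$, the $\tfrac{\d}{\d\lambda}\varphi_\lambda$ terms cancel for \emph{every} $\lambda\in\mathcal{C}_\delta$, yielding the closed formula $\nu'(\lambda)\langle\varphi_\lambda,\varphi_\lambda^\star\rangle=-\langle\tau_-\mathsf{M}_\lambda\H\varphi_\lambda,\varphi_\lambda^\star\rangle$ for all $\lambda$; the limit $\lambda\to 0$ is then immediate from the continuity of $\varphi_\lambda$, $\varphi_\lambda^\star$ and $\mathsf{M}_\lambda\H$. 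In your version the cancellation happens only at $\lambda=0$, so to get $\nu'(0)=\lim_{\lambda\to 0}\nu'(\lambda)$ you must also invoke the continuity of $\tfrac{\d}{\d\lambda}\varphi_\lambda=\mathsf{P}'(\lambda)\varphi_0$ at $\lambda=0$ (available from the formula in Lemma \ref{lem:p'0}); this is fine, but it is an extra ingredient, and your closing reference to $\varphi_\lambda^\star$ is then superfluous for your own route. Either way the conclusion and the value of $\nu'(0)$ are the same.
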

\begin{proof} Recall that we introduced in the proof of Proposition \ref{prop:eigenMLH} the functions
$$\varphi_{\l}=\mathsf{P}(\l)\varphi_{0}, \qquad \varphi_{\l}^{\star}=\mathsf{P}(\l)^{\star}\varphi_{0}^{\star}, \qquad \l \in \mathcal{C}_{\delta}$$
which are such that $\lim_{\l \to 0}\varphi_{\l}=\varphi_{0}$ and $\lim_{\l\to0}\varphi_{\l}^{\star}=\varphi^{\star}_{0}=\mathbf{1}_{\Gamma_{+}}$. Introducing the duality bracket $\langle\cdot,\cdot\rangle$ between $\lp$ and its dual $(\lp)^{\star}=L^{\infty}(\Gamma_{+},\d\mu_{+})$, we have in particular
$$\lim_{\l\to0}\langle \varphi_{\l},\varphi_{\l}^{\star}\rangle=\langle \varphi_{0},\varphi_{0}^{\star}\rangle=\int_{\Gamma_{+}}\varphi_{0}\d\mu_{+}=1.$$
Moreover, the mappings $\l \in \mathcal{C}_{\delta}\mapsto \varphi_{\l} \in \lp$ and $\l\in \mathcal{C}_{\delta} \mapsto \varphi_{\l}^{\star} \in (\lp)^{\star}$ are differentiable with 
$$\dfrac{\d}{\d\l}\varphi_{\l}=\dfrac{\d}{\d\l}\mathsf{P}(\l)\varphi_{0}, \qquad \dfrac{\d}{\d\l}\varphi_{\l}^{\star}=\dfrac{\d}{\d\l}\mathsf{P}(\l)^{\star}\varphi_{0}^{\star}.$$
Since 
$$\mathsf{M}_{\l}\H\varphi_{\l}=\nu(\l)\varphi_{\l}$$ 
so that $\langle \mathsf{M}_{\l}\H\varphi_{\l}, \varphi_{\l}^{\star}\rangle=\nu(\l)\langle\varphi_{\l},\varphi_{\l}^{\star}\rangle$
we deduce first that $\l \in \mathcal{C}_{\delta} \mapsto \nu(\l)$ is differentiable and, differentiating the above identity yields
$$\dfrac{\d}{\d\l}\left(\mathsf{M}_{\l}\H\varphi_{\l}\right)=\nu'(\l)\varphi_{\l}+\nu(\l)\dfrac{\d}{\d\l}\varphi_{\l}.$$
Computing the derivatives and multiplying with $\varphi_{\l}^{\star}$ and integrating over $\Gamma_{+}$ we get
$$\langle \left(\tfrac{\d}{\d\l}\mathsf{M}_{\l}\H\right)\varphi_{\l} + \mathsf{M}_{\l}\H\tfrac{\d}{\d\l}\varphi_{\l},\varphi_{\l}^{\star}\rangle =
\nu'(\l)\langle\varphi_{\l},\varphi_{\l}^{\star}\rangle + \nu(\l)\langle \tfrac{\d}{\d\l}\varphi_{\l},\varphi_{\l}^{\star}\rangle.$$
Using that $\tfrac{\d}{\d\l}\mathsf{M}_{\l}\H=-\tau_{-}\mathsf{M}_{\l}\H$ whereas 
$$\langle \mathsf{M}_{\l}\H\tfrac{\d}{\d\l}\varphi_{\l},\varphi_{\l}^{\star}\rangle=\langle\tfrac{\d}{\d\l}\varphi_{\l},(\mathsf{M}_{\l}\H)^{\star}\varphi_{\l}^{\star}\rangle=\nu(\l)\langle \tfrac{\d}{\d\l}\varphi_{\l},\varphi_{\l}^{\star}\rangle$$ we obtain
$$-\langle \tau_{-}\mathsf{M}_{\l}\H\varphi_{\l},\varphi_{\l}^{\star}\rangle + \nu(\l)\langle \tfrac{\d}{\d\l}\varphi_{\l},\varphi_{\l}^{\star}\rangle
=\nu'(\l)\langle \langle\varphi_{\l},\varphi_{\l}^{\star}\rangle + \nu(\l)\langle \tfrac{\d}{\d\l}\varphi_{\l},\varphi_{\l}^{\star}\rangle.$$
Thus
$$-\langle \tau_{-}\mathsf{M}_{\l}\H\varphi_{\l},\varphi_{\l}^{\star}\rangle=\nu'(\l)\langle \varphi_{\l},\varphi_{\l}^{\star}\rangle, \qquad \forall \l \in \mathcal{C}_{\delta}.$$
Letting $\l \to 0$, we get that
$$\lim_{\l\to0}\nu'(\l)=-\langle \tau_{-}\mathsf{M}_{0}\H\varphi_{0},\varphi_{0}^{\star}\rangle$$
which is the desired result since $\mathsf{M}_{0}\H\varphi_{0}=\varphi_{0}$. \end{proof}

\subsubsection{Existence of the boundary function at $0$}

Recall that, to prove the existence of the limit $\lim_{\e\to0^{+}}\Rs(\e+i\eta,\T_{\H})f$ for $\eta \neq 0$, we need that $f \in \X_{1}.$ For the delicate case $\eta=0$, we will need the additional assumption that $f$ has zero mean:
\begin{equation}\label{eq:0mean}
\varrho_{f}:=\int_{\Omega\times V}f(x,v)\d x \otimes \bm{m}(\d v)=0.\end{equation}
Namely,
\begin{lemme}\label{lem:convZk}
Let $k \in \N$, $k \leq N_{\H}.$ Then, for any $f \in \X_{k+1}$ satisfying \eqref{eq:0mean}, the limit
$$\lim_{\e\to0^{+}}\Rs(\e+i\eta,\T_{\H})f$$
exists in $\X_{k}$ \emph{uniformly} on some neighbourhood of $0$. We denote $\mathbf{R}_{f}(\eta)$ this limit.
\end{lemme}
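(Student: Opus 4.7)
The plan is to apply the resolvent identity \eqref{eq:lambdaTH}, so that it suffices to analyse $\mathrm{R}(\l)f=\mathsf{\Xi}_\l\H\,\Rs(1,\mathsf{M}_\l\H)\mathsf{G}_\l f$ near $\l=0$, since the convergence of $\Rs(\e+i\eta,\T_0)f$ to $\mathbf{R}^0_f(\eta)$ in $\X_k$ is already provided by Lemma \ref{lem:To} (and the pointwise bound $|e^{-(\e+i\eta)t}-e^{-i\eta t}|\leq 1-e^{-\e t}$ is $\eta$-independent, so the convergence is automatically uniform on $\R$). The whole difficulty is therefore to extend $\mathrm{R}(\l)f$ continuously from $\C_+$ to a neighbourhood of $0$ on the imaginary axis.

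For $\l\in\mathcal{C}_\delta$ I would isolate the singular contribution of $\Rs(1,\mathsf{M}_\l\H)$ using the spectral projection $\mathsf{P}(\l)$ from Proposition \ref{prop:eigenMLH}: writing
$$
\Rs(1,\mathsf{M}_\l\H)=\frac{1}{1-\nu(\l)}\mathsf{P}(\l)+\mathsf{S}(\l),\qquad \mathsf{S}(\l):=\Rs(1,\mathsf{M}_\l\H)(I-\mathsf{P}(\l)),
$$
where, since $\mathfrak{S}(\mathsf{M}_\l\H)\cap\{|z-1|<r_0\}=\{\nu(\l)\}$ and the rest of the spectrum stays in a fixed disk centred at $0$, the reduced resolvent $\mathsf{S}(\l)$ is continuous in operator norm on $\mathcal{C}_\delta$. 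Combining this continuity with Lemma \ref{lem:unifGeis} (uniform convergence of $\mathsf{G}_{\e+i\eta}f\to \mathsf{G}_{i\eta}f$ in $\lp$) and Proposition \ref{propo:convK}-\eqref{eq:MeisHk} (which, for $k\leq N_\H$, gives $\mathsf{\Xi}_{\e+i\eta}\H\to \mathsf{\Xi}_{i\eta}\H$ in $\mathscr{B}(\lp,\X_k)$ uniformly in $\eta$), the piece $\mathsf{\Xi}_\l\H\,\mathsf{S}(\l)\mathsf{G}_\l f$ converges in $\X_k$ uniformly as $\e\to 0^+$, on all of $\mathcal{C}_\delta$.

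The delicate term is the singular one $\tfrac{1}{1-\nu(\l)}\mathsf{\Xi}_\l\H\,\mathsf{P}(\l)\mathsf{G}_\l f$, and here the hypothesis $\varrho_f=0$ is crucial. Indeed $\mathsf{P}(0)$ is the rank-one projection $\psi\mapsto\bigl(\int_{\Gamma_+}\psi\,\d\mu_+\bigr)\varphi_0$, hence by \eqref{Eq:G0}
$$
\mathsf{P}(0)\mathsf{G}_0 f=\Bigl(\int_{\Gamma_+}\mathsf{G}_0 f\,\d\mu_+\Bigr)\varphi_0=\varrho_f\,\varphi_0=0.
$$
Writing
$$
\mathsf{P}(\l)\mathsf{G}_\l f=\bigl[\mathsf{P}(\l)-\mathsf{P}(0)\bigr]\mathsf{G}_0 f+\mathsf{P}(0)\bigl[\mathsf{G}_\l f-\mathsf{G}_0 f\bigr]+\bigl[\mathsf{P}(\l)-\mathsf{P}(0)\bigr]\bigl[\mathsf{G}_\l f-\mathsf{G}_0 f\bigr],
$$
each bracket is $\mathrm{O}(|\l|)$ in $\lp$ by Lemma \ref{lem:p'0} and by the differentiability of $\l\mapsto \mathsf{G}_\l f$ at $0$ (available since $f\in\X_{k+1}\subset\X_1$). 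On the other hand Lemma \ref{lem:deriv} furnishes $1-\nu(\l)=-\nu'(0)\l+o(|\l|)$ with $\nu'(0)<0$, so the indeterminate form has a finite $\lp$-limit as $\l\to 0$ in $\mathcal{C}_\delta$, namely $(-\nu'(0))^{-1}\bigl[\mathsf{P}'(0)\mathsf{G}_0 f-\mathsf{P}(0)\mathsf{G}_0(t_+f)\bigr]$. Applying $\mathsf{\Xi}_\l\H$, which is continuous from $\lp$ to $\X_k$ uniformly in $\l\in\overline{\C}_+$, yields the desired limit in $\X_k$.

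The main obstacle is the indeterminate form $0/0$ at $\l=0$: one needs Lipschitz-type control on both $\mathsf{P}(\l)\mathsf{G}_\l f$ and $1-\nu(\l)$ that is uniform in $\l\in\mathcal{C}_\delta$ (not merely pointwise along rays), so that the ratio converges in $\lp$ as $\e\to 0^+$ uniformly in $|\eta|<\delta'$ for some $\delta'>0$. All the ingredients are collected in Lemmas \ref{lem:p'0} and \ref{lem:deriv} together with the preliminary lemma on $\mathsf{G}_\l$, but marshalling them to extract genuine \emph{uniform} (rather than directional) convergence is the technical heart of the argument.
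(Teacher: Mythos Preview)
Your proposal is correct and follows essentially the same route as the paper: split off $\Rs(\lambda,\T_0)$, decompose $\Rs(1,\mathsf{M}_\lambda\H)$ via the spectral projection $\mathsf{P}(\lambda)$, handle the regular part $\mathsf{S}(\lambda)$ by continuity, and resolve the $0/0$ indeterminate form in the singular part using $\varrho_f=0$ together with the differentiability of $\mathsf{P}(\lambda)$, $\mathsf{G}_\lambda f$ and $\nu(\lambda)$ at $\lambda=0$.

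The one place where your write-up and the paper's proof differ is the uniformity step, which you correctly flag as the crux but leave open. In fact, with the ingredients you have assembled it is not hard: the map
\[
\lambda\in\mathcal{C}_\delta\setminus\{0\}\longmapsto \frac{1}{1-\nu(\lambda)}\mathsf{P}(\lambda)\mathsf{G}_\lambda f\in\lp
\]
is continuous away from $0$, and your expansion shows it extends continuously to $\lambda=0$ (limits being taken in $\overline{\C}_+$). Continuity on the compact rectangle $\mathcal{C}_\delta$ then gives uniform continuity, and since $|(\e+i\eta)-i\eta|=\e$, the convergence as $\e\to0^+$ is automatically uniform in $|\eta|\le\delta$. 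The paper argues this same point by contradiction with sequences $(\e_n,\eta_n)$, showing that any putative bad sequence must have $\eta_n\to0$ and then using the differentiability at $0$ to reach a contradiction; the two arguments are equivalent. A minor notational point: your reduced resolvent $\mathsf{S}(\lambda)=\Rs(1,\mathsf{M}_\lambda\H)(I-\mathsf{P}(\lambda))$ is not literally defined at $\lambda=0$ as written (since $1\in\mathfrak{S}(\mathsf{M}_0\H)$); the paper writes it as $\Rs(1,\mathsf{M}_\lambda\H(I-\mathsf{P}(\lambda)))$, which is well-defined on all of $\mathcal{C}_\delta$ and coincides with your expression for $\lambda\neq0$.
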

\begin{proof} According to Lemma \ref{lem:To}, if $f \in \X_{k+1}$ then
$$\lim_{\e\to0+}\Rs(\e+i\eta,\T_{0})f$$
exists in $\X_{k}$ \emph{uniformly} on some neighbourhood of $0$. So, as in the proof of Lemma \ref{lem:converTH}, it is enough to prove 
the convergence in $\X_{k}$ of 
$$\mathrm{R}(\e+i\eta)f=\mathsf{\Xi}_{\e+\,i\eta}\H\Rs(1,\mathsf{M}_{\e+i\eta}\H)\mathsf{G}_{\e+i\eta}f.$$
This convergence will hold thanks to \eqref{eq:0mean}.   
We split $\Rs(\e+i\eta)f$ as follows
\begin{multline*}
\mathrm{R}(\e+i\eta)f=\mathsf{\Xi}_{\e+i\eta}\H\Rs(1,\mathsf{M}_{\e+i\eta}\H)(\mathsf{I-P}(\e+i\eta))\mathsf{G}_{\e+i\eta}f\\
+\mathsf{\Xi}_{\e+i\eta}\H\Rs(1,\mathsf{M}_{\e+i\eta}\H)\mathsf{P}(\e+i\eta)\mathsf{G}_{\e+i\eta}f\\
=\mathsf{\Xi}_{\e+i\eta}\H\Rs(1,\mathsf{M}_{\e+i\eta}\H(\mathsf{I-P}(\e+i\eta)))\mathsf{G}_{\e+i\eta}f\\
+\mathsf{\Xi}_{\e+i\eta}\H\Rs(1,\mathsf{M}_{\e+i\eta}\H\mathsf{P}(\e+i\eta))\mathsf{G}_{\e+i\eta}f\end{multline*}
since $\mathsf{P}(\l)$ commutes with $\mathsf{M}_{\l}\H$. Notice that, with the notations of Proposition \ref{prop:eigenMLH}, 
$$\mathfrak{S}\left(\mathsf{M}_{\e+i\eta}\H\left[\mathsf{I-P}(\e+i\eta)\right]\right) \subset \{z \in \C\;;\;|z| <r\}$$
so that
$$r_{\sigma}\left(\mathsf{M}_{\e+i\eta}\H(\mathsf{I-P}(\e+i\eta))\right) \leq r <1.$$
One has then, for $r < r'< r_{0}$
$$\mathsf{I-P}(\e+i\eta))=\frac{1}{2i\pi}\oint_{\{|z|=r'\}}\Rs(z,\mathsf{M}_{\e+i\eta}\H)\d z$$
so that $\lim_{\e\to0^{+}}\mathsf{I-P}(\e+i\eta))=\mathsf{I-P}(i\eta))$ in $\mathscr{B}(\lp)$ uniformly with respect to $|\eta| < \delta.$ Consequently,
\begin{equation}\label{eq:convI-P}
\lim_{\e\to0^{+}}\sup_{|\eta| \leq \delta}\left\|\Rs(1,\mathsf{M}_{\e+i\eta}\H(\mathsf{I-P}(\e+i\eta)))\mathsf{G}_{i\eta}f-\Rs(1,\mathsf{M}_{i\eta}\H(\mathsf{I-P}(i\eta)))\mathsf{G}_{i\eta}f\right\|_{\lp}=0.\end{equation}
On the other hand, 
$$\Rs(1,\mathsf{M}_{\e+i\eta}\H\mathsf{P}(\e+i\eta))\mathsf{G}_{\e+i\eta}f=\frac{1}{1-\nu(\e+i\eta)}\mathsf{P}(\e+i\eta)\mathsf{G}_{\e+i\eta}f$$
so that, by the continuity of $\l \in \mathcal{C}_{\delta} \mapsto \nu(\l)$ gives easily that
$$\lim_{\e\to0^{+}}\Rs(1,\mathsf{M}_{\e+i\eta}\H\mathsf{P}(\e+i\eta))\mathsf{G}_{\e+i\eta}f=\frac{1}{1-\nu(i\eta)}\mathsf{P}(i\eta)\mathsf{G}_{i\eta}f, \qquad \forall \eta \neq 0$$
where the limit is meant in $\lp.$ Whenever $\eta=0$, we have
$$\Rs(1,\mathsf{M}_{\e}\H\mathsf{P}(\e))\mathsf{G}_{\e}f=\frac{1}{1-\nu(\e)}\mathsf{P}(\e)\mathsf{G}_{\e}f=\frac{\e}{1-\nu(\e)}\frac{\mathsf{P}(\e)\mathsf{G}_{\e}f-\mathsf{P}(0)\mathsf{G}_{0}f}{\e}$$
where we used the fact that, since $\varrho_{f}=0$ and $\mathsf{G}_{0}$ is stochastic, one has 
$$\int_{\Gamma_{+}}\mathsf{G}_{0}f\d\mu_{+}=0 \qquad \text{ so } \quad \mathsf{P}(0)\mathsf{G}_{0}f=0.$$ 
As already seen, the derivative $\mathsf{G}'(0)f$ exists since $f \in \X_{1}$ and therefore, by virtue of Lemma \ref{lem:p'0}, 
$$\lim_{\e \to 0^{+}}\frac{\mathsf{P}(\e)\mathsf{G}_{\e}f-\mathsf{P}(0)\mathsf{G}_{0}f}{\e}=\mathsf{P}'(0)\mathsf{G}_{0}f+\mathsf{P}(0)\mathsf{G}'_{0}f.$$
According to Lemma \ref{lem:deriv}, 
$$\lim_{\e\to0^{+}}\frac{\e}{1-\nu(\e)}=-\frac{1}{\nu'(0)} > 0$$
so that
$$\lim_{\e\to0^{+}}\Rs(1,\mathsf{M}_{\e}\H\mathsf{P}(\e))\mathsf{G}_{\e}f=-\frac{1}{\nu'(0)}\left[\mathsf{P}'(0)\mathsf{G}_{0}f+\mathsf{P}(0)\mathsf{G}'_{0}f\right].$$
Finally, we obtain that $\lim_{\e\to0^{+}}\Rs(1,\mathsf{M}_{\e}\H)\mathsf{G}_{\e}f$ exists in $\lp$ and is given by
$$\Rs(1,\mathsf{M}_{0}\H(\mathsf{I-P}(0)))\mathsf{G}_{0}f-\frac{1}{\nu'(0)}\left[\mathsf{P}'(0)\mathsf{G}_{0}f+\mathsf{P}(0)\mathsf{G}'_{0}f\right].$$
We just  proved that, for any $\eta \in\R$, 
$$\lim_{\e\to0^{+}}\Rs(1,\mathsf{M}_{\e+i\eta})\mathsf{G}_{\e+i\eta}f$$
exists in $\X_{0}$ and is given by 
$$\Rs_{f}(\eta):=\begin{cases}
\Rs(1,\mathsf{M}_{i\eta}\H)\mathsf{G}_{i\eta}f \qquad &\text{ if } \eta \neq 0\\
\Rs\left(1,\mathsf{M}_{0}\H\left(\mathsf{I-P}(0)\right)\right)\mathsf{G}_{0}f  -\frac{1}{\nu'(0)}\left[\mathsf{P}'(0)\mathsf{G}_{0}f+\mathsf{P}(0)\mathsf{G}'_{0}f)\right] \qquad &\text{ if } \eta =0.\end{cases}
$$
Let us prove that the convergence is uniform with respect to $|\eta| \leq \delta.$  According to \eqref{eq:convI-P}, we only need to prove that the convergence
$$\lim_{\e\to0^{+}}\Rs(1,\mathsf{M}_{\e+i\eta}\H\mathsf{P}(\e+i\eta))\mathsf{G}_{\e+i\eta}f$$
towards
$$F(\eta)=\begin{cases}
\Rs(1,\mathsf{M}_{i\eta}\H\mathsf{P}(i\eta))\mathsf{G}_{i\eta}f \qquad &\text{ if } \eta \neq 0\\
-\frac{1}{\nu'(0)}\left[\mathsf{P}'(0)\mathsf{G}_{0}f+\mathsf{P}(0)\mathsf{G}'_{0}f)\right] \qquad &\text{ if } \eta =0.\end{cases}
$$
is uniform with respect to $|\eta| < \delta.$ We argue by contradiction, assuming that there exist $c >0$, a sequence $(\e_{n})_{n} \subset (0,\infty)$ converging to $0$ and a sequence $(\eta_{n})_{n} \subset (-\delta,\delta)$ such that
\begin{equation}\label{eq:absurd}
\left\|\Rs(1,\mathsf{M}_{\e_{n}+i\eta_{n}}\H\mathsf{P}(\e_{n}+i\eta_{n}))\mathsf{G}_{\e_{n}+i\eta_{n}}f-F(\eta_{n})\right\|_{\lp} \geq c >0.\end{equation}
Up to consider subsequence if necessary, we can assume without loss of generality that $\lim_{n}\eta_{n}=\eta_{0}$ with $|\eta_{0}| \leq \delta.$ First, one sees that then $\eta_{0}=0$ since the convergence of $\Rs(1,\mathsf{M}_{\e+i\eta}\H\mathsf{P}(\e+i\eta)\mathsf{G}_{\e+i\eta}f$ to $F(\eta)$ is actually uniform in any neighbourhood around $\eta_{0} \neq 0$ (see \eqref{eq:convunifr}). Because $\eta_{0}=0$, defining $\lambda_{n}:=\e_{n}+i\eta_{n}$, $n \in \N$, the sequence $(\l_{n})_{n}\subset \mathcal{C}_{\delta}$ is converging to $0$. Now, as before,
$$\Rs(1,\mathsf{M}_{\l_{n}}\H\mathsf{P}_{\l_{n}})\mathsf{G}_{\l_{n}}f=\frac{\l_{n}}{1-\nu(\l_{n})}\frac{\mathsf{P}(\l_{n})\mathsf{G}_{\l_{n}}f-\mathsf{P}(0)\mathsf{G}_{0}f}{\l_{n}}, \qquad n\in \N$$
with
$$\lim_{n\to \infty}\frac{\l_{n}}{1-\nu(\l_{n})}=-\frac{1}{\nu'(0)}, \qquad \lim_{n\to\infty}
\frac{\mathsf{P}(\l_{n})\mathsf{G}_{\l_{n}}f-\mathsf{P}(0)\mathsf{G}_{0}f}{\l_{n}}=\left[\mathsf{P}'(0)\mathsf{G}_{0}f+\mathsf{P}(0)\mathsf{G}'_{0}f\right].$$
Therefore, 
$$\lim_{n\to\infty}\Rs(1,\mathsf{M}_{\l_{n}}\H\mathsf{P}(\l_{n}))\mathsf{G}_{\l_{n}}f=-\frac{1}{\nu'(0)}\left[\mathsf{P}'(0)\mathsf{G}_{0}f+\mathsf{P}(0)\mathsf{G}'_{0}f\right].$$
One also has
$$F(\eta_{n})=\Rs(1,\mathsf{M}_{i\eta_{n}}\H\mathsf{P}({i\eta_{n}}))\mathsf{G}_{i\eta_{n}}f=\frac{i\eta_{n}}{1-\nu(i\eta_{n})}\frac{\mathsf{P}(i\eta_{n})\mathsf{G}_{i\eta_{n}}f-\mathsf{P}(0)\mathsf{G}_{0}f}{i\eta_{n}}, \qquad n\in \N$$
so that $F(i\eta_{n})$ has the same limit $-\frac{1}{\nu'(0)}\left[\mathsf{P}'(0)\mathsf{G}_{0}f+\mathsf{P}(0)\mathsf{G}'_{0}f\right]$ as $n \to \infty$. This contradicts \eqref{eq:absurd}. Therefore, one has
$$\lim_{\e\to0^{+}}\sup_{|\eta|<\delta}\left\|\Rs(1,\mathsf{M}_{\e+i\eta}\H)\mathsf{G}_{\e+i\eta}f-\Rs_{f}(\eta)\right\|_{\lp}=0.$$
As in the proof of Lemma \ref{lem:converTH}, using now \eqref{eq:MeisHk} in Proposition \ref{propo:convK}, we deduce the desired uniform convergence in $\X_{k}$ with $\mathbf{R}_{f}(\eta)=\mathbf{R}_{f}^{0}(\eta)+\mathsf{\Xi}_{i\eta}\H\Rs_{f}(\eta).$
\end{proof}

\subsubsection{Existence of the boundary function}

Combining Lemma \ref{lem:convZk} with  Lemma \ref{lem:converTH} we get the following
\begin{theo}\label{theo:existtrace} For any $k \in \N$, $k \leq N_{\H}$ and any $f \in \X_{k+1}$ satisfying \eqref{eq:0mean}, the limit
$$\lim_{\e\to0^{+}}\Rs(\e+i\eta,\T_{\H})f$$
exists in $\X_{k}$ uniformly with respect to $\eta$ on any compact of $\R$. The limit is denoted $\mathbf{R}_{f}(\eta)$. The mapping
$$\eta \in \R \longmapsto \mathbf{R}_{f}(\eta) \in \X_{k}$$
is continuous and is uniformly bounded in $\X_{k}$, i.e. $\sup_{\eta\in \R}\|\mathbf{R}_{f}(\eta)\|_{\X_{k}} <\infty.$
\end{theo}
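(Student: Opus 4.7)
The plan is to decompose the statement into three parts: (i) existence of the limit with uniform convergence on compacts of $\R$, (ii) continuity of $\mathbf{R}_f$, and (iii) the uniform boundedness $\sup_{\eta\in\R}\|\mathbf{R}_f(\eta)\|_{\X_k} < \infty$. Parts (i) and (ii) are essentially bookkeeping based on the two preceding Lemmas, while (iii) is where Assumption \ref{hypH}\textit{(4)} is genuinely exploited and is the main obstacle.

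For (i), Lemma \ref{lem:convZk} provides a neighbourhood $I_0$ of $0$ on which the convergence $\Rs(\e+i\eta,\T_\H)f \to \mathbf{R}_f(\eta)$ holds in $\X_k$ uniformly in $\eta$, and Lemma \ref{lem:converTH} does the same on a neighbourhood $I_{\eta_0}$ of each $\eta_0 \in \R\setminus\{0\}$. Extracting a finite subcovering of any compact $K \subset \R$ by such neighbourhoods yields uniform convergence on $K$. For (ii), for each fixed $\e > 0$ the map $\eta \mapsto \Rs(\e+i\eta,\T_\H)f$ is continuous into $\X_k$ (indeed holomorphic when viewed as a function of $\l = \e+i\eta \in \C_+$); being the uniform-on-compacts limit of continuous functions, $\mathbf{R}_f$ is continuous on $\R$.

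For (iii), part (ii) already forces $\mathbf{R}_f$ to be bounded on every compact of $\R$, so the only remaining issue is to control $\|\mathbf{R}_f(\eta)\|_{\X_k}$ for $|\eta|$ large. Passing to the limit $\e \to 0^+$ in the splitting \eqref{eq:split} (which is legitimate on any neighbourhood of a given $\eta \neq 0$ thanks to Corollary \ref{cor:ResMei}, Lemma \ref{lem:unifGeis} and Proposition \ref{propo:convK}), one obtains for every $\eta \neq 0$
$$\mathbf{R}_f(\eta) = \mathbf{R}_f^0(\eta) + \mathsf{\Xi}_{i\eta}\H\,\Rs(1,\mathsf{M}_{i\eta}\H)\,\mathsf{G}_{i\eta}f.$$
Remark \ref{nb:boundedR0} gives $\sup_{\eta\in\R}\|\mathbf{R}_f^0(\eta)\|_{\X_k}\leq\|f\|_{\X_k}$; the assumption $k \leq N_\H$ combined with the pointwise bound underlying Lemma \ref{lem:motau} (the modulus of the exponential factor is $1$, so the same estimate holds with $\mathsf{\Xi}_{i\eta}$ in place of $\mathsf{\Xi}_0$) yields $\sup_{\eta\in\R}\|\mathsf{\Xi}_{i\eta}\H\|_{\mathscr{B}(\lp,\X_k)} \leq D\|\H\|_{\mathscr{B}(\lp,\Y^-_{k+1})}$; and $\|\mathsf{G}_{i\eta}f\|_{\lp} \leq \|f\|_{\X_0}$. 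The missing ingredient is a uniform bound on $\|\Rs(1,\mathsf{M}_{i\eta}\H)\|_{\mathscr{B}(\lp)}$ for $|\eta|$ large, which is precisely the role of Assumption \ref{hypH}\textit{(4)}: taken with $\varepsilon = 0$, it furnishes $R > 0$ such that $\|(\mathsf{M}_{i\eta}\H)^\ell\|_{\mathscr{B}(\lp)} \leq 1/2$ for all $|\eta| \geq R$. Grouping the Neumann series $\sum_{n\geq 0}(\mathsf{M}_{i\eta}\H)^n$ in blocks of length $\ell$ and using $\|\mathsf{M}_{i\eta}\H\|_{\mathscr{B}(\lp)} \leq 1$ then gives
$$\sup_{|\eta|\geq R}\|\Rs(1,\mathsf{M}_{i\eta}\H)\|_{\mathscr{B}(\lp)} \;\leq\; \ell\sum_{n=0}^{\infty} 2^{-n} \;=\; 2\ell,$$
which combined with the bound on $[-R,R]$ coming from (ii) yields $\sup_{\eta\in\R}\|\mathbf{R}_f(\eta)\|_{\X_k} < \infty$.
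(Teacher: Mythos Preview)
Your proof is correct and follows essentially the same route as the paper's own argument: the covering/uniform-limit argument for (i)--(ii), the splitting $\mathbf{R}_f(\eta)=\mathbf{R}_f^0(\eta)+\mathsf{\Xi}_{i\eta}\H\,\Rs(1,\mathsf{M}_{i\eta}\H)\,\mathsf{G}_{i\eta}f$ for $\eta\neq 0$, and the block-of-length-$\ell$ Neumann series estimate from Assumption \ref{hypH}\textit{(4)} to control $\|\Rs(1,\mathsf{M}_{i\eta}\H)\|$ for $|\eta|$ large. The paper writes the last bound as $\ell/(1-\|(\mathsf{M}_{i\eta}\H)^\ell\|)$ rather than specializing to the threshold $1/2$, but this is cosmetic.
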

\begin{proof} Let $[a,b]$ be a compact interval of $\R$ and $a \leq \eta_{0} \leq b.$ We know from Lemmas \ref{lem:convZk}--\ref{lem:converTH} and Lemma \ref{lem:To} that the convergence of $\Rs(\e+i\eta,\T_{\H})f$ to $\mathbf{R}_{f}(\eta)$ is uniform on some neighbourhood of $\eta_{0}$. Covering $[a,b]$ with a finite number of such neighbourhoods, we get the first part of the Theorem. The mapping $\eta \in \R \mapsto \mathbf{R}_{f}(\eta) \in \X_{k}$ is clearly continuous as the uniform limit of the family of continuous mappings $\eta \in \R \mapsto \Rs(\e+i\eta,\T_{\H})f$, $\e >0$. To prove the limit is uniformly bounded, we see from the first part that we only need to prove that, for $R >0$ large enough,
$$\sup_{|\eta| >R}\|\mathbf{R}_{f}(\eta)\|_{\X_{k}} < \infty.$$
For any $\eta\neq0$, one has
$$\mathbf{R}_{f}(\eta)=\mathbf{R}_{f}^{0}(\eta)+\mathsf{\Xi}_{i\eta}\H\Rs(1,\mathsf{M}_{i\eta}\H)\mathsf{G}_{i\eta}f$$
so that
\begin{multline*}
\|\mathbf{R}_{f}(\eta)\|_{\X_{k}} \leq \|\mathbf{R}_{f}^{0}(\eta)\|_{\X_{k}} + \left\|\mathsf{\Xi}_{i\eta}\H\Rs(1,\mathsf{M}_{i\eta}\H)\mathsf{G}_{i\eta}f\right\|_{\X_{k}}
\\
\leq \|f\|_{\X_{k+1}}+ \|\mathsf{\Xi}_{0}\|_{\mathscr{B}(\Y_{k}^{-},\X_{k})}\|\H\|_{\mathscr{B}(\lp,\Y_{k}^{-})}\left\|\Rs(1,\mathsf{M}_{i\eta}\H)\right\|_{\mathscr{B}(\lp)}\|f\|_{\X_{0}}
\end{multline*}
where we used that $|\mathsf{\Xi}_{i\eta}| \leq \mathsf{\Xi}_{0}$ and Remark \ref{nb:boundedR0}. To prove the result, we only need to show that $\|\Rs(1,\mathsf{M}_{i\eta}\H)\|_{\mathscr{B}(\lp)}$ is bounded for $|\eta|$ large enough. Recall (see \eqref{eq:poweriml}) that there exists $\ell \in \N$ such that
$$\lim_{|\eta|\to\infty}\left\|\left(\mathsf{M}_{i\eta}\H\right)^{\ell}\right\|_{\mathscr{B}(\lp)}=0,$$
and, arguing exactly as in Corollary \ref{cor:ResMei}, one proves easily that, for $|\eta|$ large enough $\Rs(1,\mathsf{M}_{i\eta}\H)=\sum_{n=0}^{\infty}\sum_{j=0}^{\ell-1}\left(\mathsf{M}_{i\eta}\H\right)^{n\ell+j}$ from which
$$\|\Rs(1,\mathsf{M}_{i\eta}\H)\|_{\mathscr{B}(\lp)} \leq \ell \sum_{n=0}^{\infty}\|\left(\mathsf{M}_{i\eta}\H\right)^{\ell}\|_{\mathscr{B}(\lp)}^{n}=\frac{\ell}{1-\left\|\left(\mathsf{M}_{i\eta}\H\right)^{\ell}\right\|_{\mathscr{B}(\lp)}}$$
which gives the conclusion.
\end{proof}

\section{Regularity of the boundary function}\label{sec:regF}

\subsection{Continuity and qualitative convergence theorem} Before proving Theorem \ref{theo:qualit} we need the following technical result
\begin{lemme}\label{lem:THX1} For any $k \in \N$ with $k \leq \mathsf{N}_{\H}$,  $\D(\T_{\H}) \cap \X_{k}$ is dense in $\X_{k}$.
\end{lemme}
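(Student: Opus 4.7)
The plan is to produce an explicit dense subspace of $\X_k$ that is contained in $\D(\T_{\H}) \cap \X_k$, thereby bypassing any use of the resolvent $\Rs(\lambda,\T_{\H})$. I would consider
$$\mathcal{D} := \mathrm{span}\bigl\{(x,v) \mapsto \rho(x)\psi(v)\;:\;\rho \in \mathcal{C}^{\infty}_c(\Omega),\ \psi \in \mathcal{C}_c(V \setminus \{0\})\bigr\}.$$

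First I would check that $\mathcal{D} \subset \D(\T_{\H}) \cap \X_k$. For $f = \rho\psi$ of the above form, the support of $\psi$ lies in $\{a \leq |v| \leq b\}$ for some $0 < a \leq b < \infty$, so $\max(1,|v|^{-k})|f|$ is bounded and compactly supported in $(x,v)$; this gives $f \in \X_k$ for every $k \in \N$. Next, $v \cdot \nabla_x f(x,v) = \psi(v)\,v \cdot \nabla \rho(x)$ is bounded with support in $\mathrm{supp}(\rho) \times \mathrm{supp}(\psi)$, hence lies in $\X_0$, so $f \in W_1$. Since $\rho$ vanishes near $\partial\Omega$, the traces $f_{|\Gamma_\pm}$ are identically zero, so $f \in W$ and the boundary identity $\H(f_{|\Gamma_+}) = 0 = f_{|\Gamma_-}$ is satisfied vacuously. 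Thus $f \in \D(\T_{\H})$.

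The main step is then the density of $\mathcal{D}$ in $\X_k = L^1(\Omega \times V,\d x \otimes w_k(v)\bm{m}(\d v))$ with $w_k(v) := \max(1,|v|^{-k})$. Since $\X_k$ is an $L^1$-space on a product measure space, the standard tensor-product density lemma reduces the problem to two independent density claims: (i) $\mathcal{C}^{\infty}_c(\Omega)$ is dense in $L^1(\Omega,\d x)$, which is classical; and (ii) $\mathcal{C}_c(V \setminus \{0\})$ is dense in $L^1(V,w_k(v)\bm{m}(\d v))$. For (ii), I would first approximate any $g$ in that space by its truncations $g_n := g\,\mathbf{1}_{\{1/n \leq |v| \leq n\}}$, which converge to $g$ in $L^1(V,w_k\bm{m})$ by dominated convergence using $\bm{m}(\{0\}) = 0$; on each annulus $\{1/n \leq |v| \leq n\}$ the weight $w_k$ is bounded both above and below, so the question reduces to the classical density of $\mathcal{C}_c$ in $L^1$ of a Radon measure on a compact set.

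There is essentially no obstacle: the argument is entirely soft and exploits none of the fine structure of $\H$, $\mathsf{M}_\lambda$, or $\mathsf{\Xi}_\lambda$ developed earlier in the paper. In particular, the hypothesis $k \leq N_{\H}$ appearing in the statement is not needed for the conclusion; it is simply the range of $k$ required in later applications. The only implicit ingredient is that $\bm{m}$ is locally finite on $V$, which is tacitly assumed throughout the paper for all the integration and trace formulae to make sense.
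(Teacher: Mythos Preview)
Your argument is correct and takes a genuinely different route from the paper. The paper proves density by showing that $\lambda\Rs(\lambda,\T_{\H})f \to f$ in $\X_{k}$ as $\lambda \to \infty$ for every $f \in \X_{k}$; the nontrivial point there is to check that $\Rs(\lambda,\T_{\H})$ actually maps $\X_{k}$ into itself, which is done via the splitting $\Rs(\lambda,\T_{\H})=\Rs(\lambda,\T_{0})+\mathsf{\Xi}_{\lambda}\H\Rs(1,\mathsf{M}_{\lambda}\H)\mathsf{G}_{\lambda}$ and the hypothesis $\H \in \mathscr{B}(\lp,\Y_{k}^{-})$, i.e.\ $k \leq N_{\H}$. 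Your approach sidesteps this entirely by exhibiting an explicit dense set of tensor products with zero trace, which lie in $\D(\T_{\H})$ regardless of $\H$; the tensor-product density in the weighted $L^{1}$-space is routine once one notes the weight depends only on $v$ and $\bm{m}(\{0\})=0$. Your observation that $k \leq N_{\H}$ is not actually needed for the density conclusion is therefore correct: the paper's proof requires it only because of the method chosen, whereas your argument shows the lemma holds for every $k \in \N$. The paper's approach has the minor advantage of establishing along the way that $\Rs(\lambda,\T_{\H})$ preserves $\X_{k}$, a fact used elsewhere; yours is shorter, more general, and decouples the density question from the structure of $\H$.
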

\begin{proof} Let  $k \leq \mathsf{N}_{\H}$ be fixed. Recall that the resolvent $\Rs(\l,\T_{\H})$ leaves invariant $\X_{k}$ $(\l >0)$. To prove the result, let us show that
\begin{equation*}
\label{eq:RlTf}
\lim_{\l\to\infty}\left\|\l\Rs(\l,\T_{\H})f-f\right\|_{\X_{k}}=0 \qquad \forall f \in \X_{k}\end{equation*}
which will clearly prove the result since $\l\Rs(\l,\T_{\H})f \in \D(\T_{\H})\cap \X_{k}$ for any $\l >0.$ Let $f \in \X_{k}$ be fixed. A general property of the resolvent implies that the above convergence holds true in $\X_{0}$ i.e.
$$\lim_{\l\to\infty}\left\|\l\Rs(\l,\T_{\H})f-f\right\|_{\X_{0}}=0.$$
To prove the claim, one has to prove that
\begin{equation}\label{eq:limvarpi}
\lim_{\l \to \infty}\left\|\left(\l\Rs(\l,\T_{\H})f-f\right)\varpi_{k}\right\|_{\X_{0}}=0\end{equation}
where we recall that $\varpi_{k}(v)=\min(1,|v|^{-k})$ has been introduced in Remark \ref{nb:varpi}. Let $\l >1$ be fixed. We already saw (see the proof of Lemma \ref{lem:To}) that $\varpi_{k}$ commutes with $\Rs(\l,\T_{0})$ so,  
$$\varpi_{k}\left(\l\,\Rs(\l,\T_{\H})f-f\right)=\l\Rs(\l,T_{0})\left(\varpi_{k}f\right) + \varpi_{k}\l\,\mathsf{\Xi}_{\l}\H\Rs(1,\mathsf{M}_{\l}\H)\mathsf{G}_{\l}f$$
and, because $\l\Rs(\l,\T_{0})$ converges strongly to the identity (in $\X_{0})$ as $\l \to \infty$, for $f \in \X_{k}$ we have
$$\lim_{\l\to\infty}\left\|\l\Rs(\l,\T_{0})\left(\varpi_{k}f\right)-\varpi_{k}f\right\|_{\X_{0}}=0.$$
So to prove \eqref{eq:limvarpi}, it is enough to show that
\begin{equation}\label{eq:Xil0}
\lim_{\l\to\infty}\left\|\varpi_{k}\l\mathsf{\Xi}_{\l}\H\Rs(1,\mathsf{M}_{\l}\H)\mathsf{G}_{\l}f\right\|_{\X_{0}}=0.\end{equation}
Because $\varpi_{k}$ is independent of $x$, one has, $\varpi_{k}\mathsf{\Xi}_{\l}\H=\mathsf{\Xi}_{\l}\left(\varpi_{k}\H\right)$ and
$$\left\|\varpi_{k}\l\mathsf{\Xi}_{\l}\H\Rs(1,\mathsf{M}_{\l}\H)\mathsf{G}_{\l}f\right\|_{\X_{0}} \leq \|\l\,\mathsf{\Xi}_{\l}\|_{\mathscr{B}(\lm,\X_{0})}\,
\left\|\varpi_{k}\H\right\|_{\mathscr{B}(\lp,\lm)}\,\left\|\Rs(1,\mathsf{M}_{\l}\H)\right\|_{\mathscr{B}(\lp)}\,\|\mathsf{G}_{\l}f\|_{\lp}.$$
For $\l > 1$, one has $\left\|\l\mathsf{\Xi}_{\l}\right\|_{\mathscr{B}(\lm,\X_{0})} \leq 1$ from \eqref{eq:XiLRL} while
$$\left\|\Rs(1,\mathsf{M}_{\l}\H)\right\|_{\mathscr{B}(\lp)} \leq \left\|\Rs(1,\mathsf{M}_{1}\H)\right\|_{\mathscr{B}(\lp)} \quad \text{ 
and } \quad  \|\varpi_{k}\H\|_{\mathscr{B}(\lp,\lm)}=\|\H\|_{\mathscr{B}(\lp,\Y_{k}^{-})} < \infty$$
since $k \leq \mathsf{N}_{\H}$. Therefore, there exists a positive constant $C_{k} >0$ independent of $f$ and $\l$ such that
$$\left\|\varpi_{k}\l\mathsf{\Xi}_{\l}\H\Rs(1,\mathsf{M}_{\l}\H)\mathsf{G}_{\l}f\right\|_{\X_{0}} \leq C_{k}\,\left\|\mathsf{G}_{\l}f\right\|_{\lp}, \qquad \forall \l >1.$$
One has
$$\|\mathsf{G}_{\l}f\|_{\lp} \leq \int_{\Gamma_{+}}\d\mu_{+}(x,v)\int_{0}^{\tau_{-}(x,v)}\,|f(x-sv,v)|e^{-\l\,s}\d s$$
and, since $t_{+}(x-sv,v)=s$ for any $(x,v) \in \Gamma_{+}$, one has
$$\|\mathsf{G}_{\l}f\|_{\lp} \leq \int_{\Gamma_{+}}\d\mu_{+}(x,v)\int_{0}^{\tau_{-}(x,v)}\,|f(x-sv,v)|e^{-\l\,t_{+}(x-sv,v)}\d s$$
which, thanks to \eqref{10.47},  yields
$$\|\mathsf{G}_{\l}f\|_{\lp} \leq \int_{\Omega\times V}e^{-\l\,t_{+}(x,v)}\,|f(x,v)|\d x\bm{m}(\d v).$$
The dominated convergence theorem implies then that 
$$\lim_{\l\to\infty}\|\mathsf{G}_{\l}f\|_{\lp}=0$$
which gives \eqref{eq:Xil0} and proves the result.\end{proof}
We have all in hands to use directly Ingham's Theorem \ref{theo:ingham} to prove Theorem \ref{theo:qualit}.
\begin{proof}[Proof of Theorem \ref{theo:qualit}] Since $\X_{1}$ is dense in $\X_{0}$, to prove the result it is enough to consider $f \in \X_{1}.$ Using then Lemma \ref{lem:THX1}, we can assume without loss of generality that 
$$f \in \D(\T_{\H}) \cap \X_{1}.$$
Replacing $f$ with $f-\varrho_{f}\Psi_{\H} \in \X_{1}$ (recall that $\Psi_{\H} \in \D(\T_{\H}) \cap \X_{1}$, see Remark \ref{nb:PsiXn}) we can assume without loss of generality that $\varrho_{f}=0.$ We apply then Ingham Theorem \ref{theo:ingham} with 
$$g(t)=U_{\H}(t)f \qquad t \geq 0.$$
Since $\|U_{\H}(t)\|_{\mathscr{B}(\X_{0})} \leq 1$ and $f \in \D(\T_{\H})$ one has
$$g(\cdot)   \in \mathrm{BUC}(\R_{+},\X_{0})$$
and
$$\widehat{g}(\alpha+is)=\Rs(\alpha+is,\T_{\H})f \qquad \text{ and } \quad F(s)=\mathbf{R}_{f}(s), \qquad \alpha >0,\;s \in \R.$$
From Theorem \ref{theo:existtrace}, $F(\cdot)$ is indeed the (weak) limit of $\widehat{g}(\alpha+i\cdot)$ as $\alpha \to 0^{+}$ and, since $F \in \mathcal{C}(\R)$, one has in particular $F \in L^{1}_{\mathrm{loc}}(\R,\X_{0}).$ We deduce directly from Ingham's Theorem that $g(\cdot) \in \mathcal{C}_{0}(\R_{+},\X_{0})$ which is the desired conclusion.\end{proof}

\subsection{Higher order regularity and proof of Theorem \ref{theo:main}}

In order to make the above convergence Theorem \ref{theo:qualit} quantitative, we need to estimate the derivatives of $\mathbf{R}_{f}(\eta)$ (i.e. estimate the derivatives of $\Rs(\l,\T_{\H})f$ along the imaginary axis under specific integrability properties of $f$.  We begin with showing that the boundary function is indeed regular under suitable integrability property of $f$.

To show $\mathbf{R}_{f}(\cdot)$ is regular for $f\in \X_{k}$, the key observation is the following general property of the resolvent
$$\dfrac{\d^{k}}{\d \l^{k}}\Rs(\l,\T_{\H}) =(-1)^{k}k!\Rs(\l,\T_{\H})^{k+1}, \qquad \l \in \C_{+}.$$
We introduce 
$$\X_{k}^{0}:=\{f \in \X_{k}\;;\;\varrho_{f}=0\}, \qquad k \in \N.$$
which is a closed subset of $\X_{k}$. Notice that, endowed with the $\X_{k}$-norm, $\X_{k}^{0}$ is a Banach space.  
Since 
$$\int_{\Omega \times V}U_{\H}(t)f\d x\otimes \bm{m}(\d v)=\int_{\Omega\times V}f\d x\otimes \bm{m}(\d v), \qquad \forall t \geq0, \quad f \in \X_{0}$$
one has
\begin{equation*}\begin{split}
\int_{\Omega\times V}\Rs(\l,\T_{\H})f\d x\otimes \bm{m}(\d v)&=\int_{\Omega\times V}\left(\int_{0}^{\infty}e^{-\l\,t}U_{\H}(t)f\d t\right)\d x\otimes \bm{m}(\d v)\\
&=\frac{1}{\l}\int_{\Omega\times V}f\d x\otimes \bm{m}(\d v), \qquad \forall \l \in \C_{+}\end{split}\end{equation*}
and therefore the resolvent and all its iterates $\Rs(\l,\T_{\H})^{k}$ $k \geq 0$ leave invariant $\X_{0}^{0}$.

\begin{lemme}\label{lem:UPS}  Assume that $\H$ satisfies Assumptions \ref{hypH}. For any $k \in \N$, $k \leq N_{\H}$ and any $f \in \X_{k}^{0}$ the limit
$$\lim_{\e\to 0^{+}}\left[\Rs(\e+i\eta,\T_{\H})\right]^{k}f:=\Upsilon_{k}(\eta)f$$
exists in $\X_{0}$ uniformly with respect to $\eta$ on any compact of $\R$. Moreover, the mapping
$$\eta \in \R \longmapsto \Upsilon_{k}(\eta)f$$
is continuous and bounded over $\R.$
\end{lemme}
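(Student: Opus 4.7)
The plan is to proceed by induction on $k$. The cases $k=0$ (trivially $\Upsilon_{0}(\eta)f=f$) and $k=1$ (directly Theorem \ref{theo:existtrace} at its index $0$) require no work, so I focus on the inductive step. Assume the statement for some $k$ with $k+1\leq N_{\H}$, fix $f\in\X_{k+1}^{0}$, and set $f_{\varepsilon}(\eta):=\Rs(\varepsilon+i\eta,\T_{\H})f$. Theorem \ref{theo:existtrace} (at index $k$) provides $f_{\varepsilon}(\eta)\to\mathbf{R}_{f}(\eta)$ in $\X_{k}$ uniformly on every compact, with $\mathbf{R}_{f}:\R\to\X_{k}$ continuous and uniformly bounded; moreover $\mathbf{R}_{f}(\eta)\in\X_{k}^{0}$ thanks to the integration identity preceding the lemma and continuity of the total mass functional on $\X_{0}$. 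I would then split
\begin{equation*}
[\Rs(\varepsilon+i\eta,\T_{\H})]^{k+1}f = [\Rs(\varepsilon+i\eta,\T_{\H})]^{k}\mathbf{R}_{f}(\eta) + [\Rs(\varepsilon+i\eta,\T_{\H})]^{k}\bigl(f_{\varepsilon}(\eta)-\mathbf{R}_{f}(\eta)\bigr)
\end{equation*}
and analyze the two pieces separately.

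The main device will be a twofold use of the Banach--Steinhaus theorem applied to the inductive hypothesis. First, locally: the uniform pointwise convergence on any compact $K\subset\R$ of $[\Rs(\varepsilon+i\eta,\T_{\H})]^{k}h$ for each $h\in\X_{k}^{0}$ yields a uniform operator-norm bound $\sup_{\varepsilon>0,\,\eta\in K}\|[\Rs(\varepsilon+i\eta,\T_{\H})]^{k}\|_{\mathscr{B}(\X_{k}^{0},\X_{0})}\leq C_{K}$, which is exactly what is needed to absorb the error piece since $\|f_{\varepsilon}(\eta)-\mathbf{R}_{f}(\eta)\|_{\X_{k}}\to 0$ uniformly on $K$ by Theorem \ref{theo:existtrace}. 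The main piece requires an approximation step: the compactness of $\mathbf{R}_{f}(K)$ in $\X_{k}^{0}$ together with the uniform bound $C_{K}$, the pointwise uniform convergence along a finite $\delta$-net $\{h_{j}\}\subset\mathbf{R}_{f}(K)$, and a companion local bound on $\|\Upsilon_{k}(\eta)\|_{\mathscr{B}(\X_{k}^{0},\X_{0})}$ over $K$, produces a uniform limit on $K$, which defines $\Upsilon_{k+1}(\eta)f:=\Upsilon_{k}(\eta)\mathbf{R}_{f}(\eta)$.

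Continuity of $\Upsilon_{k+1}(\cdot)f$ will follow by splitting $\Upsilon_{k}(\eta)\mathbf{R}_{f}(\eta)-\Upsilon_{k}(\eta_{0})\mathbf{R}_{f}(\eta_{0})$ into the two natural summands and controlling them via the local operator-norm bound on $\Upsilon_{k}$ together with continuity of $\mathbf{R}_{f}$ in $\X_{k}$, and via the strong continuity of $\Upsilon_{k}(\cdot)$ (inductive hypothesis). For the global bound on $\R$, the second (global) invocation of Banach--Steinhaus converts the pointwise bound $\sup_{\eta\in\R}\|\Upsilon_{k}(\eta)h\|_{\X_{0}}<\infty$ (inductive hypothesis) into an operator-norm bound $M:=\sup_{\eta\in\R}\|\Upsilon_{k}(\eta)\|_{\mathscr{B}(\X_{k}^{0},\X_{0})}<\infty$; combined with the global $\X_{k}$-bound on $\mathbf{R}_{f}$ from Theorem \ref{theo:existtrace}, this yields $\sup_{\eta\in\R}\|\Upsilon_{k+1}(\eta)f\|_{\X_{0}}\leq M\,\sup_{\eta\in\R}\|\mathbf{R}_{f}(\eta)\|_{\X_{k}}<\infty$.

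The main obstacle I anticipate is the nested dependence on $\varepsilon$: the inner function $f_{\varepsilon}(\eta)$ itself depends on the very parameter one wants to send to zero, which blocks a naive application of the inductive hypothesis. The repeated Banach--Steinhaus upgrades from pointwise to operator-norm control, both locally (to dominate the error term) and globally (to produce the uniform bound in $\eta$ on all of $\R$), are what ultimately make the induction close.
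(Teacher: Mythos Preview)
Your proposal is correct and follows essentially the same approach as the paper: induction on $k$, the same splitting of $[\Rs(\e+i\eta,\T_{\H})]^{k+1}f$ into a ``main piece'' and an ``error piece,'' Banach--Steinhaus to convert the inductive pointwise convergence into a local uniform operator bound $M_{k}(c)$ on $\mathscr{B}(\X_{k}^{0},\X_{0})$ (which controls the error piece), compactness of $\mathbf{R}_{f}(K)$ in $\X_{k}^{0}$ to handle the main piece, and a second global Banach--Steinhaus to get $\sup_{\eta\in\R}\|\Upsilon_{k}(\eta)\|_{\mathscr{B}(\X_{k}^{0},\X_{0})}<\infty$ for the global bound. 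Your treatment of the main piece via a $\delta$-net is more explicit than the paper's, which simply invokes the continuity and boundedness of $\eta\mapsto\mathbf{R}_{f}(\eta)\in\X_{k}^{0}$; one minor remark is that for the local Banach--Steinhaus step you should restrict to $0<\varepsilon<1$ (as the paper does), since the inductive hypothesis only controls small $\varepsilon$ and the bound for $\varepsilon$ bounded away from zero comes instead from the trivial resolvent estimate $\|\Rs(\e+i\eta,\T_{\H})\|_{\mathscr{B}(\X_{0})}\leq \e^{-1}$.
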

\begin{proof} The proof is made by induction over $k \in \N$, $k \leq N_{\H}.$ For $k=1$, the result holds true by  Theorem \ref{theo:existtrace}. Let $k \in \N$, $k < N_{\H}$ and assume the result to be true for any $j\in \{1,\ldots,k\}.$ According to Banach-Steinhaus Theorem \cite[Theorem 2.2, p. 32]{brezis}, for any $j\in \{1,\ldots,k\}$ and any $c >0$,
\begin{equation}\label{eq:Cj}
M_{j}(c)=\sup\left\{\left\|\Rs(\e+i\eta,\T_{\H})^{j}\right\|_{\mathscr{B}(\X_{j}^{0},\X_{0})}\;;\;0 < \e < 1,\;|\eta| \leq c\right\} < \infty\end{equation}
whereas, for any $g \in \X_{j}^{0}$, the limit
\begin{equation}\label{eq:upsilon}
\lim_{\e\to0^{+}}\left[\Rs(\e+i\eta,\T_{\H})\right]^{j}g=:\Upsilon_{j}(\eta)g\end{equation}
exists in $\X_{0}$  uniformly with respect to $\eta$ on any compact of $\R$ with moreover
$$
\sup_{\eta \in \R}\left\|\Upsilon_{j}(\eta)g\right\|_{\X_{0}} < \infty \qquad \forall g \in \X_{j}^{0}.$$
Notice that, using Banach-Steinhaus Theorem again, this implies that $\Upsilon_{j}(\eta) \in \mathscr{B}(\X_{j}^{0},\X_{0})$ for any $j \in \{1,\ldots,k\}$ with
\begin{equation}\label{eq:inducbounde}\sup_{\eta\in \R}\left\|\Upsilon_{j}(\eta)\right\|_{\mathscr{B}(\X_{j}^{0},\X_{0})} < \infty.\end{equation}
Moreover, the mapping $\eta \in \R \mapsto \Upsilon_{j}(\eta)f \in \X_{0}$ is continuous over $\R$ as the uniform limit of the continuous functions $\eta \mapsto \left[\Rs(\e+i\eta,\T_{\H})\right]^{j}f \in \X_{0}.$ 
 Clearly, for any $\eta \in \R$, the mapping $g \in \X_{j}^{0} \mapsto \Upsilon_{j}(\eta)g$ is linear. Let $f \in \X_{k+1}^{0}$ be given. One has, for any $0 < \e < 1$ and $\eta \in \R$: 
\begin{multline*}
\left[\Rs(\e+i\eta,\T_{\H})\right]^{k+1}f-\Upsilon_{k}(\eta)\mathbf{R}_{f}(\eta)=\left[\Rs(\e+i\eta,\T_{\H})\right]^{k}\Rs(\e+i\eta,\T_{\H})f-\Upsilon_{k}(\eta)\mathbf{R}_{f}(\eta)\\
=\left(\left[\Rs(\e+i\eta,\T_{\H})\right]^{k}-\Upsilon_{k}(\eta)\right)\mathbf{R}_{f}(\eta)\\
+\left[\Rs(\e+i\eta,\T_{\H})\right]^{k}\left(\Rs(\e+i\eta,\T_{\H})f-\mathbf{R}_{f}(\eta)\right).\end{multline*}
Notice that, for $f \in \X_{k+1}^{0}$, $\Rs(\e+i\eta,\T_{\H})f \in \X_{k}^{0}$ and so is $\mathbf{R}_{f}(\eta) \in \X_{k}^{0}$. We can use then the above induction hypothesis \eqref{eq:Cj} first to obtain that
\begin{multline*}
\left\|\left[\Rs(\e+i\eta,\T_{\H})\right]^{k}\left(\Rs(\e+i\eta,\T_{\H})f-\mathbf{R}_{f}(\eta)\right)\right\|_{\X_{0}} \\
\leq M_{k}(c)\left\|\Rs(\e+i\eta,\T_{\H})f-\mathbf{R}_{f}(\eta)\right\|_{\X_{k}^{0}}, \qquad \forall 0 < \e <1, \quad |\eta| \leq c\end{multline*}
where the right-hand-side converges to $0$ as $\e\to 0^{+}$ uniformly on $\{|\eta| \leq c\}$ thanks to Theorem \ref{theo:existtrace}.
Moreover, applying \eqref{eq:upsilon} with $g=\mathbf{R}_{f}(\eta)$, we get
$$\lim_{\e \to 0^{+}}\sup_{|\eta| \leq c}\left\|\left(\left[\Rs(\e+i\eta,\T_{\H})\right]^{k}-\Upsilon_{k}(\eta)\right)\mathbf{R}_{f}(\eta)\right\|_{\X_{0}}=0$$
where we used the fact that the mapping $\eta \in \R \mapsto \mathbf{R}_{f}(\eta) \in \X_{k}^{0}$ is bounded and continuous according to Theorem \ref{theo:existtrace}. This is enough to prove that, for any $c >0$, 
$$\lim_{\e\to0^{+}}\sup_{|\eta| \leq c}\left\|\left[\Rs(\e+i\eta,\T_{\H})\right]^{k+1}f-\Upsilon_{k}(\eta)\mathbf{R}_{f}(\eta)\right\|_{\X_{0}}=0$$
which is the first part of the result with $\Upsilon_{k+1}(\eta)f=\Upsilon_{k}(\eta)\mathbf{R}_{f}(\eta)$. Consequently
$$\left\|\Upsilon_{k+1}(\eta)f\right\|_{\X_{0}} \leq \|\Upsilon_{k}(\eta)\|_{\mathscr{B}(\X_{k}^{0},\X_{0})}\,\|\mathbf{R}_{f}(\eta)\|_{\X_{k}}$$
and we deduce that
$$\sup_{\eta \in \R}\left\|\Upsilon_{k+1}(\eta)f\right\|_{\X_{0}} <\infty$$
according to the induction hypothesis \eqref{eq:inducbounde} and the fact that $\sup_{\eta\in\R}\|\mathbf{R}_{f}(\eta)\|_{\X_{k}} <\infty.$ This achieves the proof.
\end{proof}

\begin{theo}\label{theo:regul} Assume that $\H$ satisfy Assumptions \ref{hypH}. Let $0 \leq k < N_{\H}$ and $f \in \X_{k+1}$ with $\varrho_{f}=0$. Then, the boundary function 
$$\eta \in \R \longmapsto \mathbf{R}_{f}(\eta)$$
defined by Theorem \ref{theo:existtrace} belongs to $\mathcal{C}^{k}(\R\,;\,\X_{0})$ with 
$$\sup_{\eta\in \R}\max_{0\leq j\leq k}\left\|\frac{\d^{j}}{\d\eta^{j}}\mathbf{R}_{f}(\eta)\right\|_{\X_{0}} <\infty.$$
\end{theo}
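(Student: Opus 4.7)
The plan is to leverage the classical identity for iterated resolvents together with Lemma \ref{lem:UPS}. Recalling that, along $\l = \e + i\eta$, one has $\tfrac{\d}{\d\eta} = i\tfrac{\d}{\d\l}$, the well-known formula
$$\dfrac{\d^{j}}{\d\l^{j}}\Rs(\l,\T_{\H}) = (-1)^{j}\,j!\,[\Rs(\l,\T_{\H})]^{j+1}, \qquad \l \in \C_{+}, \quad j \in \N,$$
yields, for every $\e > 0$ and $\eta \in \R$,
$$\dfrac{\d^{j}}{\d\eta^{j}}\Rs(\e+i\eta,\T_{\H})f = (-i)^{j}\,j!\,[\Rs(\e+i\eta,\T_{\H})]^{j+1}f, \qquad 0 \leq j \leq k.$$
This reduces the smoothness of the boundary function to the behaviour of the iterates $[\Rs(\e+i\eta,\T_{\H})]^{j+1}f$ as $\e \to 0^{+}$.

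Next, I would observe that, since $\varpi_{k+1}(v) \geq \varpi_{j+1}(v)$ for $j \leq k$, one has the continuous embedding $\X_{k+1} \hookrightarrow \X_{j+1}$, so that $f \in \X_{k+1}^{0}$ entails $f \in \X_{j+1}^{0}$ for each $j \in \{0,\ldots,k\}$. Moreover $j+1 \leq k+1 \leq N_{\H}$, hence Lemma \ref{lem:UPS} applies and guarantees that
$$\lim_{\e \to 0^{+}}[\Rs(\e+i\eta,\T_{\H})]^{j+1}f = \Upsilon_{j+1}(\eta)f \quad \text{in } \X_{0}$$
uniformly with respect to $\eta$ on every compact of $\R$, with $\eta \in \R \mapsto \Upsilon_{j+1}(\eta)f$ continuous and uniformly bounded on $\R$. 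Combined with the identity above, this means that, for each $j \in \{0,\ldots,k\}$, the $j$-th derivative $\tfrac{\d^{j}}{\d\eta^{j}}\Rs(\e+i\eta,\T_{\H})f$ converges, as $\e \to 0^{+}$, to $(-i)^{j}j!\,\Upsilon_{j+1}(\eta)f$ in $\X_{0}$, uniformly on every compact of $\R$.

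From here, the $\mathcal{C}^{k}$ regularity of $\mathbf{R}_{f}$ would follow by a direct induction on $j$, using the standard elementary theorem on differentiation under locally uniform convergence applied to the family $\{\Rs(\e+i\eta,\T_{\H})f\}_{\e > 0}$: since both the sequence and its $j$-th derivative converge uniformly on each compact, the limit $\mathbf{R}_{f}$ (obtained for $j=0$ by Theorem \ref{theo:existtrace}) inherits $\mathcal{C}^{k}$-regularity and
$$\dfrac{\d^{j}}{\d\eta^{j}}\mathbf{R}_{f}(\eta) = (-i)^{j}\,j!\,\Upsilon_{j+1}(\eta)f, \qquad 0 \leq j \leq k, \quad \eta \in \R.$$
The supremum estimate then follows immediately from the uniform bound $\sup_{\eta\in\R}\|\Upsilon_{j+1}(\eta)f\|_{\X_{0}} < \infty$ coming from Lemma \ref{lem:UPS}.

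The main obstacle is conceptually not in this theorem itself but has already been absorbed into Lemma \ref{lem:UPS}, whose proof required the inductive control of the iterated resolvent in $\X_{j}^{0}$ and the spectral analysis of $\mathsf{M}_{\l}\H$ near $\l = 0$ performed in Section \ref{sec:near0}. The only thing left to handle carefully here is the compatibility of the integrability hypothesis $f \in \X_{k+1}^{0}$ with the successive applications of Lemma \ref{lem:UPS} to the iterates of order $j+1 \leq k+1$, which is precisely what the chain of inclusions $\X_{k+1}^{0} \subset \X_{j+1}^{0}$ ensures.
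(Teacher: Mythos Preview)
Your proposal is correct and follows essentially the same route as the paper's own proof: both invoke the resolvent-derivative identity $\tfrac{\d^{j}}{\d\eta^{j}}\Rs(\e+i\eta,\T_{\H})f = (-i)^{j}j!\,[\Rs(\e+i\eta,\T_{\H})]^{j+1}f$, apply Lemma \ref{lem:UPS} to obtain locally uniform convergence of each derivative to $(-i)^{j}j!\,\Upsilon_{j+1}(\eta)f$, and conclude via the standard theorem on differentiation under uniform convergence. Your version is slightly more explicit about the embedding $\X_{k+1}^{0}\subset\X_{j+1}^{0}$ and the verification $j+1\leq k+1\leq N_{\H}$ needed to invoke Lemma \ref{lem:UPS}, but the argument is otherwise identical.
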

\begin{proof} The result is true for $k=0$ according to Theorem \ref{theo:existtrace}. Notice now that, for $f \in \X_{k+1}^{0}$ and $0\leq j\leq k$
$$\dfrac{\d^{j}}{\d \eta^{j}}\Rs(\e+i\eta,\T_{\H})f=(-i)^{j}j!\left[\Rs(\e+i\eta,\T_{\H})\right]^{j+1}f \qquad \forall 0< \e<1,\qquad \eta \in \R.$$
From Lemma \ref{lem:UPS}, we deduce that, for any $c >0$,
$$\lim_{\e\to^{+}}\sup_{|\eta| \leq c}\left\|\dfrac{\d^{j}}{\d \eta^{j}}\Rs(\e+i\eta,\T_{\H})f-(-i)^{j}j!\Upsilon_{j+1}(\eta)f\right\|_{\X_{0}}=0$$
i.e. the first $k$ derivatives of $\eta \mapsto \Rs(\e+i\eta,\T_{\H})f$ converge as $\e\to0$ uniformly on any compact. This proves that the boundary function $\mathbf{R}_{f}(\eta)$ is of class $\mathcal{C}^{k}(\R,\X_{0})$ with 
$$\frac{\d^{j}}{\d\eta^{j}}\mathbf{R}_{f}(\eta)=(-i)^{j}j!\Upsilon_{j+1}(\eta)f, \qquad \forall 0 \leq j \leq k.$$
This last identity, together with Lemma \ref{lem:UPS} shows also that $\eta \mapsto \mathbf{R}_{f}(\eta)$ is uniformly bounded in $\mathcal{C}^{k}(\R,\X_{0}).$
\end{proof}

With this and Theorem \ref{theo:quanti}, we are in position to prove Theorem \ref{theo:main} stated in the Introduction.
\begin{proof}[Proof of Theorem \ref{theo:main}] Let $f \in \D(\T_{\H}) \cap \X_{k+1}.$ One has
$$U_{\H}(t)f-\mathbb{P}f=U_{\H}(t)\tilde{f}$$
with $\tilde{f}=f-\mathbb{P}f=f-\varrho_{f}\mathsf{\Psi}_{\H}$. Notice that $\varrho_{\tilde{f}}=0$ and, since $\mathsf{\Psi}_{\H} \in \D(\T_{\H}) \cap \X_{k+1}$ (see Remark \ref{nb:PsiXn}), it holds
$$\tilde{f} \in \D(\T_{\H}) \cap \X_{k+1}^{0}.$$
We introduce, as in Theorem \ref{theo:qualit},
$$g(t)=U_{\H}(t)\tilde{f} \qquad t \geq 0.$$
Because $\tilde{f} \in \D(\T_{\H})$ one has that $g(\cdot)   \in \mathrm{BUC}(\R_{+},\X_{0}) \cap \mathrm{Lip}(\R_{+};\X_{0})$ and
$$\widehat{g}(\e+i\eta)=\Rs(\alpha+i\eta,\T_{\H})\tilde{f} \qquad \text{ and } \quad F(\eta)=\mathbf{R}_{\tilde{f}}(\eta), \qquad \e >0,\;\eta \in \R.$$
According to Theorem \ref{theo:regul}, the boundary function $F$ is uniformly bounded in $\mathcal{C}^{k}(\R,\X_{0})$. We conclude thanks to Theorem \ref{theo:quanti} that there exists $C_{k} >0$ such that
$$\|g(t)\|_{\X_{0}} \leq C_{k}t^{-\frac{k}{2}},\qquad \forall t \geq 0$$
which is exactly the conclusion of Theorem \ref{theo:main}. \end{proof}

\appendix

\section{About Assumptions \ref{hypH}}\label{appen:REGU}

\subsection{Reminders about regular diffuse boundary operators}

We recall here that, in our previous contribution \cite{LMR}, we introduce a general class of diffuse boundary operators that we called \emph{regular}.  First, we begin with the definition of stochastic diffuse boundary operator.
\begin{defi}\phantomsection\label{defi:Hdiff} One says that $\mathsf{H} \in \mathscr{B}(\lp,\lm)$ is a \emph{stochastic diffuse boundary operator} if
\begin{equation}\label{eq:Hdif}
\H\,\psi(x,v)=\int_{\Gamma_{+}(x)}\bm{h}(x,v,v')\psi(x,v')\bm{\mu}_{x}(\d v'), \qquad (x,v) \in \Gamma_{-}, \quad \psi \in \lp\end{equation}
where the kernel $\bm{h}(\cdot,\cdot,\cdot)$ induces a field of nonnegative measurable functions
$$x \in \partial\Omega \longmapsto \bm{h}(x,\cdot,\cdot)$$
where 
$$\bm{h}(x,\cdot,\cdot)\::\:\Gamma_{-}(x) \times \Gamma_{+}(x) \to \R^{+}$$
is such that
$$\int_{\Gamma_{-}(x)}\bm{h}(x,v,v')\bm{\mu}_{x}(\d v)=1, \qquad \forall (x,v') \in \Gamma_{+}.$$
\end{defi}
Notice that we can identify $\H \in \mathscr{B}(\lp,\lm)$ to a field of integral operators
$$x \in \partial\Omega \longmapsto \mathsf{H}(x) \in \mathscr{B}(L^{1}(\Gamma_{+}(x),\Gamma_{-}(x))$$
by the formula
$$[\mathsf{H}\psi](x,v)=\left[\mathsf{H}(x)\psi(x,\cdot)\right](v)$$
where, for any $x \in \partial\Omega$
$$\mathsf{H}(x)\::\:\psi \in L^{1}(\Gamma_{+}(x)) \longmapsto \left[\mathsf{H}(x)\psi\right](v)=\int_{\Gamma_{+}(x)}\bm{h}(x,v,v')\psi(v')\bm{\mu}_{x}(\d v') \in L^{1}(\Gamma_{-}(x))$$
where
$$L^{1}(\Gamma_{\pm}(x))=L^{1}(\Gamma_{\pm}(x),\bm{\mu}_{x}(\d v)), \qquad \Gamma_{\pm}(x)=\{v \in V\;;\;(x,v) \in \Gamma_{\pm}\}, \quad x \in \partial\Omega.$$

We recall the class of diffuse boundary operators as introduced in \cite{LMR}.
\begin{defi}\phantomsection \label{defi:regul}
We say that a diffuse boundary operator $\mathsf{H} \in \mathscr{B}(\lp,\lm)$ is \emph{regular} if the family of operators 
$$\mathsf{H}(x) \in \mathscr{B}(L^{1}(\Gamma_{+}(x)),L^{1}(\Gamma_{-}(x))), \qquad x \in \partial\Omega$$
is collectively weakly compact in the sense that 
\begin{equation}\label{eq:Sm}
\lim_{m\to\infty}\sup_{v'\in \Gamma_{+}(x)}\int_{S_{m}(x,v')}\bm{h}(x,v,v')\,\bm{\mu}_{x}(\d v)=0, \qquad \forall x \in \partial\Omega\end{equation}
where the convergence is \emph{uniform} with respect to $x \in \partial\Omega$. Here above, for any $m \in \N$ and any $(x,v') \in \Gamma_{+}$, we set
$S_{m}(x,v')=\{v \in \Gamma_{-}(x)\;;\;|v| \geq m\} \cup \{v \in \Gamma_{-}(x)\;;\;\bm{h}(x,v,v') \geq m\}.$
\end{defi}
\begin{nb} If for instance $\bm{h}(x,v,v') \leq F(v)$ for any $(x,v,v') \in \partial\Omega\times V\times V$ with $F \in L^{1}(V,\,|v|\bm{m}(\d v))$ then $\H$ is regular.\end{nb}

We can combine \cite[Lemma 3.8]{LMR} and \cite[Lemma 3.7]{LMR} to prove the following approximation result 
\begin{lemme}\phantomsection
\label{lemme-1-add-sweeping} 
Let $\H$ be a regular \emph{stochastic} diffuse boundary operator
with kernel $\bm{h}(x,v,v^{\prime })$. There exists a sequence $\left(\H_{n}\right)_{n}$ of regular diffuse boundary operator such that
$$\lim_{n\to\infty}\|\H_{n}-\H\|_{\mathscr{B}(\lp,\lm)}=0$$
where, for any $n \in \N$ the kernel $\bm{h}_{n}(x,v,v')$ of $\H_{n}$ is such that, for  any $(x,v') \in \Gamma_{-}$, 
$$\bm{h}_{n}(x,v,v')=0 \qquad \text{ if} \quad |v| \leq \tfrac{1}{n} \quad \text{ or } \quad |v| \geq n.$$
\end{lemme}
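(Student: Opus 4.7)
The plan is to truncate the kernel: define
$$\bm{h}_{n}(x,v,v'):=\bm{h}(x,v,v')\,\mathbf{1}_{\{\tfrac{1}{n}\leq |v|\leq n\}},$$
and let $\H_{n}$ be the associated diffuse boundary operator. Since $\bm{h}_{n}\leq \bm{h}$ pointwise, the natural level set attached to $\bm{h}_{n}$ is contained in $S_{m}(x,v')$, so the collective weak compactness condition \eqref{eq:Sm} transfers verbatim from $\H$ to $\H_{n}$; hence each $\H_{n}$ is a regular diffuse boundary operator in the sense of Definition \ref{defi:regul}.

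To estimate $\|\H-\H_{n}\|_{\mathscr{B}(\lp,\lm)}$, I would exploit positivity and Fubini: for every $\psi\in\lp$,
$$\|(\H-\H_{n})\psi\|_{\lm}\leq \int_{\partial\Omega}\pi(\d x)\int_{\Gamma_{+}(x)}|\psi(x,v')|\,\bm{\mu}_{x}(\d v')\,\omega_{n}(x,v'),$$
where
$$\omega_{n}(x,v'):=\int_{\Gamma_{-}(x)\cap A_{n}}\bm{h}(x,v,v')\,\bm{\mu}_{x}(\d v),\qquad A_{n}:=\{v\in V:|v|<\tfrac{1}{n}\text{ or }|v|>n\}.$$
Therefore $\|\H-\H_{n}\|_{\mathscr{B}(\lp,\lm)}\leq \sup_{(x,v')\in\Gamma_{+}}\omega_{n}(x,v')$ and the task reduces to showing that this supremum vanishes as $n\to\infty$. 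The contribution coming from $\{|v|>n\}$ is immediate: $\{|v|>n\}\cap\Gamma_{-}(x)\subset S_{n}(x,v')$, so \eqref{eq:Sm} directly yields $\sup_{(x,v')}\int_{\{|v|>n\}\cap\Gamma_{-}(x)}\bm{h}\,\bm{\mu}_{x}(\d v)\longrightarrow 0$.

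The real difficulty, and the main obstacle of the proof, is the low-velocity tail $\{|v|<1/n\}$, which is \emph{not} covered by \eqref{eq:Sm}. My plan is to extract a Dunford--Pettis-type uniform integrability from the conjunction of stochasticity and \eqref{eq:Sm}. Given $\epsilon>0$, use \eqref{eq:Sm} to pick $m\in\N$ with
$$\sup_{(x,v')\in\Gamma_{+}}\int_{\{\bm{h}(x,v,v')\geq m\}}\bm{h}(x,v,v')\,\bm{\mu}_{x}(\d v)<\epsilon/2,$$
so that for any measurable $A\subset\Gamma_{-}(x)$, splitting according to whether $\bm{h}(x,v,v')$ is above or below $m$ gives
$$\int_{A}\bm{h}(x,v,v')\,\bm{\mu}_{x}(\d v)\leq m\,\bm{\mu}_{x}(A)+\epsilon/2.$$
Applying this to $A=\{|v|<1/n\}\cap\Gamma_{-}(x)$ and using $|v\cdot n(x)|\leq |v|$,
$$\bm{\mu}_{x}(A)\leq \int_{|v|<1/n}|v|\,\bm{m}(\d v)=:c_{n},$$
a quantity independent of $x$ which tends to $0$ by monotone convergence, because $\bm{m}(\{0\})=0$ by Assumption \ref{hypO}\,\textit{(3)}. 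Picking $n$ so large that $m\,c_{n}<\epsilon/2$ closes the low-velocity tail estimate, and combining with the previous paragraph yields $\lim_{n}\|\H-\H_{n}\|_{\mathscr{B}(\lp,\lm)}=0$, as required.
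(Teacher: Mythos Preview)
Your proof is correct and follows the natural route. The paper itself does not give a self-contained argument here but simply combines \cite[Lemma~3.7]{LMR} and \cite[Lemma~3.8]{LMR}; your direct truncation of the kernel together with the Dunford--Pettis splitting (large $\bm{h}$ controlled by \eqref{eq:Sm}, small $\bm{h}$ controlled by $m\,\bm{\mu}_{x}(A)$) is precisely the mechanism those lemmas encode. One small point worth making explicit: your conclusion $c_{n}=\int_{|v|<1/n}|v|\,\bm{m}(\d v)\to 0$ uses that $\bm{m}$ is locally finite near $0$ (so that $c_{1}<\infty$ and dominated convergence applies); this is implicit in the standing hypotheses on $\bm{m}$ and holds in all the examples considered.
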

According to \cite[Theorem 5.1]{LMR}, if $\H$ is a regular and stochastic diffuse boundary operator then
$$\H\mathsf{M}_{0}\H \in \mathscr{B}(\lp,\lm) \text{ is weakly-compact,}$$ i.e. Assumptions \ref{hypH} \textit{2)} is met. 
Moreover, to prove that Assumption \ref{hypH}\textit{4)} is satisfied, one can work with a dense family of regular boundary operators, which means, thanks to Lemma \ref{lemme-1-add-sweeping} that there is no loss of generality assuming that velocities are bounded and bounded away from zero. In the sequel, we shall therefore always assume that
\begin{equation}\label{eq:Vc}
V=\{v \in V\;;\;c < |v| < c^{-1}\,\}, \qquad \qquad 0 < c < 1.\end{equation}

\subsection{A subclass of regular diffuse operators.} 
To give a practical criterion ensuring that Assumption \ref{hypH} \textit{4)} is met,   we introduce a smaller class of diffuse boundary operators for which the collective weak-compactness is replaced with the collective compactness. To do so, it will be convenient not to work directly with $\H$ but with a different operator. Namely, assuming $\H$ is stochastic diffuse boundary operator as defined in Definition \ref{defi:Hdiff} with kernel $h(\cdot,\cdot,\cdot)$, 
notice that
$$|v\cdot n(x)|\H\psi(x,v)=\int_{\Gamma_{+}(x)}|v \cdot n(x)|\bm{h}(x,v,v')\psi(x,v')\,|v'\cdot n(x)|\bm{m}(\d v')$$
and that, for given $x \in \partial\Omega$, the mapping
$$v' \in \Gamma_{+}(x) \longmapsto \psi(x,v')|v'\cdot n(x)|$$
belongs to $L^{1}(\Gamma_{+}(x),\bm{m})$ whereas 
$$v \in \Gamma_{-}(x) \longmapsto |v\cdot n(x)|\H\psi(x,v)$$
belongs to $L^{1}(\Gamma_{-}(x),\bm{m})$. For simplicity, we assume that $\bm{h}(x,v,v')$ is defined on all $\partial\Omega\times V\times V$ and introduce then the kernel
\begin{equation}\label{eq:kxvv}
k(x,v,v')=|v\cdot n(x)|\bm{h}(x,v,v').\end{equation}
 We introduce then the following
$$\mathsf{K}\::\:L^{1}(\partial\Omega\times V\,,\,\pi(\d x)\otimes \bm{m}(\d v)) \longrightarrow L^{1}(\partial\Omega\times V\,,\,\pi(\d x)\otimes \bm{m}(\d v))$$
defined by
$$\mathsf{K}\psi(x,v)=\int_{V}k(x,v,v')\psi(x,v')\bm{m}(\d v'), \qquad \psi \in L^{1}(\partial\Omega\times V\,,\,\pi(\d x)\otimes \bm{m}(\d v)).$$
As before, we can identify 
$\mathsf{K} \in \mathscr{B}(L^{1}(\partial\Omega\times V\,,\,\pi(\d x)\otimes \bm{m}(\d v)))$ to a field of integral operators
$$x \in \partial\Omega \longmapsto \mathsf{K}(x) \in \mathscr{B}(L^{1}(\partial\Omega\,,\,\pi(\d x))$$
by the formula $[\mathsf{K}\psi](x,v)=\left[\mathsf{K}(x)\psi(x,\cdot)\right](v).$ From now, we will always simply write
$$L^{1}(\partial\Omega\times V)=L^{1}(\partial\Omega\times V\,,\,\pi(\d x)\otimes \bm{m}(\d v)).$$
To make the link between $\H$ and $\mathsf{K}$, we introduce, for any $x \in \partial\Omega$ the following \emph{isometry}
$$
\mathsf{I}_{x}^{+}\::\:L^{1}(\Gamma_{+}(x)\,,\bm{\mu}_{x}(\d v)) \longrightarrow L^{1}(\Gamma_{+}(x)\,,\,\bm{m}(\d v))$$
with
$$\mathsf{I}_{x}^{+}\psi(v)=|v\cdot n(x)|\psi(v), \qquad v \in \Gamma_{+}(x)$$
and the extension by zero
$$\mathsf{E}_{x}\::\:\varphi \in L^{1}(\Gamma_{+}(x)\,,\bm{m}(\d v)) \longmapsto \mathsf{E}_{x}\varphi \in L^{1}(V,\bm{m}(\d v))$$
with 
$$\mathsf{E}_{x}\varphi(v)=\begin{cases}\varphi(v) \quad &\text{ if } v \in \Gamma_{+}(x)\\
0 \quad &\text{ else }.\end{cases}$$
We also define the restriction operator
$$\mathsf{R}_{x}\::\:L^{1}(V,\bm{m}(\d v))\rightarrow L^{1}(\Gamma_{-}(x);\bm{m}(\d v)).$$
Finally, 
we introduce the \emph{isometry}
$$\mathsf{I}_{x}^{-}:\varphi \in L^{1}(\Gamma _{-}(x);\bm{m}(\d v))\longrightarrow L^{1}(\Gamma_{-}(x)\,,\,\bm{\mu}_{x}(\d v))$$
with
$$\mathsf{I}_{x}^{-}\varphi(v)=\frac{1}{|v\cdot n(x)|}\varphi(v), \qquad v \in \Gamma_{+}(x).$$
It holds then
\begin{equation}\label{eq:Kx}
\H(x):=\mathsf{I}_{x}^{-}\mathsf{R}_{x}\mathsf{K}(x)\mathsf{E}_{x}\mathsf{I}_{x}^{+}\end{equation}
with
\begin{equation}
\|\H\|_{\mathscr{B}(\lp,\lm)} \leq \|\mathsf{K}\|_{\mathscr{B}(L^{1}(\partial \Omega
\times V))}.  \label{Estimation}
\end{equation}
We have then the following approximation result which is easily deduced from \cite[Proposition 1]{mmkop} (replacing there $\Omega$ with $\partial\Omega$):
\begin{lemme}\label{lem:approx}
We assume that the operator $\mathsf{K}$ with kernel given by \eqref{eq:kxvv} is such that
\begin{equation}\label{eq:dire}
\left\{\mathsf{K}(x)\in \mathcal{L}(L^{1}(V\,;\bm{m}(\d v)));\ x\in \partial \Omega
\right\} \quad \text{ is collectively  compact}\end{equation}
and, for any  $\psi \in L^{\infty}(V,\bm{m}(\d v))$,
\begin{equation}\label{eq:dual}
\left\{\mathsf{K}(x)^{\star}\psi\,;\,x \in \partial\Omega\right\} \subset L^{\infty}(V,\bm{m}(\d v)), \qquad \text{ is relatively compact }\end{equation}
where $\mathsf{K}(x)$ is defined through \eqref{eq:Kx}. Then, there exist a sequence $(\mathsf{K}_{n})_{n} \subset \mathscr{B}(L^{1}(\partial\Omega\times V))$ such that
$$\lim_{n\to\infty}\|\mathsf{K}_{n}-\mathsf{K}\|_{\mathscr{B}(L^{1}(\partial\Omega\times V))}=0$$
where, for any $n \in \N$, the kernel $k_{n}(x,v,v')$ of $\mathsf{K}_{n}$ is of the form
\begin{equation}\label{eq:kernelkn}
k_{n}(x,v,v')=\sum_{j \in I_{n}}\beta_{j}(x)\,f_{j}(v)\,g_{j}(v'), \qquad x \in \partial \Omega, v \in \Gamma_{-}(x), \;v'\in \Gamma_{+}(x)\end{equation}
where $I_{n}$ is a \emph{finite} subset of $\N$ and
$$
\beta_{j}(\cdot) \in L^{\infty}(\partial\Omega,\d\pi), \quad f_{j} \in L^{1}(V\,;\,\bm{m}(\d v)),\quad
g_{j} \in L^{\infty}(V\,;\,\bm{m}(\d v)),$$
where we recall that $V$ is given by \eqref{eq:Vc}.\end{lemme}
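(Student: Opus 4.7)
The plan is to follow the approach of \cite[Proposition 1]{mmkop}, which establishes an analogous separable-kernel approximation for integral operators on $\Omega$. Since the present operator acts on the boundary $\partial\Omega$, one should be able to replay that proof verbatim with $\partial\Omega$ in place of $\Omega$. The overall strategy is a threefold discretisation: approximate first in the velocity variable $v$ using \eqref{eq:dire}, then in the velocity variable $v'$ using \eqref{eq:dual}, and finally in the spatial variable $x$ by partitioning $\partial\Omega$.

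First, I would exploit the collective compactness assumption \eqref{eq:dire}: the set
$$\mathcal{F}:=\bigcup_{x \in \partial\Omega}\mathsf{K}(x)\left(B_{L^{1}(V,\bm{m})}\right)$$
is relatively compact in $L^{1}(V,\bm{m})$. By total boundedness, for any $\e>0$ one may cover $\mathcal{F}$ by finitely many balls of radius $\e$ centered at functions $f_{1},\ldots,f_{N_{\e}}\in L^{1}(V,\bm{m})$. This yields an approximation of $\mathsf{K}(x)$ by a rank-$N_{\e}$ operator of the form $\psi \mapsto \sum_{j=1}^{N_{\e}}\ell_{j}(x,\psi)\,f_{j}$, where each continuous linear functional $\ell_{j}(x,\cdot)$ on $L^{1}(V,\bm{m})$ is represented by some $g_{j}(x,\cdot)\in L^{\infty}(V,\bm{m})$.

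Next, I would use \eqref{eq:dual} to discretise in $x$. The dual collective compactness ensures that the family $\{g_{j}(x,\cdot)\,;\,x\in\partial\Omega\}$ is relatively compact in $L^{\infty}(V,\bm{m})$, so a second total-boundedness argument produces a partition $\partial\Omega=\bigcup_{k=1}^{M}E_{k}$ into finitely many measurable sets on which each $g_{j}(x,\cdot)$ is $\e$-close to a fixed representative $g_{j,k}\in L^{\infty}(V,\bm{m})$. Setting $\beta_{j,k}(x):=\mathbf{1}_{E_{k}}(x)$ and relabelling, the resulting kernel
$$k_{n}(x,v,v')=\sum_{j,k}\beta_{j,k}(x)\,f_{j}(v)\,g_{j,k}(v')$$
has exactly the form prescribed by \eqref{eq:kernelkn}.

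The hard part will be proving that the error accumulated through these successive approximations is uniform in $x$ and controlled in the full operator norm of $\mathscr{B}(L^{1}(\partial\Omega\times V))$, rather than merely pointwise in $x$ or in a weaker sense. This is precisely the interplay of the two compactness hypotheses: \eqref{eq:dire} delivers the uniform-in-$x$ control needed for the $v$-approximation, while \eqref{eq:dual} enables the uniform discretisation in $x$. Once this uniformity is verified, the straightforward $L^{1}$-norm estimate
$$\|\mathsf{K}-\mathsf{K}_{n}\|_{\mathscr{B}(L^{1}(\partial\Omega\times V))}\leq \esup_{x\in\partial\Omega}\|\mathsf{K}(x)-\mathsf{K}_{n}(x)\|_{\mathscr{B}(L^{1}(V,\bm{m}))}$$
transfers the pointwise-in-$x$ operator-norm bound into the desired norm convergence.
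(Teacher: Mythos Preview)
Your proposal is correct and follows exactly the paper's own approach: the paper does not give a self-contained proof either but simply states that the result ``is easily deduced from \cite[Proposition 1]{mmkop} (replacing there $\Omega$ with $\partial\Omega$)''. Your sketch of the underlying argument from \cite{mmkop}---collective compactness for the $v$-approximation, dual compactness for the $x$-discretisation, and the final sup-in-$x$ norm estimate---is a faithful outline of that reference's method.
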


An important feature of the above Lemma together with \eqref{Estimation} is that, to prove Assumption \ref{hypH} \textit{4)}, we can approximate the boundary operator $\H$ by a sequence $(\H_{n})_{n}$ such that
\begin{equation}\label{eq:kernL}
\H_{n}\varphi(x, v)=\sum_{j \in I_{n}}\beta_{j}(x)\,f_{j}(v)\int_{\Gamma_{+}(x)}g_{j}(v')\varphi(x,v')\bm{\mu}_{x}(\d v'), \qquad \varphi \in \lp.\end{equation}

Before explaining how assumptions \eqref{eq:dire} and \eqref{eq:dual} ensures Assumptions \ref{hypH} \textit{4)}, we give  a practical criterion ensuring that the boundary operator $\H$ satisfies \eqref{eq:dire} and \eqref{eq:dual}. 

\begin{propo}
\label{Proposition Hyp 1 et Hyp 2} The operator $\mathsf{K}$ related to $\H$ through \eqref{eq:Kx} satisfies Assumption \eqref{eq:dire} if, for any $\ 0<c<1$ 
\begin{equation}\label{eq:kol}
\lim_{z\rightarrow 0}\sup_{x\in\partial \Omega ,c\leq |v'|
 \leq c^{-1}\ }\int_{\left\{ c\leq |v| \leq
c^{-1}\right\} }\left| k(x,v+z,v')-k(x,v,v')\right| \bm{m}(\d v)=0.\end{equation}
Moreover, \eqref{eq:dual} is satisfied if, for any $\varphi \in
L^{\infty }(V;\bm{m}(\d v))$, the mapping
$$x\in \partial \Omega \longmapsto \int_{V}k(x,v',.)\varphi
(v')\bm{m}(\d v')\in L^{\infty }(V;\bm{m}(\d v)) $$
is piecewise continuous (with a finite number of pieces). In particular, both assumptions \eqref{eq:dire} and \eqref{eq:dual} are met whenever
\begin{equation}
(x,v,v^{\prime })\in \partial \Omega \times V\times V \longmapsto
k(x,v,v^{\prime })\text{ is continuous.}  \label{Continuity}
\end{equation}
\end{propo}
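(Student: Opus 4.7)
The plan is to derive the collective compactness \eqref{eq:dire} from a uniform Kolmogorov--Riesz--Fr\'echet compactness criterion, and to derive \eqref{eq:dual} from a direct topological argument based on the piecewise continuity hypothesis.

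For \eqref{eq:dire}, I work under the reduction \eqref{eq:Vc}, so that $V$ is contained in the compact annulus $\{c\leq|v|\leq c^{-1}\}$ and equi-tightness of any $L^{1}(V,\bm{m})$-bounded family is automatic. Let $B$ denote the unit ball of $L^{1}(V,\bm{m})$ and set $\mathcal{A}:=\bigcup_{x\in\partial\Omega}\mathsf{K}(x)(B)$. Uniform boundedness of $\mathcal{A}$ follows from \eqref{Estimation} combined with the stochastic character of $\H$. The key step is the equi-continuity under translations: for any $f\in B$ and any $z\in\R^{d}$, Fubini yields
\begin{equation*}
\int_{V}\left|\mathsf{K}(x)f(v+z)-\mathsf{K}(x)f(v)\right|\bm{m}(\d v)\leq\|f\|_{L^{1}(V,\bm{m})}\,\omega(z),
\end{equation*}
where
\begin{equation*}
\omega(z):=\sup_{x\in\partial\Omega,\,c\leq|v'|\leq c^{-1}}\int_{\{c\leq|v|\leq c^{-1}\}}\left|k(x,v+z,v')-k(x,v,v')\right|\bm{m}(\d v).
\end{equation*}
Assumption \eqref{eq:kol} is precisely $\omega(z)\to 0$ as $z\to 0$. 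The Kolmogorov--Riesz--Fr\'echet criterion then gives that $\mathcal{A}$ is relatively compact in $L^{1}(V,\bm{m})$, which is the desired collective compactness \eqref{eq:dire}.

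For \eqref{eq:dual}, fix $\varphi\in L^{\infty}(V,\bm{m})$ and consider the mapping $\Phi\colon\partial\Omega\to L^{\infty}(V,\bm{m})$ defined by $\Phi(x):=\mathsf{K}(x)^{\star}\varphi$. The piecewise continuity assumption provides a finite partition $\partial\Omega=\bigcup_{i=1}^{N}\Omega_{i}$ such that $\Phi$ extends continuously to each $\overline{\Omega_{i}}$, which is a compact subset of the compact manifold $\partial\Omega$. Hence each $\Phi(\overline{\Omega_{i}})$ is compact in $L^{\infty}(V,\bm{m})$ and so is their finite union, which contains $\Phi(\partial\Omega)$. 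This yields the relative compactness asserted in \eqref{eq:dual}.

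The "in particular" part follows directly: the continuity of $k$ on the compact set $\partial\Omega\times V\times V$ (with $V$ as in \eqref{eq:Vc}) gives uniform continuity together with $\sup k<\infty$, which immediately yields \eqref{eq:kol} via dominated convergence. Likewise, for any $\varphi\in L^{\infty}(V,\bm{m})$, the map $(x,v')\mapsto\int_{V}k(x,v,v')\varphi(v)\bm{m}(\d v)$ is continuous on the compact set $\partial\Omega\times V$, whence $x\mapsto\mathsf{K}(x)^{\star}\varphi$ is continuous from $\partial\Omega$ into $C(V)\hookrightarrow L^{\infty}(V,\bm{m})$, i.e. piecewise continuous with a single piece. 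The only technical nuance throughout the proof is keeping all bounds uniform in $x\in\partial\Omega$; this uniformity is already built into \eqref{eq:kol} and into the piecewise continuity hypothesis, so the Fubini computation and the topological argument transfer it to $\mathcal{A}$ and to $\Phi(\partial\Omega)$ without extra work.
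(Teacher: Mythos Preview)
Your proof is correct and follows exactly the same approach as the paper: Kolmogorov--Riesz--Fr\'echet for \eqref{eq:dire}, and the compactness of the image of a compact set under a (piecewise) continuous map for \eqref{eq:dual}. The paper's own proof is a two-line sketch citing Kolmogorov's criterion and the topological fact you spell out; you have simply supplied the details (the Fubini estimate giving uniform $L^{1}$-equicontinuity, the finite partition argument, and the dominated-convergence verification in the continuous case). One small quibble: the uniform boundedness of $\mathcal{A}$ does not really come from \eqref{Estimation} (which goes the wrong way), but directly from the stochastic normalization $\int_{V}k(x,v,v')\,\bm{m}(\d v)=1$, which bounds each $\|\mathsf{K}(x)\|_{\mathscr{B}(L^{1}(V))}$ by $1$.
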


\begin{proof} The fact that \eqref{eq:kol} implies \eqref{eq:dire} is a simple consequence of Kolmogorov compactness criterion (see \cite[Theorem 4.26, p. 126]{brezis}) whereas the second part of the proposition comes from the fact that the image of $\partial\Omega$ through a piecewise continuous mapping is compact.
\end{proof}
\begin{nb}\label{nb:bound} If $k(x,v,v') =\alpha(x)|v\cdot n(x)|\bar{h}(v,v')$ with $\alpha(\cdot) \in L^{\infty}(\partial\Omega)$ and $\bar{h}(\cdot,\cdot)$ continuous then both \eqref{eq:dire} and \eqref{eq:dual} are satisfied. Notice that we do not require $\alpha(\cdot)$ to be continuous in this case.\end{nb}

\subsection{About Assumptions \ref{hypH} \textit{4)}.} Our scope in this subsection is to prove the following
\begin{theo}\label{ass:4} Assume that the diffuse boundary operator $\H$ as in Definition \ref{defi:Hdiff} is related through \eqref{eq:Kx} to $\mathsf{K}(x)$  satisfying \eqref{eq:dire} and \eqref{eq:dual}. Assume also that
$$\bm{m}(\d v)=\varpi(|v|)\d v$$
is absolutely continuous with respect to the Lebesgue measure over $\R^{d}$ with a radial and nonnegative weight function $\varpi(|v|)$.
Then, 
$$\lim_{s\to\infty}\left\|\left(\mathsf{M}_{\varepsilon+is}\H\right)^{2}\right\|_{\mathscr{B}(\lp)}=0 \qquad \forall \varepsilon \geq0$$
i.e. \eqref{eq:poweriml} in Assumption \ref{hypH} \textit{4)} holds true for $\ell=2$.
\end{theo}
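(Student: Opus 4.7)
The plan is to reduce to finite-rank, separable boundary kernels by approximation, then to identify the oscillatory integral hidden inside $(\mathsf{M}_{\lambda}\H)^{2}$ and apply the Riemann--Lebesgue lemma. Throughout, $\lambda=\varepsilon+is$ with $\varepsilon\geq 0$.

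\textbf{Reduction.} By Lemma \ref{lemme-1-add-sweeping} we may assume $V=\{c\leq|v|\leq c^{-1}\}$. Since $\|\mathsf{M}_{\lambda}\|_{\mathscr{B}(L^{1}_{-},L^{1}_{+})}\leq 1$ for any $\lambda\in\overline{\C}_{+}$, a telescoping identity gives
\[
\left\|(\mathsf{M}_{\lambda}\H)^{2}-(\mathsf{M}_{\lambda}\H_{n})^{2}\right\|_{\mathscr{B}(L^{1}_{+})}\leq 2\,\|\H\|_{\mathscr{B}(L^{1}_{+},L^{1}_{-})}\|\H-\H_{n}\|_{\mathscr{B}(L^{1}_{+},L^{1}_{-})}.
\]
Combined with \eqref{Estimation} and Lemma \ref{lem:approx}, this reduces the claim, by linearity, to a boundary operator whose associated kernel is of the rank-one form $k(x,v,v')=\beta(x)f(v)g(v')$, with $\beta\in L^{\infty}(\partial\Omega)$, $f\in L^{1}(V,\bm{m})$ and $g\in L^{\infty}(V,\bm{m})$.

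\textbf{Explicit form of the square.} Setting $y=x-\tau_{-}(x,v)v$ and $z(y,w)=y-\tau_{-}(y,w)w$, a direct computation gives
\[
(\mathsf{M}_{\lambda}\H)^{2}\varphi(x,v)=e^{-\lambda\tau_{-}(x,v)}\,\frac{\beta(y)f(v)}{|v\cdot n(y)|}\,\mathcal{I}_{\lambda}(y,\varphi),
\]
where $\mathcal{I}_{\lambda}(y,\varphi)$ is an integral over $w\in\Gamma_{+}(y)$ of the oscillatory factor $e^{-\lambda\tau_{-}(y,w)}$ against a product of bounded functions and of $J(z(y,w),\varphi)=\int g(v'')\varphi(z,v'')|v''\cdot n(z)|\bm{m}(\d v'')$. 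The decisive geometric fact is that, in polar coordinates $w=r\omega$, the scaling $\tau_{-}(y,r\omega)=\tau_{-}(y,\omega)/r$ makes $z(y,r\omega)=y-\tau_{-}(y,\omega)\omega$ \emph{independent of} $r$. Using the change of variables formula of Appendix \ref{appen:chv}, the passage from $\omega\in\mathbb{S}^{d-1}_{+}(y)=\{\omega:\omega\cdot n(y)>0\}$ to $z\in\partial\Omega$ has Jacobian $|\omega\cdot n(z)|/|y-z|^{d-1}$, and the radial and angular integrations decouple into
\[
\mathcal{I}_{\lambda}(y,\varphi)=\int_{\partial\Omega}\beta(z)J(z,\varphi)\,\frac{|\omega(y,z)\cdot n(y)|}{|y-z|^{d-1}}\,\widetilde{R}_{\lambda}(y,z)\,\pi(\d z),
\]
with $\omega(y,z)=(y-z)/|y-z|$ and $\widetilde{R}_{\lambda}(y,z)=\int_{c}^{c^{-1}}g(r\omega)f(r\omega)\,r^{d-1}\varpi(r)\,e^{-\lambda|y-z|/r}\,\d r$.

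\textbf{Oscillatory decay.} After the substitution $\rho=|y-z|/r$, $\widetilde{R}_{\lambda}(y,z)$ becomes the Fourier--Laplace transform, evaluated at $\lambda$, of an $L^{1}$ function on a bounded $\rho$-interval. Since $\bm{m}$ is absolutely continuous with respect to Lebesgue measure, the Riemann--Lebesgue lemma applies and yields $\widetilde{R}_{\varepsilon+is}(y,z)\to 0$ as $|s|\to\infty$, uniformly in $\varepsilon\geq 0$, while $|\widetilde{R}_{\lambda}(y,z)|$ is dominated by an integrable function of $(y,z)$ independent of $\lambda$.

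\textbf{Main obstacle and conclusion.} Using \eqref{10.51} and the bound $|J(z,\varphi)|\leq\|g\|_{\infty}\|\varphi(z,\cdot)\|_{L^{1}_{+}(z)}$, the operator norm $\|(\mathsf{M}_{\lambda}\H)^{2}\|_{\mathscr{B}(L^{1}_{+})}$ is controlled by a constant multiple of $\sup_{z\in\partial\Omega}Q_{\lambda}(z)$, where
\[
Q_{\lambda}(z):=\int_{\partial\Omega}\frac{|\omega(y,z)\cdot n(y)|}{|y-z|^{d-1}}|\widetilde{R}_{\lambda}(y,z)|\,\pi(\d y).
\]
Pointwise convergence $Q_{\lambda}(z)\to 0$ follows by dominated convergence from the previous step. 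The genuine difficulty, and the essential use of the compactness hypotheses \eqref{eq:dire}--\eqref{eq:dual} channelled through the separable approximation of Lemma \ref{lem:approx}, lies in upgrading this to \emph{uniform} convergence in $z\in\partial\Omega$. This uniformity is obtained from the equicontinuity of the family $\{Q_{\lambda}\}_{\lambda}$ on the compact manifold $\partial\Omega$, which follows from the continuity of $\tau_{+}(z,\omega)$ on $\{\omega\cdot n(z)>0\}$, the boundedness of the separable coefficients, and the integrable majorant for $\widetilde{R}_{\lambda}$. An Arzela--Ascoli argument then converts the pointwise decay into uniform decay of $Q_{\lambda}$ over $\partial\Omega$, from which the theorem follows.
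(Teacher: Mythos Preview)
Your overall architecture coincides with the paper's: reduce to rank-one separable kernels via Lemma~\ref{lem:approx}, pass to polar coordinates so that the exit point becomes independent of $|w|$, apply the change of variables of Appendix~\ref{appen:chv}, and recognise the radial integral as a Fourier--Laplace transform to which Riemann--Lebesgue applies. The paper proceeds exactly this way, except that it factors off the first $\mathsf{M}_{\lambda}$ and works with $\H_{1}\mathsf{M}_{\lambda}\H_{2}$ rather than $(\mathsf{M}_{\lambda}\H)^{2}$; this is cosmetic.

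The genuine gap is in your final ``main obstacle'' paragraph. You correctly identify uniformity in $z\in\partial\Omega$ as the crux, but your proposed mechanism (equicontinuity of $\{Q_{\lambda}\}$ and Arzel\`a--Ascoli) is not substantiated and, as written, does not go through. Two specific problems. First, your ``integrable majorant'' for $Q_{\lambda}(z)$ hides the diagonal singularity: the weight you obtain from the change of variables is essentially $\mathcal{J}(y,z)$, which behaves like $|y-z|^{1-d}$ and is \emph{not} covered by a naive bound. That $\sup_{z}\int_{|y-z|\leq\delta}\mathcal{J}(y,z)\,\pi(\d y)\to 0$ as $\delta\to 0$ is a separate, non-trivial fact (Lemma~\ref{lem:convede}), proved via lower semicontinuity of $\tau_{-}$ and a compactness argument; you need something like it and you have not invoked it. Second, you appeal to ``continuity of $\tau_{+}(z,\omega)$'', but $\tau_{\pm}$ is only lower semicontinuous in general, and moreover $\widetilde{R}_{\lambda}(y,z)$ depends on $(y,z)$ through the ray integral of $gf\in L^{1}(V,\bm m)$ along the direction $\omega(y,z)$; for a merely $L^{1}$ function this is neither bounded nor continuous in $\omega$, so your equicontinuity claim has no footing.

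The paper closes the gap differently and more robustly. After reaching your $Q_{\lambda}$-type quantity it (i) uses Lemma~\ref{lem:deph} to replace $h=gf\in L^{1}$ by a function in $\mathcal{C}_{c}(V)$, and then (ii) splits $\partial\Omega$ into $\{|x-y|\leq\delta\}$ and $\{|x-y|>\delta\}$. The near-diagonal piece is made uniformly small by Lemma~\ref{lem:convede}. On the far piece $\mathcal{J}$ is bounded by $\delta^{1-d}$, and---crucially because $h$ is now continuous---the map $(x,y)\mapsto \zeta_{x,y}(\cdot)\in L^{1}$ is continuous on the compact set $\{|x-y|\geq\delta\}$, so Riemann--Lebesgue is automatically uniform there. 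If you insert these two ingredients (the $\mathcal{C}_{c}$ reduction and the near-diagonal lemma) in place of your Arzel\`a--Ascoli sketch, your argument becomes the paper's.
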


\begin{proof}  On the basis of Lemma \ref{lem:approx} and since $\|\mathsf{M}_{\varepsilon+is}\|_{\mathscr{B}(\lm,\lp)} \leq 1$ for any $\varepsilon \geq 0, s \in \R,$ it is enough to prove that
\begin{equation}\label{eq:KMK2}
\lim_{s\to\infty}\left\|\mathsf{H}_{1}\mathsf{M}_{\e+is}\mathsf{H}_{2}\right\|_{\mathscr{B}(\lp,\lm)}=0 \qquad \forall \varepsilon \geq0\end{equation}
where $\mathsf{H}_{1},\mathsf{H}_{2}$ are operators of the form \eqref{eq:kernL}. Actually, using linearity, one can simply assume that the kernels $h_{1}$ and $h_{2}$ of $\mathsf{H}_{1},\mathsf{H}_{2}$ are given by
$$h_{1}(x,v,v')=\beta_{1}(x)f_{1}(v)g_{1}(v'), \qquad \qquad h_{2}(x,v,v')=\beta_{2}(x)f_{2}(v)g_{2}(v)$$
where
$$\beta_{j}(\cdot) \in L^{\infty}(\partial\Omega,\d\pi), \quad f_{j} \in L^{1}({V}\,;\,\bm{m}(\d v)),\quad
g_{j} \in L^{\infty}({V}\,;\,\bm{m}(\d v)), \qquad j=1,2.$$
In this case, one can show (see \cite{LM-iso} for details and similar computations) that, for all $\mathrm{Re}\l \geq0$ and any $\varphi \in \lp$
\begin{multline*}
\mathsf{H_{1}M_{\l}H_{2}}\varphi(x,v)={\beta_{1}(x)f_{1}(v)}\int_{\Gamma_{+}(x)}\exp(-\lambda\,\tau_{-}(x,v'))h(v')\beta_{2}(x-\tau_{-}(x,v')v')|v'\cdot n(x)|\bm{m}(\d v')\\
\int_{w \cdot n(x-\tau_{-}(x,v')v')>0}g_{2}(w)\varphi(x-\tau_{-}(x,v')v',w)|w\cdot n(x-\tau_{-}(x,v')v')|\bm{m}(\d w),\end{multline*}
where
$$h=g_{1}f_{2} \in L^{1}(V\,;\,\bm{m}(\d v)).$$ 
Introducing polar coordinates $v'=\varrho\,\sigma$, $\varrho=|v'| >0$ and $\sigma \in \S^{d-1}$ and using that 
$$\bm{m}(\d v)=\varpi(\varrho)\varrho^{d-1}\d\varrho \otimes \d\sigma$$ where $\d\sigma$ is the \emph{normalized} Lebesgue measure on $\S^{d-1}$, we get
\begin{multline*}
\mathsf{H}_{1}\mathsf{M}_{\l}\mathsf{H}_{2}\varphi(x,v)
={\beta_{1}(x)f_{1}(v)}\\
\int_{0}^{\infty} \varrho^{d}\varpi(\varrho)\d\varrho
\int_{\mathbb{S}_{+}(x)}\exp(-\lambda\,\tau_{-}(x,\sigma)\varrho^{-1})h(\varrho\sigma)G_{\varphi}(x-\tau_{-}(x,\sigma)\sigma)|\sigma \cdot n(x)|\d\sigma\,,\end{multline*}
where for $\varphi \in \lp$,
$$G_{\varphi}(y)=\beta_{2}(y)\int_{w\cdot n(y)>0}g_{2}(w)\varphi(y,w)|w\cdot n(y)|\bm{m}(\d w), \qquad y \in \partial\Omega.$$ 
Notice that
$$\int_{\partial\Omega}|G_{\varphi}(y)|\pi(\d y) \leq \|g_{2}\|_{L^{\infty}(V,\bm{m}(\d v))}\,\|\beta_{2}\|_{L^{\infty}(\partial\Omega)}\,\|\varphi\|_{\lp}$$
i.e. the mapping $\varphi \in \lp \mapsto G_{\varphi} \in L^{1}(\partial\Omega,\pi(\d x))$ is bounded, whereas the linear mapping
$$\psi \in L^{1}(\partial\Omega,\pi(\d x)) \mapsto \psi(x)\beta_{1}(x)f_{1}(v) \in \lp$$ is also bounded. Therefore, to prove \eqref{eq:KMK2}, one only needs to prove that
\begin{equation}\label{eq:Ll}
\lim_{|\mathrm{Im}|\to\infty}\left\|L(\l)\right\|_{\mathscr{B}(L^{1}(\partial\Omega,\pi(\d x)))}=0, \qquad \mathrm{Re}\l \geq 0\end{equation}
where $L(\l)\::\:G \in L^{1}(\partial\Omega,\pi(\d x)) \mapsto L(\l)G \in L^{1}(\partial\Omega,\pi(\d x))$ defined by
\begin{multline}\label{eq:LlG}
L(\l)G(x)=\int_{c}^{1/c}\varrho^{d}\varpi(\varrho)\d\varrho\int_{\mathbb{S}_{+}(x)}\exp(-\lambda\,\tau_{-}(x,\sigma)\varrho^{-1})\\
h(\varrho\sigma)G (x-\tau_{-}(x,\sigma)\sigma)|\sigma \cdot n(x)|\d\sigma\,,\end{multline}
for any $x \in \partial\Omega$ and  $G \in L^{1}(\partial\Omega,\pi(\d x))$ where we used \eqref{eq:Vc}. 
One notice first that the operator $L(\l)$ depends continuously on $h \in L^{1}(V,\bm{m}(\d v))$ (see Lemma \ref{lem:deph} after this proof) and since the set of continuous compactly supported mappings is dense in $L^{1}(V,\bm{m}(\d v))$, to prove \eqref{eq:Ll} we can assume without loss of generality that 
$h \in \mathcal{C}_{c}(V)$
with $\mathrm{Supp}\,h \subset V.$ Under such an assumption,  \eqref{eq:Ll} holds true as shown in the subsequent Proposition \ref{prop:Leieta}.
\end{proof}

We now give the results allowing to complete the above proof. First, one has the following
\begin{lemme}\label{lem:deph} For any $\mathrm{Re}\l \geq 0$, the mapping 
$$h \in L^{1}({V},\bm{m}(\d v)) \mapsto L(\l) \in \mathscr{B}(\lp)$$
is linear and continuous.\end{lemme}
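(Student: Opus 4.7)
The linearity of the map $h \mapsto L(\l)$ is immediate from the explicit formula \eqref{eq:LlG} defining $L(\l)G(x)$, since $h$ appears linearly inside the integral. So the only content of the lemma is the continuity, i.e.\ a bound of the form $\|L(\l)\|_{\mathscr{B}(L^1(\partial\Omega,\pi))} \leq C\|h\|_{L^1(V,\bm m)}$ with $C$ independent of $h$ and $\l$ (at least on $\overline{\C}_+$).

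My plan is the following. Since $\mathrm{Re}\l \geq 0$ and $\tau_-(x,\sigma)/\varrho \geq 0$, the exponential factor satisfies $|\exp(-\l\tau_-(x,\sigma)/\varrho)| \leq 1$, which eliminates all $\l$-dependence from the estimate. Taking absolute values in \eqref{eq:LlG} and bounding $|\sigma \cdot n(x)|$ by $1$ where convenient gives
\begin{equation*}
|L(\l)G(x)| \leq \int_{c}^{1/c}\varrho^{d}\varpi(\varrho)\d\varrho\int_{\S_+(x)}|h(\varrho\sigma)|\,|G(x-\tau_-(x,\sigma)\sigma)|\,|\sigma\cdot n(x)|\,\d\sigma.
\end{equation*}
I would then revert polar coordinates back to velocity coordinates by setting $v=\varrho\sigma$. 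Using $\bm m(\d v) = \varrho^{d-1}\varpi(\varrho)\d\varrho\otimes\d\sigma$, the extra factor $\varrho$ becomes $|v|$, and the scaling identity \eqref{eq:scale} gives $\tau_-(x,\sigma)\sigma = \tau_-(x,v)v$ and $|v|\,|\sigma\cdot n(x)| = |v\cdot n(x)|$. The integral thus becomes
\begin{equation*}
|L(\l)G(x)| \leq \int_{\Gamma_+(x),\,c\leq|v|\leq 1/c}|h(v)|\,|G(x-\tau_-(x,v)v)|\,|v\cdot n(x)|\,\bm m(\d v).
\end{equation*}

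The final step is to integrate over $x\in\partial\Omega$ against $\pi(\d x)$, recognize the result as an integral over $\Gamma_+$ against $\d\mu_+$, and apply the change-of-variable identity \eqref{10.51} to rewrite it as an integral over $\Gamma_-$ with the velocity argument of $h$ unchanged and $G$ evaluated at the boundary point $z$:
\begin{equation*}
\|L(\l)G\|_{L^1(\partial\Omega,\pi)} \leq \int_{\Gamma_-}|h(v)|\,|G(z)|\,\d\mu_-(z,v).
\end{equation*}
Since $|v\cdot n(z)|\leq |v|\leq 1/c$ thanks to \eqref{eq:Vc}, Fubini's theorem then yields
\begin{equation*}
\|L(\l)G\|_{L^1(\partial\Omega,\pi)} \leq \tfrac{1}{c}\,\|h\|_{L^1(V,\bm m)}\,\|G\|_{L^1(\partial\Omega,\pi)},
\end{equation*}
which gives both boundedness of $L(\l)$ and the desired continuity of $h\mapsto L(\l)$, uniformly in $\l \in \overline{\C}_+$. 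There is no genuine obstacle here: the only slightly delicate point is the bookkeeping of the polar-to-Cartesian change of variables combined with the scaling property of $\tau_-$, so as to match precisely the measure $\d\mu_+$ and enable the use of the integration formula \eqref{10.51}.
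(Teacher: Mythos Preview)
Your proof is correct and follows essentially the same route as the paper. The paper simply starts by writing $L(\l)G(x)$ directly in Cartesian velocity coordinates (this form having already appeared in the proof of Theorem \ref{ass:4}), whereas you explicitly carry out the polar-to-Cartesian conversion; after that the two arguments are identical, using \eqref{10.51} and the bound $|v\cdot n(z)|\leq 1/c$ to reach the same estimate $\|L(\l)\|\leq c^{-1}\|h\|_{L^1(V,\bm m)}$.
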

\begin{proof} The fact that the mapping $h \in L^{1}(V,\bm{m}(\d v)) \mapsto L(\l)h$ is linear is clear. For any $x \in \partial\Omega$, one first notices that
$$L(\l)G(x)=\int_{\Gamma_{+}(x)}e^{-\l\tau_{-}(x,v)}h(v)\,G(x-\tau_{-}(x,v)v)\,|v\cdot n(x)|\bm{m}(\d v), \qquad x \in \partial\Omega$$
so that
$$\left|L(\l)G(x)\right| \leq \int_{\Gamma_{+}(x)}\,|h(v)|\,|G(x-\tau_{-}(x,v)v)|\,|v\cdot n(x)|\bm{m}(\d v)$$
and, integrating over $\partial \Omega$ and using \eqref{10.51}, we get
$$\|L(\l)G\|_{L^{1}(\partial\Omega,\pi(\d x))} \leq \int_{\Gamma_{-}}|h(v)|\,|G(z)|\,\d\mu_{-}(z,v)=\int_{\Gamma_{-}}|h(v)|\,|G(z)|\,|v\cdot n(z)|\bm{m}(\d v)\d\pi(z)$$
so that
$$\|L(\l)G\|_{L^{1}(\partial\Omega,\pi(\d x))} \leq \frac{1}{c}\|h\|_{L^{1}(V,\bm{m}(\d v))}\,\|G\|_{L^{1}(\partial\Omega,\pi(\d x))}, \qquad \forall \mathrm{Re}\l \geq0$$
where we use the bound $|v\cdot n(x)| \leq 1/c$ for $v \in \mathrm{Supp}h \subset V$. In other words
$$\sup_{\mathrm{Re}\l \geq 0}\|L(\l)\|_{\mathscr{B}(L^{1}(\partial\Omega,\pi(\d x)))} \leq \frac{1}{c}\,\|h\|_{L^{1}(V,\bm{m}(\d v))}.$$
This proves the result.\end{proof}

We can prove the following which completes the proof of Theorem \ref{ass:4}:
\begin{propo}\label{prop:Leieta} Assume that $h \in \mathcal{C}_{c}(V)$
with $\mathrm{Supp}\,h \subset V$ where we recall $V$ satisfies \eqref{eq:Vc}. Then,
$$\lim_{|\eta| \to \infty}\|L(\e+i\eta)\|_{\mathscr{B}(L^{1}(\partial\Omega,\pi(\d x)))}=0 \qquad \forall \e \geq0.$$
\end{propo}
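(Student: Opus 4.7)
The plan is to treat $\Phi(x,\sigma,\eta)$ as a truncated oscillatory integral in a rescaled time variable, to apply integration by parts in order to extract a $|\eta|^{-1}$-decay estimate, and to compensate for the degeneracy of this bound on nearly-grazing directions by a geometric tangency estimate on the $\mathcal{C}^{1}$ boundary $\partial\Omega$. The first step is a density reduction. Since Lemma \ref{lem:deph} shows that $h \mapsto L(\lambda)$ is linear and continuous from $L^{1}(V,\bm{m})$ to $\mathscr{B}(L^{1}(\partial\Omega,\pi))$ uniformly in $\lambda \in \overline{\C}_{+}$, and since the polar-form integrand of $\Phi$ is $\varrho^{d}\varpi(\varrho)h(\varrho\sigma) = \varrho^{d}H(\varrho\sigma)$ with $H(v) := h(v)\varpi(|v|) \in L^{1}(V,\d v)$, it suffices to treat the case where $H \in \mathcal{C}^{1}_{c}(V)$ is supported strictly inside $\{c < |v| < 1/c\}$; reformulating everything in terms of the Lebesgue density $H$ precisely avoids regularity assumptions on the merely measurable weight $\varpi$.

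For such a regular $H$, the substitution $u = 1/\varrho$ puts $\Phi$ into the pure oscillatory form
\[
\Phi(x,\sigma,\eta) = \int_{c}^{1/c} A(u,\sigma)\,e^{-(\varepsilon+i\eta)\tau_{-}(x,\sigma)u}\,\d u,\qquad A(u,\sigma) := \frac{H(\sigma/u)}{u^{d+2}},
\]
with $A(\cdot,\sigma) \in \mathcal{C}^{1}_{c}((c,1/c))$ uniformly in $\sigma \in \mathbb{S}^{d-1}$ and $\|A(\cdot,\sigma)\|_{L^{1}} + \|\partial_{u}A(\cdot,\sigma)\|_{L^{1}} \leq C_{H}$ depending only on $\|H\|_{\mathcal{C}^{1}}$ and $c$. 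One integration by parts in $u$ (the boundary terms vanish by compact support of $A$) combined with the trivial estimate then yields
\[
|\Phi(x,\sigma,\eta)| \leq \min\!\left(C_{H},\,\frac{C_{H}}{|\eta|\,\tau_{-}(x,\sigma)}\right),\qquad \eta \neq 0.
\]

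Combining this with $L(\lambda)G(x) = \int_{\mathbb{S}^{d-1}_{+}(x)}|\sigma\cdot n(x)|\,G(x-\tau_{-}(x,\sigma)\sigma)\,\Phi(x,\sigma,\eta)\,\d\sigma$ and with the change-of-variable formula \eqref{10.51} (applied with $\d\sigma$ on $\mathbb{S}^{d-1}$ in place of $\bm{m}(\d v)$, since the billiard map preserves the direction), I would obtain
\[
\|L(\varepsilon+i\eta)G\|_{L^{1}(\partial\Omega,\pi)} \leq \|G\|_{L^{1}(\partial\Omega,\pi)}\;\sup_{z\in\partial\Omega}\mathcal{J}_{\eta}(z),
\]
where $\mathcal{J}_{\eta}(z) := \int_{\mathbb{S}^{d-1}_{-}(z)}|\sigma\cdot n(z)|\,\bigl|\Phi(z+\tau_{+}(z,\sigma)\sigma,\sigma,\eta)\bigr|\,\d\sigma$. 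Splitting this inner integral at a threshold $\tau_{+}(z,\sigma)=\alpha$ and using both bounds on $\Phi$ yields
\[
\mathcal{J}_{\eta}(z) \leq \frac{C_{H}|\mathbb{S}^{d-1}|}{|\eta|\alpha} + C_{H}\int_{\{\sigma\in\mathbb{S}^{d-1}_{-}(z)\,:\,\tau_{+}(z,\sigma)<\alpha\}}|\sigma\cdot n(z)|\,\d\sigma.
\]

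The delicate step and the main obstacle is the uniform-in-$z$ control of the remaining grazing integral. Writing $x := z+\tau_{+}(z,\sigma)\sigma \in \partial\Omega$ so that $\sigma\cdot n(z) = (x-z)\cdot n(z)/\tau_{+}(z,\sigma)$, the $\mathcal{C}^{1}$ regularity of $\partial\Omega$, upgraded to a \emph{common} modulus of continuity $\omega$ for the outward normal by compactness, provides the tangency bound $|(x-z)\cdot n(z)| \leq |x-z|\,\omega(|x-z|)$ for all $x,z \in \partial\Omega$, whence $|\sigma\cdot n(z)| \leq \omega(\tau_{+}(z,\sigma))$ and
\[
\sup_{z\in\partial\Omega}\int_{\{\tau_{+}(z,\sigma)<\alpha\}}|\sigma\cdot n(z)|\,\d\sigma \leq |\mathbb{S}^{d-1}|\,\omega(\alpha).
\]
Optimising with $\alpha = |\eta|^{-1/2}$ gives $\sup_{z}\mathcal{J}_{\eta}(z) \leq C_{H}\bigl(\omega(|\eta|^{-1/2}) + |\eta|^{-1/2}\bigr) \to 0$ as $|\eta| \to \infty$, which is the desired conclusion. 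The essential point is that this uniform geometric estimate is precisely what converts the trivial pointwise Riemann--Lebesgue decay of $\Phi$ into operator-norm decay of $L(\varepsilon+i\eta)$; without the $\mathcal{C}^{1}$ structure of $\partial\Omega$ (and its compactness providing a common modulus of tangency) one would only obtain strong, not uniform, convergence of $L(\varepsilon+i\eta) \to 0$.
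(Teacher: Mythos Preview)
Your argument is correct but follows a genuinely different route from the paper. The paper first transfers the $\sigma$-integral to a $\partial\Omega$-integral via the change-of-variable formula of Lemma~\ref{lem:ChVa}, obtaining the Jacobian $\mathcal{J}(x,y)$; it then splits at $|x-y|=\delta$, handles the near-diagonal piece by the separate Lemma~\ref{lem:convede} (proved by lower-semicontinuity of $\tau_{-}$ and a contradiction argument), and on $|x-y|>\delta$ rewrites $H(\eta,x)$ as a one-dimensional Fourier transform and appeals to Riemann--Lebesgue together with continuity in $(x,y)$ on the compact set $A_{\delta}$ to upgrade pointwise decay to uniform decay. Your approach instead performs one extra density reduction (to $H\in\mathcal{C}^{1}_{c}$, which neatly absorbs the merely measurable weight $\varpi$), so that a single integration by parts in the radial variable yields the \emph{quantitative} bound $|\Phi|\leq C_{H}\min\bigl(1,(|\eta|\tau_{-})^{-1}\bigr)$; the billiard change of variables \eqref{10.51} then reduces matters to a $\sup_{z}$ of a spherical integral, and the grazing region $\tau_{+}<\alpha$ is controlled directly by the tangency estimate $|(x-z)\cdot n(z)|\leq|x-z|\,\omega(|x-z|)$ from the common $\mathcal{C}^{1}$ modulus of continuity of the normal. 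What your argument buys is a rate, namely $\|L(\varepsilon+i\eta)\|\lesssim\omega(|\eta|^{-1/2})+|\eta|^{-1/2}$, and it avoids entirely the Jacobian analysis and Sard-type arguments of Appendix~\ref{appen:chv}; what the paper's route buys is that it works directly for $h\in\mathcal{C}_{c}$ without the extra $\mathcal{C}^{1}$ density step and recycles machinery (Lemma~\ref{lem:ChVa}, Lemma~\ref{lem:convede}) developed for independent purposes.
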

\begin{proof} First, using Lemma \ref{lem:ChVa} and \eqref{eq:LlG}, one sees that, for any $G \in L^{1}(\partial\Omega,\pi(\d x))$
$$L(\l)G(x)=\int_{c}^{\frac{1}{c}}\varrho^{d}\varpi(\varrho)\d\varrho\int_{\Gamma_{+}(x)}e^{-\l|x-y|\varrho^{-1}}h\left(\varrho\frac{x-y}{|x-y|}\right)\mathcal{J}(x,y)\,G(y)\pi(\d y)\,, \qquad x \in \partial\Omega.$$
Therefore,
$$\|L(\l)\|_{\mathscr{B}(L^{1}(\partial\Omega,\pi(\d x)))} \leq \sup_{y \in \partial\Omega}\,\int_{\partial\Omega}\left|\int_{c}^{\frac{1}{c}}e^{-\l|x-y|\varrho^{-1}}h\left(\varrho\frac{x-y}{|x-y|}\right)\varrho^{d}\varpi(\varrho)\d\varrho\right|\mathcal{J}(x,y)\pi(\d x).$$
We denote then, for any fixed $\e \geq0$,
$$H(\eta,x):=\int_{c}^{\frac{1}{c}}e^{-(\e+i\eta)|x-y|\varrho^{-1}}h\left(\varrho\frac{x-y}{|x-y|}\right)\varrho^{d}\varpi(\varrho)\d\varrho\,, \quad x \in \partial\Omega$$
and need to prove that
$$\lim_{|\eta|\to\infty}\sup_{y \in \partial\Omega}\int_{\partial\Omega}\left|H(\eta,x)\right|\mathcal{J}(x,y)\pi(\d x)=0.$$
Given $y \in \partial\Omega$ one can split the above integral as
\begin{multline*}
\int_{\partial\Omega}\left|H(\eta,x)\right|\mathcal{J}(x,y)\pi(\d x)=\int_{|x-y| >\delta}\left|H(\eta,x)\right|\,\mathcal{J}(x,y)\pi(\d x)\\
+\int_{|x-y|\leq\delta}\left|H(\eta,x)\right|\,\mathcal{J}(x,y)\pi(\d x), \qquad \delta >0.\end{multline*}
First, one sees that
$$\sup_{\eta \in \R}\int_{|x-y|\leq\delta}\left|H(\eta,x)\right|\,\mathcal{J}(x,y)\pi(\d x) \leq \|h\|_{L^{\infty}(V,\d\bm{m})}\left(\int_{c}^{\frac{1}{c}}\varrho^{d}
\varpi(\varrho)\d\varrho\right)\,\int_{|x-y|\leq\delta}\mathcal{J}(x,y)\pi(\d x)$$
and thanks to Lemma \ref{lem:convede},
$$\lim_{\delta\to 0^{+}}\sup_{y \in \partial\Omega}\int_{|x-y|\leq\delta}\mathcal{J}(x,y)\pi(\d x)=0.$$
Thus, to prove our claim, we only have to show that, for any $\delta >0$,
$$\lim_{|\eta|\to\infty}\sup_{y \in \partial\Omega}\int_{|x-y|>\delta}\left|H(\eta,x)\right|\,\mathcal{J}(x,y)\pi(\d x)=0.$$
For $|x-y| > \delta$, one has $\mathcal{J}(x,y) \leq |x-y|^{1-d} \leq \delta^{1-d}$  and we are reduced to prove that
\begin{equation}\label{eq:final}
\lim_{|\eta|\to\infty}\sup_{y\in \partial\Omega}\int_{|x-y| > \delta}\left|H(\eta,x)\right|\pi(\d x)=0.\end{equation}
For a fixed $y \in \partial\Omega$ and $r:=|x-y| >\delta$, we perform the change of variable 
$\varrho \mapsto z:=|x-y|\varrho^{-1}$ with $\d\varrho=|x-y|z^{-2}\d z$ so that
$$H(\eta,x)=r^{d}\int_{rc}^{r/c}e^{-i\eta z}e^{-\e z}h\left(z^{-1}(x-y)\right)\varpi(r/z)\frac{\d z}{z^{2+d}} \qquad r=|x-y|\,.$$
which means that $H(\eta,x)$ is the Fourier transform of the continuous mapping $z \mapsto \zeta_{x,y}(z)$ given by
$$\zeta_{x,y}(z):=r^{d}\mathbf{1}_{(rc,r/c)}(z)e^{-\e z}h\left(z^{-1}(x-y)\right)\varpi(r/z)z^{-(2+d)}.$$
From Rieman-Lebesgue Theorem, one has
\begin{equation}\label{eq:RL}
\lim_{|\eta|\to\infty}\int_{\R_{+}}\big|\zeta_{x,y}(z)\big|\d z=0\end{equation}
for any $(x,y) \in A_{\delta}:=\{(x,y) \in \partial\Omega\times\partial\Omega\,;\,|x-y| >\delta\}$. Since the mapping
$$(x,y) \in A_{\delta}\longmapsto \zeta_{x,y}(\cdot) \in L^{1}(rc,r/c)$$
is \emph{continuous}, the above convergence \eqref{eq:RL} is uniform with respect to $(x,y)$, i.e.   
$$\lim_{|\eta|\to\infty}\sup_{y \in \partial\Omega}\sup_{|x-y|>\delta}\left|\int_{\R_{+}}e^{i\eta\,z}\zeta_{x,y}(z)\d z\right|=0.$$
This exactly means that
$$\lim_{|\eta|\to\infty}\sup_{y \in \partial\Omega}\sup_{|x-y|>\delta}\bigg|H(\eta,x)\bigg|=0$$
and, of course, this implies \eqref{eq:final}.
\end{proof}

\subsection{A few examples} We give some examples of practical application in the kinetic theory of gases \cite{aoki,bernou1}.

\begin{enumerate}
\item First, we consider an operator $\H$ with kernel
$$\bm{h}(x,v,v')=\alpha(x)\,h(v,v')$$
where 
$$\alpha(\cdot) \in L^{\infty}(\partial\Omega,\d\pi), \qquad \text{ and } \quad h(\cdot,\cdot) \text{ is continuous on } V\times V.$$
In that case, 
$$k(x,v,v')=\alpha(x)|v \cdot n(x)|\,h(v,v')$$
is such that, for $x \in \partial\Omega$, 
$$k(x,\cdot,\cdot) \text{ is continuous over } V \times V$$
and \eqref{eq:dual} and \eqref{eq:dire} are satisfied thanks to Proposition \ref{Proposition Hyp 1 et Hyp 2} and Remark \ref{nb:bound}.
\item Another example is the one given in Example \ref{exe:gener}:
$$\bm{h}(x,v,v')=\gamma^{-1}(x)\bm{G}(x,v)$$
where $\G\::\:\partial \Omega \times V \to \R^{+}$ is a measurable and nonnegative mapping such that 
\begin{enumerate}[($i$)]
\item $\G(x,\cdot)$ is radially symmetric for $\pi$-almost every $x \in \partial \Omega$; 
\item $\G(\cdot,v) \in L^{\infty}(\partial \Omega)$ for almost every $v \in V$;
\item The mapping $x \in \partial\Omega \mapsto G(x,\cdot) \in L^{1}(V,|v|\bm{m}(\d v))$ is piecewise continuous,
\item The mapping $x \in \partial \Omega \mapsto \gamma(x)$ is \emph{bounded away from zero} where
\begin{equation*}
\gamma(x):=\int_{\Gamma_{-}(x)}\G(x,v)|v \cdot n(x)| \bm{m}(\d v) \qquad \forall x \in \partial\Omega,\end{equation*}
i.e. there exist $\gamma_{0} >0$ such that $\gamma(x) \geq \gamma_{0}$ for $\pi$-almost every $x \in \partial\Omega.$\end{enumerate}
In that case, it is easy to show that the associated boundary operator $\H$ is satisfying \eqref{eq:dire} and \eqref{eq:dual} and, if 
\begin{equation}\label{eq:bmvarpi}
\bm{m}(\d v)=\varpi(|v|)\d v\end{equation}
for some radially symmetric and nonnegative function $\varpi(|v|)$, we deduce from Theorem \ref{ass:4} that Assumptions \ref{hypH} are met. 

\item A particular example of this kind of boundary operator corresponds to the case in which
$$\bm{G}(x,v)=\mathcal{M}_{\theta(x)}(v), \qquad \mathcal{M}_{\theta}(v)=(2\pi\theta)^{-d/2}\exp\left(-\frac{|v|^{2}}{2\theta}\right), \qquad x \in \partial \Omega, \:\:v \in \R^{d}.$$ 
Then, 
$$\gamma(x)=\bm{\kappa}_{d}\sqrt{\theta(x)}\int_{\R^{d}}|w|\M_{1}(w)\d w, \qquad x \in \partial\Omega$$ 
for some positive constant $\bm{\kappa}_{d}$ depending only on the dimension. The above assumption \textit{$(iii)$} asserts that the temperature mapping $x \in \partial\Omega \mapsto \theta(x)$ is bounded away from zero and piecewise continuous.

In that case, again, if $\bm{m}$ is given by \eqref{eq:bmvarpi}, Assumptions \ref{hypH} are met. Moreover, for, say
$$\varpi(|v|)=1, \qquad \bm{m}(\d v)=\d v$$
we see that
$$\H \in \mathscr{B}(\lp,\Y_{k+1}^{-}) \Longleftrightarrow  \int_{0}^{\infty}|v|^{-k}\mathcal{M}_{\theta(x)} (v)\d v < \infty \Longleftrightarrow  k < d,$$
which means that $N_{\H}=d-1$ (since $N_{\H}$ needs to be an integer).
\end{enumerate}
\section{Some useful change of variables}\label{appen:chv}
We establish now a fundamental change of variable formula which has its own interest and that we used in the previous Appendix. A systematic use of such a change of variable will be made in a companion paper \cite{LM-iso}. We begin with the following technical Lemma:

\begin{lemme}\phantomsection\label{lem:jaco}
Let $x \in \partial{\Omega}$ be fixed. We denote by $\mathbf{B}_{d-1}$ the closed unit ball of $\R^{d-1}$ and define
$$\mathfrak{p}\::\:z \in \mathbf{B}_{d-1} \longmapsto \mathfrak{p}(z)=x-\tau_{-}(x,\sigma(z))\sigma(z) \in \partial{\Omega}$$
where
$$\sigma(z)=\left(\sigma_{1}(z),\ldots,\sigma_{d}(z)\right)=\left(z_{1},\ldots,z_{d-1},\sqrt{1-|z|^{2}}\right) \in \S^{d-1}, \qquad z \in \mathbf{B}_{d-1}.$$
Defining
$$\mathcal{O}_{x}:=\left\{z \in \mathbf{B}_{d-1}\;;\;\sigma(z) \cdot n(x) >0\;;\;\sigma(z) \cdot n(\mathfrak{p}(z)) < 0\right\}$$ 
it holds that $\mathfrak{p}$ is differentiable on $\mathcal{O}_{x}$ and
$$\mathrm{det}\Bigg(\Bigg(\left\langle \frac{\partial \mathfrak{p}(z)}{\partial z_{i}},\frac{\partial \mathfrak{p}(z)}{\partial z_{j}}\right\rangle\Bigg)_{1\leq i,j\leq d-1}\Bigg)=\left(\frac{\tau_{-}(x,\sigma(z))^{d-1}}{\left(\sigma(z)\cdot n(\mathfrak{p}(z))\right)\sigma_{d}(z)}\right)^{2} \qquad \forall z \in \mathcal{O}_{x}.$$\end{lemme}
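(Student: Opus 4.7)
The plan is to factor $\mathfrak{p} = \Psi \circ \Phi$, where $\Phi(z) = \sigma(z)$ parametrises the open upper hemisphere of $\mathbb{S}^{d-1}$ and $\Psi(\sigma) = x - \tau_{-}(x,\sigma)\sigma$ is the first-hit map from the sphere onto $\partial\Omega$. The square root of the Gram determinant coincides with the $(d-1)$-dimensional volume distortion factor and is therefore multiplicative under composition, so the two factors can be treated separately and multiplied. On $\mathcal{O}_{x}$ the ray $t \mapsto x - t\sigma(z)$ meets $\partial\Omega$ transversally at $\mathfrak{p}(z)$ since $\sigma(z) \cdot n(\mathfrak{p}(z)) \neq 0$; describing $\partial\Omega$ locally by $\{F = 0\}$ with $\nabla F$ collinear to $n$, the implicit function theorem applied to $F(x - t\sigma) = 0$ gives $\tau_{-}(x,\cdot) \in \mathcal{C}^{1}$ near $\sigma(z)$, whence $\mathfrak{p} \in \mathcal{C}^{1}(\mathcal{O}_{x})$. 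Differentiating $n(\mathfrak{p}(z)) \cdot \partial_{z_{i}}\mathfrak{p}(z) = 0$ (valid because $\mathfrak{p}(z) \in \partial\Omega$), writing $n := n(\mathfrak{p}(z))$, I would obtain
\begin{equation*}
\partial_{z_{i}}\tau_{-} = -\tau_{-}\,\frac{n \cdot \partial_{z_{i}}\sigma}{n \cdot \sigma}, \qquad \partial_{z_{i}}\mathfrak{p}(z) = -\tau_{-}\Bigl(\partial_{z_{i}}\sigma - \frac{n \cdot \partial_{z_{i}}\sigma}{n \cdot \sigma}\,\sigma\Bigr).
\end{equation*}

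For the Gram determinant of $\Phi$, a direct computation gives $\langle \partial_{z_{i}}\sigma, \partial_{z_{j}}\sigma\rangle = \delta_{ij} + z_{i}z_{j}/(1-|z|^{2})$, and the matrix determinant lemma yields $\det G_{\Phi}(z) = 1 + |z|^{2}/(1-|z|^{2}) = 1/\sigma_{d}(z)^{2}$. For $\Psi$, I would pick an orthonormal basis $(v_{1},\ldots,v_{d-1})$ of $T_{\sigma}\mathbb{S}^{d-1} = \sigma^{\perp}$. By the displayed formula, $D\Psi(\sigma)[v_{i}] = -\tau_{-}(v_{i} - \alpha_{i}\sigma)$ with $\alpha_{i} := (n \cdot v_{i})/(n \cdot \sigma)$. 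Since $v_{i} \perp \sigma$ and $|\sigma| = 1$, one computes $\langle D\Psi[v_{i}], D\Psi[v_{j}]\rangle = \tau_{-}^{2}(\delta_{ij} + \alpha_{i}\alpha_{j})$, and another application of the matrix determinant lemma gives $\det G_{\Psi} = \tau_{-}^{2(d-1)}(1 + |\alpha|^{2})$. Observing that $(\alpha_{1},\ldots,\alpha_{d-1})$ are the coordinates of $n/(n \cdot \sigma)$ in the orthonormal basis $(v_{1},\ldots,v_{d-1})$ of $\sigma^{\perp}$, the Pythagorean identity $|n - (n \cdot \sigma)\sigma|^{2} = 1 - (n \cdot \sigma)^{2}$ yields $|\alpha|^{2} = (1 - (n \cdot \sigma)^{2})/(n \cdot \sigma)^{2}$, so that $1 + |\alpha|^{2} = 1/(n \cdot \sigma)^{2}$.

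Combining these by the multiplicativity $\det G_{\mathfrak{p}}(z) = \det G_{\Psi}(\sigma(z)) \cdot \det G_{\Phi}(z)$, we arrive at
\begin{equation*}
\det G_{\mathfrak{p}}(z) = \frac{\tau_{-}(x,\sigma(z))^{2(d-1)}}{(n(\mathfrak{p}(z)) \cdot \sigma(z))^{2}\,\sigma_{d}(z)^{2}},
\end{equation*}
which is the claimed identity. The main obstacle should be the clean evaluation of $\det G_{\Psi}$: the differential $D\Psi$ is an \emph{oblique} projection along $\sigma$ onto $T_{\mathfrak{p}(z)}\partial\Omega$, and a brute-force computation of $P^{T}P$ is unpleasant; choosing a basis adapted to $\sigma$ is precisely what collapses the cross-terms and reduces the calculation to the rank-one perturbation $\det(I + \alpha\alpha^{T}) = 1 + |\alpha|^{2}$.
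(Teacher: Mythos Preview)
Your argument is correct and takes a genuinely different route from the paper's. The paper works entirely in the $z$-chart: it computes $\partial_{z_i}\mathfrak{p}$ directly, obtains the Gram matrix in the form $\tau_{-}^{2}\bigl(\mathbf{I}_{d-1}+p\otimes p+u\otimes u\bigr)$ with $p_i=z_i/\sigma_d$ and $u_i=(n\cdot\partial_i\sigma)/(n\cdot\sigma)$, applies the matrix determinant lemma \emph{twice} (first to $\mathbf{I}+p\otimes p$, then to the perturbation by $u\otimes u$), and finishes with a page of explicit algebra relating $|u|^2$, $\langle p,u\rangle$ and $|p|^2$ to $n_d$, $\sigma_d$ and $n\cdot\sigma$. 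Your factorisation $\mathfrak{p}=\Psi\circ\Phi$ separates the two rank-one perturbations geometrically: the contribution $1/\sigma_d^2$ is the standard hemispherical Jacobian, and by choosing an \emph{orthonormal} frame $(v_i)$ of $T_\sigma\mathbb{S}^{d-1}$ the oblique-projection piece $D\Psi$ reduces to a single rank-one perturbation whose determinant $1+|\alpha|^2$ collapses via Pythagoras to $1/(n\cdot\sigma)^2$. The multiplicativity you invoke is legitimate precisely because the intermediate frame is orthonormal (so that $\det G_{\Psi\circ\Phi}=\det G_\Phi\cdot\det G_\Psi$). What your approach buys is a computation-free endgame; what the paper's direct approach buys is that it never leaves the single coordinate system and never needs to justify the multiplicativity of the Gram determinant under composition.
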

\begin{proof} The fact that $\mathfrak{p}(\cdot)$ is differentiable on $\mathcal{O}_{x}$ is a consequence of \cite[Lemma A.4]{LMR}. We recall in particular here that the set $\widehat{\Gamma}_{+}(x)=\{\omega \in \S^{d-1}\;;\;(x,\omega) \in \Gamma_{+}\text{ and } \bm{\xi}(x,\omega)=(x-\tau_{-}(x,\omega)\omega,\omega) \in \Gamma_{-}\}$ is an open subset of $\{\omega \in \S^{d-1}\;;\;(x,\omega) \in \Gamma_{+}\}$ and $\tau_{-}(x,\cdot)$ is differentiable on $\widehat{\Gamma}_{+}(x)$ with
\begin{equation}\label{eq:difftau}
\partial_{\omega}\tau_{-}(x,\omega)=\frac{\tau_{-}(x,\omega)}{w\cdot n(\bm{\xi}(x,\omega))}n(\bm{\xi}_{s}(x,\omega)), \qquad \omega \in \widehat{\Gamma}_{+}(x)
\end{equation}
where $\bm{\xi}_{s}(x,\omega)=x-\tau_{-}(x,\omega)\omega$. Since, for any $z \in \mathcal{O}_{x}$, $\mathfrak{p}(z)=\bm{\xi}_{s}(x,\sigma(z))$, this translates in a straightforward way to the differentiability of $\mathfrak{p}$. Moreover, one deduces from \eqref{eq:difftau} that
\begin{equation*}\begin{split}
\partial_{i}\mathfrak{p}(z)&=-\left(\nabla_{\omega}\tau_{-}(x,\sigma(z))\cdot\partial_{i}\sigma(z)\right)\sigma(z)-\tau_{-}(x,\sigma(z))\partial_{i}\sigma(z)\\
&=-\frac{\tau_{-}(x,\sigma(z))}{\sigma(z)\cdot n(\mathfrak{p}(z))}\left[\left(n(\mathfrak{p}(z))\cdot\partial_{i}\sigma(z)\right)\sigma(z)+\left(n(\mathfrak{p}(z))\cdot\sigma(z)\right)\partial_{i}\sigma(z)\right]\end{split}\end{equation*}
where we denote for simplicity $\partial_{i}=\frac{\partial}{\partial z_{i}}$. We therefore get
\begin{multline*}
\left\langle \partial_{i}\mathfrak{p}(z)\,;\,\partial_{j}\mathfrak{p}(z)\right\rangle=\left(\frac{\tau_{-}(x,\sigma(z))}{\sigma(z)\cdot n(\mathfrak{p}(z))}\right)^{2}\bigg(\left(n(\mathfrak{p}(z))\cdot\partial_{i}\sigma(z)\right)\left(n(\mathfrak{p}(z))\cdot\partial_{j}\sigma(z)\right) \\
 +\left(n(\mathfrak{p}(z))\cdot\sigma(z)\right)^{2}\left(\partial_{i}\sigma(z)\cdot \partial_{j}\sigma(z)\right)\bigg).\end{multline*}
Let us fix $z \in \mathcal{O}_{x}$. We denote by $\mathfrak{P}(z)$ the matrix whose entries are $\mathfrak{P}_{ij}(z)=\left\langle \partial_{i}\mathfrak{p}(z)\,;\,\partial_{j}\mathfrak{p}(z)\right\rangle$, $1 \leq i,j \leq d-1$. Using that
$$\partial_{i}\sigma(z)\cdot\partial_{j}\sigma(z)=\delta_{ij}+\frac{z_{i}z_{j}}{\sigma_{d}^{2}(z)}$$
where $\sigma_{d}(z)$ is the last component of $\sigma(z)$, i.e. $\sigma_{d}(z)=\sqrt{1-|z|^{2}}$, one sees that
$$\mathfrak{P}_{ij}(z)=\tau_{-}^{2}(x,\sigma(z))\Bigg[\delta_{ij}+\frac{z_{i}z_{j}}{\sigma_{d}^{2}(z)}+\frac{1}{(\sigma(z)\cdot n(\mathfrak{p}(z)))^{2}}\left(n(\mathfrak{p}(z))\cdot\partial_{i}\sigma(z)\right)\left(n(\mathfrak{p}(z))\cdot\partial_{j}\sigma(z)\right)\Bigg].$$
For simplicity, we will simply denote by $n$ the unit vector $n(\mathfrak{p}(z))$ and $\sigma=\sigma(z)$. Introducing the vectors $u,p\in \R^{d-1}$ whose components are 
$$u_{i}:=\frac{n\cdot\partial_{i}\sigma}{\sigma\cdot n}, \qquad p_{i}:=\frac{z_{i}}{\sigma_{d}}, \qquad i=1,\ldots,d-1$$
we have $\mathfrak{P}_{ij}(z)=\tau_{-}(x,\sigma)^{2}\left[\delta_{ij}+p_{i}p_{j}+u_{i}u_{j}\right]$ so that
$$\mathrm{det}\left(\mathfrak{P}(z)\right)=\left(\tau_{-}(x,\sigma)\right)^{2(d-1)}\mathrm{det}\left(\mathbf{I}_{d-1}+p\otimes p +u \otimes u\right).$$
Recall that, for any invertible matrix $\bm{A}$ and any vectors $\bm{x},\bm{y} \in \R^{d-1}$, then
\begin{equation}\label{detA}
\mathrm{det}\left(\bm{A}+\bm{x}\otimes\bm{y}\right)=\mathrm{det}(A)\left(1+\langle \bm{y},\bm{A}^{-1}\bm{x}\rangle\right).\end{equation}
We apply this identity first by considering $\bm{A}=\mathbf{I}_{d-1}+p\otimes p$ and get
$$\mathrm{det}\left(\mathfrak{P}(z)\right)=\left(\tau_{-}(x,\sigma)\right)^{2(d-1)}\mathrm{det}(\bm{A})\left(1+\langle u,\bm{A}^{-1}u\rangle\right).$$
To compute $\mathrm{det}(\bm{A})$, one uses again \eqref{detA} to deduce
$$\mathrm{det}(\bm{A})=1+\langle p,p\rangle=1+\frac{|z|^{2}}{\sigma^{2}_{d}}=\sigma_{d}^{-2}.$$
One also can compute in a direct way the inverse of $\bm{A}$ given by $\bm{A}^{-1}=\mathbf{I}_{d-1}-\frac{1}{1+|p|^{2}}p\otimes p$ from which
$$\langle u,\bm{A}^{-1}u\rangle=|u|^{2}-\frac{\langle p,u\rangle^{2}}{1+|p|^{2}}.$$
This results in 
\begin{equation}\label{eq:detPz}
\mathrm{det}\left(\mathfrak{P}(z)\right)=\left(\tau_{-}(x,\sigma)\right)^{2(d-1)}\left(1+|u|^{2}-\frac{\langle p,u\rangle^{2}}{1+|p|^{2}}\right).\end{equation}
Let us make this more explicit. One easily checks that
$$u_{i}=\frac{1}{\sigma\cdot n}\left(n_{i}-\frac{n_{d}}{\sigma_{d}}z_{i}\right) \qquad \text{ and } \qquad \langle p,u\rangle=\frac{1}{(\sigma\cdot n)\sigma_{d}}\sum_{i=1}^{d-1}\left(n_{i}z_{i}-\frac{n_{d}}{\sigma_{d}}z_{i}^{2}\right).$$
Noticing that $\sum_{i=1}^{d-1}n_{i}z_{i}=(\sigma\cdot n)-\sigma_{d}n_{d}$, it holds
$$\langle p,u\rangle=\frac{1}{(\sigma\cdot n)\sigma_{d}}\left(\sigma\cdot n - \sigma_{d}n_{d}-\frac{n_{d}}{\sigma_{d}}|z|^{2}\right)=\frac{1}{\sigma_{d}^{2}}\left(\sigma_{d}-\frac{n_{d}}{\sigma\cdot n}\right)$$
where we used again that $\sigma_{d}^{2}+|z|^{2}=1$. Since one also has
$$|u|^{2}=\frac{1}{(\sigma\cdot n)^{2}}\sum_{i=1}^{d-1}\left(n_{i}-\frac{n_{d}}{\sigma_{d}}z_{i}\right)^{2}=\frac{1}{\sigma_{d}^{2}(\sigma\cdot n)^{2}}\sum_{i=1}^{d-1}\left(\sigma_{d}n_{i}-z_{i}n_{d}\right)^{2}$$
we get easily after expanding the square and using that $\sum_{i=1}^{d-1}n_{i}^{2}=1-n_{d}^{2}$, 
$$|u|^{2}=\frac{1}{\sigma_{d}^{2}(\sigma\cdot n)^{2}}\left(\sigma_{d}^{2}+n_{d}^{2}-2(\sigma\cdot n)\sigma_{d}n_{d}\right).$$
One finally obtains, using \eqref{eq:detPz},
$$\mathrm{det}\left(\mathfrak{P}(z)\right)=\frac{\tau_{-}(x,\sigma)^{2(d-1)}}{\sigma_{d}^{2}}\left(1+\frac{1}{\sigma_{d}^{2}(\sigma\cdot n)^{2}}\left(\sigma_{d}^{2}+n_{d}^{2}-2(\sigma\cdot n)\sigma_{d}n_{d}\right)-\frac{1}{\sigma_{d}^{2}}\left(\sigma_{d}-\frac{n_{d}}{\sigma\cdot n}\right)^{2}\right)$$
and little algebra gives
$$\mathrm{det}\left(\mathfrak{P}(z)\right)=\frac{\tau_{-}(x,\sigma)^{2(d-1)}}{\sigma_{d}^{2}(\sigma\cdot n)^{2}}$$
which is the desired result.\end{proof}

We complement the above with the following
\begin{lemme}\phantomsection\label{lem:sard}
For any $x \in \partial\Omega$, introduce
$$\mathcal{G}_{x}:=\{\omega \in \S^{d-1}\;;\;(x,\omega) \in \Gamma_{+}\;;\;\omega \cdot n(x-\tau_{-}(x,\omega)\omega)=0\}.$$
Then, 
$$|\mathcal{G}_{x}|=0$$
where here $|\cdot|$ denotes the Lebesgue surface measure over $\S^{d-1}$. Moreover, with the notations of Lemma \ref{lem:jaco}, the set
$$\mathbf{G}_{x}=\left\{z \in \mathbf{B}_{d-1}\;;\;\sigma(z)=(z,\sqrt{1-|z|^{2}}) \in \mathcal{G}_{x}\right\}$$
has zero Lebesgue measure (in $\R^{d-1}$).
\end{lemme}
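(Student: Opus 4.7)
The plan is to identify $\mathcal{G}_x$ with (a subset of) the set of critical values of the map
$$\Phi\::\:\partial\Omega\setminus\{x\} \longrightarrow \mathbb{S}^{d-1}, \qquad \Phi(y) = \frac{x-y}{|x-y|},$$
and then to invoke Sard's theorem. More precisely, I would introduce
$$\mathcal{T}_x := \left\{y \in \partial\Omega\setminus\{x\}\;;\;(x-y)\cdot n(y) = 0\right\},$$
and show first that $\mathcal{G}_x \subset \Phi(\mathcal{T}_x)$. Indeed, for any $\omega \in \mathcal{G}_x$ the point $y := \mathfrak{p}(\omega) = x - \tau_-(x,\omega)\omega$ satisfies $y \neq x$ (because $(x,\omega)\in\Gamma_+$ forces $\tau_-(x,\omega)>0$), satisfies $\omega = \Phi(y)$, and the defining relation $\omega \cdot n(y) = 0$ translates directly into $(x-y)\cdot n(y) = 0$, i.e. $y \in \mathcal{T}_x$.

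The next step is to recognise $\mathcal{T}_x$ as exactly the critical set of $\Phi$. A short computation gives, for any $v \in T_y\partial\Omega$,
$$d\Phi_y(v) = -\frac{1}{|x-y|}\bigl(v - \Phi(y)\,(\Phi(y)\cdot v)\bigr),$$
so that $d\Phi_y$ coincides, up to a nonzero scalar, with the orthogonal projection onto $T_{\Phi(y)}\mathbb{S}^{d-1} = \Phi(y)^{\perp}$. Hence $d\Phi_y$ is non-invertible iff $\Phi(y) \in T_y\partial\Omega$, iff $(x-y)\cdot n(y) = 0$, iff $y \in \mathcal{T}_x$. Since $\Phi$ is $C^1$ (as the restriction of a smooth map on $\R^d\setminus\{x\}$ to the $C^1$ hypersurface $\partial\Omega$) and both source and target are $C^1$ manifolds of dimension $d-1$, I would then invoke the equidimensional case of Sard's theorem to conclude that $\Phi(\mathcal{T}_x)$ has zero surface measure on $\mathbb{S}^{d-1}$, whence $|\mathcal{G}_x| = 0$. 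For the statement about $\mathbf{G}_x$, the chart $z \in \operatorname{Int}(\mathbf{B}_{d-1}) \mapsto \sigma(z)$ is a smooth diffeomorphism onto the open upper hemisphere, and a direct Gram-matrix computation shows that the surface measure pulls back to $\sigma_d(z)^{-1}\,\d z$, equivalent to Lebesgue measure on $\operatorname{Int}(\mathbf{B}_{d-1})$; since moreover $\partial\mathbf{B}_{d-1}$ is Lebesgue-null in $\R^{d-1}$, $|\mathcal{G}_x|=0$ translates directly into $|\mathbf{G}_x|=0$.

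The main subtlety will be the appeal to Sard's theorem under the mere $C^1$-regularity of $\partial\Omega$; this is however the equidimensional case $m=n=d-1$, which is the mildest instance and classically valid for $C^1$ maps (a direct covering argument based on the vanishing of the Jacobian on the critical set suffices). All remaining steps are elementary.
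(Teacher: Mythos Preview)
Your proposal is correct and follows essentially the same route as the paper: the same map $\Phi$ (called $\Psi$ there), the same identification of $\mathcal{G}_x$ with critical values, and the same appeal to Sard's theorem, with the equidimensional $C^1$ case explicitly justified. The only minor difference is in the treatment of $\mathbf{G}_x$: the paper applies Sard's theorem a second time to the composite $\mathcal{P}\circ\Psi$ where $\mathcal{P}(\omega)=(\omega_1,\dots,\omega_{d-1})$, whereas your measure-equivalence argument via the chart $\sigma$ is more direct and equally valid.
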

\begin{proof} The proof resorts to Sard Theorem. Let $x \in \partial\Omega$ be fixed. Introducing the function
$$\Psi\::\:y \in \partial\Omega \setminus\{x\} \longmapsto \Psi(y)=\frac{x-y}{|x-y|}$$
it holds that $\Psi$ is a $\mathcal{C}^{1}$ function. For any $\omega \in \S^{d-1}$ setting $y_{\omega}=x-\tau_{-}(x,\omega)\omega \in \partial\Omega$ one has
$$\omega=\Psi(y_{\omega})$$
Let us prove that $\mathcal{G}_{x}$ is included in the set of \emph{critical values} of $\Psi$. To do this, we show that, if $\omega \in \mathcal{G}_{x}$ then $y_{\omega}$ is a critical point of $\Psi$, i.e. the differential $\d\Psi(y_{\omega})$ is not injective. Since actually $\Psi$ is defined and smooth on $\R^{d}\setminus\{x\}$, its differential on $\partial\Omega \setminus \{x\}$ is nothing but the restriction of its differential on $\R^{d}\setminus\{x\}$ on the tangent hyperplane to $\partial\Omega$, i.e., for any $y \in \partial\Omega \setminus\{x\}$, one has
$$\d\Psi(y)\::\:h \in \mathcal{T}_{y} \longmapsto -\frac{1}{|x-y|}\mathds{P}_{z_{y}}(h)$$
where $\mathcal{T}_{y}$ is the tangent space of $\partial\Omega$ at $y\in \partial \Omega \setminus\{x\}$, $z_{y}=\frac{x-y}{|x-y|}$ and, for any $z \in \R^{d}$,  $\mathds{P}_{z}$ denote the orthogonal projection on the hyperplane orthogonal to $z$, 
$$\mathds{P}_{z}h=h_{z}^{\perp}:=h-\langle h,\,\bar{z}\rangle\,\bar{z}, \qquad \bar{z}=\frac{z}{|z|} \in \mathbb{S}^{d-1}, h \in \R^{d}.$$
Now, one notices that
$$\omega \in \mathcal{G}_{x} \implies \omega \cdot n(y_{\omega})=0, \qquad \omega=\Psi(y_{\omega}).$$
In particular, one has $\omega \in \mathcal{T}_{y_{\omega}}$ and $\mathds{P}_{z_{y_{\omega}}}(\omega)=0$ since $z_{y_{\omega}}=\frac{x-y_{\omega}}{|x-y_{\omega}}|=\omega$. In particular, $\d\Psi(y_{\omega})$ is not injective ($\omega \neq 0$ belongs to its kernel). We proved that, if $\omega \in \mathcal{G}_{x}$ then it is a \emph{critical value} of $\Psi$ and Sard Theorem implies in particular that the measure of $\mathcal{G}_{x}$ is zero. 

Now, to prove that $\mathbf{G}_{x}$ is also of zero measure, one simply notices that $\mathbf{G}_{x}$ is  the image of $\mathcal{G}_{x}$ through the smooth function
$$\mathcal{P}\::\:\omega \in \S^{d-1}\setminus \{w_{d} =0\} \longmapsto \mathcal{P}(\omega)=(\omega_{1},\ldots,\omega_{d-1}) \in \mathbf{B}_{d-1}$$
In particular, from the first part of the Lemma, $\mathbf{G}_{x} $ is included in the set of critical value of the smooth function $\mathcal{P} \circ \Psi$ and we conclude again with Sard Theorem.
\end{proof}

 \begin{lemme}\phantomsection\label{lem:ChVa} Assume that $\partial\Omega$ satisfies Assumption \ref{hypO}. For any $x \in \partial\Omega$, we set
 $$\S_{+}(x)=\left\{\sigma \in \S^{d-1}\;;\;\sigma \cdot n(x) >0\right\}.$$
Then, for any nonnegative measurable mapping $g\::\:\S^{d-1} \mapsto \R$ one has
$$\int_{\S_{+}(x)}g(\sigma)\,|\sigma\cdot n(x)|\d\sigma=\int_{\Gamma_{+}(x)}g\left(\frac{x-y}{|x-y|}\right)\mathcal{J}(x,y)\pi(\d y)$$
where $\Gamma_{+}(x)=\left\{y \in \partial \Omega\,;\,(x-y)\cdot n(x) >0\right\}$ and 
\begin{equation}\label{eq:Jxy}
\mathcal{J}(x,y)=\ind_{\{(x-y) \cdot n(y) < 0\}}(y)\frac{|(x-y)\cdot n(x)|}{|x-y|^{d+1}}\,|(x-y)\cdot n(y)|, \qquad \forall y \in \Gamma_{+}(x).\end{equation}
In particular, for any nonnegative measurable $G\::\:\partial \Omega \to \R$ and any $\varphi\::\:\R^{+}\to\C$ measurable we have
\begin{equation}\label{eq:CofV}
\int_{\S_{+}(x)}\left|\sigma \cdot n(x)\right|\,\varphi(\tau_{-}(x,\sigma)) \,G(x-\tau_{-}(x,\sigma)\sigma)\d\sigma=\int_{\Gamma_{+}(x)}G(y)\varphi(|x-y|)\mathcal{J}(x,y)\pi(\d y).\end{equation}
\end{lemme}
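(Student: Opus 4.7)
Since the identity \eqref{eq:CofV} is obtained from the first formula by choosing $g(\sigma)=\varphi(\tau_-(x,\sigma))\,G(x-\tau_-(x,\sigma)\sigma)$, I need only establish the first one. Fix $x\in\partial\Omega$; by the rotational invariance of $\d\sigma$ on $\S^{d-1}$, I may suppose without loss of generality that $n(x)=e_d$, so that $\S_+(x)$ coincides with the open upper hemisphere. I then parametrize it by the interior of the unit ball $\mathbf{B}_{d-1}\subset\R^{d-1}$ through $\sigma(z)=(z,\sigma_d(z))$ with $\sigma_d(z)=\sqrt{1-|z|^2}$. The standard spherical coordinate computation yields $\d\sigma=\sigma_d(z)^{-1}\d z$ while $\sigma(z)\cdot n(x)=\sigma_d(z)$, so that the left-hand side collapses to
$$\int_{\S_+(x)}g(\sigma)\,|\sigma\cdot n(x)|\,\d\sigma=\int_{\mathbf{B}_{d-1}}g(\sigma(z))\,\d z.$$

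Next I perform the change of variable $z\mapsto y=\mathfrak{p}(z)=x-\tau_-(x,\sigma(z))\sigma(z)$ studied in Lemma \ref{lem:jaco}. By Lemma \ref{lem:sard}, the complement of $\mathcal{O}_x$ in $\mathbf{B}_{d-1}$ has Lebesgue measure zero, so the integration may be harmlessly restricted to $\mathcal{O}_x$. On this set, $\mathfrak{p}$ is $\mathcal{C}^1$ and injective (from $y=\mathfrak{p}(z)$ one recovers $\sigma(z)=(x-y)/|x-y|$, and hence $z$ as its first $d-1$ components), while Lemma \ref{lem:jaco} gives the surface Jacobian
$$\sqrt{\det\bigl(\langle\partial_i\mathfrak{p},\partial_j\mathfrak{p}\rangle\bigr)}=\frac{\tau_-(x,\sigma(z))^{d-1}}{|\sigma(z)\cdot n(\mathfrak{p}(z))|\,\sigma_d(z)}.$$
The area formula therefore converts $\d z$ into $\pi(\d y)$ along the image $\mathfrak{p}(\mathcal{O}_x)\subset\partial\Omega$.

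Finally, I translate the Jacobian back to the variable $y$. Writing $x-y=\tau_-(x,\sigma(z))\sigma(z)$ and using that $\sigma(z)$ is a unit vector, I obtain $\tau_-(x,\sigma(z))=|x-y|$, $\sigma(z)\cdot n(\mathfrak{p}(z))=(x-y)\cdot n(y)/|x-y|$ (negative on $\mathcal{O}_x$ by definition of $\mathcal{O}_x$), and $\sigma_d(z)=\sigma(z)\cdot n(x)=(x-y)\cdot n(x)/|x-y|$. Substituting, the reciprocal Jacobian $\d z/\pi(\d y)$ becomes precisely $\mathcal{J}(x,y)$ from \eqref{eq:Jxy}. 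Because $\mathcal{J}(x,y)$ carries the factor $\ind_{\{(x-y)\cdot n(y)<0\}}$, extending the integration from $\mathfrak{p}(\mathcal{O}_x)$ to the full set $\Gamma_+(x)$ introduces no error, which yields the desired identity. The geometric heart of the argument—the Gram-determinant computation—is already settled by Lemma \ref{lem:jaco}; the main residual obstacle is the careful identification of $\mathfrak{p}(\mathcal{O}_x)$ with the subset $\{y\in\Gamma_+(x)\,;\,(x-y)\cdot n(y)<0\}$ up to $\pi$-null sets, which is controlled by the Sard-type statement of Lemma \ref{lem:sard} ruling out the tangential configurations $\sigma\cdot n(\mathfrak{p}(\sigma))=0$.
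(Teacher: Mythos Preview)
Your proposal is correct and follows essentially the same route as the paper: reduce to $n(x)=e_d$, parametrize $\S_+(x)$ by the open ball $\mathbf{B}_{d-1}$ so that the factor $|\sigma\cdot n(x)|$ cancels the spherical Jacobian, discard the null set $\mathbf{G}_x$ via Lemma~\ref{lem:sard}, and then push forward by $\mathfrak{p}$ using the Gram determinant of Lemma~\ref{lem:jaco} to obtain $\mathcal{J}(x,y)$. The only point you might make a little more explicit is why the complement of $\mathcal{O}_x$ in $\mathbf{B}_{d-1}$ is exactly the null set $\mathbf{G}_x$: since $(x-\tau_-(x,\sigma)\sigma,\sigma)\in\Gamma_-\cup\Gamma_0$ by definition of $\tau_-$, one always has $\sigma\cdot n(\mathfrak{p}(z))\leq 0$, so the strict inequality fails precisely on $\mathbf{G}_x$; the paper notes this as $B_x\setminus\mathbf{G}_x=\mathcal{O}_x$.
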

\begin{proof} Let now $x \in \partial\Omega$ be given. We can assume without generality that the system of coordinates in $\R^{d}$ is such that $n(x)=(0,\ldots,1)$. For a given $f\::\:\S_{+}(x)\to \R^{+}$, it holds
$$\int_{\S_{+}(x)}f(\sigma)\d \sigma=\int_{B_{x}}f(z,\sqrt{1-|z|^{2}})\frac{\d z}{\sqrt{1-|z|^{2}}},$$
where $B_{x}=\{z \in \R^{d-1}\;;\;|z| < 1, \; z \cdot n(x) >0\}.$ Moreover, according to Lemma \ref{lem:sard}
$$\int_{\S_{+}(x)}f(\sigma)\d\sigma=\int_{\S_{+}(x)\setminus\mathcal{G}_{x}}f(\sigma)\d\sigma$$
while
$$\int_{B_{x}}f(z,\sqrt{1-|z|^{2}})\frac{\d z}{\sqrt{1-|z|^{2}}}=\int_{B_{x}\setminus \mathbf{G}_{x}}f(z,\sqrt{1-|z|^{2}})\frac{\d z}{\sqrt{1-|z|^{2}}}$$
Then, for the special choice of $f(\sigma)=g(\sigma)\,|\sigma \cdot n(x)|$
we get
\begin{equation}\label{eq:G+x}
\int_{\S_{+}(x)}|\sigma\cdot n(x)|\,g(\sigma)\d\sigma=\\
\int_{B_{x}} g(\sigma(z)))\ind_{B_{x}\setminus\mathbf{G}_{x}}(z)\d z\end{equation}
with $\sigma(z) =(z_{1},\ldots,z_{d-1},\sqrt{1-|z|^{2}})$ for $|z| <1.$ 

Notice that, with the notations of Lemma \ref{lem:jaco}, one has $B_{x}\setminus \mathbf{G}_{x}=\mathcal{O}_{x}$. Still using the notations of Lemma \ref{lem:jaco}, we introduce the regular parametrization of $\partial\Omega$ given by
$$\mathfrak{p}\::\:z \in \mathcal{O}_{x} \mapsto y=\mathfrak{p}(z)=x-\tau_{-}(x,\sigma(z))\sigma(z) \in \partial\Omega.$$
With this parametrization, notice that 
$$\tau_{-}(x,\sigma(z))=|x-y|$$
since $\sigma(z) \in \S^{d-1}$ and therefore
$$\sigma(z)=\frac{x-\mathfrak{p}(z)}{\tau_{-}(x,\sigma(z))}=\frac{x-y}{|x-y|}.$$
According to \cite[Lemma 5.2.11 \& Theorem 5.2.16, pp. 128-131]{stroock}, from this parametrization, the Lebesgue surface measure $\pi(\d y)$ on $\partial\Omega$ is given by
$$\pi(\d y)=\sqrt{\mathrm{det}\Bigg(\Bigg(\left\langle \frac{\partial \mathfrak{p}(z)}{\partial z_{i}},\frac{\partial \mathfrak{p}(z)}{\partial z_{j}}\right\rangle\Bigg)_{1\leq i,j\leq d-1}\Bigg)}\d z_{1}\ldots \d z_{d-1}=\sqrt{\mathrm{det}(\mathfrak{P}(z))}\d z$$
from which we deduce directly that
$$\int_{B_{x}} \ind_{B_{x}\setminus\mathbf{G}_{x}}(z)g(\sigma(z))\d z=\int_{\partial\Omega}g\left(\frac{x-y}{|x-y|}\right)\mathcal{J}(x,y)\pi(\d y)$$
where
$$\mathcal{J}(x,y)=\frac{1}{\sqrt{\mathrm{det}(\mathfrak{P}(z))}}\ind_{\mathcal{O}_{x}}(z)$$
has to be expressed in terms of $x$ and $y$. Using Lemma \ref{lem:jaco}, one has
$$\mathcal{J}(x,y)=\left|\frac{(\sigma(z)\cdot n(\mathfrak{p}(z)))\sigma_{d}(z)}{\tau_{-}(x,\sigma(z))^{d-1}}\right|\ind_{\mathcal{O}_{x}}(z)$$
with, as mentioned, $\tau_{-}(x,\sigma(z))=|x-y|,$ $\mathfrak{p}(z)=y$ and $\sigma(z)=\frac{x-y}{|x-y|}$. Notice that 
$$\sigma_{d}(z)=\sigma(z)\cdot n(z)=\frac{(x-y)}{|x-y|}\cdot n(x)$$ 
which gives the desired expression \eqref{eq:Jxy} of $\mathcal{J}(x,y)$. Now, if $g(\sigma)=\varphi(\tau_{-}(x,\sigma))G(x-\tau_{-}(x,\sigma)\sigma)$, we get \eqref{eq:CofV}.\end{proof}
\begin{nb}\label{eq:sym} Noticing that $\mathcal{J}(x,y)=\ind_{\Gamma_{+}(x)}(y)\mathcal{J}(x,y)$, one sees that
$$\mathcal{J}(x,y)=\mathcal{J}(y,x) \qquad \forall (x,y) \in \partial\Omega\times\partial\Omega$$
\end{nb}

We end this Section with a useful technical result
\begin{lemme}\label{lem:convede}
 Assume that $\partial\Omega$ satisfies Assumption \ref{hypO}. Then
$$\lim_{\delta\to0^{+}}\sup_{y \in \partial\Omega}\int_{|x-y| \leq \delta}\mathcal{J}(x,y)\pi(\d x)=0.$$
\end{lemme}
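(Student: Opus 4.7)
My plan is to first reduce the left-hand side to a spherical integral based at $y$, and then to establish uniform decay by a soft compactness argument.

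\textbf{Reduction.} Using the symmetry $\mathcal{J}(x,y)=\mathcal{J}(y,x)$ recorded in Remark \ref{eq:sym} together with the fact that $\mathcal{J}(y,x)=0$ outside $\Gamma_+(y)$, I plan to apply the change of variable formula \eqref{eq:CofV} from Lemma \ref{lem:ChVa} (with the roles of $x$ and $y$ interchanged) to $G\equiv 1$ and $\varphi=\mathbf{1}_{[0,\delta]}$. This produces the identity
$$\int_{|x-y|\leq\delta}\mathcal{J}(x,y)\,\pi(\d x)=\int_{\S_+(y)}|\sigma\cdot n(y)|\,\mathbf{1}_{\{\tau_-(y,\sigma)\leq\delta\}}\,\d\sigma,\qquad y\in\partial\Omega.$$
The singular factor $\mathcal{J}$ is thus replaced by the much milder integrand $|\sigma\cdot n(y)|$, restricted to those directions whose backward free flight from $y$ has length at most $\delta$.

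\textbf{Uniform smallness.} I then plan to introduce the quantity
$$\omega(\delta):=\sup\Big\{|\sigma\cdot n(y)|\;:\;y\in\partial\Omega,\ \sigma\in\S^{d-1},\ \sigma\cdot n(y)\geq 0,\ \tau_-(y,\sigma)\leq\delta\Big\},$$
which is nondecreasing in $\delta$ and bounded by $1$. I would argue by contradiction that $\omega(\delta)\to 0$ as $\delta\to 0^+$: if not, one can extract sequences $y_n\in\partial\Omega$ and $\sigma_n\in\S^{d-1}$ with $\sigma_n\cdot n(y_n)\geq c>0$ and $\tau_-(y_n,\sigma_n)\to 0$. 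Compactness of $\partial\Omega\times\S^{d-1}$ yields a limit point $(y_\ast,\sigma_\ast)$, and the continuity of the outward normal $n(\cdot)$, afforded by the $\mathcal{C}^1$ regularity of $\partial\Omega$, preserves $\sigma_\ast\cdot n(y_\ast)\geq c>0$. Consequently $\tau_-(y_\ast,\sigma_\ast)>0$ (since moving from $y_\ast$ in direction $-\sigma_\ast$ enters the open set $\Omega$), which contradicts the lower semicontinuity of $\tau_-$ on $\overline{\Omega}\times V$.

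\textbf{Conclusion.} Combining the two steps gives
$$\sup_{y\in\partial\Omega}\int_{|x-y|\leq\delta}\mathcal{J}(x,y)\,\pi(\d x)\leq \omega(\delta)\,\big|\S^{d-1}\big|,$$
which vanishes as $\delta\to 0^+$. I expect the compactness argument of the second step to be the only genuine obstacle: pointwise positivity of $\tau_-(y,\sigma)$ for admissible $(y,\sigma)$ is immediate, but the uniform decay of $\omega(\delta)$ really requires the combined use of compactness, lower semicontinuity of $\tau_-$, and continuity of the outward normal. All other ingredients reduce to direct invocations of Lemma \ref{lem:ChVa} and Remark \ref{eq:sym}.
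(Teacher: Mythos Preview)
Your proof is correct and shares the same reduction step as the paper's argument: both use the symmetry $\mathcal{J}(x,y)=\mathcal{J}(y,x)$ together with Lemma~\ref{lem:ChVa} to rewrite the boundary integral as a spherical integral based at $y$. The divergence comes in how uniform smallness is established.

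The paper drops the factor $|\sigma\cdot n(y)|$ (bounding it by $1$) and proves the stronger fact that the \emph{measure} of $\{\sigma\in\S_+(y):\tau_-(y,\sigma)\leq\delta\}$ tends to $0$ uniformly in $y$. It argues by contradiction via Fatou's lemma (in limsup form), reducing to the pointwise claim that for a.e.\ fixed $\sigma$ the indicator $\mathbf{1}_{(0,\delta_n]}(\tau_-(y_n,\sigma))$ has limsup zero; this uses lower semicontinuity of $\tau_-$ together with a measure-zero argument for the equator $\{\sigma\cdot n(y)=0\}$. Your approach instead \emph{keeps} the factor $|\sigma\cdot n(y)|$ and observes that it is uniformly small on the constraint set $\{\tau_-\leq\delta\}$: you package this as $\omega(\delta)$ and run a sequential compactness argument directly on pairs $(y_n,\sigma_n)$. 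This is more elementary---no Fatou, no equator argument---and exploits a geometric fact the paper ignores, namely that short backward exit times force near-tangential directions. The price is that you obtain only the statement actually needed, not the stronger one the paper proves in passing; both routes rely on exactly the same three ingredients (compactness of $\partial\Omega\times\S^{d-1}$, continuity of $n$, lower semicontinuity of $\tau_-$).
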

\begin{proof} First notices that the straightforward estimate 
\begin{equation}\label{eq:boundJ}
\mathcal{J}(x,y) \leq |x-y|^{1-d}\end{equation}
is not strong enough to derive the result (see the subsequent Lemma \ref{lem:1} for more details on this point). We need to proceed in a different way. Observe that, thanks to Remark \ref{eq:sym}, for any $y \in \partial\Omega$ it holds
$$\int_{|x-y|\leq\delta}\mathcal{J}(x,y)\pi(\d x)=\int_{|x-y|\leq\delta}\mathcal{J}(y,x)\pi(\d x)=\int_{\S_{+}(y)}\mathbf{1}_{(0,\delta]}(\tau_{-}(y,\sigma))\d\sigma$$
where we used Lemma \ref{lem:ChVa} with the functions $\varphi(r)=\mathbf{1}_{[0,\delta]}(r)$ and $G\equiv 1$. Clearly, for any \emph{fixed} $y \in \partial\Omega$
\begin{equation}\label{eq:lim}
\lim_{\delta\to0^{+}}\int_{\S_{+}(y)}\mathbf{1}_{(0,\delta]}(\tau_{-}(y,\sigma))\d\sigma=0\end{equation}
according to the dominated convergence theorem so one needs to check that the convergence \eqref{eq:lim} is \emph{uniform} with respect to $y\in \partial\Omega$. Assume it is not the case so that there exist $c >0$, a sequence $\{y_{n}\}_{n} \subset\partial\Omega$ and a sequence $(\delta_{n})_{n} \subset (0,\infty)$ converging to $0$ such that
$$\int_{\S_{+}(y_{n})}\mathbf{1}_{(0,\delta_{n}]}(\tau_{-}(y_{n},\sigma))\d\sigma \geq c \qquad \forall n \in \N.$$
First, one deduces from Fatou's lemma that
\begin{multline}\label{fatou}
0 < c \leq \limsup_{n}\int_{\S^{d-1}}\ind_{\S_{+}(y_{n})}(\sigma)\ind_{(0,\delta_{n}]}(\tau_{-}(y_{n},\sigma))\d\sigma\\
 \leq \int_{\S^{d-1}}\limsup_{n}\ind_{\S_{+}(y_{n})}(\sigma)\ind_{(0,\delta_{n}]}(\tau_{-}(y_{n},\sigma))\d\sigma.\end{multline}
Of course, there is no loss of generality in assuming that $\{y_{n}\}_{n}$ converges to some $y \in \partial\Omega.$ Now, being $\partial\Omega$ of class $\mathcal{C}^{1}$, it holds that $\lim_{n}n(y_{n})=n(y)$ and therefore there is $n_{0}\in \N$ such that
$$\S_{+}(y) \subset \S_{+}(y_{n}) \qquad \forall n \geq n_{0}.$$
Moreover, for $\sigma \in \S_{+}(y)$, $\tau_{+}(y,\sigma) >0$ and, since $\tau_{+}$ is lower-semicontinuous on $\partial\Omega\times V$ (see \cite[Lemma 1.5]{voigt}), it holds that
$$\liminf_{n\to\infty}\tau_{-}(y_{n},\sigma) \geq \tau_{-}(y,\sigma) >0 \qquad \forall \sigma \in \S_{+}(y).$$
As a consequence, one has
$$\limsup_{n\to\infty}\ind_{(0,\delta]}(\tau_{-}(y_{n},\sigma))=0 \qquad \forall \sigma \in \S_{+}(y)$$
Since $\{\sigma \in \S^{d-1}\;;\;\sigma \cdot n(x)=0\}$ is a subset of $\S^{d-1}$ of zero Lebesgue measure, we see that
$$\limsup_{n}\ind_{\S_{+}(y_{n})}(\sigma)\ind_{(0,\delta]}(\tau_{-}(y_{n},\sigma))=0$$
for almost every $\sigma \in \S^{d-1}$ which contradicts \eqref{fatou}. This proves the result.
\end{proof}

Whenever the boundary $\partial\Omega$ is of class $\mathcal{C}^{2}$ -- which is not the framework adopted in this paper -- one can strengthen the estimates \eqref{eq:boundJ}. Namely, one has the following result (see \cite[Lemma 2]{guo03} for a similar result)
\begin{lemme}\label{lem:1}
Under the above assumption on $\Omega$ and if $\partial\Omega$ is of class $\mathcal{C}^{2}$ then, there exists a positive constant $C_{\Omega} >0$ such that
$$\left|(x-y)\,\cdot \,n(x)\right| \leq C_{\Omega}\,|x-y|^{2}, \qquad \forall x,y \in \partial \Omega.$$
Consequently, with the notations of Lemma \ref{lem:ChVa}, there is a positive constant $C >0$ such that
$$\mathcal{J}(x,y) \leq \frac{C}{|x-y|^{d-3}}, \qquad \forall x,y \in \partial \Omega, x\neq y.$$
In particular, for $d=2,3$, $\mathcal{J}(\cdot,\cdot)$ is bounded.  \end{lemme}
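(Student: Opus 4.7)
The plan is to establish the first inequality by a local graph representation of $\partial\Omega$ and then deduce the bound on $\mathcal{J}$ by plugging this estimate symmetrically into both scalar products appearing in \eqref{eq:Jxy}.

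\medskip

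Fix $x\in\partial\Omega$. Since $\partial\Omega$ is of class $\mathcal{C}^{2}$, there exist a radius $r_{x}>0$ and an orthonormal frame centered at $x$ with last basis vector equal to $n(x)$ such that, in these coordinates,
$$\partial\Omega\cap B(x,r_{x})=\{(y',\varphi_{x}(y'))\;:\;y'\in U_{x}\}$$
for some open neighbourhood $U_{x}\subset \R^{d-1}$ of the origin and some $\mathcal{C}^{2}$ function $\varphi_{x}$ satisfying $\varphi_{x}(0)=0$ and $\nabla\varphi_{x}(0)=0$ (the latter because $n(x)$ is normal to the tangent hyperplane at $x$). For $y\in\partial\Omega\cap B(x,r_{x})$ with $y=(y',\varphi_{x}(y'))$ in the chosen frame, one has $(x-y)\cdot n(x)=-\varphi_{x}(y')$ and $|y'|\leq|x-y|$. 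Taylor's formula at $0$ then gives
$$|(x-y)\cdot n(x)|=|\varphi_{x}(y')|\leq \tfrac{1}{2}\sup_{U_{x}}\|D^{2}\varphi_{x}\|\,|y'|^{2}\leq \tfrac{1}{2}\sup_{U_{x}}\|D^{2}\varphi_{x}\|\,|x-y|^{2}.$$

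\medskip

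Next I will make the constant uniform in $x$. By compactness of $\partial\Omega$ and $\mathcal{C}^{2}$ regularity, one can cover $\partial\Omega$ by finitely many such charts; equivalently, there exist a uniform radius $r_{0}>0$ and a uniform constant $M>0$ such that $\sup_{x\in\partial\Omega}\sup_{U_{x}}\|D^{2}\varphi_{x}\|\leq M$ whenever the above chart is defined on a ball of radius $r_{0}$. This yields, for all $x,y\in\partial\Omega$ with $|x-y|\leq r_{0}$,
$$|(x-y)\cdot n(x)|\leq \tfrac{M}{2}\,|x-y|^{2}.$$
For $|x-y|>r_{0}$ the inequality follows trivially from $|(x-y)\cdot n(x)|\leq|x-y|\leq D|x-y|/r_{0}\leq (D/r_{0})|x-y|^{2}/r_{0}$ (using boundedness of $\Omega$). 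Setting $C_{\Omega}=\max(M/2,D/r_{0}^{2})$ yields the first claimed estimate.

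\medskip

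To deduce the bound on $\mathcal{J}$, I apply the first estimate at both endpoints. Recalling \eqref{eq:Jxy},
$$\mathcal{J}(x,y)\leq \frac{|(x-y)\cdot n(x)|\,|(x-y)\cdot n(y)|}{|x-y|^{d+1}}\leq \frac{C_{\Omega}^{2}\,|x-y|^{2}\cdot |x-y|^{2}}{|x-y|^{d+1}}=\frac{C_{\Omega}^{2}}{|x-y|^{d-3}},$$
which is the claimed bound with $C=C_{\Omega}^{2}$. For $d=3$ this is a uniform bound, while for $d=2$ one has $\mathcal{J}(x,y)\leq C|x-y|\leq CD$, also uniformly bounded.

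\medskip

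The argument is essentially geometric and the only subtlety is the uniformity in $x$ of the constant controlling $\|D^{2}\varphi_{x}\|$; this is what requires compactness of $\partial\Omega$ together with $\mathcal{C}^{2}$ regularity, but once recorded, the rest is a direct substitution.
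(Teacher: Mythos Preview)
Your proof is correct and follows essentially the same approach as the paper: a local $\mathcal{C}^{2}$ graph representation of $\partial\Omega$, a second-order Taylor expansion giving the quadratic bound on $(x-y)\cdot n(x)$, compactness for uniformity in $x$, and then substitution into \eqref{eq:Jxy} at both endpoints. The only cosmetic difference is that you choose the chart so that $n(x)$ is the last coordinate axis (hence $\nabla\varphi_{x}(0)=0$), which slightly streamlines the Taylor step compared to the paper's computation with a general parametrization.
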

\begin{proof} The intuition behind the estimate is that, from the smoothness of $\partial \Omega$,  for any $x\neq y \in \partial\Omega$, if $\bm{e}_{x}(y)=\frac{x-y}{|x-y|}$ denotes the unit vector with direction $x-y$, then
$$\lim_{y \to x}\bm{e}_{x}(y)\,\cdot \,n(x)=0$$
since $\bm{e}_{x}(y)$ tends to be tangent to $\partial \Omega.$ Then $( x-y)\,\cdot \,n(x)$ is of the order $|x-y|^{2}$ for $x \simeq y$. Let us make this rigorous. For a given $x \in \partial \Omega$, one can find a local parametrization of a neighbourood $\mathcal{O}_{x} \subset \partial\Omega$ containing $x$ as 
$$\mathcal{O}_{x}=\left\{(u,\Phi(u))\;;\;u \in U\right\}$$
where $U$ is a open subset of $\R^{d-1}$ and $\Phi\,:\,U \to \mathcal{O}_{x}$ is a $\mathcal{C}^{2}$-diffeomorphism. Denoting by $|\cdot|$ the euclidian norm of $\R^{d-1}$ and by $\nabla \Phi$ the gradient of $\Phi$ (in $\R^{d-1}$) we get, with $x=(u_{0},\Phi(u_{0})) \in \partial \Omega$, 
$$n(x)=\frac{1}{\sqrt{1+|\nabla \Phi(u_{0})|^{2}}}\left(\nabla \Phi(u_{0}),-1\right)$$
so that
$$(x-y)\,\cdot n(x)=\frac{1}{\sqrt{1+|\nabla \Phi(u_{0})|^{2}}} \bigg(\langle u-u_{0}\,;\,\nabla \Phi(u_{0})\rangle_{d-1}-\left(\Phi(u)-\Phi(u_{0})\right)\bigg)$$
where $\langle \cdot,\cdot\rangle_{d-1}$ is the inner product in $\R^{d-1}.$ Then, using Taylor expansion, one sees that, for $u \simeq u_{0}$, 
$$\langle u-u_{0}\,;\,\nabla \Phi(u_{0})\rangle_{d-1}-\left(\Phi(u)-\Phi(u_{0}\right)=-\frac{1}{2}\langle \mathrm{Hess}\Phi(u_{0})(u-u_{0})\,;\,u-u_{0}\rangle_{d-1}+\mathrm{o}(|u-u_{0}|^{2})$$
where $\mathrm{Hess}\Phi(u_{0})$ is the Hessian matrix of $\Phi$ at $u_{0} \in U \subset \R^{d-1}.$ Then, since $\Phi$ is of class $\mathcal{C}^{2},$ by setting $M:=\|\mathrm{Hess}\Phi\|_{L^{\infty}(\overline{U})}$ (notice that, if $\partial \Omega$ is compact, then $U$ is relatively compact in $\R^{d-1}$) we get
$$|(x-y)\,\cdot n(x)| \leq M|u-u_{0}|^{2} \leq M|x-y|^{2}\qquad \text{ for } u \simeq u_{0}.$$
 Since $\partial \Omega$ is compact and $\Omega$ bounded, this easily yields the conclusion. Now, from \eqref{eq:Jxy}, we get
$$\mathcal{J}(x,y) \leq C_{\Omega}^{2}|x-y|^{3-d}  \qquad \forall y \in \Gamma_{+}(x)$$
which proves the boundedness of $\mathcal{J}$ for $d=2,3$ since $\mathcal{J}(x,y) \leq C_{\Omega}^{2}D^{3-d}$ in such a case. \end{proof}

\begin{nb} Applying Lemma \ref{lem:ChVa} with $\varphi=1$, one sees that in dimension $d=2,3$, the boundedness of $\mathcal{J}(x,y)$ implies  the existence of a positive constant $C >0$ such that
$$\int_{\S_{+}(x)}G(x-\tau_{-}(x,\sigma)\sigma)\,|\sigma \cdot n(x)|\d\sigma \leq C \int_{\Gamma_{+}(x)}G(y)\pi(\d y) \qquad \forall G \geq 0.$$
This easily allows to recover \cite[Lemma 2.3, Eq. (2.6)]{EGKM}. 
\end{nb}
\begin{nb} For $\partial\Omega$ of class $\mathcal{C}^{2}$, one can quantifies the convergence to $0$ obtained in Lemma \ref{lem:convede}. Indeed, since
$$\int_{|x-y| \leq \delta}\mathcal{J}(x,y)\pi(\d x) \leq \delta\int_{|x-y|\leq \delta}\frac{\pi(\d x)}{|x-y|^{d-2}} \qquad \forall y \in \partial\Omega$$
and because
$$\lim_{\delta\to0^{+}}\sup_{y \in \partial\Omega}\int_{|x-y|\leq \delta}\frac{\pi(\d x)}{|x-y|^{d-2}}=0$$
by a classical argument, one obtains that
$$\sup_{y \in \partial\Omega}\int_{|x-y|\leq \delta}\mathcal{J}(x,y)\pi(\d x)=o(\delta) \quad \text{ as } \delta \to 0^{+}.$$
\end{nb}

\end{document}